\documentclass[reqno]{amsart}

\title[]{Modelling Anomalous Diffusion: The Role of CTRWs and Non-Local Dynamics}

\author{Ivan Bio\v{c}i\'{c}}
\author{Bruno Toaldo}
\keywords{{anomalous diffusion, continuous time random walks, time-changed Markov processes, semi-Markov processes, fractional-type evolution equation}}
\date{\today}
\subjclass[2020]{{60K50, 60K15, 35R11, 35R09;}}

\thanks{The authors acknowledge financial support under the National Recovery and Resilience Plan (NRRP), Mission 4, Component 2, Investment 1.1, Call for tender No. 104 published on 2.2.2022 by the Italian Ministry of University and Research (MUR), funded by the European Union – NextGenerationEU– Project Title “Non–Markovian Dynamics and Non-local Equations” – 202277N5H9 - CUP: D53D23005670006 - Grant Assignment Decree No. 973 adopted on June 30, 2023, by the Italian Ministry of University and Research (MUR).}

\thanks{The author B. Toaldo would like to thank the Isaac Newton Institute for Mathematical Sciences, Cambridge, for support and hospitality during the programme Stochastic systems for anomalous diffusion, where work on this paper was undertaken. This work was supported by EPSRC grant EP/Z000580/1.}

\thanks{The author I. Biočić acknowledges financial support by the European Union – NextGenerationEU through the National Recovery and Resilience Plan 2021-2026 Institutional grant of University of Zagreb Faculty of Science (IK IA 1.1.3. Impact4Math), as well as the support by Croatian Science Foundation through the project IP-2025-02-8793.}

\thanks{The authors are very grateful to Giacomo Ascione, Enrico Scalas, and Francesco Virgili for their valuable comments, which have considerably improved a previous version of the manuscript.
}

\usepackage{amssymb, amsmath, amsfonts}
\usepackage{natbib}
\usepackage{lscape, upgreek}
\usepackage{aligned-overset}
\usepackage{color}
\usepackage{comment}
\usepackage{mathtools}
\usepackage{dsfont}
\usepackage{mathrsfs}
\usepackage{bm}
\usepackage{graphicx}
\usepackage{wrapfig}
\usepackage{caption}
\usepackage{tikz}
\usepackage{float}
\usetikzlibrary{arrows,decorations.markings,arrows.meta}
\usetikzlibrary{calc}

\usepackage{cancel}

\usepackage{soul}

\DeclareMathAlphabet{\mathpzc}{OT1}{pzc}{m}{it}
\DeclareMathOperator\supp{supp}

\bibpunct{[}{]}{;}{n}{,}{,}
\usepackage{algorithm2e}
\usepackage{algpseudocode}
\RestyleAlgo{ruled}

\usepackage{bbm} %Indicator
\newtheorem{theorem}{Theorem}[section] %Theorems
\newtheorem{lemma}[theorem]{Lemma}
\newtheorem{definition}[theorem]{Definition}
\newtheorem{proposition}[theorem]{Proposition}
\newtheorem{corollary}[theorem]{Corollary}
\theoremstyle{remark}
\newtheorem{remark}[theorem]{Remark}
\newtheorem{example}[theorem]{Example}
\numberwithin{equation}{section} %# by Section
\usepackage{authblk}
\usepackage{natbib}
\usepackage{hyperref}
\hypersetup{
	colorlinks=true,
}
\usepackage{caption}
\usepackage{subcaption}
\usepackage{graphicx}
\usepackage[pagewise]{lineno}
\usepackage{float} %for fixing positions of algotithms and figures

\newcommand{\R}{\mathbb{R}}
\newcommand{\N}{\mathbb{N}}
\newcommand{\p}{\mathds{P}}
\newcommand{\ex}{\mathds{E}}
\newcommand{\Lpsi}{\psi(-\Delta)}
\newcommand{\LpsiD}{\Lpsi_{|D}}
\newcommand{\RR}{\mathbb{R}}
\newcommand{\EE}{\mathds{E}}
\newcommand{\PP}{\mathds{P}}
\newcommand{\NN}{\mathbb{N}}

\newcommand{\LL}{\mathcal{L}}

\newcommand{\BB}{\mathcal{B}}
\newcommand{\DD}{\mathcal{D}}
\newcommand{\dist}{\text{dist}}
\newcommand{\de}{\delta_D}
\newcommand{\wh}{\widehat}
\newcommand{\wt}{\widetilde}
\newcommand{\1}{\mathds{1}}

\DeclareMathAlphabet{\mathpzc}{OT1}{pzc}{m}{it}

\allowdisplaybreaks

\newtheorem*{assumption*}{\assumptionnumber}
\providecommand{\assumptionnumber}{}
\makeatletter
\newenvironment{assumption}[2]
{%
	\renewcommand{\assumptionnumber}{(\textbf{#1#2})}%
	\begin{assumption*}%
		\protected@edef\@currentlabel{(\textbf{#1#2})}%
	}
	{%
	\end{assumption*}
}
\makeatother

\begin{document}
	
	\maketitle
	
	\begin{abstract}
		These notes provide a self-consistent summary of the stochastic approach to anomalous diffusion based on Continuous Time Random Walks (CTRWs) and their scaling limits. An introduction to CTRWs and their relationship to the theory of semi-Markov processes is provided. A general technique to study scaling limits of CTRWs is then described, and the semi-Markov property of the limiting processes is discussed. With this at hand, the connection of limit processes with non-local (fractional-type) equations is introduced, and the most recent (and general) contributions, going far beyond fractional equations, are also described. Indeed, the theory presented here includes very general non-local evolution equations as abstract Cauchy problems as well as pointwise non-local fractional diffusion equations in bounded domains.
	\end{abstract}
	
	{\hypersetup{linkcolor=black}
		% or \hypersetup{linkcolor=black}, if the colorlinks=true option of hyperref is used
		\tableofcontents
	}

	\section{Anomalous diffusion and fractional kinetic}

	The study of complex systems has gained a prominent place in physics, due to their characteristic structural and dynamical richness. Such systems are typically composed of diverse interacting elements and often exhibit unpredictable or anomalous time evolution \cite{mandelbrot1982, havlin2002}. These features are common across numerous physical, chemical, biological, and even ecological systems, including glasses, polymers, proteins, and living organisms \cite{bouchaud1990, metzler2000, klages2008}.
	
	Diffusion models can be viewed as tools for modelling and generating complex systems, because they capture how simple local interactions accumulate over time to produce rich, emergent global behavior. Specifically, diffusion models describe the motion of particles, or larger objects, subject to random effects. Anomalous diffusion refers to a class of processes whose behavior significantly departs from that predicted by the simplest diffusion models. A typical characteristic of anomalous diffusion is the non-linear time dependence of the mean squared displacement (MSD): a classical diffusion models the motion of an object with position $X(t)$, $t \geq 0$, such that the MSD, i.e., the quantity $\Delta^2X(t) \coloneqq \mathds{E}^x\left\| X(t)-x \right\|^2$, increases linearly in time (here $\mathds{E}^x$ denotes the expectation under a measure $\PP^x$ such that $\PP^x(X(0)=x)=1$, and $\left\| \cdot \right\|$ is the Euclidean norm). Deviations from this behavior are typical of anomalous diffusion. The prototype of non-linear behavior is the power-law $Ct^\alpha$, for $\alpha \neq 1$, which is typical of fractional kinetic processes \cite{metzler2000}. In general, when the MSD grows slower (faster) than linearly, the process is said to be subdiffusive (superdiffusive).

	The first indications of anomalous diffusion arose from Richardson’s 1926 work on turbulence \cite{richardson1926}. More structured theoretical treatments emerged in the 1970s with Scher and Montroll’s study of charge transport in amorphous semiconductors, employing the continuous time random walk (CTRW) framework \cite{scher1975}.
	
	Since then, anomalous diffusion has been experimentally verified in systems ranging from porous media, polymer chains, and cellular environments to turbulent flows and biological systems \cite{barkai0, barkai2000, golding2006, sokolov2005}. Subdiffusion, in particular, appears in polymer dynamics, protein transport, and {nuclear magnetic resonance } studies in disordered materials, whereas superdiffusion is often linked to Lévy statistics observed in turbulent and biological systems \cite{zumofen1993, bouchaud1990, weiss1994}.
	
	Single-particle mathematical models for anomalous diffusion often include memory effects, environmental inhomogeneities, or constraints on particle movement. Typical examples include boundary reflections \cite{burdzy2015reflections, mijatovicreflection}, interactions with the past trajectory \cite{amit1983asymptotic, toth1995true, toth2012superdiffusive}, and trapping effects, i.e., when heterogeneous media slow down diffusion due to random traps or spatial inhomogeneities. see \cite{arous2006dynamics, benarous}. Chaotic systems (Hamiltonian chaos) can exhibit anomalous transport behavior; this case is typically connected to fractional kinetic equations (FKEs), the prototype being the time-fractional diffusion equation 
	\begin{equation}
		\partial_t^\alpha q(t,x)=\frac{1}{2}\partial_x^2q(t,x), \qquad x \in \mathbb{R},\, t>0,
		\label{fde}
	\end{equation}
	and fractional kinetic processes (FKPs) connected to FKEs. In \eqref{fde}, the operator on the left-hand side is defined by
	\begin{align}
		\partial_t^\alpha f(t) \, = \, \frac{1}{\Gamma(1-\alpha)} \partial_t \int_0^t (f(s)-f(0)) (t-s)^{-\alpha} \, ds,
	\end{align}
	for any function $f$ such that integration and differentiation make sense.
	
	In \cite{zaslavsky3}, the author finally established the link between fractional kinetic equations (FKEs) and models of non-moderate Hamiltonian chaos (see also \cite{zaslavsky4, zaslavsky5} for further developments). It is interesting to observe that in this setting FKEs emerge from a scaling limit procedure. This is also the case, for instance, for the Hamiltonian systems describing cellular flows. Hairer et al. \cite{hairer1} provided a stochastic representation result for the limit of an averaging-homogenization problem; the link between such a problem and the FKE \eqref{fde} has been better highlighted in Hairer et al. \cite{hairer2}. Fractional models of anomalous diffusion arise also in continuous time finance (e.g., \cite{scalas2000fractional, mainardi2000fractional, gorenflo2001fractional}); also, many other modelling problems can be dealt with using fractional models (see, e.g. \cite{baleanu2012fractional, meerschaert2019stochastic} for several stochastic models related to fractional calculus).
	
	\subsection{Structure of the notes and leading idea}
	In these notes, we look at anomalous diffusion arising as scaling limits of CTRWs. Among the existing approaches, this is the most natural one from the purely probabilistic point of view.
	One of the most classical theories leading to diffusion is based on Donsker's well-known approximation of a Brownian motion. If one takes a random walk in $\mathbb{R}^d$, call it $S_n$, $n \in \mathbb{N}$, with $S_0=0$, a.s., and with i.i.d. jumps having unit variance and zero mean, then, as $c \to +\infty$,
	\begin{align}
		\left(  u \mapsto \frac{S_{[cu]}}{\sqrt{c}}, u \geq 0 \right) \Longrightarrow \left( u \mapsto B(u), u \geq 0 \right),
	\end{align}
	weakly in the space of càdlàg functions $D[0, +\infty)$ endowed with the Skorokhod $J$ topology\footnote{This topology is also quite often called $J_1$ topology.} \cite{billingsley2013convergence, jacod2013limit}. Then, the Brownian motion is Markovian and represents the stochastic ingredient for the heat equation. In these notes, we explain how to extend the limit theory considering CTRWs whose jumps and waiting times may be dependent but jointly form a Markov chain, converging to processes that are not Markovian and that exhibit anomalous diffusive behavior. Then we will discuss how these processes are naturally connected to non-local evolution equations, involving non-locality in both time and space.
	
	The notes are organized as follows.
	In the next Section \ref{ss:CTRW-intro} we introduce CTRWs and show their semi-Markov property. In Section \ref{sec_limit} we  explain how to study scaling limits of CTRWs and then, in Section \ref{semi_limit} we explain the semi-Markov property for the limit processes. In Section \ref{sec3_non-local} we establish the theory of (non-local) evolution equations for the limit processes. Finally, in Section \ref{section_killed}, we study more specific non-local equations, in bounded domains, which are related to killed processes (upon exiting the domain) arising from CTRWs limits.
	
	\section{Introduction to CTRWs and the semi-Markov property}\label{ss:CTRW-intro}
	We formalize here the construction and the assumptions of CTRWs.
	Let $(\Omega, \mathcal{F}, \PP^{(x,s)})$, $(x,s) \in \mathbb{R}^d \times \mathbb{R}$, be a family of probability spaces, and, on these spaces, let $(J_n, W_n)$, $n \in \mathbb{N}$, be a sequence of random vectors with values in $\mathbb{R}^d \times (0,+\infty)$. The random variables $J_n$, $n \in \mathbb{N}$, represent the (numbered) jumps of the CTRW, while $W_n$, $n \in \mathbb{N}$, represent the waiting times between jumps. To proceed with the construction, define $T_0$ such that $\PP^{(x,s)} (T_0=s)=1$, $T_n \coloneqq T_0+\sum_{i=1}^n W_i$, $n \in \mathbb{N}$, and
	\begin{align}\label{1047-inv}
		N(t) \coloneqq \max \{ n \in \mathbb{N}: T_n \leq t \}, \quad t\in\RR,
	\end{align}
	with $N(t)=0$ if $t<T_1$. Also, define $S_0$ such that $\PP^{(x,s)} (S_0=x)=1$, and $S_n \coloneqq S_0+\sum_{i=1}^nJ_i$, $n \in \mathbb{N}$. Note that $N(t)$ is the inverse of $T_n$, $n\in\N$. The CTRW is defined as the continuous-time process $Y(t),\, t \in \RR,$ where
	\begin{align}
		Y(t) \coloneqq S_{N(t)}.
		\label{defctrw}
	\end{align}
	It follows that
	\begin{align}
		Y(t) = S_n, \qquad T_n \leq t < T_{n+1}.
		\label{stepreprctrw}
	\end{align}
	
	Another relevant process for the discussion below is the process that tracks the time of the last jump before time $t\in\RR$, i.e.
	\begin{align}\label{last-jump}
		T(t) \coloneqq  T_{N(t)},\qquad t\in\RR.
	\end{align}
	
	These processes differ from continuous-time Markov chains since the waiting times between the jumps are not necessarily exponentially distributed. Here, an accumulation of infinitely many jumps in finite time is excluded by assumption, i.e., we always assume that
	\begin{align}
		\sum_n W_n = +\infty, \quad \PP^{(x,s)}\text{--a.s.},
		\label{nonexplosion}
	\end{align}
	for all $(x,s)\in \R^d\times \R$.
	
	The typical assumptions on $(J_n, W_n)$ are such that the process $(S_n, T_n)$, $n \in \mathbb{N}\cup\{0\}$, is a time-homogeneous Markov chain on $\mathbb{R}^d \times \mathbb{R}$, tracking the position of a particle and the time of the $n$-th jump. This means that the information on $(J_n, W_n)$, i.e., the $n$-th space-time jump, passes through the current position $(S_{n-1},T_{n-1})$. In other words, to construct a CTRW of this kind, it is enough to specify, for any $n\in \NN$, the joint distribution of $(J_n,W_n)=(S_n-S_{n-1},T_n-T_{n-1})$ conditionally on the random variables $S_{n-1}$ and $T_{n-1}$. This perspective aligns with the approach taken by Gikhman and Skorokhod \cite[Chapter 3, Section 3]{Gihman} in their introduction to semi-Markov processes.
	
	Indeed, in \cite[Chapter 3, Section 3]{Gihman} the authors considered a sequence of uniform random variables on $(0,1)$, say $\eta_n$, $n \in \mathbb{N}$, jointly independent of some Markov chain $S_n$, $n \in \mathbb{N}$, and then they defined a function $\varphi_{x,y}(t)$, $x,y \in \mathbb{R}^d$, such that $\varphi_{x,y}(\eta)$, where $\eta$ has the uniform distribution on $(0,1)$, has the cumulative distribution function $F_{x,y}(t)$. With this at hand, one can set $W_n \coloneqq \varphi_{S_{n-1}, S_n}(\eta_n)$, $n \in \mathbb{N}$. Obviously, the random variables $W_n$, $S_{n-1}$, and $S_n-S_{n-1}$ are not necessarily independent. To obtain a CTRW (in the spirit described above) from this construction, it is more suitable to consider functions $\varphi_{(x,y)}(\eta_n) \equiv \widetilde\varphi_{(x,y-x)}(\eta_n)$, where $\widetilde\varphi(x,h)=\varphi(x,x+h)$, i.e., the dependence of $W_n$ on the next position is due to the dependence on the forthcoming jump. From this, we easily see that $(S_n, W_n)$, $n \in \mathbb{N}$, is a discrete-time Markov chain, and so is $(S_n,T_n)_n$, $n\in \NN$. With this construction, the transition probabilities of this chain depend only on the current position $S_n$, since the waiting time $W_{n+1}$ is independent of $T_n$. The corresponding CTRW is called homogeneous in this case. This approach inspired by \cite{Gihman} is useful as it allows us to apply the theory of semi-Markov processes developed therein to CTRWs. In particular, we can draw upon \cite[Lemma 2, Section 3, Chapter 3]{Gihman}, and other results in the same reference, to prove a Markov embedding for $Y(t)$ and renewal equations for the corresponding Markov process: this property, stated in the next theorem, is called the semi-Markov property in the sense of Gikhman and Skorokhod. Renewal equations for some more specific CTRWs $Y(t)$ (the position coordinate alone) were already obtained in the literature (e.g., \cite{germano2009stochastic}).

	\begin{theorem}
		\label{thmGS}
		Let $Y(t)$, $t\in\RR$, be a CTRW as in \eqref{defctrw} and suppose that $(S_n, T_n)$, $n\in \NN \cup \{0\}$, is a Markov chain such that \eqref{nonexplosion} holds. Assume further that, for all $(z,r) \in \mathbb{R}^d \times \mathbb{R}$, $(z^\prime,r^\prime) \in \mathbb{R}^d \times \mathbb{R}$ and $(x,s) \in \mathbb{R}^d \times \mathbb{R}$, it holds that
		\begin{align}\label{semi-jump}
			\mathds{P}^{(z,r)} \left( S_{n+1} \in dy, W_{n+1} \in dw \mid S_n = x, T_n = s \right) \, = \, &\mathds{P}^{(z^\prime,r^\prime)} \left( S_{n+1} \in dy, W_{n+1} \in dw \mid S_n = x \right) \notag \\
			= \, & \mathds{P}^{(x,0)} \left( S_1 \in dy, W_1 \in dw \right) \notag  \\
			\eqqcolon \, & \mu_x(dy,dw).
		\end{align}
		Define, for $t \geq 0$,
		\begin{align}
			\xi(t) \, = \, t-T_n, \qquad T_n\leq t < T_{n+1},
		\end{align}
		i.e. $\xi(t) = t- T(t)$.
		Then the process $(Y(t), \xi (t))$, $t \geq 0$, is a time-homogeneous Markov process, under the measures $\PP^{(x,s)}$, $(x,s)\in \RR^d\times (-\infty,0]$, and the filtration $\mathcal{F}_t$, $t \geq 0$, generated by $N(s), S_0, \dots, S_{N(s)}, T_0, \dots, T_{N(s)}$, $s \le t$. Moreover, its semigroup $P_t$, $t \geq 0$, satisfies for $(x,s)\in \RR^d\times [0,+\infty)$
		\begin{align}
			P_tf(x,s) \, = \, f(x,s+t) \, \frac{\mu_x(\mathbb{R}^d, (t+s, +\infty))}{\mu_x(\mathbb{R}^d, (s, +\infty))} + \int_{(s,s+t]} \int_{\mathbb{R}^d} P_{t+s-u}f(y,0)   \frac{\mu_x(dy, du)}{\mu_x(\mathbb{R}^d, (s, +\infty))},
			\label{transition_step}
		\end{align}
		and 
		\begin{align}
			P_tf(x,0) = \mathds{E}^{(x,0)} f(Y(t), \xi(t)), \quad x\in \RR^d,
			\label{semizero}
		\end{align}
		for any function $f$ which is bounded and Borel measurable ($f \in \mathcal{B}_b (\mathbb{R}^d \times [0, +\infty))$).
	\end{theorem}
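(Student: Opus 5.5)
We prove the Markov property through the standard semi-Markov embedding of \cite[Chapter 3, Section 3]{Gihman}. The key fact is that, by \eqref{semi-jump}, the law of the next space-time jump depends only on the current position $S_n$. Conditioning on $\mathcal{F}_t$ therefore reduces to conditioning on $S_{N(t)}$ and on the \emph{residual} information that the current waiting time already exceeds $\xi(t)$.

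\textbf{Step 1: one-step conditional law.} Work on the event $\{N(t)=n\}$, which equals $\{T_n\le t<T_{n+1}\}$. On this event $\mathcal{F}_t$ is generated by $(S_0,\dots,S_n,T_0,\dots,T_n)$ together with the indicator of $\{W_{n+1}>t-T_n\}$. Using the Markov property of $(S_n,T_n)$ and \eqref{semi-jump}, we compute for Borel $A\subset\RR^d$ and $B\subset(0,\infty)$:
\begin{align*}
\PP\left(S_{n+1}\in A,\, W_{n+1}-\xi(t)\in B \mid \mathcal{F}_t\right)=\frac{\mu_x\left(A, (\xi(t)+B)\cap(\xi(t),\infty)\right)}{\mu_x\left(\RR^d,(\xi(t),\infty)\right)},
\end{align*}
with $x=S_n=Y(t)$. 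This is a ratio of elementary conditional probabilities, obtained by conditioning on the countably generated $\sigma$-field through cylinder sets. The right-hand side depends only on $(Y(t),\xi(t))$.

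\textbf{Step 2: regeneration after the next jump.} After time $T_{n+1}$, the strong Markov property of the chain $(S_k,T_k)$ at the deterministic index $n+1$, combined with time-homogeneity and the translation invariance in $s$ from \eqref{semi-jump}, gives the following. The future path $(Y(T_{n+1}+\cdot),\xi(T_{n+1}+\cdot))$ has, given $S_{n+1}=y$, the law of $(Y,\xi)$ under $\PP^{(y,0)}$. Combining with Step~1 shows that the conditional law of the whole future path $(Y(t+\cdot),\xi(t+\cdot))$ given $\mathcal{F}_t$ equals a kernel $Q_{(Y(t),\xi(t))}$.

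This kernel is defined as follows. Started from $(x,s)$, the process stays at $x$ while its age grows linearly, until a residual time whose joint law with the jump target is $\mu_x(dy,du)$ restricted to $u>s$ and shifted by $s$ and normalized. At that moment it restarts from $(y,0)$. This yields the Markov property with respect to $\mathcal{F}_t$ and identifies $P_tf(x,s)=Q_{(x,s)}f(Y(t),\xi(t))$. For $s=0$ and starting measure $\PP^{(x,0)}$ (so $T_0=0$ and $\xi(0)=0$), we get $P_tf(x,0)=\EE^{(x,0)}f(Y(t),\xi(t))$, which is \eqref{semizero}. Measures $\PP^{(x,s)}$ with $s\le0$ correspond to initial age $-s$ via the same computation; non-explosion \eqref{nonexplosion} ensures $N(t)<\infty$, so the decomposition over $\{N(t)=n\}$ exhausts $\Omega$.

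\textbf{Step 3: renewal equation.} To obtain \eqref{transition_step}, we split according to whether the first jump after the current state occurs before $t$.
\begin{itemize}
\item On \{no jump in $(0,t]$\}, which has probability $\mu_x(\RR^d,(s+t,\infty))/\mu_x(\RR^d,(s,\infty))$, the state is $(x,s+t)$. This gives the first term.
\item On \{jump at age $u\in(s,s+t]$ to $y$\}, the process restarts from $(y,0)$ at elapsed time $u-s$. Applying Step~2 then gives $P_{t+s-u}f(y,0)$ integrated against $\mu_x(dy,du)/\mu_x(\RR^d,(s,\infty))$.
\end{itemize}

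\textbf{Main obstacle.} The delicate point is Step~1, namely rigorously identifying $\mathcal{F}_t$ on $\{N(t)=n\}$ as a trace $\sigma$-field and computing the conditional law there. That $\mathcal{F}_t$ is the right filtration for a random-index object is not immediate. We would follow \cite[Lemma 2, Section 3, Chapter 3]{Gihman}: verify the identity on generating sets of the form $\{N(t)=n\}\cap\{(S_k,T_k)_{k\le n}\in C\}$, where the indicator $\1_{\{T_{n+1}>t\}}$ factorizes through $\mu_{S_n}$. The rest then follows by a monotone class argument.
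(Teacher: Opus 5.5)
Your proposal is correct and follows essentially the same route as the paper's proof. Both arguments split according to whether the next jump occurs before the target time. Both identify the conditional law of the next space-time jump as the normalized restriction of $\mu_{Y(t)}$ to $\{w>\xi(t)\}$ (the paper's \eqref{1822}), regenerate from $(y,0)$ at the next renewal epoch, and read off \eqref{transition_step} by evaluating at time zero. The differences are only in packaging: you partition over $\{N(t)=n\}$ with deterministic indices and describe the conditional law of the whole future path, whereas the paper conditions directly on $\mathcal{F}_T\vee\sigma(S_{N(T)+1},T_{N(T)+1})$ for a single test function $f$.
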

	\begin{proof}
		Note that the property \eqref{semi-jump} implies that we can use the construction of jumps $W_n=\varphi_{S_n,S_{n-1}}(\xi_n)$ as in Gikhman and Skorokhod \cite{Gihman}, described in the paragraph above, using the classical construction as in, e.g., \cite[Theorem 10.7.2]{Bogachev-II}. By using this, the Markov property follows from \cite[Lemma 2, Section 3, Chapter 3]{Gihman}, while the transition probabilities \eqref{transition_step} come from \cite[Eq. (33), Section 3, Chapter 3]{Gihman}. 
		For the reader's convenience, we provide here the details of the proof, using a slightly different technique that is easier to read.
		
		Take $t,T>0$, $f \in \mathcal{B}_b (\mathbb{R}^d \times [0, +\infty))$, and observe that by the tower property
		\begin{align}
			& \mathds{E}^{(x,s)} [ f(Y(T+t), \xi(T+t)) \mid \mathcal{F}_T ] \notag \\
			= \, & \mathds{E}^{(x,s)}\left[\mathds{E}^{(x,s)} \left[ f(Y(T+t), \xi(T+t)) \mid \mathcal{F}_T, S_{N(T)+1}, T_{N(T)+1} \right] \mid \mathcal{F}_T \right] \notag \\
			\begin{split}\label{2terms}
				= \, & \mathds{E}^{(x,s)}\left[\mathds{E}^{(x,s)} \left[ \1_{[N(T+t) = N(T)]}f(Y(T+t), \xi(T+t)) \mid \mathcal{F}_T, S_{N(T)+1}, T_{N(T)+1} \right] \mid \mathcal{F}_T \right]\\
				+ \, & \mathds{E}^{(x,s)}\left[\mathds{E}^{(x,s)} \left[ \1_{[N(T+t) > N(T)]}f(Y(T+t), \xi(T+t))\mid \mathcal{F}_T, S_{N(T)+1}, T_{N(T)+1} \right] \mid \mathcal{F}_T \right].
			\end{split}
		\end{align}
		The first term in \eqref{2terms} is equal to
		\begin{align}
			& \mathds{E}^{(x,s)}\left[\mathds{E}^{(x,s)} \left[ \1_{[N(T+t) = N(T)]}f(Y(T+t), \xi(T+t)) \mid \mathcal{F}_T, S_{N(T)+1}, T_{N(T)+1} \right] \mid \mathcal{F}_T \right]\notag\\
			= \, &f(S_{N(T)}, \xi (T)+t) \mathds{P}^{(x,s)}\left(N(T+t) = N(T) \mid \mathcal{F}_T \right)\notag\\
			= \, &f(S_{N(T)}, \xi (T)+t) \mathds{P}^{(x,s)}\left( W_{N(T)+1} > t+T-T_{N(T)} \mid \mathcal{F}_T\right)\notag\\
			= \, & f(S_{N(T)}, \xi (T)+t) \frac{\mathds{P}^{(S_{N(T)},0)} (W_{1} > t+T-T_{N(T)} )}{\mathds{P}^{(S_{N(T)},0)} (W_{1} >  T-T_{N(T)}  )}\label{1105midstep}\\
			= \, &f(Y(T), \xi (T)+t) \frac{\mathds{P}^{(Y(T),0)} (W_{1} > t+ \xi(T)  )}{\mathds{P}^{(Y(T),0)} (W_{1} >  \xi(T)  )}\label{1823}
		\end{align}
		where  \eqref{1105midstep} follows from \eqref{semi-jump}, and the terms in it should be read as $\mathds{P}^{(S_{N(T)},0)} (W_{1} > t+T-T_{N(T)})=h(S_{N(T)},t+T-T_{N(T)})$ with $h(x,w)=\mathds{P}^{(x,0)} (W_{1} > w )$.
		
		For the second term in \eqref{2terms} we have
		\begin{align}
			&  \mathds{E}^{(x,s)}\left[\mathds{E}^{(x,s)} \left[ \1_{[N(T+t) > N(T)]}f(Y(T+t), \xi(T+t) \mid \mathcal{F}_T, S_{N(T)+1}, T_{N(T)+1} \right] \mid \mathcal{F}_T \right] \notag \\
			= \, & \mathds{E}^{(x,s)} \left[ \mathds{E}^{(S_{N(T)+1},0)} \big[f(Y( t+T-T_{N(T)+1}), \xi(t+T-T_{N(T)+1}))\big] \mathds{1}_{[T_{N(T)+1}\leq t+T]} \mid \mathcal{F}_T \right]\notag \\
			= \, & \mathds{E}^{(x,s)} \left[ \mathds{E}^{(S_{N(T)+1},0)} \big[f(Y( t+T-T_{N(T)+1}), \xi(t+T-T_{N(T)+1}))\big] \mathds{1}_{[T_{N(T)+1}\leq t+T]} \left|S_{N(T)}, T_{N(T)} , N(T)\right. \right].
			\label{1721}
		\end{align}
		Here, in the second line, we have used that, conditionally on $(S_0, T_0)$, $(S_1,T_1)$, $\dots$, $(S_{N(T)+1}, T_{N(T)+1})$, $N(T)$, the process $t\mapsto \big(Y(t+T), \xi (t+T)\big)$ is a function of the random variables $S_{N(T)+2}$, $S_{N(T)+3},\dots$, and $T_{N(T)+2}$, $T_{N(T)+3}, \dots$ in the same way the process $t\mapsto \big(Y(t+T-T_{N(T)+1}),\xi(t+T-T_{N(T)+1})\big)$ is a function of the random variables $S_1$, $S_2,\dots$ and $T_1$, $T_2,\dots$ conditionally on $S_0=S_{N(T)+1}$, $T_0=0$. The third line follows from \eqref{semi-jump}. Further, \eqref{1721} is equal to
		\begin{align}
			& \int_{T}^{t+T}  \int_{\mathbb{R}^d} \mathds{E}^{(y,0)} \big[f(Y(t+T-w), \xi(t+T-w))\big]\notag \\ & \qquad \qquad \times \mathds{P}^{{(x,s)}} \left( S_{N(T)+1} \in dy, T_{N(T)+1} \in dw \mid S_{N(T)}, T_{N(T)} , N(T) \right) \notag \\
			\begin{split}
				\label{906}
				&\quad = \, \int_{\xi(T)}^{t+\xi(T)}  \int_{\mathbb{R}^d} \mathds{E}^{(y,0)} f(Y(t+\xi(T)-w), \xi(t+\xi(T)-w))\\ &\quad \qquad \qquad \times \mathds{P}^{(x,s)} \left( S_{N(T)+1} \in dy, W_{N(T)+1} \in dw \mid S_{N(T)}, \xi(T),  N(T) \right).
			\end{split}
		\end{align}
		Note that the information given by conditioning on $\xi(T), N(T)$ is tantamount to saying that $W_{N(T)+1}>\xi(T)$, so the probability measure in \eqref{906} with respect to which we integrate becomes
		\begin{align}
			&\mathds{P}^{(x,s)} \left( S_{N(T)+1} \in dy, W_{N(T)+1} \in dw \mid S_{N(T)}, \xi(T),  N(T) \right)\\
			&\qquad=\frac{\mathds{P}^{(x,s)} \left( S_{N(T)+1} \in dy, W_{N(T)+1} \in dw \mid S_{N(T)},  N(T) \right)}{\mathds{P}^{(x,s)}\left( W_{N(T)+1} > \xi(T) \mid  S_{N(T)},N(T) \right)} \notag \\
			&\qquad=\frac{\mathds{P}^{(S_{N(T)},0)} \left( S_{1} \in dy, W_{1} \in dw  \right)}{\mathds{P}^{(S_{N(T)},0)}\left( W_{1} > \xi(T) \right)},
			\label{1822}
		\end{align}
		where again $\mathds{P}^{(S_{N(T)},0)}\left( W_{1} > \xi(T) \right)$ should be read as $h(S_{N(T)},\xi(T))$ for $h(x,w)=\mathds{P}^{(x,0)}\left( W_{1} > w \right)$. By putting pieces together, we obtain the time-homogeneous Markov property, since \eqref{1823} and \eqref{906} (together with \eqref{1822}) only depend on $S_{N(T)}=Y(T)$, $\xi(T)$ and $t$. By choosing $T=0$ in \eqref{1823} and \eqref{906} (together with \eqref{1822}), we get the renewal-type equation \eqref{transition_step}. 
		The equality \eqref{semizero} follows since, under $\mathds{P}^{(x,0)}$, one has that $Y(0)=x$ and $\xi(0)=0$.
	\end{proof}
	\begin{remark}[On the notion of semi-Markov processes]
		We note that the semi-Markov property in the sense of Gikhman and Skorokhod as discussed so far is a Markov embedding. This is, however, not the only possible notion of semi-Markovianity. A more general notion of semi-Markov processes is discussed in \cite[Chapter 3]{harlamov}. In full generality, take a filtered probability space $(\Omega, \mathcal{F}_\infty, \mathcal{F}_t, \mathds{P})$ and consider a filtration $\mathcal{G}_t$, $t \geq 0$, and a progressively measurable càdlàg process $\mathfrak{X}(t), t \ge 0$, adapted to $\mathcal{G}_t$. We say that a $\mathcal{G}$-stopping time $\tau$ is a $\mathcal{G}$-regenerative time for $\mathfrak{X}$ if, by denoting $\mathds{P}^y(B)=\mathds{P}(B \mid \mathfrak{X}(0)=y)$, for all $B \in \mathcal{F}_\infty$ it holds that
		\begin{equation*}
			\mathds{P}(\mathfrak{X}(\cdot+\tau) \in B \mid \mathcal{G}_\tau)=\mathds{P}^{\mathfrak{X}(\tau)}(\mathfrak{X}(\cdot)\in B), \quad \forall B \in \mathcal{F}_\infty,
		\end{equation*}
		on $\{\tau<\infty\}$.
		Take an open set $\mathcal{O}$ in the state space and denote by $\tau_\mathcal{O}$ the exit time from this set.
		The process $\mathfrak{X}$ is said to be semi-Markov if $\tau_\mathcal{O}$ is $\mathcal{G}$-regenerative for any choice of the open set $\mathcal{O}$.
		
		It turns out, however, that the notion of semi-Markov processes in the sense of Gikhman and Skorokhod is equivalent to this general one whenever the process $\mathfrak{X}$ is a step process (see \cite[Proposition 3.4]{harlamov}). It follows that CTRWs as in Theorem \ref{thmGS} are also semi-Markov in this sense.
	\end{remark}
	\begin{remark}
		We also observe that if we replace \eqref{semi-jump} with the assumption
		\begin{align}
			\mathds{P}^{(z,r)} \left( S_{n+1} \in dy, W_{n+1} \in dw \mid S_n=x, T_n=s  \right) \, = \, \mu_x(dy) \theta(x)e^{-\theta(x) w} dw,
		\end{align}
		for probability measures $\mu_x$, $x\in \R^d$, on $\R^d$, and a function $\theta:\R^d\to [0,+\infty)$, then the process $t\mapsto Y(t)$ is a (continuous-time) Markov chain on $\mathbb{R}^d$. Indeed, equations \eqref{1823} and \eqref{906} (together with \eqref{1822}) would depend only on $Y(t)$ by using a change of variables and the memorylessness of the exponential distribution.
	\end{remark}

	Limit processes of CTRWs are particularly relevant for modelling anomalous diffusion related to fractional kinetics. We will describe the theory in the next sections.
	
	It turns out, however, that one can modify definition \eqref{defctrw} to obtain a different process, called the Overshooting Continuous Time Random Walk (OCTRW), which is given by
	\begin{align}
		Y^+(t) = S_{N(t)+1},\quad t\in\RR.
		\label{def_octrw}
	\end{align}
	This process arises naturally as a model for anomalous diffusion (see, e.g., \cite{weron2010overshooting}) and its scaling limit will be different from that of the corresponding CTRW (which we make precise in the next section). For this process, the representation \eqref{stepreprctrw} reads
	\begin{align}
		Y^+(t) \, = \, S_{n+1}, \qquad T_n \leq t < T_{n+1}.
	\end{align}
	In this context, in contrast to \eqref{last-jump}, the process
	\begin{align}
		T^+(t) \coloneqq T_{N(t)+1},\quad t\in\RR,
		\label{tnt1}
	\end{align}
	which tracks the time of the forthcoming jump will be relevant. Note that 
	\begin{align}
		N(t)+1=\min\{n\in\N: T_n>t\},
	\end{align}
	i.e., $N(t)+1$ can be considered another notion of the inverse of the coordinate $T_n$, $n\in \N$, of our Markov chain, in contrast to $N(t)$, see \eqref{1047-inv}.
	
	The process $Y^+(t)$, $t\in\RR$, has a different dependence between jumps and waiting times: indeed, for $Y^+$, the waiting times between the jumps depend on the \emph{previous} jumps. Hence, the semi-Markov property in the sense of Gikhman and Skorokhod can easily fail in this case. The following heuristic arguments clarify this. Observe that the conditioning on the path $Y(u)$, $u \leq s$, also contains the information $\xi(s)$, and as such, must be taken into account when computing the probability of events in the future of time $s$. This information is also sufficient, together with the current position $Y(s)$, to obtain the Markov embedding of $Y$ as we have seen in Theorem \ref{thmGS}. Indeed, the proof exploited exactly what we have described: the vectors $(S_{n+k}, W_{n+k})$, $k=0,1,2,\dots$, conditionally on $S_{n}$, are independent of the past, and the joint distribution of $S_{n+1}$, $W_{n+1}$ does not depend on $n$. Thus, in order to compute the probability of events in the future, one takes into account the extra information $\xi(s)$ by just conditioning on $W_{N(s)+1}>\xi(s)$ and by remembering the current position $S_n$. This shows that $\xi(s)$ should be added as an additional coordinate to obtain a Markov embedding. For the process $Y^+$, however, this argument fails. Indeed, conditioning on $Y^+(u)$, $u \leq s$, gives the information on the current position $S_{N(s)+1}$ and also $W_{N(s)+1}>\xi(s)$. Since the vectors $(S_n, W_n)$, $n =0,1,2,\dots$, form a Markov chain (and thus $(S_{n+1}, W_{n+1})$ is independent of the past conditionally on $S_n$), to compute the probability of the residual lifetime at time $s$ by using the Markov property of $(S_n,W_n)$, $n=0,1,2,\dots$, would require knowing $S_{N(s)}$, but this is not the current position $Y^+(s)$. Furthermore, adding the extra coordinate $\xi(s)$ to the process also does not solve the issue, since conditioning on the current position $S_{N(s)+1}$ and on $W_{N(s)+1}>\xi(s)$ does not allow us to use the Markov property of $(S_n, W_n)$, $n=0,1,2,\dots$.
	
	On the other hand, if the residual lifetime, i.e. $\Xi(s)\coloneqq T^+(s)-s$, is added as an extra coordinate, this problem is overcome since the next random quantities appearing after time $s$ are $(S_{N(s)+2}, W_{N(s)+2}), \dots$, and are independent of the past, conditionally on the current position $Y^+(s)=S_{N(s)+1}$.
	
	Moreover, it turns out that we can make the Markov embedding of $Y^+(t)$, $t\in\RR$, rigorous in two ways, which we present in the following two theorems.
	
	The first theorem embeds the left limits of $Y^+$, i.e., the process $Y^+(t-)=\lim_{\delta\searrow0}Y^+(t-\delta)$, $t\in\RR$.
	
	\begin{theorem}
		\label{thmGS+}
		Suppose that the assumptions of Theorem \ref{thmGS} are satisfied. Define
		\begin{align}\label{next-jump}
			\Xi(t) \, = \, T_{n+1}-t, \qquad T_n \le  t < T_{n+1},
		\end{align}
		i.e. $\Xi (t) = T^+(t)-t$, $t \geq 0$. Denote by $N^-(s):= N(s-)$, $s \geq 0$.
		Then the process $(Y^+(t-), \Xi(t-))$, $t \geq 0$, is a time-homogeneous Markov process, under all $\PP^{(x,s)}$, $(x,s)\in \RR^d\times (-\infty,0]$, and the filtration $\mathcal{G}^-_t$, $t \geq 0$, generated by $N^-(s)$, $S_0, \dots, S_{N^-(s)+1}$, $T_0, \dots, T_{N^-(s)+1}$, $s \leq t$. Its semigroup $P_t^+$, $t \geq 0$, satisfies for $t\ge 0$ and $(x,s)\in\RR^d\times [0,+\infty)$
		\begin{align}\label{710-a1}
			P_t^+f(x,s) \, = \, \mathds{1}_{[0 \leq t \le  s]} f(x,s-t)+ \mathds{1}_{[0 \leq s <t]} P_{t-s}^+f(x,0),
		\end{align}
		and
		\begin{align}\label{710-b1}
			P_t^+f(x,0) \, = \, \mathds{E}^{(x,0)}f(Y(t-), \Xi(t-)),\quad t>0,\, x\in \RR^d.
		\end{align}
		for any function $f \in \mathcal{B}_b (\mathbb{R}^d \times [0, +\infty))$.
	\end{theorem}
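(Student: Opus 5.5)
The plan is to mimic the proof of Theorem \ref{thmGS}: fix $T,t>0$ and $f\in\mathcal{B}_b(\mathbb{R}^d\times[0,+\infty))$, and compute $\mathds{E}^{(x,s)}[f(Y^+((T+t)-),\Xi((T+t)-))\mid\mathcal{G}^-_T]$. First I will identify what $\mathcal{G}^-_T$ reveals. With $n=N^-(T)$, we have $T_n<T\le T_{n+1}$, and $\mathcal{G}^-_T$ contains the whole skeleton $(S_k,T_k)_{k\le n+1}$. Hence $Y^+(T-)=S_{n+1}$ and $\Xi(T-)=T_{n+1}-T$ are $\mathcal{G}^-_T$-measurable, and there is no residual randomness about the current sojourn. (Using left limits is exactly what makes the time $T_{n+1}$ of the forthcoming jump available at time $T$.) The first step is therefore to check that $\Xi(t-)=T_{N(t-)+1}-t$ and $Y^+(t-)=S_{N(t-)+1}$ with $N(t-)=\max\{n:T_n<t\}$. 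This gives the deterministic description: on $\{t\le \Xi(T-)\}$ we get $(Y^+((T+t)-),\Xi((T+t)-))=(Y^+(T-),\Xi(T-)-t)$, which produces the term $\mathds{1}_{[t\le s]}f(x,s-t)$ in \eqref{710-a1}.

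Second, on $\{t>\Xi(T-)\}$ the process restarts at the $\mathcal{G}^-_T$-measurable time $T_{n+1}$ from position $S_{n+1}$. By the Markov property of $(S_k,T_k)$ and assumption \eqref{semi-jump}, conditionally on $(S_k,T_k)_{k\le n+1}$ and $N^-(T)=n$, the future increments $(J_{n+1+k},W_{n+1+k})_{k\ge1}$ have the same law as $(J_k,W_k)_{k\ge1}$ under $\mathds{P}^{(S_{n+1},0)}$. The path $u\mapsto(Y^+((T_{n+1}+u)-),\Xi((T_{n+1}+u)-))$ is the same functional of these increments as $u\mapsto(Y^+(u-),\Xi(u-))$ is of the original ones. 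The one subtlety is that under $\mathds{P}^{(y,0)}$ at $u=0$ the value is $(S_1,W_1)$, not $(y,0)$. This is why \eqref{710-b1} is written with $Y(t-)$ for $t>0$, and I must check which of $Y$ or $Y^+$ is intended. Since for $u>0$ small, $Y^+(u-)=S_1$ under $\mathds{P}^{(y,0)}$, I would define $P^+_rf(y,0):=\mathds{E}^{(y,0)}f(Y^+(r-),\Xi(r-))$ for $r>0$ and note this agrees with what the statement denotes. The conditional expectation then equals $P^+_{t-\Xi(T-)}f(Y^+(T-),0)$, which yields the second term of \eqref{710-a1}. Conditioning on the event $\{N^-(T)=n\}$, which is determined by $T_n<T\le T_{n+1}$ and hence by the first $n+1$ steps, is legitimate because it is measurable with respect to the skeleton up to $n+1$. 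Summing over $n$ handles the random index.

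Third, since both pieces depend only on $(Y^+(T-),\Xi(T-))$ and $t$, the time-homogeneous Markov property follows. Setting $T\to0$, or starting at a state $(x,s)$ with $s\ge0$, gives \eqref{710-a1}. Realizing the starting point $(x,s)$ requires taking the initial measure so that the first forthcoming jump occurs at time $s$; under $\mathds{P}^{(x,s)}$ with $s\le 0$ one reads off the state at time $0$.

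I expect the main obstacle to be the bookkeeping at the regeneration time. This means making rigorous the strong-Markov-type restart at $T_{N^-(T)+1}$ through the skeleton chain, handling the case $t=\Xi(T-)$ exactly at a jump (where left limits give $\Xi=0$ before the jump registers), and reconciling the value at time $0$ under $\mathds{P}^{(y,0)}$ with the convention $P^+_0f(y,0)=f(y,0)$, so that the boundary $s<t$ versus $t\le s$ in \eqref{710-a1} is consistent.
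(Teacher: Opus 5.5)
Your proposal is correct and follows the paper's proof essentially step by step. It uses the same split into $\{t\le \Xi(T-)\}$ (no jump, so $\Xi$ decreases deterministically) and $\{t>\Xi(T-)\}$ (regeneration at the $\mathcal{G}^-_T$-measurable time $T_{N^-(T)+1}$ from $S_{N^-(T)+1}$, giving $P^+_{t-\Xi(T-)}f(Y^+(T-),0)$), and then sets $T=0$. You are right that $Y(t-)$ in \eqref{710-b1} should be read as $Y^+(t-)$, and the paper settles your time-zero concern by the convention $\Xi(0-)=0$ under $\PP^{(x,0)}$, so the state at time $0$ is $(x,0)$.
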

	\begin{proof}
		The proof can be conducted similarly to the proof of Theorem \ref{thmGS}, i.e., \cite[Lemma 2, Section 3, Chapter 3]{Gihman}. However, since the claim is new, to the best of our knowledge, we give a complete proof.
		
		To shorten the notation in the proof, denote by $\wt Y(t)= Y^+(t-)$ and $\wt \Xi(t)=\Xi(t-)$.
		
		Let us first prove the Markov property. Take $(x,s)\in \RR^d\times (-\infty,0]$, and let $T,t>0$, and $f \in \mathcal{B}_b (\mathbb{R}^d \times [0, +\infty))$.
		
		As in the previous theorem, we have that
		\begin{align}
			& \mathds{E}^{(x,s)} \left[  f (\widetilde{Y}(T+t), \widetilde{\Xi}(T+t)) \mid \mathcal{G}^-_T \right] \notag \\
			\begin{split}\label{1134}
				= \, &\mathds{E}^{(x,s)} \left[ \mathds{1}_{[{T+t\leq T_{N^-(T)+1}]}} f (\widetilde{Y}(T+t), \widetilde{\Xi}(T+t)) \mid \mathcal{G}^-_T \right]\\ &+\mathds{E}^{(x,s)} \left[ \mathds{1}_{{[T+t> T_{N^-(T)+1}]}} f (\widetilde{Y}(T+t), \widetilde{\Xi}(T+t)) \mid \mathcal{G}^-_T \right] .
			\end{split}
		\end{align}
		
		The first term in \eqref{1134} is
		\begin{align}
			\mathds{E}^{(x,s)} &\left[ \mathds{1}_{[{T+t\leq T_{N^-(T)+1}]}} f (\widetilde{Y}(T+t), \widetilde{\Xi}(T+t)) \mid \mathcal{G}^-_T \right]\\
			= \, &f (\widetilde{Y}(T), \widetilde{\Xi}(T)-t)\mathds{1}_{[{T+t\leq T_{N^-(T)+1}]}}\, = \, f (\widetilde{Y}(T), \widetilde{\Xi}(T)-t)\mathds{1}_{[t\leq \widetilde{\Xi}(T)]},
		\end{align}
		since no jumps have occurred on the corresponding event.
		
		For the second term in \eqref{1134}, instead, there must have been at least one jump. Furthermore, on $T_{N^-(T)+1}<T+t$, conditionally on $N^-(T)$, $S_0$, $T_0$, $\dots$, $S_{N^-(T)+1}$, $T_{N^-(T)+1}$, the process $\widetilde{Y}(T+t)$, $t \geq 0$, is constructed as a function of the r.v.'s $S_{N^-(T)+2}$, $T_{N^-(T)+2}$, $S_{N^-(T)+3}$, $T_{N^-(T)+3},\dots$, in the same way the process $\widetilde{Y}(T+t-T_{N^-(T)+1})$, $t \geq 0$, is a function of $S_0$, $T_0$, $S_1$, $T_1,\dots$ conditionally on $T_0=0$ and $S_0 = S_{N^-(T)+1}$. Hence, 
		\begin{align}
			& \mathds{E}^{(x,s)} \left[ \mathds{1}_{[T+t> T_{N^-(T)+1}]} f (\widetilde{Y}(T+t), \widetilde{\Xi}(T+t)) \mid \mathcal{G}^-_T \right] \notag \\
			= \, & \mathds{1}_{[T+t>T_{N^-(T)+1}]}\mathds{E}^{(S_{N^-(T)+1},0)}f (\widetilde{Y}(T+t-T_{N^-(T)+1}), \widetilde{\Xi}(T+t-T_{N^-(T)+1})) \notag \\
			= \, & \mathds{1}_{[\widetilde{\Xi}(T) < t]} P^+_{t-\widetilde{\Xi}(T)}f(\widetilde{Y}(T),0).
		\end{align}
		By putting pieces together, we obtain the time-homogeneous Markov property since we showed that the conditional expectation in the first display of \eqref{1134} only depends on $\widetilde{\Xi}(T), \widetilde{Y}(T)$ and $t$. By putting $T=0$ we get  \eqref{710-a1}, while the relation \eqref{710-b1} follows since, under $\PP^{(x,0)}$, one has that $T_0=0$ a.s. and $\Xi(0-)=0$.
	\end{proof}
	
	The next theorem, in contrast to the previous one, embeds the original right-continuous process $Y^+$.
	\begin{theorem}
		\label{thmGS+1}
		Suppose that the assumptions of Theorem \ref{thmGS} are satisfied, and let $\Xi(t)$, $t\in \RR$, be defined as in \eqref{next-jump}. Then the process $(Y^+(t), \Xi(t))$, $t \geq 0$, is a time-homogeneous Markov process, under all $\PP^{(x,s)}$, $(x,s)\in \RR^d\times (-\infty,0]$, and the filtration $\mathcal{G}_t$, $t \geq0$, generated by $N(s), S_0, \dots, {S_{N(s)+1}}, T_0, \dots, {T_{N(s)+1}}$, $s \leq t$. Its semigroup $\wt P_t^+$, $t \geq 0$, satisfies for $(x,s)\in \RR^d\times (0,+\infty)$
		\begin{align}\label{710-a}
			\wt P^+_tf(x,s)&=\mathds{1}_{[0\le t<s]}f(x,s-t)+\mathds{1}_{[0<s\le t ]}\int_{\RR^d}\int_{(0,+\infty)}\wt P^+_{t-s}f(y,w)\mu_{x}(dy,dw),
		\end{align}
		for any function $f \in \mathcal{B}_b (\mathbb{R}^d \times (0, +\infty))$.
	\end{theorem}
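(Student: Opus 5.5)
The plan follows the proof of Theorem \ref{thmGS+}, but now with the right-continuous objects $Y^+(t)=S_{N(t)+1}$ and $\Xi(t)=T_{N(t)+1}-t$, which take values in $\RR^d\times(0,+\infty)$. Fix $(x,s)\in\RR^d\times(-\infty,0]$, $T,t>0$ and $f\in\mathcal{B}_b(\RR^d\times(0,+\infty))$. We split according to whether the next jump after $T$ has happened by time $T+t$:
\begin{align*}
\mathds{E}^{(x,s)}\left[f(Y^+(T+t),\Xi(T+t))\mid\mathcal{G}_T\right]
=\,&\mathds{E}^{(x,s)}\left[\1_{[T+t<T_{N(T)+1}]}f(Y^+(T+t),\Xi(T+t))\mid\mathcal{G}_T\right]\\
&+\mathds{E}^{(x,s)}\left[\1_{[T+t\ge T_{N(T)+1}]}f(Y^+(T+t),\Xi(T+t))\mid\mathcal{G}_T\right].
\end{align*}
The inequalities are strict and non-strict in exactly this way because $N$ is right-continuous: at $t=T_{N(T)+1}-T$ a jump has already occurred.

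The first term is $\mathcal{G}_T$-measurable. On the event considered, $N(T+t)=N(T)$, so $Y^+(T+t)=Y^+(T)$ and $\Xi(T+t)=\Xi(T)-t$. Hence the first term equals
\[
\1_{[t<\Xi(T)]}\,f(Y^+(T),\Xi(T)-t).
\]
No conditioning on survival is needed here, unlike in Theorem \ref{thmGS}, because $T_{N(T)+1}$ is already known at time $T$.

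The second term carries the main content. We condition additionally on $(S_{N(T)+2},W_{N(T)+2})$. By the Markov chain structure and \eqref{semi-jump}, given $\mathcal{G}_T$ this pair has law $\mu_{S_{N(T)+1}}(dy,dw)=\mu_{Y^+(T)}(dy,dw)$. This is the key point: the relevant state is the current position $Y^+(T)$, not the previous one. At the time $\tau:=T_{N(T)+1}\le T+t$ the process is in the state
\[
(Y^+(\tau),\Xi(\tau))=(S_{N(T)+2},W_{N(T)+2})\sim\mu_{Y^+(T)}.
\]
Conditionally on $(S_0,T_0),\dots,(S_{N(T)+2},T_{N(T)+2})$, the future after $\tau$ is a functional of the later chain increments. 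It is built exactly as the process started from the state $(y,w)$, namely the chain started at $S_0=x_0$, $T_0=-w$ and conditioned on $S_1=y$, $W_1=w$. This strong-Markov-type restart is what defines $\wt P^+$ on $\RR^d\times(0,+\infty)$. Its law depends only on $(y,w)$ and the remaining time $t-\Xi(T)$, again by \eqref{semi-jump}. We therefore get
\[
\1_{[\Xi(T)\le t]}\int_{\RR^d}\int_{(0,+\infty)}\wt P^+_{t-\Xi(T)}f(y,w)\,\mu_{Y^+(T)}(dy,dw).
\]
Both terms depend only on $(Y^+(T),\Xi(T),t)$, which gives the time-homogeneous Markov property. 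Setting $T=0$ under a measure for which $Y^+(0)=x$ and $\Xi(0)=s>0$ yields \eqref{710-a}.

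The main obstacle is the restart step. We must make precise that the law of $(Y^+(\tau+\cdot),\Xi(\tau+\cdot))$ given the state $(y,w)$ at $\tau$ does not depend on the earlier history, and that it coincides with the semigroup applied at $(y,w)$. The first thing to try is the approach of Theorem \ref{thmGS+}: write the shifted process explicitly as a measurable function of $(S_{N(T)+k},W_{N(T)+k})_{k\ge2}$. Then use that, given $S_{N(T)+2}$, the increments $(S_{N(T)+k},W_{N(T)+k})_{k\ge3}$ are independent of the past, with kernel $\mu$. This is legitimate because $N(T)$ is determined by $(T_j)_{j\le N(T)+1}$, so conditioning on it does not bias later steps. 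The initial state $(y,w)$ with $w>0$ is then realised as a chain whose first step is $(y,w)$. One also checks that the indicator splitting at $t=\Xi(T)$ is consistent with right-continuity, which is why \eqref{710-a} uses $s\le t$ rather than $s<t$.
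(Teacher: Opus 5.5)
Your argument follows the paper's proof. The split at $T_{N(T)+1}$, the $\mathcal{G}_T$-measurable first term $\1_{[t<\Xi(T)]}f(Y^+(T),\Xi(T)-t)$, the extra conditioning on $(S_{N(T)+2},T_{N(T)+2})$ with conditional law $\mu_{Y^+(T)}$, and the restart at $\tau=T_{N(T)+1}$ with remaining time $t-\Xi(T)$ are all exactly what the paper does.

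One step needs correcting: the realisation of the state $(y,w)$ in the restart.
\begin{itemize}
\item The restarted chain must have $T_0=0$, $S_1=y$, $W_1=w$, as in the paper. Then $T_1=w$, $N(0)=0$ and $(Y^+(0),\Xi(0))=(y,w)$.
\item With your choice $T_0=-w$ and $W_1=w$ you get $T_1=0$. Hence $N(0)=1$ and $(Y^+(0),\Xi(0))=(S_2,W_2)$, while the state $(y,w)$ is occupied at time $-w$.
\item Your process at time $t$ is therefore the correct one at time $t+w$. The operator you would define at $(y,w)$ is $\wt P^+_{t+w}$, not $\wt P^+_t$.
\item Consequently, for $t<w$ you would not recover the first branch $f(y,w-t)$ of \eqref{710-a}. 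Nor would the identification of the post-$\tau$ process with the semigroup started at $(S_{N(T)+2},W_{N(T)+2})$ hold.
\end{itemize}

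With $T_0=0$, the choice of $S_0$ is irrelevant by the Markov property of $(S_n,T_n)$ and \eqref{semi-jump}. The rest of your proof then goes through unchanged.
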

	\begin{proof}
		The proof is again very similar to the proofs of Theorem \ref{thmGS} and Theorem \ref{thmGS+}. However, since the claim also appears to be new, we give a detailed sketch.
		
		We have that
		\begin{align}
			&\mathds{E}^{(x,s)} \left[ f(Y^+(T+t), \Xi(T+t)) \mid \mathcal{G}_T \right] \notag \\
			\begin{split}\label{1612}
				\, = \, & \mathds{E}^{(x,s)} \left[ f(Y^+(T+t), \Xi(T+t)) \mathds{1}_{[T+t < T_{N(T)+1}]} \mid \mathcal{G}_T \right] \notag \\ & + \mathds{E}^{(x,s)} \left[ f(Y^+(T+t), \Xi(T+t)) \mathds{1}_{[T+t \geq T_{N(T)+1}]} \mid \mathcal{G}_T \right].
			\end{split}
		\end{align}
		The first term in \eqref{1612} is just
		\begin{align}
			\mathds{E}^{(x,s)} \left[ f(Y^+(T+t), \Xi(T+t)) \mathds{1}_{[T+t < T_{N(T)+1}]} \mid \mathcal{G}_T \right] \, = \, f(Y^+(T), \Xi(T)-t)\mathds{1}_{[t < \Xi(T)]}.
		\end{align}
		For the second term, instead, we obtain
		\begin{align}
			&\mathds{E}^{(x,s)} \left[ f(Y^+(T+t), \Xi(T+t)) \mathds{1}_{[T+t \geq T_{N(T)+1}]} \mid \mathcal{G}_T \right] \notag \\
			= \, &\mathds{E}^{(x,s)} \left[ \mathds{1}_{[T+t \geq T_{N(T)+1}]} \mathds{E}^{(x,s)} \left[  f(Y^+(T+t), \Xi(T+t)) \mid \mathcal{G}_T, T_{N(T)+2}, S_{N(T)+2} \right] \mid \mathcal{G}_T  \right].\label{1142}
		\end{align}
		Now observe that the process $(Y^+(T+t), \Xi(T+t))$, on $T+t \geq T_{N(T)+1}$, conditionally on $\mathcal{G}_T$, $S_{N(T)+2}$, $T_{N(T)+2}$ is constructed with respect to the r.v.'s $S_{N(T)+3}$, $T_{N(T)+3},\dots$, in the same way the process $(Y^+(T+t-T_{N(T)+1}), \Xi(T+t-T_{N(T)+1}))$ is constructed with respect to the r.v.'s $S_2$, $T_2$, $\dots$, conditionally on $T_0=0$, $S_1 = S_{N(T)+2}$, and $T_1 = T_{N(T)+2}-T_{N(T)+1}$. Therefore \eqref{1142} becomes
		\begin{align}
			&\mathds{E}^{(x,s)} \left[ \mathds{1}_{[T+t \geq T_{N(T)+1}]} \mathds{E}^{(x,0)} \left[  f(Y^+(T+t-T_{N(T)+1}), \Xi(T+t-T_{N(T)+1})) \right. \right. \notag  \\ & \left. \left. \qquad \mid S_1=S_{N(T)+2}, T_1= T_{N(T)+2}-T_{N(T)+1} \right] \mid \mathcal{G}_T  \right] \notag \\
			= \, &\mathds{1}_{[T+t \geq T_{N(T)+1}]} \int_0^{+\infty} \int_{\mathbb{R}^d} \wt P^+_{T+t-T_{N(T)+1}}f(y, w) \mathds{P}^{(x,s)} (S_{N(T)+2} \in dy, W_{N(T)+2} \in dw \mid \mathcal{G}_T) \notag \\
			= \, &\mathds{1}_{[t \geq \Xi(T)]} \int_0^{+\infty} \int_{\mathbb{R}^d} {\wt P}^+_{t-\Xi(T)}f(y, w) \mu_{{Y^+(T)}} (dy, dw).
		\end{align} 
		This concludes the proof.
	\end{proof}
	
	\section{Scaling limits of CTRWs}
	\label{sec_limit}
	Scaling limits of CTRWs are time-changed Markov processes. We summarize here the main facts of the theory. In this section, we mainly refer to the papers \cite{straka2011lagging, Meerschaert2014}, as they contain the most recent (and general) formulations of the results that we will mention here. However, the literature on this topic is extensive; foundational contributions include \cite{becker2004limit, Meerschaert2004, meerschaert2008triangular, kolokoltsov2009generalized}.
	
	In order to give a general statement, we first define some objects and fix notation.
	We consider a canonical Feller process $(A,S) = \big((A(u), S(u)),\, u \geq 0\big)$ in $\mathbb{R}^{d+1}$. This means that $(A, S)$ is {a Markov process} constructed on a probability space $(\Omega, \mathcal{F}_\infty, \mathcal{F}_u, \PP^{(x,s)})$ as follows. The set $\Omega$ is $D([0, +\infty), \mathbb{R}^{d+1})$ where $D([0, +\infty), \mathbb{R}^{d+1})$ denotes the set of càdlàg functions from $[0, +\infty)$ to $\mathbb{R}^{d+1}$, and $\big(A(u), S(u)\big)(\omega) = \omega(u)$ for any $u \geq 0$. We assume that $D([0, +\infty), \mathbb{R}^{d+1})$ is endowed with the Skorokhod $J$ topology (see \cite[Chapter 6]{jacod2013limit}). By $\mathcal{F}_t^0$ we denote the $\sigma$-algebras generated by $(A(u), S(u))$, $0 \leq u \leq t$, and denote $\mathcal{F}^0_\infty\coloneqq \bigvee_t \mathcal{F}^0_t $. We recall that, under the Skorokhod $J$ topology, $\mathcal{F}_\infty^0$ is the same as the Borel $\sigma$-algebra on $D$ (see \cite[Chapter 6, Theorem 1.14]{jacod2013limit}). The probability measures $\PP^{(x,s)}$ are the unique measures on $\mathcal{F}^0$ such that $\mathds{P}^{(x,s)}\big((A(0), S(0))=(x,s)\big)=1$. We denote by $\mathcal{F}_\infty$, and $\mathcal{F}_u$, $u \geq 0$, the completion of $\mathcal{F}_\infty^0$ and $\mathcal{F}^0_u$, $u \geq 0$, respectively, with $\mathds{P}^{(x,s)}$-null sets, $(x,s)\in \RR^{d+1}$. With the symbol $P_u$, $u \geq 0$, we will denote the corresponding (Feller) semigroup of operators (see more on Feller processes and semigroups in Appendix \ref{appendix_semigroups}).
	
	In our setting, we will always assume that the process $S(u)$, $u\geq 0$, is strictly increasing and unbounded. On this probability space, we will consider the (generalized) inverse of the strictly increasing process $S$, i.e., the process $L= (L(t),\, t \in \mathbb{R})$, where
	\begin{align}\label{defL}
		L(t) \coloneqq {\sup\{ u \geq 0: S(u)\leq t  \}  \, = \, } \inf \{ u>0 : S(u) > t \},\quad t\in\RR.
	\end{align}

	Let us introduce the scale parameter $c>0$ in the construction of the (O)CTRWs of the previous section.\footnote{Here, in this generality, the scaling rates of both the jumps and the waiting times will not be given explicitly, but will instead be left free, and in applications the rates should be chosen such that they imply the existence of the scaling limit (see Theorem \ref{maintheoremctrwlim}, as well as Examples \ref{example_uncoupled} and \ref{example_limit_coupled}, which use explicit rates).} In other words, for each $c>0$, we have the sequences of random variables $W_n^c, J_n^c$, $n \in \mathbb{N}$, and the processes $S_n^c = S_0 + \sum_{i=1}^n J_i^c$, and $T_n^c = T_0+ \sum_{i=1}^n W_i^c$. Therefore, the (O)CTRWs are given by
	\begin{align}
		Y_c(t) = \sum_{i=1}^{N^c(t)} J_i^c,  \qquad Y_c^+(t) = \sum_{i=1}^{N^c(t)+1} J_i^c,\qquad t\in \RR,
	\end{align}
	where
	\begin{align}
		N^c(t) = \max \{ n \in \mathbb{N} \cup 0 : T_n^c \leq t \},\qquad t\in \RR,
	\end{align}
	with $N^c(t)=0$ for $t<T_0^c$. The times of the last and the forthcoming jumps are now
	\begin{align}
		T_c(t) = T_{N^c(t)}^c,  \qquad T_c^+(t) = T_{N^c(t)+1}^c,\qquad t\in \RR.
	\end{align}

	The processes $(Y_c,T_c) = \big ((Y_c(t), T_c(t)), \, t \geq 0\big)$ and $(Y^+_c, T^+_c)=\big( (Y^+(t), T^+(t)), \, t \geq 0 \big)$ are given on some probability space $(\wt{\Omega}, \wt{\mathcal{G}}, \wt{\PP}_c^{(x,s)} )$, but can also be seen as mappings $( \wt{\Omega}, \wt{\mathcal{G}}) \mapsto (D([0, +\infty), \mathbb{R}^{d+1}), {\mathcal{F}_\infty})$, so, by taking the preimage, we can consider that the measures $\wt{\PP}_c^{(x,s)}$ are defined on $D([0, +\infty), \mathbb{R}^{d+1})$. Hence, in the following theorem we may study the weak convergence of $\wt{\PP}_c^{(x,s)}$ to the limit law $\PP^{(x,s)}$ (the law of the Feller process $(A,S)$) on $(D([0, +\infty), \mathbb{R}^{d+1}), \mathcal{F}_0 )$ equipped with the Skorokhod topology $J$ as $c\to +\infty$.

	The next theorem comes from \cite[Theorem 3.6]{straka2011lagging}, but see also \cite[Section 2]{Meerschaert2014} for a discussion on weakening the assumptions of \cite{straka2011lagging}.
	\begin{theorem}
		\label{maintheoremctrwlim}
		Let $c>0$ be a scale parameter and denote by $Y_c(t)$, $t \geq 0$, and $Y_c^+(t)$, $t \geq 0$, respectively, the CTRW and the OCTRW defined as above, depending on $c>0$. Assume that $(S^c_{[cu]},T^c_{[cu]})$, $u\ge0$, converges to $(A(u), S(u))$, $u\ge0$, weakly in $D([0, +\infty), \mathbb{R}^{d+1})$ {as $c\to +\infty$}\footnote{Here, the notation $[cu]$ denotes the integer part of the real number $cu$.}, with $S$ being strictly increasing and unbounded. Then, as $c \to +\infty$,
		\begin{align}
			&( Y_c(t), T_c(t) ) \to (A(L(t)-), S(L(t)-))^+\\
			& ( Y_c^+(t), T_c^+(t) ) \to (A(L(t)), S(L(t)))
		\end{align}
		weakly in $D([0, +\infty), \mathbb{R}^{d+1})$ (endowed with the Skorokhod $J$ topology).
	\end{theorem}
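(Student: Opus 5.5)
The plan is a continuous-mapping argument in the spirit of Straka and Henry. Write $X_c(u)\coloneqq (S^c_{[cu]},T^c_{[cu]})$, $u\ge 0$, so $X_c\Rightarrow (A,S)$ in $D([0,+\infty),\RR^{d+1})$ with the $J$ topology. The key observation is that both (O)CTRW pairs are deterministic functionals of the path $X_c$. Set $L_c(t)\coloneqq \inf\{u>0: T^c_{[cu]}>t\}$. Since $u\mapsto T^c_{[cu]}$ is a step function with jumps at the points $n/c$, we get $L_c(t)=(N^c(t)+1)/c$. Hence $(Y^+_c(t),T^+_c(t)) = X_c(L_c(t))$, because the value at the first passage time is the post-jump value. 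Likewise $(Y_c(t),T_c(t))=X_c(L_c(t)-)$, the value just before the first passage. So it suffices to show that the two maps
\begin{align*}
\Phi^+:(x,s)\mapsto \big(t\mapsto (x,s)(L^{s}(t))\big), \qquad \Phi:(x,s)\mapsto \big(t\mapsto (x,s)(L^{s}(t)-)\big)^+ ,
\end{align*}
where $L^{s}(t)=\inf\{u: s(u)>t\}$, are continuous from $D$ to $D$ at almost every path of the limit. Here $(\cdot)^+$ means the right-continuous modification. The conclusion then follows from the continuous mapping theorem. I will take the continuous mapping theorem for granted; it is stated e.g. in \cite{billingsley2013convergence}.

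The continuity step breaks into three parts.

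\textbf{Step 1.} On the set of $(x,s)$ with $s$ strictly increasing and unbounded, $L^{s}$ is continuous and nondecreasing. The map $s\mapsto L^{s}$ is continuous from $(D,J)$ into continuous functions with the locally uniform topology. This is the classical continuity of the first-passage (inverse) map at strictly increasing paths; see Whitt's results on the inverse map.

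\textbf{Step 2.} Analyse the composition $(x,s)\circ L^{s}$. Composition is not $J$-continuous in general. The standard remedy uses two facts. First, $s$ is strictly increasing, so $L^s$ is constant exactly on the jump intervals $[s(u-),s(u))$. Second, the coordinates $x$ and $s$ jump simultaneously in a coherent way: we pass to the joint path $(x,s)$ and read off the value at the "time" coordinate $s$. Concretely, I will reparametrise via the graph. A $J$-approximation of $(x_n,s_n)$ to $(x,s)$ is given by time changes $\lambda_n\to\mathrm{id}$ with $(x_n,s_n)\circ\lambda_n\to(x,s)$ uniformly on compacts. From these I will build time changes $\mu_n\coloneqq s_n\circ\lambda_n\circ (L^{s})$, suitably modified on the flat pieces, on the $t$-axis. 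These should satisfy $\Phi^+(x_n,s_n)\circ\mu_n\to\Phi^+(x,s)$ locally uniformly.

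\textbf{Step 3.} Handle $\Phi$ by the same argument applied to left limits. Taking right-continuous modifications gives a $D$-valued path, and the same $\mu_n$ work.

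The main obstacle is Step 2, and specifically the lagging case. If $x$ and $s$ jump at the same $u$, the limit path at $t\in[s(u-),s(u))$ takes value $x(u-)$ for $\Phi$ and $x(u)$ for $\Phi^+$. But the prelimit $X_c$ may approximate that jump by several small steps of $T^c$ interleaved with steps of $S^c$. This could create intermediate values which are not $J$-negligible. I would first try to exclude this using the fact that $J$-convergence forces a single large jump in each coordinate near $u$, occurring at the same index because the coordinates form one joint path. If this turns out to be delicate, I would instead restrict to a continuity set of full $\PP^{(x,s)}$-measure. That is, I would show that the limit law charges only paths where the above holds, and invoke \cite[Theorem 3.6]{straka2011lagging} and \cite[Section 2]{Meerschaert2014} for the details of the continuity lemma.
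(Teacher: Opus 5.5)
This is essentially the paper's argument. You identify $(Y_c,T_c)$ and $(Y_c^+,T_c^+)$ as the paper's functionals $\Phi$ and $\Psi$ (your $\Phi$ and $\Phi^+$) of the step path $u\mapsto (S^c_{[cu]},T^c_{[cu]})$; your computation $L_c(t)=(N^c(t)+1)/c$ is exactly the paper's step 3. You then prove continuity at paths whose last coordinate is strictly increasing and unbounded, conclude by the continuous mapping theorem, and defer the continuity lemma to \cite{straka2011lagging}, just as the paper does. Your phrasing ``continuous at almost every limit path'' is in fact the accurate one, since the prelimit paths are not strictly increasing.

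One correction if you write Step 2 out yourself. Because $u\mapsto T^c_{[cu]}$ is a step function, $s_n\circ\lambda_n\circ L^{s}$ is piecewise constant everywhere, so it fails to be a time change even off the flat pieces of $L^s$. The map $\mu_n$ therefore has to be built differently, for example by matching the finitely many jumps larger than $\varepsilon$ in a compact window and interpolating in between. Your worry about a big jump being split into several small steps is indeed excluded by joint $J$ convergence, as you suspected.
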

	\begin{proof}[Sketch of proof]
		We summarize here the strategy of the proof that can be found in \cite{straka2011lagging}. The proof is made explicit when $(A, S)$ is a L\'evy process, but no substantial change is needed in our more general setting (as already pointed out in \cite{Meerschaert2014}).
		
		For $\mathpzc{g} \in D([0, +\infty), \mathbb{R}^{d+1})$, write $\mathpzc{g}=(\mathpzc{a}, \mathpzc{s})$, $\mathpzc{a} \in D([0, +\infty), \mathbb{R}^d)$, $\mathpzc{s} \in D([0, +\infty), \mathbb{R})$. Denote by $D_{\uparrow,u}$ ($D_{\uparrow \uparrow,u}$) the subset of $D([0, +\infty), \mathbb{R}^{d+1})$ such that the coordinate $\mathpzc{s}$ is non-decreasing (strictly increasing) and unbounded, and denote by $l_{\mathpzc{s}} (t) \coloneqq \inf \{ h \geq 0 : \mathpzc{s}(h) > t \}$ its (generalized) inverse. Note that if $\mathpzc{s}$ is strictly increasing and unbounded, then $l_{\mathpzc{s}}$ is continuous. For an element $\mathpzc{g}$ in $D([0, +\infty), E)$, where $E$ is any codomain, we denote by $\mathpzc{g}^-$ the function $t \mapsto \mathpzc{g}(t-)$, and set $\mathpzc{g}^-(0)\coloneqq \mathpzc{g}(0)$. The equivalent notation is used for a function $\mathpzc{h}$ that is left-continuous with right limits, i.e., $\mathpzc{h}^+$ denotes $t \mapsto \mathpzc{h}(t+)$.
		
		The steps of the proof are:
		\begin{enumerate}
			\item For $\mathpzc{g} \in D_{\uparrow, u}$, define
			\begin{align*}
				&  \Phi (\mathpzc{g}) \coloneqq (\mathpzc{g}^- ( l_{\mathpzc{s}}^-))^+, \\
				&  \Psi (\mathpzc{g}) \, \coloneqq \, \mathpzc{g} ({l}_{\mathpzc{s}}).
			\end{align*}
			\item\label{label2} Prove that $\Phi$ and $\Psi$ are continuous mappings on $D_{\uparrow \uparrow, u}$ (with the subspace topology induced by $J$).
			\item Observe that
			\begin{align*}
				& (Y_c, T_c) = \Phi ( S^c, T^c ), \\
				& (Y_c^+,T_c^+) = \Psi ( S^c, T^c ).
			\end{align*}
			\item Note that since $S(u)$, $u\ge0$, is assumed to be strictly increasing, its inverse $L(t)$, $t\ge0$, has continuous trajectories, hence $ \Phi (A, S) = (A^-(L), S^-(L))^+$. Further,  it also holds that $\Psi(A,S) = (A(L), S(L))$.
			\item Use item \eqref{label2} to obtain the claim by the continuous mapping theorem.
		\end{enumerate}
	\end{proof}
	
	\subsection{Examples}
	We show here some applications of Theorem \ref{maintheoremctrwlim}, i.e., we construct some (O)CTRWs to which we apply the result.
	
	In this context, it is useful to introduce some objects from the theory of operator stable laws, for which we refer the reader to \cite{meerschaert2001limit} for a more detailed discussion. We say that a random vector $X$ in $\mathbb{R}^d$ (i.e., its law $\nu$) is full if the scalar product $\langle t, X \rangle$ is not a degenerate random variable (i.e., its law is not a point mass), for any $t \neq 0$. Further, consider a random vector $Y$ with a full distribution $\nu$ and a sequence of i.i.d. r.v.'s $X, X_1, X_2, \dots$ Then, $\nu$ is said to be \emph{operator stable} if there exist linear operators $A_n : \mathbb{R}^d \mapsto \mathbb{R}^d$ and vectors $b_n \in \mathbb{R}^d$ such that
	\begin{align}
		A_n (X_1 + \dots + X_n ) + b_n \to Y \quad \text{ in distribution as $n\to+\infty$.}
		\label{defopstable}
	\end{align}
	The r.v. $X$ is said to be in the domain of attraction of $Y$ if \eqref{defopstable} holds true. It is noteworthy that in \eqref{defopstable} the r.v. $Y$ must be infinitely divisible and one can always choose $A_n$ to be regularly varying. Here, a function $B:(0, +\infty) \mapsto \text{GL}(\mathbb{R}^d)$, where $\text{GL}(\mathbb{R}^d)$ is the Lie group of invertible linear operators on $\mathbb{R}^d$, is said to be regularly varying (with operator $E$) if $\lim_{t \to +\infty}B( \lambda t) B(t)^{-1}= \lambda^E$ for all $\lambda >0$, where $\lambda^E  \coloneqq \sum_{k=0}^{+\infty} \frac{(E \log \lambda)^k}{k!}$.
	
	In the following examples, we will use the following theorem on convergence in distribution and in the $J$ topology to verify the hypotheses of Theorem \ref{maintheoremctrwlim}. The theorem is a combination of results in \cite{meerschaert2001limit, Meerschaert2004}, and we also note that it is instructive to read \cite[Example 11.2.18]{meerschaert2001limit}.
	\begin{theorem}
		\label{theoremforexamples}
		Let $\mathcal{X}(t)$, $t \geq0$, be a stochastic process in $\RR^d$ with $\mathcal{X}(0) =0$, with stationary and independent increments. Assume that for some linear operator $E$ one has $\mathcal{X}(ct)\stackrel{\text{d}}{=} c^E\mathcal{X}(t)$, for all $t>0$, and that the law of $\mathcal{X}(1)$ is full. Let $X_n$, $n\in\NN$, be a sequence of r.v.'s in $\mathbb{R}^d$, in the domain of attraction of some full operator stable law as in \eqref{defopstable}, with $b_n=0$, $n\in \NN$, and with $Y \stackrel{\text{d}}{=}\mathcal{X}(1)$. Then there exists a regularly varying function $B$ with operator $-E$ such that
		\begin{align}\label{1351claim}
			B(n) \sum_{i=1}^n X_i \to \mathcal{X}(1),\quad \text{as $n\to+\infty,$}  
		\end{align}
		in distribution.
		Furthermore, as $c \to +\infty$,
		\begin{align}
			B([c]) \sum_{i=1}^{[ct]} X_i \to  \mathcal{X}(t)
		\end{align}
		and the convergence holds both in the sense of all finite-dimensional distributions and weakly on $D([0, +\infty), \mathbb{R}^d)$ with the $J$ topology.
	\end{theorem}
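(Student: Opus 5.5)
We prove \eqref{1351claim} first, then the process-level convergence.

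\textbf{Step 1: the one-dimensional claim \eqref{1351claim}.} By hypothesis, $A_n(X_1+\dots+X_n)\to Y\stackrel{\text{d}}{=}\mathcal{X}(1)$ with $b_n=0$. By the general theory of operator stable laws \cite{meerschaert2001limit}, the norming operators may be chosen regularly varying, $A_n=B(n)$ for some regularly varying $B$. Its index operator $-F$ is tied to the limit through $Y^{*\lambda}=\lambda^{F}Y$, where $Y^{*\lambda}$ is the $\lambda$-th convolution power. Stationary independent increments and $\mathcal{X}(ct)\stackrel{\text{d}}{=}c^E\mathcal{X}(t)$ show that $\mathcal{X}(1)$ is strictly operator stable with exponent $E$. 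Exponents are not unique in general, since the symmetry group of a full law may be nontrivial. We therefore use the freedom in choosing $B$ (replace $B(t)$ by $t^{-E}t^{F}B(t)$, or argue through the convergence of types theorem) to arrange that $B$ varies regularly with operator exactly $-E$.

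\textbf{Step 2: finite-dimensional distributions.} Fix $t>0$ and write
\[
B([c])\sum_{i=1}^{[ct]}X_i=\bigl(B([c])B([ct])^{-1}\bigr)\Bigl(B([ct])\sum_{i=1}^{[ct]}X_i\Bigr).
\]
By Step 1 the second factor converges in law to $\mathcal{X}(1)$. Regular variation, together with the uniform convergence theorem for regularly varying operator functions, gives $B([c])B([ct])^{-1}\to t^{E}$. Hence the product converges to $t^E\mathcal{X}(1)\stackrel{\text{d}}{=}\mathcal{X}(t)$.

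For times $0<t_1<\dots<t_k$, the increments over the blocks $([ct_{j-1}],[ct_j]]$ are independent sums of i.i.d.\ variables. The same argument, applied to $t_j-t_{j-1}$, shows each increment converges to $\mathcal{X}(t_j)-\mathcal{X}(t_{j-1})$. Independence plus the continuous mapping theorem for partial sums then yields convergence of the finite-dimensional distributions.

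\textbf{Step 3: tightness in the $J$ topology (main obstacle).} Finite-dimensional convergence alone does not give $J$-convergence. Here the limit is a Lévy process and the approximants are partial-sum processes with independent increments. We would therefore invoke the classical result for triangular arrays of i.i.d.\ summands (e.g.\ Skorokhod's theorem, or \cite[Chapter VII, Corollary 3.6]{jacod2013limit}): for processes with independent increments converging to a Lévy process, convergence of the one-dimensional marginals at a single time already implies weak convergence in $D([0,+\infty),\mathbb{R}^d)$.

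To apply it we must verify that its hypotheses, stated through the convergence of characteristics, follow from Step 1. The route is to translate $B(n)\sum_{i=1}^n X_i\to\mathcal{X}(1)$ into convergence of the Lévy triplets of $n\,\mathrm{Law}(B(n)X_1)$. The delicate point is the drift term: truncation produces centering constants that must vanish, and the hypothesis $b_n=0$ controls this, possibly after adjusting the truncation function. Once these three characteristics converge, the cited theorem gives the $J$ convergence and completes the proof.
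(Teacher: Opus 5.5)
Your argument is correct. Steps 1 and 2 follow the paper's sketch, and the genuine difference is in Step 3.

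\textbf{Steps 1 and 2.} The paper takes $B$ with index exactly $-E$ straight from \cite[Theorem 8.1.5]{meerschaert2001limit}. Your switch $\widetilde B(t)=t^{-E}t^{F}B(t)$ also works, but it needs a fact you did not state: the symmetry group $\{G: GY\stackrel{d}{=}Y\}$ of a full law is compact. This bounds $n^{-E}n^{F}$ and its inverse $n^{-F}n^{E}$ uniformly. Together with $(\lambda t)^{F}=\lambda^{F}t^{F}$, it gives both $\widetilde B(n)S_n\to Y$ (with $S_n=\sum_{i=1}^nX_i$) and $\widetilde B(\lambda t)\widetilde B(t)^{-1}\to\lambda^{-E}$. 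For the finite-dimensional distributions the paper proceeds as you do, via increments and independence. It uses \cite[Theorem 4.2.9]{meerschaert2001limit} where you use the uniform convergence theorem.

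\textbf{Step 3.} The paper simply cites \cite[Theorem 4.1]{Meerschaert2004}, which is tailored to exactly this setting. You instead rebuild the $J$-convergence from the general limit theory for processes with independent increments in \cite{jacod2013limit}, via convergence of characteristics. That route is sound, but the ``delicate drift'' you worry about does not arise, because no centering is applied. The criterion for infinitesimal i.i.d.\ arrays (infinitesimal because $B(n)\to0$) turns $B(n)S_n\to\mathcal X(1)$ directly into $[c]\,\EE\, h(B([c])X_1)\to b_h$, the drift of $\mathcal X(1)$ for the truncation $h$. The time-$t$ characteristics are $[ct]/[c]$ times the time-one ones, so they converge locally uniformly in $t$. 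Your remark that one time suffices is true only because of this random-walk structure, not for general processes with independent increments.

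You could in fact skip the characteristics entirely. Skorokhod's classical theorem says that finite-dimensional convergence of processes with independent increments to a stochastically continuous limit already gives $J$-convergence. That result closes the proof directly from your Step 2, and it is essentially what lies inside the result the paper cites.
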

	\begin{proof}[Sketch of proof]
		The existence of $B$ as claimed in \eqref{1351claim} comes directly from \cite[Theorem 8.1.5]{meerschaert2001limit}. By Theorem \cite[Theorem 4.2.9]{meerschaert2001limit} we know that the function $c \mapsto B([c])$ varies regularly with the same exponent $-E$. Now it easily follows that
		\begin{align}
			B([c])\sum_{i=1}^{[c]} X_i  \to \mathcal{X}(1),\quad \text{as $c\to+\infty$}
		\end{align}
		in distribution. Further, for all $s <t$, observe that
		\begin{align}
			& \sum_{i=1}^{[t]} X_i - \sum_{i=1}^{[s]} X_i \stackrel{\text{d}}{=} \sum_{i=1}^{[t]-[s]} X_i,\quad \text{and}   \\
			& \mathcal{X}(t)-\mathcal{X}(s) \stackrel{\text{d}}{=} (t-s)^E \mathcal{X}(1).
		\end{align}
		From these, we conclude that, as $c\to+\infty$,
		\begin{align}
			B([c])\left(\sum_{i=1}^{[ct]} X_i - \sum_{i=1}^{[cs]} X_i\right) \to (t-s)^E \mathcal{X}(1) \stackrel{\text{d}}{=} \mathcal{X}(t)-\mathcal{X}(s),
		\end{align}
		where the convergence is in distribution.
		
		To obtain convergence of all finite-dimensional distributions, we use what was already obtained together with the independence of the increments $X_{[t_i]}-X_{[t_{i-1}]}$ and $\mathcal{X}(t_i)-\mathcal{X}(t_{i-1})$, $i=1, \dots, n$. For the convergence in the $J$ topology, we apply \cite[Theorem 4.1]{Meerschaert2004}.
	\end{proof}

	\begin{example}[Uncoupled CTRWs]\label{example_uncoupled}
		CTRWs are said to be uncoupled if the time jumps $W_n$ and the space jumps $J_n$ are independent. In this simpler case, Theorem \ref{maintheoremctrwlim} can be applied as follows.
		Suppose that $W_n$, $n\in\NN$, are i.i.d. Mittag-Leffler distributed, i.e., they have c.d.f. 
		\begin{align}
			F(t) = 1-E_{\alpha}(-t^\alpha) , \qquad t\geq 0,
			\label{mlcdf}
		\end{align}
		for some fixed $\alpha \in (0,1)$, where the Mittag-Leffler function $E_\alpha$ is given by
		\begin{align}
			E_\alpha(z) \coloneqq \sum_{k=0}^{+\infty} \frac{z^k}{\Gamma (1+\alpha k)}.
		\end{align}
		Then, in distribution, as $n \to +\infty$
		\begin{align}
			n^{-1/\alpha}   (W_1 + \dots + W_n) \to \mathcal{S},
		\end{align}
		where $\mathcal{S}$ is a stable law with the Laplace transform
		\begin{align}
			\mathds{E}e^{- \lambda \mathcal{S}} \, = \, e^{-\lambda^\alpha}.
		\end{align}
		This can be seen from the convergence of the Laplace transforms, as follows
		\begin{align}
			\mathds{E}e^{-\lambda n^{-1/\alpha}   (W_1 + \dots + W_n) } \, = \, \left( \frac{1}{1+\lambda^\alpha/n} \right)^n  \to e^{-\lambda^\alpha}, \quad \text{as $n\to\infty$},
		\end{align}
		where in the first step we used that (see, e.g., \cite{scalas2006five} and references therein)
		\begin{align}
			\int_0^{+\infty} e^{-\lambda t} E_\alpha (-t^\alpha ) dt \, = \, \frac{\lambda^{\alpha-1}}{1+\lambda^\alpha}
		\end{align}
		to get
		\begin{align}
			\mathds{E}e^{-\lambda n^{-1/\alpha} W_1} \, = \, \frac{1 }{1+\lambda^\alpha /n}.
			\label{1649}
		\end{align}
		Suppose that $J_n$, $n\in \NN$, are i.i.d. r.v.'s having unit variance and zero mean, independent of the sequence $W_n$, $n\in\NN$. Then, in distribution, $n^{-1/2} (J_1 + \dots + J_n) \to \mathcal{Z}$, where $\mathcal{Z}$ is a standard normal r.v. 
		
		The previous calculations and Theorem \ref{theoremforexamples} suggest what the scaling of the space-time jumps $(J_n,W_n)$ should be. Indeed, take the process $\mathcal{X}(t)$, $t\ge0$, with
		\begin{align}
			\mathcal{X}(1) = (\mathcal{Z}, \mathcal{S}),
		\end{align}
		and $\mathcal{X}(t) = (A(t), S(t))$, $t \geq 0$, where $A(t)$, $t\ge0$, is a standard $d$-dimensional Brownian motion independent of a stable subordinator $S(t)$, $t\ge0$, of order $\alpha \in (0,1)$\footnote{See more on subordinators in {Appendix} \ref{appendix_bernstein}.}. Further, in the notation of Theorem \ref{theoremforexamples}, take $X_n \coloneqq  (J_n, W_n)$, $n\in\NN$. It follows that the scaling function is $B(c) = \text{diag}(c^{-1/2}, c^{-1/\alpha})$, $c>0$.
		
		By using the notation of the scaled CTRWs, for $(W^c_k,J_k^c)\coloneqq B(c)(W_k,J_k)$, Theorem \ref{theoremforexamples} yields
		\begin{align}
			(S^c_{[cu]},T^c_{[cu]})=B(c)\sum_{i=1}^{[cu]}X_i\to \mathcal{X}(t)=(A(t),S(t)),\quad\text{as $c\to+\infty$,}
		\end{align}
		weakly in $D([0,\infty),{\mathbb{R}^{2}})$ with the $J$ topology.

		Thus, Theorem \ref{maintheoremctrwlim} implies that
		\begin{align}
			&  (Y_c(t),T_c(t))=\Phi ( S^c_{[ct]}, T^c_{[ct]} ) \to (A(L(t)), S(L(t)-))^+ \label{1654},\\
			& (Y^+_c(t),T^+_c(t))=\Psi ( S^c_{[ct]}, T^c_{[ct]} ) \to (A(L(t)), S(L(t)))  \label{1655}
		\end{align}
		in the Skorokhod $J$ topology. In \eqref{1654}, we also used the fact that $A(t)$, $t\ge0$, is a Brownian motion, and hence continuous. In order to make everything very explicit, one can rewrite \eqref{1654} and \eqref{1655} as follows:
		\begin{align}
			& \frac{1}{\sqrt{c}} \sum_{i=1}^{N(c^{1/\alpha} t) } J_i \to A(L(t)), \qquad \frac{1}{c^{1/\alpha}} \sum_{i=1}^{N(c^{1/\alpha}t)} W_i \to S(L(t)-)^+, \\
			&  \frac{1}{\sqrt{c}} \sum_{i=1}^{N(c^{1/\alpha} t)+1 } J_i \to A(L(t)), \qquad \frac{1}{c^{1/\alpha}} \sum_{i=1}^{N(c^{1/\alpha}t)+1} W_i  \to S(L(t)),
		\end{align}
		where $N(t)=\max\{n\in \NN\cup\{0\}: \sum_{i=1}^nW_i\le t\}$.
		\qed
	\end{example}
	
	The previous example is a special case of a fairly general family of limit procedures involving stable distributions that can be made quite explicit by relying on the scaling property of stable laws. Also, the previous example suggests that the CTRW and OCTRW limits ($A(L(t)-)^+$ and $A(L(t))$, respectively) should coincide in the uncoupled case. This is, indeed, true: if the limit processes $A$ and $S$ (of Theorem \ref{maintheoremctrwlim}) are independent, then they do not have simultaneous jumps almost surely. In particular, the following lemma holds.
	\begin{lemma}[{\cite[Lemma 3.9]{straka2011lagging}}]
		\label{lemma_ctrw=octrw}
		If $\textrm{Disc}(A) \cap \textrm{Disc} (S) = \emptyset$\footnote{Here, $\textrm{Disc}(A)$ denotes the (random) set of discontinuities of the process $A$.}, a.s., then for all $x\in\RR^{d}$
		\begin{align}
			\mathds{P}^{(x,0)} (A(L(t))=A(L(t)-)^+ \text{ for all } t \geq 0) = 1.
		\end{align}
	\end{lemma}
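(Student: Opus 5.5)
We argue pathwise on the almost sure event $\Omega_0$ where $\mathrm{Disc}(A)\cap\mathrm{Disc}(S)=\emptyset$, $S$ is strictly increasing and unbounded, and $A$, $S$ are càdlàg. On $\Omega_0$ the inverse $L$ is continuous and non-decreasing. Fix $t\ge 0$ and set $u=L(t)$. There are two cases.

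\emph{Case 1: $S$ is continuous at $u$.} Then $A(L(t)-)$, i.e. $A^-$ evaluated at $L(t)$, is simply $A(u-)$. We need the right limit of the map $r\mapsto A(L(r)-)$ at $r=t$. We distinguish whether $L$ is strictly increasing to the right of $t$, or constant on some $[t,t+\varepsilon)$.

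If $L(r)>L(t)$ for all $r>t$, then $L(r)\downarrow u$ strictly as $r\downarrow t$, by continuity of $L$. Hence $A(L(r)-)\to A(u)$ by right continuity, since left limits at points converging strictly from the right converge to $A(u)$. So $A(L(t)-)^+=A(u)=A(L(t))$.

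If $L$ is constant on $[t,t+\varepsilon)$, then $S$ jumps at $u$, because $S(u-)\le t<t+\varepsilon\le S(u)$. This contradicts the assumption of Case 1. Hence when $S$ is continuous at $u$ the first alternative always holds.

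\emph{Case 2: $S$ jumps at $u$.} By the disjointness of discontinuities on $\Omega_0$, $A$ is continuous at $u$, so $A(u-)=A(u)$. For $r$ slightly larger than $t$ there are two possibilities.
\begin{itemize}
\item If $t<S(u)$, then $L(r)=u$ for $r\in[t,S(u))$. Therefore $A(L(r)-)=A(u-)=A(u)$ on that interval, and the right limit equals $A(u)$.
\item If $t=S(u)$, then $L(r)\downarrow u$ strictly as $r\downarrow t$, and the argument of Case 1 applies again.
\end{itemize}
In both cases $A(L(t)-)^+=A(L(t))$.

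Since this holds for every $t\ge 0$ simultaneously on $\Omega_0$, and $\mathds{P}^{(x,0)}(\Omega_0)=1$, the claim follows.

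The main obstacle is careful bookkeeping of how $L$ behaves near $t$, namely whether it is flat to the right or strictly increasing. I expect this to be handled by the observation that $L$ is flat exactly on the intervals $[S(u-),S(u))$ corresponding to jumps of $S$. Measurability of $\Omega_0$ and the strict monotonicity of $S$ come from the standing assumptions of Theorem \ref{maintheoremctrwlim}.
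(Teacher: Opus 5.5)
Your proof is correct and uses the same mechanism as the paper's. Pathwise, $A(L(t))$ and $A(L(t)-)^+$ can only differ where $L$ is flat next to $t$; flatness forces $L(t)\in\mathrm{Disc}(S)$, and then $A(L(t)-)=A(L(t))$ by the disjointness hypothesis. The difference is only bookkeeping: the paper compares the left limits of the two c\`adl\`ag processes (flatness of $L$ to the left of $t$), whereas you compute the right limit defining $A(L(t)-)^+$ directly (flatness to the right), which spares you the fact that c\`adl\`ag paths with equal left limits coincide.
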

	\begin{proof}
		Since both $A(L(t))$ and $A(L(t)-)^+$ start from the same position $A(0)=x$, we only have to check whether the left limits of the processes coincide. To this end, the left limits of $A(L(t))$ are
		\begin{align}\label{iv1430}
			A(L(t-))=\begin{cases}
				A(L(t)-),&\text{if $L(s)<L(t)$ for $s<t$},\\
				A(L(t)),&\text{otherwise},
			\end{cases}
		\end{align}
		while the left limits of $A(L(t)-)^+$ are
		$A(L(t)-)$ (since $S$ is strictly increasing, so $L$ is continuous). The left limits coincide in the first case of \eqref{iv1430} while, in the second case, we have $L(t)\in \text{Disc}(S)$, hence $L(t)\notin \text{Disc}(A)$, and consequently $A(L(t))=A(L(t)-)$.
	\end{proof}
	
	\begin{example}[Coupled CTRWs]
		\label{example_limit_coupled}
		CTRWs are said to be coupled when the space-time jumps $J_n, W_n$ are not independent. We provide here a very explicit example that can be made more general as in the previous (uncoupled) case. Take a CTRW with time jumps $W_n$, $n\in\NN$, that are (again) Mittag-Leffler distributed with parameter $\alpha\in (0,1)$. Suppose, however, that the space-time jumps are not independent and admit the joint density
		\begin{align}
			f_{J,W}(x,s) \, = \, \frac{1}{\sqrt{2\pi s}} e^{-\frac{x^2}{2s}} e_\alpha (s) \mathds{1}_{[x \in \mathbb{R}, s > 0]}
		\end{align}
		where $e_\alpha (\cdot)$ is the density induced by the Mittag-Leffler c.d.f. \eqref{mlcdf}. 
		Let $B(t)$, $t \geq 0$, be a standard Brownian motion independent of an $\alpha$-stable subordinator $S(t)$, $t \geq 0$. Define $S_0(t) = S(t) - S(0)$, $A(t) = B(S_0(t))$ and $\mathcal{X}(t) = (A(t), S(t))$, $t \geq 0$. Note that, by using independence of $B$ and $S$, we have
		\begin{align}
			\mathds{E}^{(x,s)}\left[e^{i\xi \cdot B(S_0(t))-\lambda S(t) } \right]\, = \, &\mathds{E}^{(x,s)}  \mathds{E}^{(x,s)} \left[ e^{i\xi \cdot B(S_0(t))-\lambda S(t) } \mid S(t) \right] \notag \\ 
			= \, &e^{i\xi \cdot x} \mathds{E}^{(x,s)} e^{-\lambda S(t)}  e^{-S_0(t) \frac{|\xi|^2}{2}} \notag \\
			= \, & e^{i\xi \cdot x+\lambda s} \mathds{E}^{(x,s)} e^{-\lambda S_0(t)}  e^{-S_0(t) \frac{|\xi|^2}{2}} \notag \\
			= \, & e^{i\xi \cdot x+\lambda s} e^{-t(\lambda + |\xi|^2/2)^\alpha}.
		\end{align}
		Hence the process $(A(u), S(u))$, $u\ge 0$, is a L\'evy process with the Fourier-Laplace symbol
		\begin{align}
			\varphi (\xi, \lambda)=  \left( \lambda + \frac{|\xi|^2}{2} \right)^\alpha.
		\end{align}
		Observe that, in distribution, 
		\begin{align}
			\text{diag}\left( (\sqrt{n})^{-1/\alpha}, n^{-1/\alpha} \right) \sum_{i=1}^n (J_i, W_i) \to \mathcal{X}(1),
		\end{align}
		as $n \to +\infty$. Indeed,
		\begin{align}
			& \mathds{E}\left(e^{-\lambda n^{-1/\alpha}\sum_{k=1}^{n} W_k + i\xi \cdot (\sqrt{n})^{-1/\alpha}\sum_{k=1}^{n} J_k}\right) \notag \\ = \, &\mathds{E} \left(\mathds{E} \left[ e^{-\lambda n^{-1/\alpha}\sum_{k=1}^{n} W_k + i\xi \cdot (\sqrt{n})^{-1/\alpha}\sum_{k=1}^{n} J_k} \mid W_1, \dots, W_{n} \right]\right) \notag \\
			= \, & \mathds{E}\prod_{k=1}^{n} \left(e^{-n^{-1/\alpha}\lambda W_k} \mathds{E} \left[ e^{i (\sqrt{n})^{-1/\alpha}\xi \cdot J_k} \mid W_k \right]\right) \notag \\
			= \, & \left( \mathds{E}\left[e^{-\lambda n^{-\frac{1}{\alpha}} W_1} e^{-W_1 \frac{\xi^2}{2n^{1/\alpha}}} \right]\right)^{n} \notag \\
			= \, & \left( \frac{1}{1+ \frac{1}{n}(\lambda + \xi^2/2)^\alpha} \right)^{n} \to e^{-(\lambda + |\xi|^2/2)^\alpha},\quad \text{as $n\to+\infty$.}
			\label{forosmall}
		\end{align}
		We can now use Theorem \ref{theoremforexamples} to get that
		\begin{align}
			\left(   \frac{1}{c^{\frac{1}{2\alpha}}}  \sum_{i=1}^{[cu]} J_i, \frac{1}{c^{\frac{1}{\alpha}}} \sum_{i=1}^{[cu]} W_i  \right) \eqqcolon (S^c_{[cu]},  T^c_{[cu]}) \to (A(u), S(u)),
		\end{align}
		weakly in $D([0, +\infty), {\mathbb{R}^{2}})$ endowed with the $J$ topology.
		
		Therefore, it follows from Theorem \ref{maintheoremctrwlim} that
		\begin{align}
			&    \frac{1}{c^{1/2\alpha}}\sum_{i=1}^{N(c^{1/\alpha}t)}J_i \to A(L(t)-)^+, &  \frac{1}{c^{1/\alpha}}\sum_{i=1}^{N(c^{1/\alpha}t)}W_i \to S(L(t)-)^+, \\
			&    \frac{1}{c^{1/2\alpha}}\sum_{i=1}^{N(c^{1/\alpha}t)+1}J_i \to A(L(t)), &  \frac{1}{c^{1/\alpha}}\sum_{i=1}^{N(c^{1/\alpha}t)+1}W_i \to S(L(t)),
		\end{align}
		weakly in $D([0, +\infty), {\mathbb{R}^{2}})$ endowed with the $J$ topology, where $N(t)=\max\{n\in \NN\cup\{0\}: \sum_{i=1}^nW_i\le t\}$.
		\qed
	\end{example}
	
	In the previous example the process $A(L(t)-)^+, t \geq 0$, can be seen as the Brownian motion $B$ time-changed by the undershooting process $S(L(t)-)$, $t\ge0$, of the (independent) subordinator $S$. On the other hand, the process $A(L(t))$, $t \geq 0$ can be seen as the Brownian motion $B$, time-changed by the overshooting process $S(L(t))$, $t\ge0$, of the (independent) subordinator $S$.
	
	In general, for a subordinator $S(t)$, $t\ge0$,  and fixed time $t\in \RR$, the triplet of its inverse, overshooting, and undershooting, i.e., $\big(L(t),S(L(t)), S(L(t)-)\big)$, is called the first passage event of the subordinator over level $t$. Indeed, the inverse $L(t)$ is the time at which $S$ crosses the level $t$, the overshooting $S(L(t))$ is the position of $S$ after it crosses the level $t$, while the undershooting $S(L(t)-)$ is the position of $S$ just before crossing the level $t$, see Figure \ref{figure}.
	
	The previous example can be generalized to more general Markov processes $B$ and more general subordinators $S$. This motivates the study of processes time-changed by overshootings and undershootings of subordinators.
	
	\begin{figure}
		\centering
		\begin{subfigure}{\textwidth}
			\centering
			\includegraphics[width=0.875\linewidth]{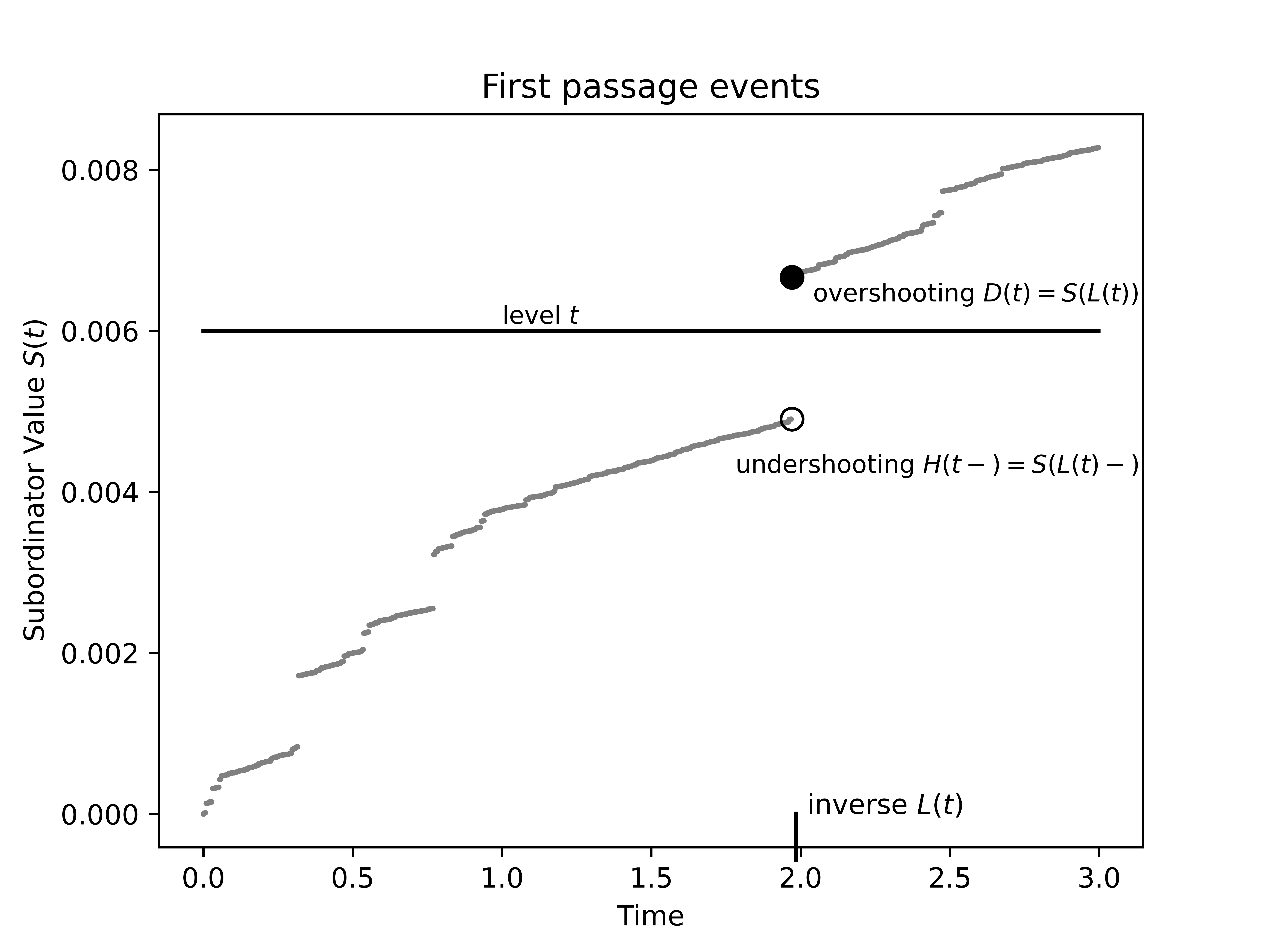}
			\caption{An exact sample of the trajectory of a stable (with $\alpha=0.8$) subordinator, together with its first passage events for level $t=0.006$.}
			\label{Top}
		\end{subfigure}
		\\
		\begin{subfigure}{\textwidth}
			\centering
			\includegraphics[width=0.875\linewidth]{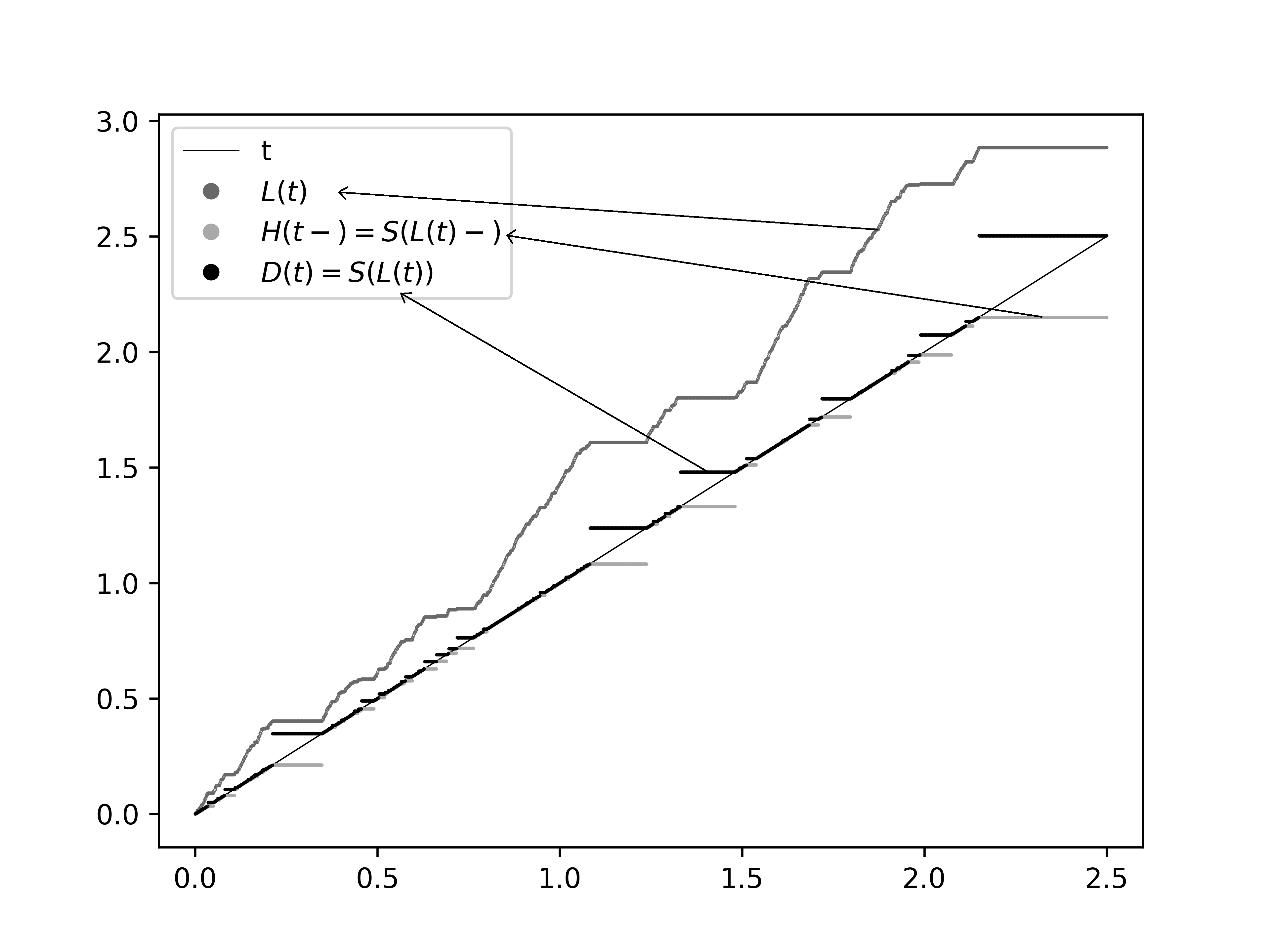}
			\caption{An exact sample of the joint trajectories of an inverse (stable with $\alpha=0.75$) subordinator, its overshooting, and  its undershooting processes; exactly sampled using the algorithms from \cite{daniel24}.}
			\label{Bottom}
		\end{subfigure}
		\caption{}
		\label{figure}
	\end{figure}
	
	\subsection{The semi-Markov property of the limit process}
	\label{semi_limit}
	It is relevant that the scaling limits of (O)CTRWs preserve the same type of semi-Markov property as the corresponding (O)CTRWs. In this section, we are going to explain the connection of the processes
	\begin{align}
		&  X^+ (t) \coloneqq A(L(t)), \quad t \geq 0, \label{defn:X^+}\\
		&   X (t) \coloneqq A(L(t)-)^+, \quad t \geq 0,\label{defn:X}
	\end{align}
	with the theory of semi-Markov processes in the sense of Gikhman and Skorokhod. In other words, we are going to show the Markov embedding property of $X^+$ and $X$ in Theorem \ref{theorem_semim} that is the counterpart for the limit processes of the embeddings in Theorems \ref{thmGS} and \ref{thmGS+}. The presented theory is due to \cite{Meerschaert2014}. We first introduce some notation and make assumptions.
	
	In accordance with Theorem \ref{maintheoremctrwlim}, we are assuming that the limit process $(A,S)=\big((A(u), S(u)),\, u \geq 0\big)$, is a Feller process in $\RR^{d+1}$, given on the canonical probability space, and that the second coordinate $S$ is strictly increasing and unbounded.
	
	Recall that the semigroup of $(A,S)$ is denoted by $P_u$, $u \geq 0$, and acts on $C_0 (\mathbb{R}^{d+1})$. Let us assume that its infinitesimal generator admits the following Courr\'ege--von Waldenfels type form on a suitable core:
	\begin{align}
		\mathcal{A}f(x,t) \, = \, &\sum_{i=1}^d b_i(x,t)\partial_{x_i} f(x,t) + \gamma (x,t) \partial_tf(x,t) + \frac{1}{2} \sum_{1 \leq i, j \leq d} a_{ij}(x,t)\partial_{x_i x_j}^2 f(x,t) \notag \\
		& + \int \left[ f(x+y, t+w)-f(x,t) -\sum_{i=1}^d h_i(y,w) \partial_{x_i} f(x,t)  \right] K(x,t;dy, dw).
		\label{generatorlimit}
	\end{align}
	Here $b_i$ and $\gamma$ are real-valued functions, $a_{ij}(x,t)$ are the entries of a non-negative definite $d\times d$ matrices, $h_i(x,t) = x_i \mathds{1}_{\big[(x,t) \in [-1,1]^{d+1}\big]}$. The kernel $K(x,t;dy, dw)$ is a jumping kernel from $\mathbb{R}^{d+1}$ to itself, i.e., the function $(x,t) \mapsto K(x,t;C)$ is Borel measurable in $\RR^{d+1}$ for any Borel set $C$ in $\RR^{d+1}$, $C \mapsto K(x,t;C)$ is a measure for all $(x,t)\in \RR^{d+1}$, and the integrability condition holds
	\begin{align}
		\int \left( 1 \wedge (\left\| y \right\|^2 + |w|) \right)  K(x,t; dy, dw) < +\infty.
	\end{align}
	
	We will further assume that the process $(A, S)$ is a solution to an SDE of the form \cite[Eq. (6.12), Section 6.2]{applebaum} and also that it is a semi-martingale (see, e.g., \cite{jacod2013limit}). These properties are, e.g., implied by the Lipschitz and linear growth conditions of the coefficients $b, \gamma, a, K$ (see \cite[Chapter 6]{applebaum}).

	Since the processes $X^+$ and $X$ are not stepped, we need to define the counterparts of the processes \eqref{last-jump} and \eqref{tnt1}, given that in this case the number of renewal points (i.e. jumps of $S$) in a time interval does not have to be finite. 
	Define the regenerative set
	\begin{align}
		& \mathcal{R} \coloneqq \{  (t, \omega) \in \mathbb{R} \times \Omega : \, t = S(u)\big(\omega\big) \text{ for some } u\geq 0 \},
	\end{align}
	i.e. the random set of image points of $S(t)$, $t\ge0$, and denote the $\omega$-slice of $\mathcal{R}$ by
	\begin{align}
		& \mathcal{R}(\omega) \coloneqq \{ t \in \mathbb{R} : (t, \omega) \in \mathcal{R} \},\quad \omega\in\Omega.
	\end{align}
	
	The set $\mathcal{R}$ turns out to be the set of renewal points\footnote{In other words, on the complement of $\mathcal{R}(\omega)$ the processes $X^+(t)=A(L(t))$ and $X(t)=A(L(t)-)^+$ do not move, i.e. they are stuck in the interval of constancy.} of the inverse process $L(t)$, $t\ge0$, of $S$. Note also that since $S$ is càdlàg and has increasing sample paths,  the complement of the slice $\mathcal{R}(\omega)$ in $\RR$ is a countable union of intervals of the form
	$\big[S(u-)(\omega), S(u) (\omega)\big)$, where $u \ge 0$ ranges over the jump epochs of the process $S$.
	
	Let, for $t \geq 0$,
	\begin{align}
		&H(t)\big(\omega\big) \coloneqq \sup \{ s \leq t : s \in \mathcal{R}(\omega) \}, \label{def_under}\\
		& D(t)\big(\omega\big) \coloneqq \inf \{ s >t : s \in \mathcal{R}(\omega) \}\label{def_over},
	\end{align}
	where we set $H(t)\big(\omega\big) = s$ on $\{ \omega : \{ s \leq t : s \in \mathcal{R}(\omega) \} = \emptyset \}$, since $\mathds{P}^{(x,s)} (\inf \mathcal{R}(\omega) = s)=1$. 
	
	The processes $H(t)$, $t \in \mathbb{R}$, and $D(t)$, $t \in \mathbb{R}$, are càdlàg. Further, denote by $x_i$, $i\in\NN$, the countable set of discontinuities of the (càdlàg) function $S(t)$, $t\ge 0$,  and set $y_i^+ = S(x_i)$, $y_i^-=S(x_i-)$. Then we have that
	\begin{align}
		S(L(t)) = \begin{cases}
			S(x_i), \qquad & t \in [y_{i}^-, y_i^+), \\
			t, &  \text{else},
		\end{cases}
		\label{traj_over}
	\end{align}
	as well as
	\begin{align}
		S(L(t)-) = \begin{cases}
			S(x_i-), \qquad & t \in (y_{i}^-, y_i^+], \\
			t, &  \text{else.}
		\end{cases}
		\label{traj_under}
	\end{align}
	It is now easy to see that
	\begin{align}
		D(t) = S(L(t)) , \qquad H(t-) = S(L(t)-).
	\end{align}
	With this at hand, we can define the age process $\gamma(t)$, $t \in \mathbb{R}$, and the remaining lifetime process $\Gamma(t)$, $t \in \mathbb{R}$, as
	\begin{align}
		\Gamma (t) = D(t)-t, \qquad \gamma(t) = t-H(t).
	\end{align}
	Define the filtrations $\mathcal{H}_t= \mathcal{F}_{L(t)}$, $t \in \mathbb{R}$, and $\mathcal{G}_t = \mathcal{F}_{L(t)-}$, $t  \in \mathbb{R}$.
	Define the family of operators $Q_{s,t}$, $0 \leq s < t$, acting on the space of bounded Borel measurable functions on $\mathbb{R}^d \times [0, +\infty)$, as
	\begin{align}
		&  Q_{s,t}^+f (y,0)  = \mathds{E}^{(y,s)} f(X^+(t), \Gamma (t)), \\
		& Q_{s,t}^+ f(y,r) = \mathds{1}_{[r>t-s]} f(y,r-(t-s))+ \mathds{1}_{[0 \leq r \leq t-s]} Q_{s+r,t}^+f(y,0),
	\end{align}
	and also the family $Q_{s,t}$, $0 \leq s < t$, as
	\begin{align}
		&Q_{s,t}f(x,0) = \mathds{E}^{(x,s)} f(X(t-), \gamma(t-)), \\
		&Q_{s,t}f(x,v) = f(x,v+t-s) \frac{K(x,s-v; \mathbb{R}^d \times [v+t-s,+\infty))}{K(x,s-v; \mathbb{R}^d \times [v,+\infty))}  \\ & \qquad +  \int_{\mathbb{R}^d} \int_{[v,v+t-s)} Q_{s+w-v,t} f(x+y,0)  \frac{K(x,s-v;dy,dw)}{K(x,s-v; \mathbb{R}^d \times [v,+\infty))}.
	\end{align}
	It turns out that the families of operators $Q_{s,t}$ and $Q^+_{s,t}$ are two-parameter semigroups of operators. In other words, they describe the evolutions of the processes $(X^+(t), \Gamma(t))$ and $(X(t-), \gamma(t-))$, respectively, which are therefore Markovian (with respect to $\mathcal{H}_t$ and $\mathcal{G}_t$, respectively). Here is the precise statement, which is due to \cite{Meerschaert2014}.
	\begin{theorem}
		\label{theorem_semim}
		The process $(X^+(t), \Gamma(t))$, $t \geq 0$, is a Hunt process, {i.e. a strong Markov process that is quasi-left.continuous}, with respect to the filtration $\mathcal{H}_t$ and with the (two-parameter) semigroup $Q^+_{s,t}$. The (left-continuous) process $(X(t-), \gamma(t-))$, $t \geq 0$, is a simple Markov process with respect to the filtration $\mathcal{G}_t$, $t \geq 0$, and with the (two-parameter) semigroup $Q_{s,t}$.
	\end{theorem}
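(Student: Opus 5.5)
The overshooting embedding $(X^+,\Gamma)$ and the undershooting embedding $(X(t-),\gamma(t-))$ are handled separately. Both rest on the strong Markov property of the Feller process $(A,S)$ at the random times $L(t)$ and $L(t)-$. Since $S$ is strictly increasing and unbounded, $L(t)$ is continuous, and for each $t$ it is an $\mathcal{F}_u$-stopping time: $\{L(t)\le u\}=\{S(u)\ge t\}$ up to right-continuity.

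\emph{Step 1: overshooting process.} Fix $T<T+t$. On $\{\Gamma(T)>t\}$ we have $D(T)>T+t$, so $L$ is constant on $[T,T+t]$. Hence $X^+(T+t)=X^+(T)$ and $\Gamma(T+t)=\Gamma(T)-t$, which gives the deterministic branch $f(y,r-t)$ of $Q^+$. On $\{\Gamma(T)\le t\}$ the level $D(T)=S(L(T))$ is hit at the stopping time $L(T)$. We apply the strong Markov property of $(A,S)$ at $L(T)$, with $\mathcal{H}_T=\mathcal{F}_{L(T)}$. The shifted process $(A,S)(L(T)+\cdot)$ starts at $(X^+(T),D(T))$. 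Its inverse at level $T+t$ is $L(T+t)-L(T)$, and its regenerative set is $\mathcal{R}\cap[D(T),\infty)$. This yields
\[
\mathds{E}\bigl[f(X^+(T+t),\Gamma(T+t))\mid\mathcal{H}_T\bigr]=Q^+_{D(T),T+t}f(X^+(T),0),
\]
which is the second branch with $s+r=T+\Gamma(T)$. The semigroup identity $Q^+_{s,u}Q^+_{u,t}=Q^+_{s,t}$ then follows from the tower property.

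For the Hunt property, I would first check the Feller-type continuity of $Q^+_{s,t}$, or argue directly on paths. Right-continuity of $(X^+,\Gamma)$ follows from the càdlàg paths of $A$ and $D$. The strong Markov property at an $\mathcal{H}$-stopping time $\tau$ is obtained by the same argument, since $L(\tau)$ is again an $\mathcal{F}$-stopping time (approximate $\tau$ by dyadic times from above). Quasi-left-continuity should be inherited from that of $(A,S)$. Along an increasing sequence of stopping times $\tau_n\uparrow\tau$, the times $L(\tau_n)$ increase to $L(\tau)$. A jump of $(X^+,\Gamma)$ at $\tau$ would force a jump of $(A,S)$ at the predictable time $L(\tau)$. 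The only exception is a jump of $\Gamma$ created by entering an interval of constancy $[S(u-),S(u))$, and that again corresponds to a jump of $S$ at $L(\tau)$.

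\emph{Step 2: undershooting process.} This is the analogue of Theorem \ref{thmGS}, and here the kernel $K$ enters. Condition on $\mathcal{G}_T=\mathcal{F}_{L(T)-}$. The state $(X(T-),\gamma(T-))=(x,v)$ means that the current renewal point is $T-v$. The next jump of $S$, if any, occurs at time $L(T)$ with size $(y,w)$, $w\ge v$. Given that no renewal has yet occurred in $[T-v,T)$, its conditional law should be $K(x,T-v;dy,dw)/K(x,T-v;\mathbb{R}^d\times[v,\infty))$. To prove this I would use the Lévy system (compensator) formula for the semimartingale $(A,S)$: $\sum_{u}F(u,\Delta(A,S)(u))$ is compensated by $\int F(u,y,w)K(A(u),S(u);dy,dw)\,du$. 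Combined with a predictable projection at the totally inaccessible time $L(T)$, this identifies the joint law of the jump and the pre-jump state. On $\{w\ge v+t\}$ we get the term $f(x,v+t)$ weighted by the tail ratio. Otherwise we restart at the post-jump point $(x+y,T-v+w)$ via the strong Markov property at $L(T)$, exactly as in Step 1. This produces $Q_{s+w-v,t}f(x+y,0)$.

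The hard part is Step 2. The difficulty is to justify rigorously that, conditionally on $\mathcal{F}_{L(T)-}$, the overshoot is distributed according to the normalized $K$. This requires the first-passage formula $\mathds{P}(S(L(T)-)\in ds,\,\Delta(A,S)(L(T))\in dy\,dw)$. It also requires separating the event that $S$ creeps continuously across level $T$ (possible when $\gamma>0$) from a crossing by a jump. I would try the compensator identity applied to $F(u,y,w)=\mathds{1}_{[S(u-)<T\le S(u-)+w]}g(A(u-),S(u-),y,w)$. On the creeping event there is no jump, and it contributes only to the state $(x,0)$. Finally, the Markov property can be only simple, not strong, because of the left-continuity of the embedding.
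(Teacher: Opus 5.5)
The paper gives no proof of this theorem. It only quotes the result from \cite{Meerschaert2014}, so there is no in-text argument to compare with. Your outline is a correct route, and it is the natural continuous-time analogue of the paper's proof of Theorem \ref{thmGS}. You split according to whether the next renewal comes before $T+t$, and you restart with the strong Markov property of $(A,S)$ at the stopping time $L(T)$. The L\'evy system of $(A,S)$ then supplies $K(x,T-v;dy,dw)/K(x,T-v;\mathbb{R}^d\times[v,+\infty))$ in place of $\mu_x(dy,dw)/\mu_x(\mathbb{R}^d,(s,+\infty))$.

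Step 1 works as written, including the strong Markov property via $L(\tau_n)\downarrow L(\tau)$ and $\mathcal{H}_\tau\subset\mathcal{F}_{L(\tau)}$. For quasi-left-continuity you need no predictability argument. If $\tau_n\uparrow\tau$ are $\mathcal{H}$-stopping times, then the $L(\tau_n)$ are $\mathcal{F}$-stopping times increasing to $L(\tau)$. Quasi-left-continuity of $(A,S)$ then gives $(A,S)(L(\tau_n))\to(A,S)(L(\tau))$, which is exactly $(X^+,\Gamma)(\tau_n)\to(X^+,\Gamma)(\tau)$.

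In Step 2, three points should be fixed.

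(i) Your integrand $\mathds{1}_{[S(u-)<T\le S(u-)+w]}\,g(A(u-),S(u-),y,w)$ only identifies the law of the crossing jump given the pre-jump state $(A,S)(L(T)-)$. The Markov property, however, is with respect to all of $\mathcal{G}_T=\mathcal{F}_{L(T)-}$. Multiply the integrand by an arbitrary bounded predictable $Z_u$ and use $\mathcal{F}_{L(T)-}=\sigma(Z_{L(T)}:\ Z \text{ predictable})$. Comparing the compensator identities for $G(y,w)$ and for its normalized $K$-average $\Phi(A(u-),S(u-))$ then gives $\mathds{E}[G(\Delta(A,S)(L(T)))\mid\mathcal{F}_{L(T)-}]=\Phi((A,S)(L(T)-))$ on $\{S(L(T)-)<T\}$. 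This is a compensator (dual predictable projection) argument, not a predictable projection: the identity ${}^pY_\tau=\mathds{E}[Y_\tau\mid\mathcal{F}_{\tau-}]$ holds only at predictable $\tau$. Also, $L(T)$ is not totally inaccessible in general, since $S$ may creep.

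(ii) On $\{\gamma(T-)=0\}=\{S(L(T)-)=T\}$ you must show that $(A,S)$ almost surely does not jump at $L(T)$. This excludes both a jump of $S$ starting exactly at level $T$ and a jump of $A$ alone. Without it you cannot restart from $(X(T-),T)$ and obtain $Q_{T,T+t}f(X(T-),0)$. It follows from the same compensation formula, applied to jumps of size larger than $\varepsilon$ and then letting $\varepsilon\downarrow 0$, because $\{u:S(u)=T\}$ has at most one point.

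(iii) The Chapman--Kolmogorov identity for $Q_{s,t}$ at states $(x,v)$ with $v>0$ cannot come from the tower property, because no $\mathds{P}^{(x,r)}$ puts $(X(s-),\gamma(s-))$ at such a state. Check it directly from the formula. Under $(s,v)\mapsto(u,v+u-s)$ the kernel $K(x,s-v;\cdot)$ is unchanged and the tail ratios telescope, which reduces everything to the case $v=0$. For $Q^+$ the tower property does suffice, since $\mathds{P}^{(y,s+r)}$ starts $(X^+,\Gamma)$ in $(y,r)$ at time $s$.

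Your closing remark on why the Markov property is only simple is a heuristic and is not needed for the statement.
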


	The case in which the limit process $(A,S)$, is Markov additive means that the kernel and the coefficients in \eqref{generatorlimit} do not depend on the second variable $t$. Therefore, the jumps of the process $S(t)$, $t\ge0$, do not depend on its position, so the intervals of constancy of $L(t), t \geq 0$, do not depend on time. It can  then be concluded that the processes $(X(t-), \gamma(t-))$, $t\ge0$, and $(X^+(t), \Gamma(t))$, $t\ge0$, are time-homogeneous. In this setting the semigroups appearing in Theorem \ref{theorem_semim} simplify, respectively, to
	\begin{align}
		&P^+_t f(y,0) = \mathds{E}^{(y,0)} f (X^+(t), \Gamma(t)), \\
		& P^+_tf(y,r) = \mathds{1}_{[r>t]} f(y,r-t) + \mathds{1}_{[0\leq r \leq t]} P^+_{t-r}f(y,0),
	\end{align}
	and
	\begin{align}
		&P_t f(x,0) = \mathds{E}^{(x,0)} f (X(t-), \gamma(t-)), \\
		& P_tf(x,v) =f(x,v+t) \frac{K(x; \mathbb{R}^d \times [v+t,+\infty))}{K(x; \mathbb{R}^d \times [v,+\infty))}  \\ & \qquad +  \int_{\mathbb{R}^d} \int_{[v,v+t)} P_{t+v-w} f(x+y,0)  \frac{K(x;dy,dw)}{K(x; \mathbb{R}^d \times [v,+\infty))},
	\end{align}
	as shown in \cite[Theorem 4.1]{Meerschaert2014}.
	
	It is interesting to note that the process $(X^+(t), \Gamma(t))$, $t \geq 0$, in spirit resembles (in a pathwise sense) the OCTRW process $(Y^+(t),\Xi(t))$, $t\ge0$, from Subsection \ref{ss:CTRW-intro}, but its semigroup resembles the renewal equation of $(Y^+(t-),\Xi(t-))$, $t\ge0$, cf. Theorem \ref{thmGS+} (and \eqref{710-a1} and \eqref{710-b1}). This `merging' comes from the fact that the process $\Xi$ does not assume value 0 (and there are finitely many jumps in a given open interval), but after taking the scaling limit the corresponding process $\Gamma$ does assume value 0. This can be immediately seen by looking at \eqref{traj_over}. It is also interesting to observe that the left-continuous process $\gamma(t-)$ can touch zero, as we can see from \eqref{traj_under}; this is, again, different from what happens in the discrete case for the process $\xi(t-)$ (i.e., the left-continuous version of the process $\xi(t)$ appearing in Theorem \ref{thmGS}), which never touches zero.
	
	Here we note that the filtration $\mathcal{G}_t$, $t\ge0$, driving the process $(X,\gamma)$, is the true `continuous' counterpart to the `discrete' filtration  \[\sigma(N(s),S_0,S_1,\dots,S_{N(s)},T_0,T_1,\dots,T_{N(s)}:s\le t),\]$t\ge0$, driving the CTRW $(Y,\xi)$, as it provides information on the trajectory of the process $(A,S)$ strictly before the jump $L(t)$. The same can be said for the filtration $\mathcal{H}_t$, $t\ge0$, which is a continuous counterpart of \[\sigma(N(s),S_0,S_1,\dots,S_{N(s)+1},T_0,T_1,\dots,T_{N(s)+1}:s\le t),\] $t\ge0$, that contains the information driving the OCTRW.

	\subsection{The limit processes as an anomalous diffusion}
	
	We have seen that the limit processes (time-changed Markov processes) may lose the Markov property because of time-change. The diffusive behavior of the original process $A(t)$, $t\ge0$, may be perturbed. Heuristically, the explanation is as follows. Suppose that the process $A(t)$, $t\ge0$, is diffusive in the sense that the mean squared displacement\footnote{Recall that $A(t)$, $t\ge0$, is considered to be in $\RR^d$ for $d\ge1$, and $\|\cdot\|$ is the Euclidean norm.} $(\Delta A )^2 (t) \coloneqq \mathds{E}^{(x,0)} \left\| A(t)-x \right\|^2$ increases linearly with time, i.e., $(\Delta A)^2(t) \sim Ct$ for some $C>0$, as $t \to +\infty$. If we consider now the process $X^+(t) = A(L(t))$, then one could expect that $(\Delta X^+)^2 (t) \sim C\,\mathds{E}^{(x,0)} L(t)$, and the very same behavior is expected  $X(t)$, $t\ge0$, i.e. $(\Delta X)^2 (t)\sim C\,\mathds{E}^{(x,0)} L(t)$. To the best of our knowledge, a thorough study, in full generality, of the mean squared displacements of CTRWs limit processes has not been done yet. However, it is useful to see what happens in the case dealt with in Examples \ref{example_uncoupled} and \ref{example_limit_coupled} where the computations can be done explicitly.
	
	We start with the uncoupled case of Example \ref{example_uncoupled}, and thus take the limit process $X^+(t) = X(t) = B(L(t))$, $t \geq 0$. Here $B(t)$, $t\ge0$, is a standard Brownian motion in $\RR^d$, independent of $S(t)$, $t\ge0$, which is a stable subordinator (with parameter $\alpha\in(0,1)$), and $L(t)$, $t\ge0$, is its inverse. In this case, by using independence and a conditioning argument we have
	\begin{align}
		\mathds{E}^{(x,0)} \left\| B(L(t)) - x\right\|^2 \, = \, & \int_0^{+\infty}\mathds{E}^{(x,0)} \left\| B(s) - x \right\|^2 \mathds{P}^{(x,0)} (L(t) \in ds) \notag \\
		= \, & d\int_0^{+\infty}  \, s \, \mathds{P}^{(x,0)} (L(t) \in ds) \notag \\
		= \, & d\,\mathds{E}^{(x,0)}L(t) \notag \\
		= \, & d \frac{t^{\alpha}}{\Gamma (\alpha+1)}.
		\label{msd_stable}
	\end{align}
	In the last step we used that
	\begin{align}
		\mathds{E}^{(x,0)} L(t)  = & \int_0^{+\infty} \mathds{E}^{(x,0)} \mathds{1}_{[L(t)>s]} ds 
		=  \mathds{E}^{(x,0)}\int_0^{+\infty}  \mathds{1}_{[S(s)\leq t]} ds = U((-\infty,t]), \notag
	\end{align}
	where $U$ denotes the potential measure of the stable subordinator $S$, whose explicit form is known (see, e.g., \cite[Example 5.8]{bogdan2009potential}) and thus \eqref{msd_stable} is obtained. Thus \eqref{msd_stable} is obtained. We have therefore shown that the process $X^+(t)=X(t)=B(L(t))$, $t\ge0$, is a subdiffusion with the mean squared displacement growing as $\frac{d}{\Gamma(\alpha+1)}\,t^\alpha$, where $\alpha \in (0,1)$.
	
	The trajectory of a Brownian motion time-changed by an independent inverse subordinator can be seen in Figure \ref{fig:subdiffusion}. Note that such a time-changed Brownian motion visits the same positions and in the same order as the original Brownian motion, but the speed of visiting the positions is lower (for large times) due to sticking effects produced by the intervals in which the inverse subordinator is constant. Some authors call such a process a slowed-down (or delayed) diffusion (see, e.g., \cite{CapitanelliDOvidio2020, kochubei2008distributed, magdziarz2015asymptotic, toaldo2015}).
	
	Let us now compute the mean squared displacements for the limit processes appearing in Example \ref{example_limit_coupled}. Note that here the limit processes are
	\begin{align}
		X^+(t) = B(S(L(t))), \qquad X(t) = (B(S(L(t)-)))^+,
	\end{align}
	where we recall that $S$ and $L$ are the $\alpha$-stable subordinator and its inverse, respectively, independent of the standard Brownian motion $B$. Also, note that $(B(S(L(t)-)))^+=(B(S(L(t)-)))$ a.s. since at fixed time the undershooting of a driftless subordinator does not have a jump, see \cite[Proposition 2 \& Theorem 4, in Chapter 3]{bertoin1996}. 
	
	Thus, a conditioning argument similar to that in \eqref{msd_stable} yields
	\begin{align}
		\mathds{E}^{(x,0)} \left\| B(S(L(t)))-x \right\|^2  &=  d\, \mathds{E}^{(x,0)} S(L(t)), \label{261}\\
		\mathds{E}^{(x,0)} \left\| B(S(L(t)-))^+-x \right\|^2  &    = d\, \mathds{E}^{(x,0)} S(L(t)-).
		\label{262}
	\end{align}
	The two expectations appearing in \eqref{261} and \eqref{262} can be computed explicitly with the help of the joint distribution of $S(L(t))$ and $S(L(t)-)$, see \cite[page 96]{bogdan2009potential} and \cite[Example 5.8]{bogdan2009potential}. In other words, it holds that
	\begin{align*}
		\mathds{P}^{(x,0)} (S(L(t)-) \in ds, S(L(t)) \in dx) \, = \, \frac{s^{\alpha -1}}{\Gamma(\alpha)} \frac{\alpha (x-s)^{-\alpha-1}}{\Gamma (1-\alpha)} \mathds{1}_{[0\le s\le t<x]} \, ds \, dx,
	\end{align*}
	so we get
	\begin{align}
		&  \mathds{E}^{(x,0)} S(L(t))  = +\infty, \\
		& \mathds{E}^{(x,0)} S(L(t)-) = \alpha t,
	\end{align}
	proving that the process $X^+$ is superdiffusive with infinite mean squared displacement while the process $X$ is diffusive with the mean squared displacement $d\alpha\,t$. Trajectories of overshooted and undershooted Brownian motion can be seen in Figure \ref{fig:subdiffusion}.
	Heuristically, this can be explained as follows. The time-change with an $\alpha$-stable inverse subordinator induces a subdiffusion with mean squared displacement of order $t^\alpha$, as the time-changed process is obtained by `running' on the same Brownian path with the `speed' given by the inverse subordinator. In the time-change with the undershooting, the trajectory of the process leaves the original Brownian path with an interval in which the undershooting is constant and then jumps back onto the Brownian path: hence the resulting process returns to a Brownian path and thus has the same asymptotic behavior. In the time-change with the overshooting, the process jumps away from the Brownian path and reaches it with an interval of constancy: since the jumps are heavy-tailed, an infinite mean squared displacement is obtained.
	
	\begin{figure}
		\centering
		\begin{subfigure}{\textwidth}
			\centering
			\includegraphics[width=.8\linewidth]{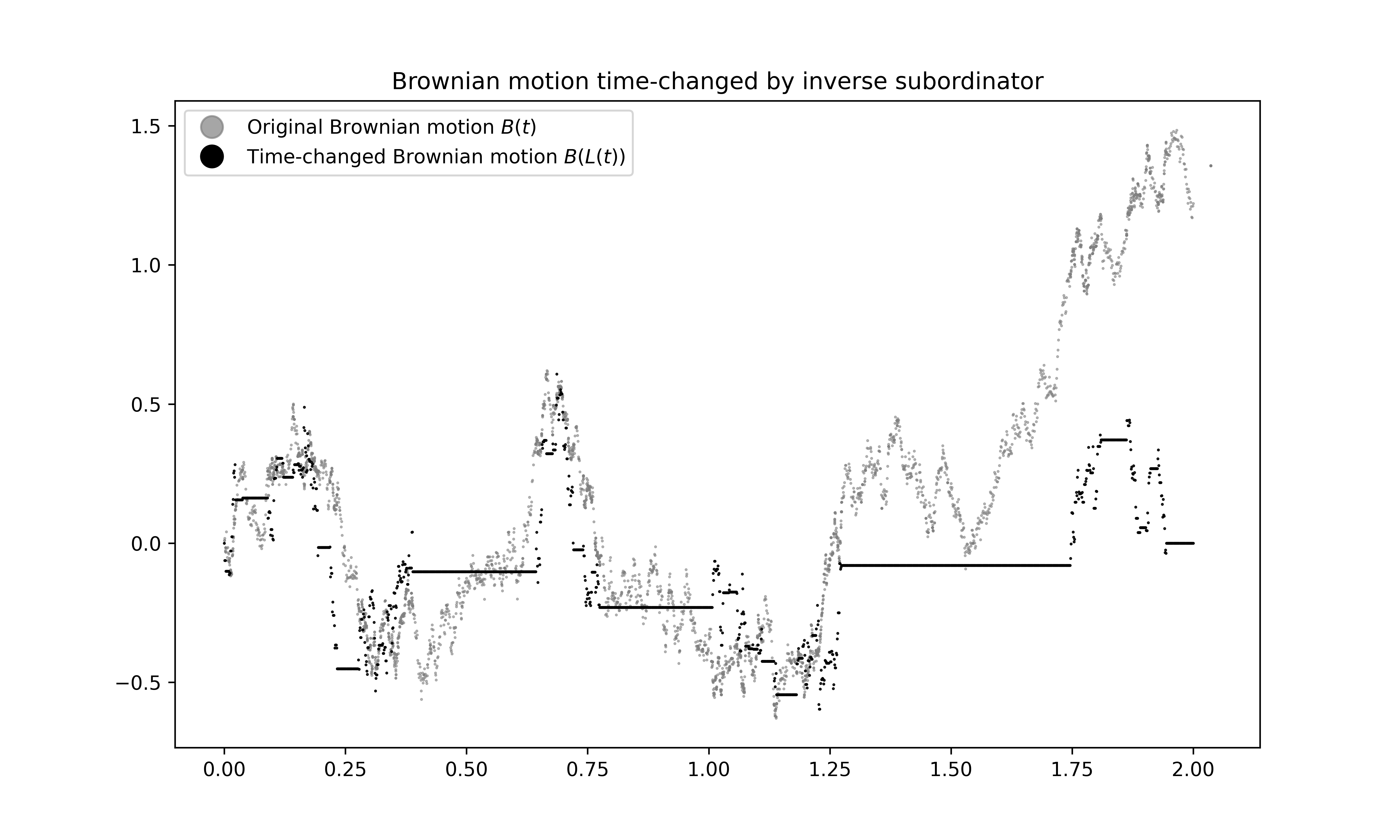}
			\caption{Exactly sampled trajectories (in discrete time points) of a standard Brownian motion (in $d=1$ and $t\in[0,2])$ and the same Brownian motion time-changed by an inverse (stable $\alpha=0.75$) subordinator; using algorithms from  \cite{daniel24} providing exact samples from the finite-dimensional distributions of inverse subordinators.}
			\label{Top-inverse}
		\end{subfigure}
		\\
		\begin{subfigure}{\textwidth}
			\centering
			\includegraphics[width=\linewidth]{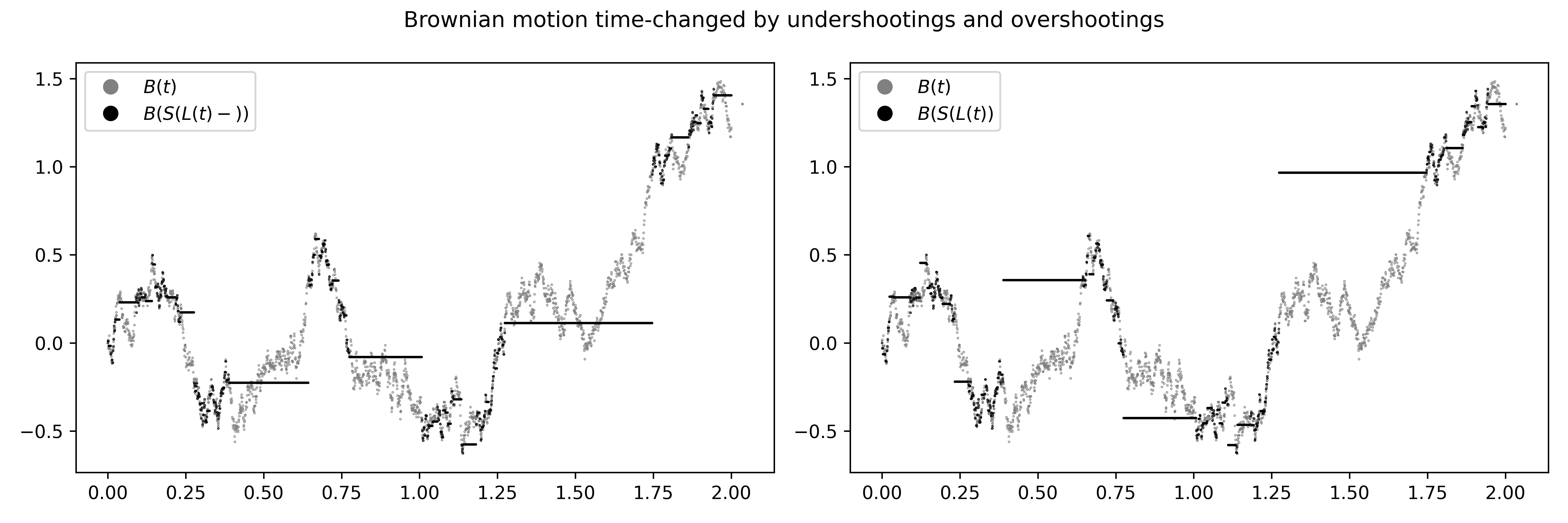}
			\caption{Two exact samples of trajectories of a Brownian motion (in $d=1$ and $t\in [0,2]$) and the same Brownian motion time-changed by the overshootings of a stable ($\alpha=0.75$) subordinator (left figure), and a Brownian motion together with the same Brownian motion time-changed by the undershootings of a stable ($\alpha=0.75$) subordinator (right figure). The undershootings and overshootings come from the same exact samples (from the finite-dimensional distribution) obtained by algorithms from  \cite{daniel24}.}
			\label{Bottom-UO}
		\end{subfigure}
		\caption{}
		\label{fig:subdiffusion}
	\end{figure}

	\section{Non-local equations for CTRWs' limit processes}
	\label{sec3_non-local}
	One of the main tools for the analysis of the distribution of (O)CTRWs limit processes, are their governing equations. Given a process $X^+(t)=A(L(t))$, $t\ge0$, or $X(t)=A(L(t)-)^+$, $t\ge0$, arising as the limit of a suitable OCTRW (CTRW respectively), one can define the functions on $\RR^{d}\times [0,+\infty)$
	\begin{align}
		q^+(x,t) \, &= \, \mathds{E}^{(x,0)} u(X^+(t)),\\
		q(x,t) \, &= \, \mathds{E}^{(x,0)}u(X(t)),
	\end{align}
	and can ask the question: what is the integro-differential equation, if any, satisfied by these functions? Whenever the limit processes $X^+$ or $X$ are Markovian, the governing equation is provided by classical semigroup theory. This can happen, for example, in the uncoupled example of Example \ref{example_uncoupled} assuming that the waiting times $W_n$, $n\in\NN$, are i.i.d. and exponential with parameter $\theta>0$. Indeed, under this assumption 
	\begin{align}
		\mathds{E} e^{-\lambda n^{-1} (W_1 + \dots + W_n) } = \left( \frac{1}{1+\lambda \theta / n} \right)^n \to e^{-\theta \lambda}.
	\end{align}
	Hence, by the same arguments as in Example \ref{example_uncoupled}, the limit process is now $\mathcal{X}(t) = (A(t), S(t))$, $t \geq 0$, where $A(t)$ is a standard Brownian motion while $S(t)$ is the deterministic process $S(t) = \theta t$. Therefore, the CTRW limit (which is also equal to the OCTRW limit) is $X(t) = A(\theta^{-1}t)$, i.e., a Brownian motion with zero mean and variance $t/\theta$. More generally, whenever the process $X(t)$, $t\ge0$, is an arbitrary Feller process, associated with the semigroup $P_t$, $t \geq 0$, on $C_0(\mathbb{R}^d)$, with the generator $(G, \mathcal{D}(G) )$, then the function $q(t) = P_tu$, is the unique classical solution to the abstract Cauchy problem
	\begin{align}
		\frac{d}{dt} q(t) = Gq(t), \qquad q(0) = u \in \mathcal{D}(G),
		\label{class_cauchy}
	\end{align}
	see e.g. \cite[Proposition 3.1.9]{arendt2001cauchy}. This means that the mapping $t \mapsto q(t)$ is continuous as a function $q:[0, +\infty) \mapsto C_0 (\mathbb{R}^d)$, and the derivative on the left-hand side of \eqref{class_cauchy} exists in the strong sense in $C_0 (\mathbb{R}^d)$.
	
	In general, the processes $X$ and $X^+$ are not Markovian and semigroup theory cannot be applied directly. Several different approaches have been adopted in the literature to overcome this obstacle under different assumptions (see, e.g., \cite{ascione2021time, Baeumer2001, mirko, chen, hernandez2017, kochubei, kolokoltsov2015, mainardi2000fractional,  meerschaert2002governing, savtoa, ScalasGorenfloMainardi2004, STRAKA2018451, toaldo_pota, toaldo2015, Ton19jmaa}). A classical example of governing equations in the non-Markovian context is the fractional kinetic equation (FKE), which governs fractional-time processes. Such processes arise as in Example \ref{example_uncoupled} when the limiting (Feller) process $(A(t), S(t))$, $t \geq 0$, is such that $A(t)$ is not merely a Brownian motion, but can rather be replaced by an arbitrary Feller process $A(t), t \geq 0$. In this context, the CTRW limit process (and also the OCTRW limit, as they coincide by Lemma \ref{lemma_ctrw=octrw}), is the process $X(t) = A(L(t))$, where $L(t)$ is the inverse of a stable subordinator (for more on inverse subordinators, see \cite{bertoin1999, tlms}). This case has been covered by the very first results on the fractional-time equations related to subordination techniques, in \cite{bazlekova, Baeumer2001}, where the fractional derivative of order $\alpha \in (0,1)$, is the integro-differential operator defined by
	\begin{align}
		\partial_t^\alpha q(t) \coloneqq \partial_t \int_0^t q(s) \frac{(t-s)^{-\alpha}}{\Gamma (1-\alpha)} ds.
		\label{frac_der}
	\end{align}
	An alternative definition to \eqref{frac_der} is the Caputo-type fractional derivative
	\begin{align}
		^C\partial_t^\alpha q(t) \, = \, \int_0^t \partial_sq(s) \, \frac{(t-s)^{-\alpha}}{\Gamma(1-\alpha)} ds,
		\label{frac_der_caputo}
	\end{align}
	where the well-known relationship between   \eqref{frac_der} and \eqref{frac_der_caputo} holds:
	\begin{align}
		^C\partial_t^\alpha q(t) = \partial_t^\alpha (q(t) - q(0)),
	\end{align}
	on suitable function spaces, see Lemma \ref{equiv_def}.
	In this context, in particular, \cite[Theorem 3.1]{Baeumer2001} implies the following claim.
	\begin{theorem}
		Let $l_t^\alpha (\cdot)$ be the distribution of the inverse $\alpha$-stable subordinator.  Let $P_t$, $t \geq 0$, be a uniformly bounded strongly continuous semigroup on a Banach space $(\mathfrak{B}, \left\| \cdot \right\|)$, associated with the generator $(G, \mathcal{D}(G))$. Then the family of operators $\mathcal{P}_t$, $t \geq 0$, given by
		\begin{align}
			\mathcal{P}_t u = \int_0^{+\infty} P_s u \, l_t^\alpha(ds),
		\end{align}
		is a uniformly bounded family of linear operators from $\mathfrak{B}$ to $\mathfrak{B}$ and the mapping $t \mapsto \mathcal{P}_tu$ solves, whenever $u \in \mathcal{D}(G)$, the problem
		\begin{align}
			\partial_t^\alpha (q(t)-q(0))\, = \, G q(t), \qquad q(0)  = u.
		\end{align}
	\end{theorem}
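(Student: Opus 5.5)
Since $\mathcal{P}_tu=\int_0^\infty P_su\,l_t^\alpha(ds)$ is a subordination integral, the natural route is through Laplace transforms in $t$. There $\partial_t^\alpha$ becomes multiplication by $\lambda^\alpha$ and the semigroup becomes the resolvent.

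\textbf{Step 1: boundedness.} If $\|P_s\|\le M$, then $\|\mathcal{P}_tu\|\le M\|u\|$ because $l_t^\alpha$ is a probability measure, so the family is uniformly bounded. Strong continuity of $s\mapsto P_su$ makes the integral a well-defined Bochner integral. Strong continuity of $t\mapsto \mathcal{P}_tu$ follows by dominated convergence, since $L$ has continuous paths and hence $l_t^\alpha$ is weakly continuous in $t$.

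\textbf{Step 2: Laplace transform of $\mathcal{P}_tu$.} Recall $\{L(t)>s\}=\{S(s)\le t\}$. This gives
\begin{align*}
\int_0^\infty e^{-\lambda t}\,l_t^\alpha(ds)\,dt=\lambda^{\alpha-1}e^{-s\lambda^\alpha}\,ds .
\end{align*}
To verify it, note $\int_0^\infty e^{-\lambda t}\,\PP(L(t)>s)\,dt=\lambda^{-1}\EE e^{-\lambda S(s)}=\lambda^{-1}e^{-s\lambda^\alpha}$, then differentiate in $s$. By Fubini,
\begin{align*}
\widetilde q(\lambda):=\int_0^\infty e^{-\lambda t}\mathcal{P}_tu\,dt=\lambda^{\alpha-1}\int_0^\infty e^{-s\lambda^\alpha}P_su\,ds=\lambda^{\alpha-1}R(\lambda^\alpha)u,
\end{align*}
where $R(\mu)=(\mu-G)^{-1}$ is the resolvent. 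It exists for $\mu>0$ by Hille--Yosida, since the semigroup is bounded.

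\textbf{Step 3: transform the equation.} For $u\in\mathcal{D}(G)$, the resolvent identity gives
\begin{align*}
\lambda^\alpha\widetilde q(\lambda)-\lambda^{\alpha-1}u=G\widetilde q(\lambda),
\end{align*}
and $G$ commutes with the integral because it is closed. The Laplace transform of $\partial_t^\alpha(q-q(0))$ is $\lambda^{\alpha}\widetilde q-\lambda^{\alpha-1}u$: the kernel $t^{-\alpha}/\Gamma(1-\alpha)$ has transform $\lambda^{\alpha-1}$, the inner convolution vanishes at $t=0$, and the derivative contributes a factor $\lambda$. The equation therefore holds in the transformed form.

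\textbf{Step 4: inversion (the main obstacle).} It remains to upgrade the identity from Laplace space to a pointwise-in-$t$ identity in $\mathfrak{B}$. This needs $G\mathcal{P}_tu=\mathcal{P}_tGu$ to be continuous in $t$, which follows from Step 1 applied to $Gu$. It also needs $I(t):=\int_0^t(\mathcal{P}_su-u)(t-s)^{-\alpha}/\Gamma(1-\alpha)\,ds$ to be differentiable.

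I would show directly that $I(t)=\int_0^t \mathcal{P}_sGu\,ds$. Both sides are continuous, and by Steps 2--3 their Laplace transforms agree (each equals $\lambda^{-1}G\widetilde q$). Uniqueness of the vector-valued Laplace transform for continuous exponentially bounded functions then gives equality. The right-hand side is $C^1$ with derivative $\mathcal{P}_tGu=G\mathcal{P}_tu$, so $\partial_t^\alpha(q(t)-q(0))=Gq(t)$ for every $t>0$, and $q(0)=u$ holds because $l_0^\alpha=\delta_0$.
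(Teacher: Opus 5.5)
Your proposal is correct and follows the paper's argument. The paper obtains this statement as the case $\phi(\lambda)=\lambda^\alpha$ (so $b=0$ and $\bar\nu(t)=t^{-\alpha}/\Gamma(1-\alpha)$) of Theorem~\ref{theorem_convolution}, whose proof likewise identifies $\widetilde q(\lambda)=\lambda^{\alpha-1}(\lambda^\alpha-G)^{-1}u$, matches the Laplace transforms of the two sides of $\int_0^t (q(s)-q(0))\bar\nu(t-s)\,ds=\int_0^t Gq(s)\,ds$, extends the identity to all $t$ by continuity, and then differentiates. The differences are cosmetic: you compute $\widetilde q$ directly from the $t$-Laplace transform of $l^\alpha_t(ds)$ rather than via $q(t)=u+\int_0^\infty GP_su\,\PP(L(t)\ge s)\,ds$, and you assert rather than check the continuity of the convolution term, which follows from the local integrability of $t^{-\alpha}$ as the paper verifies.
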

	\begin{proof}
		This theorem is a corollary of the more general Theorem \ref{theorem_convolution}, which will be stated and proved later.
	\end{proof}
	
	\subsection{Uncoupled CTRWs and generalized FKEs}
	\label{sec_generalized_fractional}
	In the uncoupled case of Example \ref{example_uncoupled}, where $A$ and $S$ are independent, it is possible to generalize the approach therein and obtain that the limit process $S(t)$, $t\ge0$, is a (general) strictly increasing subordinator. A subordinator is a non-decreasing L\'evy process characterized by its Laplace exponent $\lambda\mapsto \phi (\lambda)$  which is a Bernstein function (see {Appendix} \ref{appendix_bernstein} for basic facts on subordinators and Bernstein functions). In other words, we have
	\begin{align}
		\mathds{E}^{(x,0)}e^{-\lambda S(t)} = e^{-t\phi(\lambda)},\quad t,\,\lambda\ge0,
	\end{align}
	where the exponent $\phi$ can be represented as
	\begin{align}
		\phi (\lambda) = \,& a + b \lambda + \int_0^{+\infty} \left( 1-e^{-\lambda s} \right) \nu(ds) \label{bernstein-eq0}\\
		= \, & b\lambda + \lambda \int_0^{+\infty} e^{-\lambda t} \bar{\nu}(t) \, dt .
		\label{bernstein-eq}
	\end{align}
	Here,  $a,b\ge0$, while $\nu (\cdot)$ is a measure supported on $(0, +\infty)$ such that
	\begin{align}
		\int_0^{+\infty} \left( s \wedge 1 \right) \nu (ds) < +\infty,
	\end{align}
	and $\bar{\nu}(t)\coloneqq a+\nu(t, +\infty)$. 
	
	Actually, the setting in this section allows $a>0$, which means that a (classical) non-decreasing L\'evy process can be killed (i.e. sent to the state $\{+\infty\}$) at an independent exponentially distributed random time with parameter $a$. Recall that in the previous sections we always considered the second coordinate process $S$ to be strictly increasing (see, e.g.,  Theorem \ref{maintheoremctrwlim}), and hence $L$ was continuous. In the setting of subordinators, this assumption is equivalent to assuming $b>0$ or $\nu(0, +\infty) = +\infty$, where the latter is called the infinite activity of the subordinator. This will remain our assumption throughout the section. We note that the case $b=0$ and $\nu(0, +\infty) < +\infty$ corresponds to the so-called compound Poisson process, in which $L$ is not continuous and has exponentially distributed jumps. 
	
	In this general setting, the inverse process $L(t)=\inf\{s\ge0:S(s)>t\}$, $t \geq 0$, of the subordinator $S$ is usually called inverse subordinator. Note that the fractional case (i.e. the $\alpha$-stable subordinator) can be recovered by taking $\phi(\lambda) = \lambda^\alpha$ (where $a=b=0$ and $\nu(ds)=\frac{\alpha}{\Gamma (1-\alpha)}s^{-1-\alpha}$), while some other interesting examples can be found in \cite{bernstein}, particularly in Chapter 20 therein, where one can find a list of many Bernstein functions.
	
	We return to the main question where we ask ourselves what is the governing equation of $X^+(t)=A(L(t))$, $t \geq 0$ (and recall that $X=X^+$ a.s., by Lemma \ref{lemma_ctrw=octrw} since $A$ and $S$ are independent). To answer this question, one should understand which equation is satisfied by the function $q(x,t) = \mathds{E}^{(x,0)}u(A(L(t)))$, $(x,t)\in\RR^{d}\times [0,+\infty)$. Since $A$ and $S$ are independent, by a simple conditioning argument we get 
	\begin{align}
		q(x,t) = \int_0^{+\infty} \mathds{E}^{(x,0)}u(A(s)) \, \mathds{P}^{(x,0)} (L(t) \in ds).
	\end{align}
	This suggests looking at the problem in a more general setting: we look at the evolution equation on some Banach space $\mathfrak{B}$, for the mapping
	\begin{align}
		[0, +\infty) \ni  t \mapsto \int_0^{+\infty} P_s u \, l_t(ds) \in \mathfrak{B},
	\end{align}
	where $P_s$, $s \geq 0$, is a suitable semigroup of operators, $l_t(ds)$ denotes the law of $L(t)$ under $\mathds{P}^{(x,0)}$ which is independent of $x$, and the integral is meant in the Bochner sense.\footnote{For a detailed approach to Bochner integrals, we refer the reader to \cite{Bochner}.} Whenever $\mathfrak{B} = C_0(E)$, for some $E$ locally compact and separable space, there exists a Markov process $A(t)$, $t\ge0$, on $E$ such that $P_tu(x) = \mathds{E}^{(x,0)}u(A(t))$, for all $u\in C_0(E)$ and $x \in E$ (see basic facts on semigroups in {Appendix} \ref{appendix_semigroups}). We also remark that the one-dimensional law of $L(t)$, i.e., $l_t(\cdot)$ as denoted above, is nowadays well studied (see, in particular, \cite{ascione2024regularity, rivero_doney}).

	It turns out that this equation has the form of an FKE with a time operator having the form \eqref{frac_der} but with a different convolution kernel, namely the one associated with a Bernstein function $\phi$. We define this general operator as in \cite[Definition 2.1]{toaldo_pota}
	\begin{align}
		\phi(\partial_t) f(t) \, \coloneqq \, \partial_t \left( b f(t) + \int_0^t f(s) \bar{\nu}(t-s) ds \right),
		\label{conv_der}
	\end{align}
	where $\bar{\nu}(t) \coloneqq a+\nu(t, +\infty)$, for $f$ such that the operations of derivation and integration make sense, see Lemma \ref{equiv_def}. On suitable function spaces (with regularity conditions and $f(0)=0$) this definition coincides with
	\begin{align}
		\,^C\phi(\partial_t)f(t) \, = \,\partial_t b f(t) + \int_0^t \partial_s f(s) \bar{\nu}(t-s) ds 
		\label{conv_der_toa}
	\end{align}
	in the same spirit in which \eqref{frac_der} and \eqref{frac_der_caputo} coincide. 
	Now it is possible to make this statement precise. The following lemma is an extension of \cite[Lemma 2.4]{moving} for Banach-valued functions.
	Here we emphasize that in calculus with Banach-valued functions one has to be careful: e.g. not all Lipschitz functions are differentiable almost everywhere, which follows from the fact that not all spaces have the Radon-Nikodym property. In other words, the absolute continuity assumption of \cite[Lemma 2.4]{moving} is replaced here by the condition \eqref{abs_integ}.
	\begin{lemma}
		\label{equiv_def}
		Fix $T>0$, and let $(\mathfrak{B}, \left\| \cdot \right\|)$ be a Banach space. Suppose that $f: [0, +\infty) \mapsto \mathfrak{B}$,  is such that there exists a Bochner integrable function $g: [0, +\infty) \mapsto \mathfrak{B}$ such that
		\begin{align}\label{abs_integ}
			f(t) - f(0) = \int_0^t g(s) \, ds,\quad t \in [0,T].
		\end{align}
		Then $\partial_tf(t)$ and $\phi(\partial_t) f(t)$ exists a.e. in $[0,T]$ and are elements of $L^1([0,T]; \mathfrak{B})$. Moreover, for almost all $t\in [0,T]$, it holds that $\partial_tf(t)=g(t)$ and
		\begin{align}
			\phi(\partial_t)\big(f(t)-f(0)\big) = b \partial_tf(t) + \int_0^t \partial_sf(s) \, \bar{\nu}(t-s) \, ds.
			\label{der_dentro}
		\end{align}
		If, furthermore, $g\in C([0,T]; \mathfrak{B})$, then $t \mapsto \phi(\partial_t ) f(t) \in C([0,T]; \mathfrak{B})$ and \eqref{der_dentro} holds for all $t\in [0,T]$.
		
	\end{lemma}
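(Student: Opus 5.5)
The plan is to reduce everything to Fubini's theorem for Bochner integrals together with the vector-valued Lebesgue differentiation theorem; no Radon--Nikodym property is needed, because the representation \eqref{abs_integ} is given.

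\textbf{Step 1: the derivative of $f$.} Since $g\in L^1([0,T];\mathfrak{B})$, the Lebesgue differentiation theorem for Bochner integrable functions gives
\begin{align*}
\frac{1}{h}\int_t^{t+h}\|g(s)-g(t)\|\,ds\to 0
\end{align*}
for almost every $t$. Hence $\partial_t f(t)=g(t)$ almost everywhere on $[0,T]$, and in particular $\partial_t f\in L^1([0,T];\mathfrak{B})$.

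\textbf{Step 2: the convolution term.} Substituting \eqref{abs_integ}, I write
\begin{align*}
\int_0^t (f(s)-f(0))\,\bar\nu(t-s)\,ds=\int_0^t\int_0^s g(r)\,dr\,\bar\nu(t-s)\,ds .
\end{align*}
Since $\bar\nu\in L^1_{loc}$, the integrand $\|g(r)\|\bar\nu(t-s)\mathds{1}_{[r<s<t]}$ is integrable. Fubini for Bochner integrals then gives
\begin{align*}
\int_0^t g(r)\,\bar N(t-r)\,dr, \qquad \bar N(u)\coloneqq\int_0^u\bar\nu(w)\,dw .
\end{align*}
Local integrability of $\bar\nu$ holds because $\int_0^1\nu(w,\infty)\,dw=\int(s\wedge1)\,\nu(ds)<\infty$. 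Next, since $\bar N(t-r)=\int_r^t \bar\nu(\tau-r)\,d\tau$, a second application of Fubini yields
\begin{align*}
\int_0^t\Big(\int_0^\tau g(r)\,\bar\nu(\tau-r)\,dr\Big)\,d\tau .
\end{align*}
The inner function $G(\tau)\coloneqq (g*\bar\nu)(\tau)$ is Bochner integrable on $[0,T]$ by Young's inequality, with $\|G\|_{L^1}\le \|g\|_{L^1}\|\bar\nu\|_{L^1(0,T)}$. Consequently, $b(f(t)-f(0))+\int_0^t (f(s)-f(0))\,\bar\nu(t-s)\,ds$ equals
\begin{align*}
\int_0^t\big(b\,g(\tau)+G(\tau)\big)\,d\tau,
\end{align*}
an indefinite Bochner integral of an $L^1$ function. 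Applying Step 1 again, its derivative exists almost everywhere and equals $b\,g+G=b\,\partial_t f+\int_0^t\partial_s f(s)\,\bar\nu(t-s)\,ds$. This is \eqref{der_dentro}, and it lies in $L^1$.

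\textbf{Step 3: the case $g$ continuous.} I would interpret $\phi(\partial_t)f$ in the statement as $\phi(\partial_t)(f-f(0))$, since these differ only by the term $f(0)\,\partial_t\big(b+\int_0^t\bar\nu\big)=f(0)\,\bar\nu(t)$. When $g$ is continuous, $G$ is continuous. Indeed, $G(\tau)=\int_0^\tau g(\tau-w)\,\bar\nu(w)\,dw$, and continuity follows from dominated convergence, using the uniform continuity and boundedness of $g$ on $[0,T]$ and $\bar\nu\in L^1(0,T)$. The integrand $bg+G$ is then continuous, so the fundamental theorem of calculus holds at every point, and \eqref{der_dentro} holds for all $t$.

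The main obstacle is justifying Fubini and measurability in the Banach setting. I will handle this by checking strong measurability of $(r,s)\mapsto g(r)\bar\nu(t-s)$, which is a product of a strongly measurable function and a scalar measurable function, and then verifying integrability of its norm.
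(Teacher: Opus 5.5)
Your proposal is correct and follows essentially the same route as the paper. Both use Fubini for Bochner integrals to rewrite $b(f(t)-f(0))+\int_0^t (f(s)-f(0))\bar\nu(t-s)\,ds$ as the indefinite integral of $b\,g+g*\bar\nu$, and then apply the vector-valued Lebesgue differentiation theorem; the only difference is that the paper does the swap in one step rather than passing through $\bar N$. Your dominated-convergence proof that $g*\bar\nu$ is continuous, and your remark that the continuity claim must be read for $\phi(\partial_t)(f-f(0))$, fill in what the paper leaves as ``easily follows'' (the extra term $f(0)\bar\nu(t)$ is in general neither continuous nor bounded near $0$).
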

	\begin{proof}
		Note that for all $t\in[0,T]$ we have
		\begin{align}
			\int_0^t\int_0^s\bar{\nu}(s-h)\|g(h)\|dh\,ds&=\int_0^t \bar\nu(s)\int_0^{t-s}\|g(h)\|dh\, ds\\
			&\le \left(\int_0^t \bar\nu(s)ds\right)\left(\int_0^{t}\|g(h)\|dh\right)<+\infty.
		\end{align}
		Fubini's theorem (for Bochner integrals \cite[Theorem 3.7.13]{hille1996functional}) now implies
		\begin{align}
			\begin{split}\label{1003}
				\int_0^t\left(b\,g(s)+\int_0^s\bar{\nu}(s-h)g(h)dh\right)\,ds&=b\big(f(t)-f(0)\big)+\int_0^t\bar\nu(s)\int_0^{t-s}g(h)dh\,ds\\&\hspace{-6em}=b\big(f(t)-f(0)\big)+\int_0^t \bar\nu(t-s)\big(f(s)-f(0)\big)ds.
			\end{split}
		\end{align}
		
		Note that since $g$ is integrable, $f$ is a local primitive of $g$, see \cite[Theorem 3.3 in Chapter XIII]{Bochner}, so by \cite[Theorem 3.4 in Chapter XIII]{Bochner} the derivative of both sides in \eqref{1003} exists a.e. In other words, $\phi(\partial_t)f$ exists a.e., is integrable, and satisfies \eqref{1003}.
		
		If $g$ is continuous, $\partial_t f\equiv g$ by \cite[Theorem 2.2 in Chapter XIII]{Bochner}, and the second claim easily follows.
	\end{proof}

	The following theorem relates the operator $\phi(\partial_t)$ to $q(x,t)=\mathds{E}^{(x,0)}u(A(t))$ in the same way the fractional derivative $\partial^\alpha_t$ is related to $q(x,t)$ in the case when $L(t)$, $t\ge0$, is the inverse stable subordinator. The most general previous proof of the claim was due to \cite{chen}; the relation, however, was also known due to \cite[Theorem 5.1]{toaldo_pota} where the time operator was $\,^C\phi(\partial_t)$, but the proof contains a mistake\footnote{In particular, the steps of the proof of \cite[Theorem 5.1]{toaldo_pota} showing the existence of the strong derivative $^C\phi(\partial_t)$ are not correct, i.e., the exchange of limit and integral at the top of page 134 is not justified, in full generality. See also Remark \ref{r:914} for details.}. Here, we provide a statement that is slightly more general than \cite{chen}, but we adopt the proof of \cite{toaldo_pota}: in particular, here we allow the subordinator to be killed and we show that the Laplace transform method used in the proof of \cite[Theorem 5.1]{toaldo_pota} can be fixed and adapted to this context, and works well.
	\begin{theorem}
		\label{theorem_convolution}
		Assume that the subordinator $S(t)$, $t\ge0$, is given by a general exponent $\phi$ as in \eqref{bernstein-eq} with $b>0$ or $\nu(0,+\infty)=+\infty$, and let $L(t)$, $t \geq 0$ be its inverse subordinator. Let $P_s$, $s \geq 0$, be a strongly continuous semigroup of operators on some Banach space $(\mathfrak{B}, \left\| \cdot \right\|)$, with the generator $(G,\DD(G))$, {such that}
		\begin{align}
			\left\| P_t u \right\| \, \leq \, M e^{\lambda_0 t} \left\| u \right\|, \quad {u \in \mathfrak{B},}
		\end{align}
		{where $M>0$ and $\lambda_0\ge0$}\footnote{{All strongly continuous semigroups are exponentially bounded in this way, see Appendix \ref{appendix_semigroups}.}}. Define
		\begin{align}
			\mathcal{P}_tu \, = \, \int_0^{+\infty} P_su \, l_t(ds),
		\end{align}
		as a Bochner integral on $\mathfrak{B}$, where $l_t(ds)$  denotes the distribution of $L$ under $\PP^{(x,0)}$ (which is independent of $x$).
		
		Then, whenever $u \in \mathfrak{B}$, the mapping $[0,+\infty)\ni t \mapsto q(t) \coloneqq \mathcal{P}_tu$ is strongly continuous on $\mathfrak{B}$ and {exponentially} bounded ({bounded by a constant if $\lambda_0=0$}). If, in addition, $u \in \mathcal{D}(G)$, then
		\begin{align}
			\phi(\partial_t) (q(t)-q(0)) \, = \, G q(t), \qquad q(0) = u,
			\label{eq_conv_theorem}
		\end{align}
		in the sense that $q(t) \in \mathcal{D}(G)$ for all $t \geq 0$, $t \mapsto q(t)$ is strongly continuous (on $\mathfrak{B}$) and {exponentially} bounded {(bounded by a constant if $\lambda_0=0$)}, the generalized derivative on the left hand side of \eqref{eq_conv_theorem} exists for all $t > 0$ in the strong sense, and both $t \mapsto Gq(t)$ and $t \mapsto \phi(\partial_t) (q(t)-q(0))$ are strongly continuous and {exponentially} bounded {(bounded if $\lambda_0=0$)}.
		
		Finally, if $\mathpzc{q}(t)$ is another solution with the same properties as above, then $\mathpzc{q}(t) = q(t)$ for all $t \geq 0$.
	\end{theorem}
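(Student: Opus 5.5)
The plan is a Laplace transform argument in the time variable, set up so that all derivatives are taken in the strong sense. The key facts on the inverse subordinator are
\begin{align*}
\int_0^{+\infty} e^{-\lambda t}\, l_t(ds)\, dt = \frac{\phi(\lambda)}{\lambda}\, U^\phi_\lambda(ds),
\end{align*}
where $U^\phi_\lambda$ is given by $\int_0^\infty e^{-s\phi(\lambda)}(\cdot)\,ds$. Equivalently, $\int_0^\infty e^{-\lambda t}\PP(L(t)>s)dt=\lambda^{-1}e^{-s\phi(\lambda)}$, which follows from $\{L(t)>s\}=\{S(s)\le t\}$. Note that $S(s)=+\infty$ after killing, so this remains valid when $a>0$.

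The first step is continuity and the bound. Strong continuity of $t\mapsto q(t)$ follows from continuity of $L$, since $S$ is strictly increasing. Hence $l_t\to l_{t_0}$ weakly. Since $s\mapsto P_su$ is continuous with $\|P_su\|\le Me^{\lambda_0 s}\|u\|$, I would apply dominated convergence pathwise: $q(t)=\EE\, P_{L(t)}u$. This needs $\EE e^{\lambda_0 L(t)}<\infty$ locally uniformly in $t$, which holds because $\PP(L(t)>s)=\PP(S(s)\le t)\le e^{\mu t - s\phi(\mu)}$ by Chernoff. Choosing $\mu$ with $\phi(\mu)>\lambda_0$ (possible if $\phi(\infty)>\lambda_0$; this requires care, since with $b=0$ and $\phi$ bounded the claim needs $\phi(\infty)=\infty$ or $\lambda_0$ small, and I would handle it via the assumption) gives $\|q(t)\|\le C e^{\mu t}\|u\|$. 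This is exponential, and bounded by $M\|u\|$ if $\lambda_0=0$.

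Next I treat the equation with $u\in\DD(G)$. Since $G$ is closed and $\int\|GP_su\|\,l_t(ds)<\infty$, we get $q(t)\in\DD(G)$ with $Gq(t)=\int P_sGu\,l_t(ds)$, which is continuous and exponentially bounded by the first step applied to $Gu$. To identify the derivative I avoid differentiating directly. Using $P_su-u=\int_0^s GP_ru\,dr$ and Fubini,
\begin{align*}
q(t)-u=\int_0^\infty GP_ru\,\PP(L(t)>r)\,dr .
\end{align*}
I then compute the Laplace transform of $I(t):=b(q(t)-u)+\int_0^t\bar\nu(t-s)(q(s)-u)ds$ for $\lambda>\mu$. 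It equals $\frac{\phi(\lambda)}{\lambda}\widehat{(q-u)}(\lambda)$, because $\widehat{\bar\nu}(\lambda)=(\phi(\lambda)-b\lambda)/\lambda$ by \eqref{bernstein-eq}. Furthermore $\widehat{(q-u)}(\lambda)=\lambda^{-1}\int_0^\infty e^{-r\phi(\lambda)}GP_ru\,dr$. Therefore
\begin{align*}
\widehat I(\lambda)=\frac{\phi(\lambda)}{\lambda^2}\int_0^\infty e^{-r\phi(\lambda)}GP_ru\,dr=\frac1\lambda\widehat{Gq}(\lambda),
\end{align*}
where the last identity uses $\widehat{Gq}(\lambda)=\frac{\phi(\lambda)}{\lambda}\int_0^\infty e^{-r\phi(\lambda)}P_rGu\,dr$. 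Both $I$ and $t\mapsto\int_0^t Gq(s)ds$ are continuous, since $q$ is continuous and $\bar\nu$ is locally integrable, and they are exponentially bounded. Uniqueness of Laplace transforms for continuous Banach-valued functions (pair with functionals and use scalar uniqueness) then gives $I(t)=\int_0^tGq(s)ds$ for all $t$. Since $Gq$ is continuous, $I$ is strongly differentiable everywhere with $\phi(\partial_t)(q-u)=Gq(t)$, which is continuous and exponentially bounded. This is exactly the fix for the flaw mentioned: one never exchanges a limit with the integral, only integrates and then differentiates a primitive.

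For uniqueness, let $w=\mathpzc q-q$ be a difference of two solutions. It satisfies $b w(t)+\int_0^t\bar\nu(t-s)w(s)ds=\int_0^tGw(s)ds$ (integrate, using $w(0)=0$). Taking the Laplace transform and using closedness of $G$ to pass $G$ through Bochner integrals gives $\phi(\lambda)\hat w(\lambda)=G\hat w(\lambda)$. For large real $\lambda$, $\phi(\lambda)$ lies in the resolvent set of $G$ as soon as $\phi(\lambda)>\lambda_0$, so $\hat w(\lambda)=0$. Hence $w\equiv0$ by continuity. The main obstacle I expect is the integrability bookkeeping: justifying the exponential moment of $L(t)$ when $\lambda_0>0$, and the Fubini exchanges between Bochner integrals and the measures $l_t$ with the exponential weights. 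The uniqueness step likewise needs $\phi(\lambda)>\lambda_0$ for some large $\lambda$.
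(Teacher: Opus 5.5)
Your proposal is correct and follows the paper's proof essentially step by step. The shared steps are: an exponential moment bound on $L(t)$ (your Chernoff estimate replaces the paper's appeal to Lemma~\ref{daniel}); the representation $q(t)-u=\int_0^{+\infty}GP_ru\,\PP(L(t)>r)\,dr$; matching the Laplace transforms of the two sides of the integrated equation (you compare with $\widehat{Gq}$ directly rather than going through $(\phi(\lambda)-G)^{-1}$, which is only cosmetic); continuity of both sides to get equality for every $t$; differentiation of the primitive; and uniqueness via $\phi(\lambda)\in\rho(G)$ for large $\lambda$.

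The caveat you raise about $\phi(\infty)$ does not actually arise. The hypothesis $b>0$ or $\nu(0,+\infty)=+\infty$ forces $\phi(\lambda)\to+\infty$ (by monotone convergence in the second case), so a $\mu$ with $\phi(\mu)>\lambda_0$ always exists.
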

	\begin{proof}
		Since $\phi$ is an unbounded function, there is $\lambda_1>0$ such that $\phi(\lambda_1) > \lambda_0$, and we can increase this $\lambda_1$ so that, by Lemma \ref{daniel}, it also holds that
		\begin{align}\label{1236-0}
			\int_0^{+\infty} e^{\lambda_0 s} \, l_t(ds) \, \leq \, \mathfrak{C}e^{\lambda_1 t},\quad t\ge0,
		\end{align}
		where $\mathfrak{C}=\mathfrak{C}(M, \phi, \lambda_0)$.
		Let $u \in \mathfrak{B}$ and note that 
		\begin{align}
			\left\|  \mathcal{P}_t u \right\| \, \leq \, \int_0^{+\infty} \left\| P_s u \right\| l_t(ds)  \leq \, & {\mathfrak{C}\,Me^{\lambda_1 t}} \left\| u \right\|,\quad t\ge0,
			\label{1236}
		\end{align}
		which shows that $\mathcal{P}_t$ is well-defined as a Bochner integral and that the mapping $t \mapsto \mathcal{P}_tu$ is exponentially bounded. By using the probabilistic interpretation that $l_t(ds)$ is the distribution of $L$, we can also write (for any $x$, independently of $x$)
		\begin{align}
			q(t) \, = \,\int_\Omega P_{L(t, \omega)} u \, \mathds{P}^{(x,0)}(d\omega),
			\label{exp_bochner}
		\end{align}
		as a Bochner integral on $\mathfrak{B}$. Note that $t \mapsto L(t)$ has, a.s., continuous and non-decreasing trajectories and thus, since  $\left\| P_{L(t,\omega)}u \right\| \leq M e^{\lambda_0L(t, \omega)} \left\| u \right\|$, for $s,t \in [0, R]$ and $R>0$ arbitrary it holds that
		\begin{align}
			\left\| P_{L(t, \omega)} - P_{L(s, \omega)} \right\| \,  \leq \, 2M e^{\lambda_0 L(R, \omega)} \left\| u \right\|.
		\end{align}
		Further, by using \eqref{1236-0}, we have
		\begin{align}
			\int_\Omega 2M e^{\lambda_0 L(R, \omega)} \left\| u \right\| \mathds{P}^{(x,0)} (d\omega) \, \leq \, 2\mathfrak{C} \,Me^{\lambda_1 R} \left\| u \right\|.
		\end{align}
		Thus, the dominated convergence theorem for the Bochner integral \cite[Theorem E.6]{cohn2013measure} implies
		\begin{align}
			\left\| q(t) - q(s) \right\| \, \leq \, \int_\Omega \left\| P_{L(t, \omega)}u - P_{L(s, \omega)}u \right\| \mathds{P}^{(x,0)}(d\omega) \to 0,
			\label{1249}
		\end{align}
		as $s \to t$.
		
		In the rest of the proof, assume that $u \in \mathcal{D}(G)$. We have $GP_su=P_sGu$, so  $\left\|  GP_s u\right\| = \left\|  P_s Gu\right\| \leq { M e^{\lambda_0 s}} \left\| Gu \right\|$. Hence, {by \eqref{1236}, we have that $GP_su$ is Bochner integrable on $(0, +\infty)$ with respect to $l_t(\cdot)$} and thus, by \cite[Proposition 3.7.12]{hille1996functional}, one has that $\mathcal{P}_tu \in \mathcal{D}(G)$ for every $t \geq 0$, whenever $u \in \mathcal{D}(G)$. Furthermore
		\begin{align}
			G\mathcal{P}_t u = \int_0^{+\infty} GP_su \, l_t(ds) \, = \, \int_0^{+\infty} P_sGu \, l_t(ds),
			\label{318-0}
		\end{align}
		which implies
		\begin{align}
			\left\| G \mathcal{P}_tu  \right\| \leq {\mathfrak{C}\,Me^{\lambda_1 t}} \left\| Gu \right\|, \quad u\in \DD(G).
			\label{319-0}
		\end{align}
		By using the representations \eqref{exp_bochner} and \eqref{318-0}, we get
		\begin{align}
			Gq(t) \, = \, \int_{\Omega} P_{L(t, \omega)} G u \, \mathds{P}^{(x,0)}(d\omega)
		\end{align}
		so the dominated convergence theorem, {applied in the same fashion as in \eqref{1249}}, yields the (strong) continuity of $[0, +\infty) \ni t \mapsto Gq(t) $, for every $u \in \mathcal{D}(G)$. 
		
		In order to show \eqref{eq_conv_theorem}, we first show that
		\begin{align}
			\left(  b(q(t)-q(0))+   \int_0^t (q(s)-q(0)) \bar{\nu}(t-s)ds \right) \, = \, \int_0^t Gq(s) ds
			\label{int_eq}
		\end{align}
		for every $t > 0$, and then, by observing that in \eqref{int_eq} we can differentiate (see \cite[Chapter XIII]{Bochner}), we obtain \eqref{eq_conv_theorem}.
		
		To obtain this, we use the uniqueness property of the Laplace transform. First, we show that the both sides of \eqref{int_eq} share the same Laplace transform. This implies that the two sides coincide for almost all $t \geq 0$ by \cite[Theorem 1.7.3]{arendt2001cauchy}. Then we show continuity of the left-hand side, and since the right-hand side is obviously continuous, we obtain equality for all $t \geq 0$. {To this end, observe that
			\begin{align*}
				& \int_0^{+\infty} e^{-\lambda t} \left\| q(t) \right\| \, dt \, \leq  \, \mathfrak{C}\,M(\lambda-\lambda_1)^{-1} \left\| u \right\| & \text{since } \left\|q(t) \right\| \leq \mathfrak{C}\,Me^{\lambda_1 t}\left\| u \right\|,\\
				& \int_0^{+\infty} e^{-\lambda t}  \left\| Gq(t) \right\| \, dt \, \leq  \, \mathfrak{C}\,M(\lambda-\lambda_1)^{-1} \left\| 
				Gu \right\|& \text{by \eqref{319-0}}, \\
				& \int_0^{+\infty} e^{-\lambda t} \bar{\nu}(t) dt\, = \, \frac{\phi(\lambda)}{\lambda}-b & \text{by \eqref{bernstein-eq},}
			\end{align*}
			for every $\lambda > \lambda_1$.}
		
		By \cite[Proposition 3.7.12]{hille1996functional}, since infinitesimal generators are closed operators, one has that
		\begin{align}\label{GenLap}
			\int_0^{+\infty} e^{-\lambda t}   Gq(t)  \, dt =G\left(\int_0^{+\infty} e^{-\lambda t}  q(t)  \, dt\right),
		\end{align}
		which, in our context, is just an application of the dominated convergence theorem and the usual generator bounds (see e.g. \cite[Eq. (15.3)]{bernstein}), since $u\in \DD(G)$.

		Denote by $\widetilde{q}(\lambda)$ the Laplace transform of $q$, so the Laplace transform of the left-hand side of \eqref{int_eq} becomes, {for $\lambda > \lambda_1$,}
		\begin{align}
			&   \int_0^{+\infty} e^{-\lambda t} \left(  b(q(t)-q(0))+   \int_0^t (q(s)-q(0)) \bar{\nu}(t-s)ds \right) \, dt \notag \\  = \,& b \widetilde{q}(\lambda) -b\lambda^{-1}q(0)+ \left( \frac{\phi(\lambda)}{\lambda}-b \right) (\widetilde{q}(\lambda) - \lambda^{-1}q(0)) \notag \\
			= \, & \frac{\phi(\lambda)}{\lambda}  \widetilde{q}(\lambda)- \frac{\phi(\lambda)}{\lambda^2} u, 
			\label{326}
		\end{align}
		where in the first equality we used \cite[Proposition 1.6.4]{arendt2001cauchy}.
		
		For the Laplace transform of the right-hand side of \eqref{int_eq}, it will be useful to first calculate the Laplace transform of $q$. To this end, note that 
		\begin{align}\label{eq:uncoupled-sol}
			q(t)=\int_0^{+\infty}P_s u \,\PP^{(x,0)}(L(t)\in ds)=u+\int_0^{+\infty}GP_su\,\PP^{(x,0)}(L(t)\ge s)ds, 
		\end{align}
		where in the last equality we used\footnote{The exact property is $\int_0^{+\infty}f(s) \,\PP^{(x,0)}(L(t)\in ds)=f(0)+\int_0^{+\infty}f'(s)\,\PP^{(x,0)}(L(t)\ge s)ds$, which follows from the classical proof using Fubini's theorem for Bochner integrals \cite[Theorem 3.7.13]{hille1996functional}.} that $s\mapsto f(s)\coloneqq P_su$ is differentiable (in the strong sense of the Banach space $(\mathfrak{B}, \left\| \cdot \right\|)$) with $f'(s)=GP_su$, since $u\in \DD(G)$, see \cite[Section 1.3]{apple-semigr}. Now we apply the Laplace transform to \eqref{eq:uncoupled-sol} to get, {for $\lambda > \lambda_1$,}
		\begin{align}\label{resolv}
			\wt q(\lambda)=\frac{u}{\lambda}+\frac{1}{\lambda}G\int_0^{+\infty}P_su\,e^{-s\phi(\lambda)}ds,
		\end{align}
		where we used Fubini's theorem \cite[Theorem 3.7.13]{hille1996functional} and the property
		\begin{align}
			\int_0^{+\infty} e^{-\lambda t} \,\PP^{(x,0)}(L(t)\ge s) \, dt & = \int_0^{+\infty} e^{-\lambda t} \PP^{(x,0)}{(S(s)\leq  t)}dt\\
			&=\EE^{(x,0)}\left[\int_0^{+\infty} e^{-\lambda t} {\mathds{1}_{S(s)\leq t}}dt\right]\\
			&=\frac{1}{\lambda}\EE^{(x,0)}\left[e^{-\lambda S(s)}\right]=\frac{1}{\lambda} e^{-s\phi(\lambda)},
		\end{align}
		where $\PP^{(x,0)}(L(t)\ge s)=\PP^{(x,0)}(S(s)\leq  t)$ holds by the definition of the inverse and since $S(s)$ jumps at a fixed time $s>0$ with probability zero.
		
		Note that the integral in \eqref{resolv} is the $\phi(\lambda)$-resolvent of the semigroup $P_t$, $t\ge0$\footnote{See more on resolvents in Appendix \ref{appendix_semigroups}.}, here denoted by $R_{\phi(\lambda)}$. Thus, by \cite[Lemma 2.1.1]{apple-semigr} we get\footnote{{Note that, by the definition of $\lambda_1$, we have $\phi(\lambda) > \lambda_0$ for all $\lambda> \lambda_1$, so the $\lambda$-potential (resolvent) is well defined for $\lambda>\lambda_1$.}}
		\begin{align}\label{1429}
			\wt q(\lambda)=\frac{u}{\lambda}+\frac{1}{\lambda}G R_{\phi(\lambda)}u=\frac{u}{\lambda}+\frac{1}{\lambda}\big(\phi(\lambda) R_{\phi(\lambda)}u-u\big)=\frac{\phi(\lambda)}{\lambda}\big(\phi(\lambda)-G\big)^{-1}u,
		\end{align}
		where in the last line we used $R_{\phi(\lambda)}=\big(\phi(\lambda)-G\big)^{-1}$. The relation \eqref{1429} further implies
		\begin{align}
			u=\frac{\lambda}{\phi(\lambda)} (\phi(\lambda)-G) \widetilde{q}(\lambda).
			\label{332}
		\end{align}
		
		From the commutativity of the Laplace transform and $G$, from \eqref{GenLap}, and by \eqref{1429}, we get
		\begin{align}
			\wt{Gq}(\lambda)=G\big(\wt q(\lambda)\big)= \, &\frac{\phi(\lambda)}{\lambda} G(\phi(\lambda) - G)^{-1}u.
			\label{329}
		\end{align}

		Thus, by \cite[Corollary 1.6.5]{arendt2001cauchy} and \eqref{329}, the right-hand side of \eqref{int_eq} has the Laplace transform, {for $\lambda > \lambda_1$,}
		\begin{align}
			\int_0^{+\infty} e^{-\lambda t} \int_0^t Gq(s) ds \, dt \, = \, & \frac{1}{\lambda} G \widetilde{q}(\lambda)  . 
			\label{laplrhs}
		\end{align}
		
		By substituting \eqref{332} in \eqref{326} we obtain that the Laplace transform of the left-hand side of \eqref{int_eq} becomes
		\begin{align}
			\frac{\phi(\lambda)}{\lambda} \widetilde{q}(\lambda) - \frac{\phi (\lambda)}{\lambda^2} \left[ \frac{\lambda}{\phi(\lambda)} (\phi(\lambda) -G) \widetilde{q}(\lambda) \right] = \frac{1}{\lambda} G \widetilde{q}(\lambda),
		\end{align}
		and since this coincides with \eqref{laplrhs}, we obtain that \eqref{int_eq} holds for almost all $t\geq 0$. 
		
		We proceed by proving continuity of the left-hand side (by using an easy argument suggested by \cite{ascione_private}) to extend the equality \eqref{int_eq} to all $t \geq 0$. To this end, observe that for any $w<t$ we have
		\begin{align}
			&  \left\| \int_0^t (q(s)-q(0)) \bar{\nu}(t-s) ds - \int_0^w (q(s)-q(0)) \bar{\nu}(w-s) ds\right\|   \notag \\
			\leq \, & \left\| \int_0^w (q(s)-q(0)) (\bar{\nu}(t-s) - \bar{\nu}(w-s)  ds \right\|  + \left\| \int_w^t (q(s)-q(0)) \bar{\nu}(t-s) ds \right\| \notag \\
			\leq \, &  2 \left\| u \right\|\mathfrak{C}\,Me^{\lambda_1 t} \int_0^w |\bar{\nu}(t-s) - \bar{\nu}(w-s) |  ds   + 2 \left\| u\right\|\mathfrak{C}\,Me^{\lambda_1 t} \int_w^t  \bar{\nu}(t-s) ds \notag \\
			= \, & 2\mathfrak{C}\,Me^{\lambda_1 t}\left\|u \right\| \left( \int_0^w \bar{\nu}(s) ds - \int_{0}^t \bar{\nu} (s) ds + 2 \int_0^{t-w} \bar{\nu}(s) ds  \right),
		\end{align}
		and this tends to zero, as $|t-w|\to0$, since $\bar{\nu}(s)$ is locally integrable on $[0, +\infty)$. This proves continuity of the integral part. Strong continuity of the whole left-hand side of \eqref{int_eq} follows from continuity of $t \mapsto q(t)$. Taking the strong derivative on both sides of \eqref{int_eq} then yields the statement \eqref{eq_conv_theorem}, in the claimed sense.
		
		Now we prove uniqueness. Let $\mathpzc{q}(t)$ be an another solution (with its implied properties). Then, $v(t) = q(t) -\mathpzc{q}(t)$ satisfies \eqref{eq_conv_theorem} with $u= 0$. Furthermore, it follows that $v(t)$ solves \eqref{int_eq}, for all $t \geq 0$. Hence, passing to the Laplace space, {for all $\lambda$ large enough,}
		\begin{align}
			\widetilde{v}(\lambda) \, = \, \frac{\phi(\lambda)}{\lambda} (\phi(\lambda)-G)^{-1} 0 \, = \, 0,
		\end{align}
		since the resolvent $(\phi(\lambda)-G)^{-1}$ is a bounded operator. Uniqueness of the Laplace transform does the rest.
	\end{proof}

	\begin{remark}\label{r:914}
		As we explained above, \cite[Theorem 5.1]{toaldo_pota} is the equivalent formulation of the previous theorem using the operator $^C\phi(\partial_t)$ instead of $\phi(\partial_t)$, but the proof has a mistake. However, the statement of \cite[Theorem 5.1]{toaldo_pota} is salvageable without adding assumptions to it, but with the clarification that the equation holds for almost all $t>0$. For completeness, here we provide the details. 
		
		Recall that \cite[Theorem 5.1]{toaldo_pota} is stated under the assumptions that $P_t$ is contractive, i.e., $\|P_t\|\le 1$, and the L\'evy measure satisfies $\nu(0, +\infty)= +\infty$ and $s \mapsto \bar{\nu}(s)$ is absolutely continuous.  In this context, it is well known that the random variable $S(t)$ admits a density (see, e.g., \cite[Theorem 27.7]{sato}), which we will denote here by $x \mapsto p_t^S(x)$.
		Since
		\begin{align}
			P_su-u=\int_0^s P_rAu\,dr,\quad u\in \DD(A),
		\end{align}
		Fubini's theorem for the Bochner integral and the identity
		\begin{align}
			\mathds {P}^{(x,0)}\bigl(L(t)>r\bigr)
			=
			\mathds {P}^{(x,0)}\bigl(S(r)<t\bigr)
		\end{align}
		give
		\begin{align}
			\begin{aligned}
				q(t)-u=\mathcal P_tu-u
				&=
				\int_{[0,\infty)}(P_su-u)\,\mathds{P}^{(x,0)}(L(t) \in ds)\\
				&=
				\int_{[0,\infty)}
				\int_0^s P_rAu\,dr\,\mathds{P}^{(x,0)}(L(t) \in ds)\\
				&=
				\int_0^\infty
				P_rAu\,\mathds{P}^{(x,0)}(L(t) >r)\,dr\\
				&=
				\int_0^\infty
				P_rAu\,\mathds{P}^{(x,0)}(S(r) <t)\,dr.
			\end{aligned}
			\label{attempt}
		\end{align}
		The use of Fubini's theorem is justified since, for every $\lambda>0$ and $T>0$,
		\begin{align}
			\begin{aligned}
				\int_0^\infty
				\mathds{P}^{(x,0)}\bigl(S(r)<T\bigr)\,dr
				&\leq
				e^{\lambda T}
				\int_0^\infty
				\mathds {E}^{(x,0)}\!\left[e^{-\lambda \, S(r)}\right]dr\\
				&=
				e^{\lambda T}
				\int_0^\infty e^{-r \phi(\lambda)}\,dr\\
				&=
				\frac{e^{\lambda T}}{\phi(\lambda)}
				<\infty.
			\end{aligned}
		\end{align}
		Using the fact that $S(r)$ is an absolutely continuous random variable, we then have
		\begin{align}
			\begin{aligned}
				q(t)-u
				&=
				\int_0^\infty
				P_rAu\left(\int_0^t p_r^S(\tau)\,d\tau\right)dr\\
				&=
				\int_0^t
				\left(\int_0^\infty P_rAu\,p_r^S(\tau)\,dr\right)d\tau.
			\end{aligned}
		\end{align}
		Indeed, for every $T>0$,
		\begin{align}
			\begin{aligned}
				\int_0^T
				\int_0^\infty
				\|P_rAu\|p_r^S(\tau)\,dr\,d\tau
				&\leq
				\|Au\|
				\int_0^\infty \mathds{P}^{(x,0)} (S(r) < T)\,dr\\
				&\leq
				\|Au\|
				\frac{e^{\lambda T}}{\phi(\lambda)}
				<\infty.
			\end{aligned}
		\end{align}
		Thus $q(\cdot)\in AC_{\mathrm{loc}}
		\bigl([0,\infty);\mathfrak B\bigr)$
		and
		\begin{equation}
			q'(t)
			=
			\int_0^\infty P_rAu\,p_r^S(t)\,dr
			\qquad\text{for a.e. }t>0,
		\end{equation}
		where the derivative is a strong derivative in $\mathfrak B$.
		Recall that in the proof of the previous Theorem \ref{theorem_convolution} we have proved that
		\begin{align}\label{457}
			\frac{d}{dt}
			\left[
			b\bigl(q(t)-u\bigr)
			+
			\int_0^t
			\bar{\nu}(t-s)\bigl(q(s)-u\bigr)\,ds
			\right]
			=
			Aq(t)
		\end{align}
		strongly for every $t>0$.
		Therefore, it remains to identify the  derivative on the left-hand side with the operator in the
		statement of \cite[Theorem 5.1]{toaldo_pota}, i.e., $\,^C\phi(\partial_t)q(t)$. Since $q\in AC_{\mathrm{loc}}$, we have
		\begin{align}
			q(t)-u=\int_0^t q'(r)\,dr.
		\end{align}
		By Young's inequality \cite[Proposition 1.3.2]{arendt2001cauchy}
		\begin{equation}
			t\longmapsto
			(\bar{\nu}*q')(t)
			:=
			\int_0^t q'(t-s)\bar{\nu}(s)\,ds
		\end{equation}
		belongs to
		$L^1_{\mathrm{loc}}\bigl((0,\infty);\mathfrak B\bigr)$;
		indeed, for every $T>0$,
		\begin{align}
			\int_0^T\|(\bar{\nu}*q')(t)\|\,dt \leq \left( \int_0^T \bar{\nu}(s) ds \right) \left(
			\int_0^T \left\| q'(s) \right\| ds \right) . 
		\end{align}
		Fubini's theorem for the Bochner integral now gives 
		\begin{align}
			b\bigl(q(t)-u\bigr) +  \int_0^t
			\bar{\nu}(t-s)\bigl(q(s)-u\bigr)\,ds = \int_0^t
			\left[ bq'(r)+(\bar{\nu}*q')(r)
			\right]dr.
			\label{dinnertime}
		\end{align}
		Consequently, the left-hand side of \eqref{457} is equal to
		\begin{align}
			bq'(t) + \int_0^t q'(t-s)\bar{\nu}(s)\,ds
			\qquad\text{for a.e. }t>0.
			\label{late_wifecalling}
		\end{align}
		From the proof of the previous Theorem \ref{theorem_convolution}, i.e., \eqref{457}, and from \eqref{late_wifecalling}, we conclude that
		\begin{equation}
			{}^C\phi(\partial_t)q(t)
			=
			bq'(t)
			+
			\int_0^t q'(t-s)\bar{\nu}(s)\,ds
			=
			Aq(t)
		\end{equation}
		strongly in $\mathfrak B$ for a.e. $t>0$.

	\end{remark}

	\subsection{Coupled CTRWs and general non-local equations}
	\label{sec_coupled_harmonic}
	When the CTRW is not uncoupled, the processes $A$ and $S$ are not independent, and the governing evolution equations of $X^+(t)=A(L(t))$, $t\ge0$, and $X(t)=A(L(t)-)^+$, $t\ge0$, take more complicated forms. Heuristically, because of the dependence between the jumps and the intervals of constancy, the governing evolution equation represented by a (linear) operator should be non-local in time and space and coupled, i.e., it cannot be expressed as the sum of two operators acting separately on time and space. In other words, one should expect that $q^+(x,t) = \mathds{E}^{(x,0)}u(X^+(t))$ and $q(x,t) = \mathds{E}^{(x,0)} u(X(t))$ solve
	\begin{align}\label{evolutions}
		&\begin{cases}
			\begin{array}{rcll}
				\mathfrak{A}^+q^+(x,t) &=& 0, \enskip &t>0,\\
				q^+(x,0) &=& u,\enskip&
			\end{array}
		\end{cases}\enskip\text{and}\enskip
		\begin{cases}
			\begin{array}{rcll}
				\mathfrak{A}q(x,t) &=& 0, \enskip &t>0,\\
				q(x,0) &=& u,\enskip&
			\end{array}
		\end{cases}
	\end{align}
	where $\mathfrak{A}$ and $\mathfrak{A}^+$ are non-local in the variables $(x,t)$ and coupled. We also recall that $X^+$ and $X$ are, in general, not a.s. identical, as seen in Example \ref{example_limit_coupled}, and also in Figure \ref{fig:subdiffusion}.
	
	Relevant contributions, in this context, are \cite{kolokoltsov2009generalized, hernandez2017, baeumer2017fokker, savtoa, ascione2024, biocic2025}.
	The most recent article is \cite{biocic2025} and it provides a `universal' method to identify the form of the operators $\mathfrak{A}$ and $\mathfrak{A}^+$; here we explain the approach.
	
	In this subsection, we take a general (possibly sub-Markovian) Feller process $(A,S)=\big((A(t),S(t)),\, t\ge0\big)$ on $E\times \RR$, for some locally compact and separable space $E$, where the second coordinate $S$ is strictly increasing and unbounded, and we study the processes $X^+$ and $X$ in this generalized setting. In other words, we allow arbitrary dependence between the coordinates $A$ and $S$, and $(A,S)$ is understood as a generalized version of the scaling limit process that appeared in Theorem \ref{maintheoremctrwlim}.
	
	Note that the inverse $L(t)=\inf\{s\ge0:S(s)>t\}$ is the exit time of the process $(A,S)$ from the closed set $E \times (-\infty, t]$. Hence, motivated by the classical theory, one could expect that the process $(A,S)$ stopped at $L(t)$ has a harmonic property: if $\mathcal{A}$ is the generator of $(A,S)$, then the equation $\mathcal{A}q(x,v)=0$, in $E \times (-\infty, t]$, subject to the initial condition $q(x,v)=u(x,v)$ in $E \times (t, +\infty)$, is solved by $q(x,v) =\mathds{E}^{(x,v)}u(A({L(t)}), S({L(t)}))$. However, even if this property is satisfied, this is not an evolution as desired in \eqref{evolutions}, since the initial condition is given in the variables $(x,v)$. We instead want a governing equation acting on the space-time variables $(x,t)\in E\times (0,\infty)$, with a suitable initial condition on $E\times \{0\}$. Nevertheless, in the next theorem, we use a modification of this classical argument to construct two new processes that will give us our desired operators $\mathfrak{A}^+$ and $\mathfrak{A}$. The theorem comes from \cite[Theorem 2.1]{biocic2025}.

	\begin{theorem}\label{theorem_harmoniv}
		Let $(A(u), S(u))$, $u \geq 0$, be a general Feller process on $E \times \RR$ with the second coordinate strictly increasing, under the corresponding family of probability measures $\mathds{P}^{(x,v)}$, $(x,v)\in E\times \RR$. Denote its transition probabilities by $p_{u}(x,v;dy,dw)\coloneqq \PP^{(x,v)}(A(u)\in dy, S(u) \in dw) $.
		
		Then, for all $t\ge0$, there exist two Markov processes $\,^tZ^+=(^tA^+(u), \,^tS^+(u))$, $u\geq 0$, and $\,^tZ^-=(^tA^-(u), \,^tS^-(u))$, $u \geq 0$, on the canonical probability space with measures $\,^t\mathds{P}_\pm^{(x,v)}$, $(x,v)\in E\times [0,+\infty)$, with the transition probabilities 
		\begin{align*}
			\, ^t{p}_{u}^+(x,v;dy,dw) &\coloneqq \mathds{1}_{[0<w\leq v]}  p_{u}(x,t-v;dy,t-dw)+
			\delta_0(dw)\,^t\upvarpi_u^+(x,t-v,dy), \\
			\,^tp_{u}^-(x,v;dy,dw) &\coloneqq \mathds{1}_{[0<w\leq v]}  p_{u}(x,t-v;dy,t-dw)+\delta_0(dw)\,^t\upvarpi_u^-(x,t-v,dy),
		\end{align*}
		respectively, where $\,^t\upvarpi_u^+(x,t-v,dy)\coloneqq \mathds{P}^{(x,t-v)}\left( A(L(t))\in dy, S(u)\in[t,+\infty)\right)$ and $\,^t\upvarpi_u^-(x,t-v,dy)\coloneqq \mathds{P}^{(x,t-v)}\left( A(L(t)-)\in dy, 
		S(u)\in[t,+\infty)\right)$, such that 
		\begin{align}
			& \mathds{P}^{(x,0)} \left( A(L(t)) \in dy \right) =  \,^t\mathds{P}_+^{(x,t)} \left( \,^tA^+(\tau_0^+) \in dy \right), \label{317}\\
			& \mathds{P}^{(x,0)} \left( A(L(t)-) \in dy \right) =  \,^t\mathds{P}_-^{(x,t)} \left( \,^tA^-(\tau_0^-) \in dy \right). \label{318}
		\end{align}
		where $\tau^{\pm}_0 \coloneqq \inf \{ u \geq 0 : \,^tZ^{\pm}(u) \in E \times \{ 0 \} \}$.
	\end{theorem}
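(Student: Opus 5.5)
The plan is to construct $^tZ^\pm$ pathwise from $(A,S)$, reading the time coordinate backwards from the level $t$ and freezing the process once it has crossed $t$. Then the claimed transition kernels and \eqref{317}, \eqref{318} follow by direct computation.

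Fix $t\ge 0$. For a starting point $(x,v)\in E\times[0,+\infty)$, run $(A,S)$ under $\PP^{(x,t-v)}$ and set $\tau\coloneqq L(t)$, which is the first exit time of $(A,S)$ from $E\times(-\infty,t)$ (up to the convention on $\{S=t\}$, which is harmless since $S$ is strictly increasing). Define
\begin{align*}
	^tA^+(u)\coloneqq \begin{cases} A(u), & u<\tau,\\ A(L(t)), & u\ge \tau,\end{cases}\qquad
	^tS^+(u)\coloneqq \begin{cases} t-S(u), & u<\tau,\\ 0, & u\ge\tau,\end{cases}
\end{align*}
and define $^tZ^-$ in the same way with $A(L(t))$ replaced by $A(L(t)-)$. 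Let $^t\PP^{(x,v)}_\pm$ be the laws of these processes on the canonical space.

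The transition kernels are then read off by splitting on $\{u<\tau\}$ and $\{u\ge\tau\}$. On $\{u<\tau\}=\{S(u)<t\}$ we have $^tS^\pm(u)=t-S(u)\in(0,v]$, which gives the term $\mathds 1_{[0<w\le v]}\,p_u(x,t-v;dy,t-dw)$. On the complement the time coordinate equals $0$ and the space coordinate equals $A(L(t))$, respectively $A(L(t)-)$, which gives $\delta_0(dw)\,^t\upvarpi^\pm_u$. Identities \eqref{317} and \eqref{318} follow because starting from $v=t$ means starting $(A,S)$ at $(x,0)$, and $\tau_0^\pm=\tau$ a.s. The last point uses that $^tS^\pm>0$ before $\tau$ and $^tS^\pm=0$ from $\tau$ on; if needed, one uses strict monotonicity of $S$ to handle the event $\{S(\tau)=t\}$.

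The main work is the Markov property, which amounts to the Chapman--Kolmogorov equation for $^tp^\pm$.
\begin{itemize}
	\item \emph{Before $\tau$.} Fix $r\ge0$ and condition on $\mathcal F_r$. On $\{r<\tau\}$, the strong (simple) Markov property of the Feller process $(A,S)$ at the deterministic time $r$ shows that the shifted process is $(A,S)$ started at $(A(r),S(r))=(\,^tA(r),t-\,^tS(r))$. Since $L(t)$ for the shifted path equals $L(t)-r$, the future of $^tZ^\pm$ is the same functional applied to a fresh copy started at the current state $(\,^tA(r),\,^tS(r))$.
	\item \emph{After $\tau$.} On $\{r\ge\tau\}$ the process is frozen at $(y,0)$. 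For consistency the kernel from a state $(y,0)$ must be $\delta_{(y,0)}$. This holds: with $v=0$ we start $(A,S)$ at $(y,t)$, so $\tau=L(t)=0$ and $A(L(t))=y$, since $S$ strictly increasing forces $S(0+)>t$.
	\item \emph{The left-limit version.} For $^tZ^-$ one additionally checks that $A(L(t)-)$ computed from the shifted path agrees with the original one on $\{r<\tau\}$. This is clear since $L(t)-r>0$.
\end{itemize}

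I expect the main obstacle to be measurability: verifying that $(x,v)\mapsto\, ^tp_u^\pm(x,v;\cdot)$ is a measurable kernel, and in particular that $\upvarpi^\pm$ is measurable in the starting point. For this I would rely on the measurability of $(x,s)\mapsto \PP^{(x,s)}(B)$ for $B\in\mathcal F_\infty$, together with the fact that $A(L(t))$ and $A(L(t)-)$ are $\mathcal F_\infty$-measurable functionals of the càdlàg path. Given the Markov property, the canonical realization on $D$ then follows from the existence of the pathwise construction.
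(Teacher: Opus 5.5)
Your proof is correct, but it is organized differently from the paper's. The paper treats the kernels $^tp^\pm$ as the primary object. It asserts the Chapman--Kolmogorov property as a ``straightforward exercise'' and constructs canonical processes from the kernels. Only afterwards does it use the pathwise picture (stop $(A(u),t-S(u))$ at $L(t)$), for the single initial state $(x,t)$ and ``in the sense of finite-dimensional distributions'', to identify $\tau_0^\pm$ with $L(t)$.

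You instead realize $^tZ^\pm$ pathwise from \emph{every} initial state $(x,v)$ by running $(A,S)$ from $(x,t-v)$. The kernels are then read off from $\{u<L(t)\}=\{S(u)<t\}$ and $\{u\ge L(t)\}=\{S(u)\ge t\}$, both valid a.s. by strict monotonicity. The Markov property, and hence Chapman--Kolmogorov, follows from three facts: the simple Markov property of $(A,S)$, the shift identity $L(t)\circ\theta_r=L(t)-r$ on $\{r<L(t)\}$, and the fact that the kernel at $(y,0)$ is $\delta_{(y,0)}$.

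Your route buys two things. First, Chapman--Kolmogorov is proved rather than asserted; a direct check on the kernels would need essentially the same shift computation anyway. Second, your processes are càdlàg, so $\tau_0^\pm$ and $^tA^\pm(\tau_0^\pm)$ are genuine path functionals and \eqref{317}, \eqref{318} hold exactly. In the paper's route, passing from finite-dimensional distributions to hitting times implicitly requires such a version. The paper's route is shorter and stays at the level of kernels.

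Two small points. For $^tZ^-$, the identity $^tp^-_u(y,0;\cdot)=\delta_{(y,0)}$ relies on the convention $A(0-)\coloneqq A(0)$, used in the sketch of proof of Theorem \ref{maintheoremctrwlim}, since $L(t)=0$ under $\PP^{(y,t)}$. The measurability you single out as the main obstacle is routine. The identity $\{L(t)<r\}=\bigcup_{q\in\mathbb{Q}\cap(0,r)}\{S(q)>t\}$ makes $L(t)$, and then $A(L(t))$ and $A(L(t)-)$, measurable path functionals. Also, $(x,s)\mapsto\PP^{(x,s)}(C)$ is measurable for $C\in\mathcal{F}^0_\infty$. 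The real content of your argument is the shift identity, which you handle correctly.
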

	\begin{proof}
		It is a straightforward exercise to check that, for each $t\ge 0$, the kernels $\,^tp^{\pm}$ satisfy the Chapman-Kolmogorov property, thus forming (sub-)Markov transition functions. Hence, for each $t\ge0$, we can construct the processes $^tZ^+=(^tA(u)^+, \,^tS(u)^+)$,  $u\ge0$, and $^tZ^-=(^tA(u)^-, \,^tS(u)^-)$,  $u\ge0$, as canonical processes with the state space $E \times [0, +\infty)$, with measures $\,^t\mathds{P}_\pm^{(x,v)}$, $(x,v)\in E\times [0,+\infty)$, whose transition kernels are $^tp^+$ and $^tp^-$, respectively.
		
		Note that when $v=t$, that is, under $\,^t\mathds{P}_\pm^{(x,t)}$, we have
		\begin{align}
			^t p^+_u(x,t;dy,dw) & =  \mathds{1}_{[w>0]} \mathds{P}^{(x,0)} \left( A(u) \in dy, t-S(u) \in dw  \right) \notag \\ &\hspace{4em}+\mathds{P}^{(x,0)} \left( A(L(t)) \in dy, L(t) \leq u \right)\delta_0(dw), \\
			^tp^-_u(x,t;dy, dw)&= \mathds{1}_{[w>0]} \mathds{P}^{(x,0)} \left( A(u) \in dy, t-S(u) \in dw \right) \notag \\ &\hspace{4em}+ \mathds{P}^{(x,0)} \left( A(L(t)-)  \in dy, L(t) \leq u \right) \delta_0(dw).
		\end{align}
		In other words, when the processes $^tZ^+$ and $^tZ^-$ start from $(x,t)$, and when $(A,S)$ starts from $(x,0)$, then one has (in the sense of finite-dimensional distributions) that
		\begin{align}\label{eq:heuristics}
			^tZ^+(u)\, = \, \begin{cases}
				\big(A(u), t-S(u)\big), \enskip & u < L(t), \\ \big(A(L(t)),0\big), & u \geq  L(t),
			\end{cases} \quad ^tZ^-_u = \begin{cases}
				\big(A(u), t-S(u)\big), \enskip & u < L(t), \\ \big(A(L(t)-),0\big), & u \geq  L(t).
			\end{cases} 
		\end{align}
		
		To finish the proof, first note that $L(t)=\inf\{s\ge0:S(s)>t\}=\inf\{s\ge0:S(s)\ge t\}\eqqcolon \wt L(t)$ a.s., since $S$ is assumed to be strictly increasing. Thus, by \eqref{eq:heuristics}, $\tau^{\pm}_0 \coloneqq \inf \{ u \geq 0 : \,^tZ^{\pm}(u) \in E \times \{ 0 \} \}$ under $\,^t\mathds{P}_\pm^{(x,t)}$ corresponds to  $L(t)$ under $\PP^{(x,0)}$, so we get \eqref{317} and \eqref{318}.    
	\end{proof}

	The construction presented in Theorem \ref{theorem_harmoniv} suggest us, for each $t\ge0$, two operators $\,^t\mathfrak{A}^+$ and $\,^t\mathfrak{A}$ that are generators (in the sense discussed below) of the processes $\,^t Z^+$ and $\,^t Z^-$, respectively. The relations \eqref{317} and \eqref{318} now imply that the evolution equations for $X^+(t)=A(L(t))$, $t\ge0$, and $X(t)=A(L(t)-)^+$, $t\ge0$, could be obtained in the following way:
	The family of harmonic problems for $\,^t Z^\pm$, $t\ge0$, upon exiting from the open set $E \times (0, +\infty)$, is given by
	\begin{align}
		\begin{cases} 
			\begin{array}{rcll}
				^t\mathfrak{A}^{\pm} q_t(x,v)  &=&  0, \qquad &(x,v) \in E \times (0, +\infty) \\
				q_t(x,v) &=& u(x), & (x,v) \in E \times \{ 0 \}.
			\end{array}
		\end{cases}
		\label{1407}
	\end{align}
	The problems, heuristically, should be solved by
	\begin{align}
		q_t(x,v) \, = \, \mathds{E}_{\pm}^{(x,v)} u(\,^tA^{\pm}_{\tau_{0}^{\pm}}).
		\label{1408}
	\end{align}
	In view of \eqref{317} and \eqref{318}, by fixing $t=v$, we should obtain
	\begin{align}
		&     q_t(x,t) \, = \, \mathds{E}_{+}^{(x,t)} u(\, ^tA^{+}_{\tau_{0}^{+}}) \, = \, \mathds{E}^{(x,0)} u(A(L(t))) \label{320}\\ 
		&   q_t(x,t) \, = \, \mathds{E}_{-}^{(x,t)} u(\, ^tA^{-}_{\tau_{0}^{-}}) \, = \, \mathds{E}^{(x,0)} u(A({L(t)-)}).
	\end{align}
	
	Note that we will study the harmonic problem for the process $\,^tZ^-$, related to $A(L(t)-)$. However, this is equivalent to studying $A(L(t)-)^+$ in the setting of the following proposition. 
	\begin{proposition}\label{p:1333}
		Suppose that $(A,S)$ is generated by the operator as in \eqref{generatorlimit} with core $C_c^\infty(\R^{d+1})$.
		Then, for all $t>0$, it holds that $(A(L(t)-)^+=A(L(t)-)$, $\PP^{(x,s)}$-a.s. for all $x\in \R^d$ and $s\le t$, if and only if for all $t>0$ and for almost every $h>0$ it holds that  $K(A(h),S(h);\R^d\times \{t-S(h)\})=0$, $\PP^{(x,s)}$-a.s. for all $x\in \R^d$ and $s\le t$.
	\end{proposition}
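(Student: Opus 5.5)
The plan is to reduce the identity $A(L(t)-)^+=A(L(t)-)$ to a statement about the jumps of $(A,S)$ occurring exactly at level $t$, and then to count those jumps with the Lévy system (compensation formula) attached to the generator \eqref{generatorlimit}.

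\emph{Pathwise step.} Since $S$ is strictly increasing, $L$ is continuous and non-decreasing. I would describe the left-continuous path $t'\mapsto A(L(t')-)$ using the same bookkeeping as in \eqref{traj_over}--\eqref{traj_under}.
\begin{itemize}
\item If $t'\in(S(u-),S(u)]$ for a jump time $u$ of $S$, then $L(t')=u$ and the path equals $A(u-)$.
\item Its right limit at $t'=S(u)$ is $A(u)$, because $L$ increases strictly immediately after $S(u)$.
\item If $t'=S(u)$ with $S$ continuous at $u$, the value is $A(u-)$ and the right limit is $A(u)$.
\end{itemize}
Hence $A(L(t)-)^+\neq A(L(t)-)$ if and only if there is a (necessarily unique) $u$ with $S(u)=t$ and $\Delta A(u)\neq 0$. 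This gives the disjoint decomposition
\[
\{A(L(t)-)^+\neq A(L(t)-)\}=\{\exists u:\Delta A(u)\neq0,\ \Delta S(u)>0,\ S(u)=t\}\cup\{\exists u:\Delta A(u)\neq 0,\ \Delta S(u)=0,\ S(u)=t\}.
\]

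\emph{Compensation step.} Since $(A,S)$ is an SDE solution and a semimartingale with jump kernel $K$, its jump measure has compensator $K(A(h-),S(h-);dy,dw)\,dh$. Applying the compensation formula to the non-negative predictable integrand $\mathbf 1_{\{y\neq0,\ S(h-)+w=t\}}$ gives
\[
\mathds{E}^{(x,s)}\#\{u:\Delta A(u)\neq0,\ S(u)=t\}=\mathds{E}^{(x,s)}\int_0^\infty K\bigl(A(h),S(h);(\R^d\setminus\{0\})\times\{t-S(h)\}\bigr)\,dh.
\]
Here $h-$ can be replaced by $h$ because there are only countably many jumps.

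\begin{itemize}
\item Jumps with $w=0$ would require $S(h)=t$. The set $\{h:S(h)=t\}$ is at most a point, hence Lebesgue-null, so the second event above has probability zero.
\item The first event has probability zero exactly when the displayed expectation vanishes. For a non-negative integrand this is the same as the kernel vanishing for a.e.\ $h$, a.s.
\end{itemize}

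This gives both directions. For \emph{if}, the hypothesis kills the integrand. For \emph{only if}, zero probability for every $t>0$ and every start $(x,s)$ with $s\le t$ forces the integral to be zero, hence the integrand to be zero for a.e.\ $h$, a.s.

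\emph{Expected obstacle.} The main difficulty is a mismatch between the two sides. The statement uses $K(\cdot;\R^d\times\{t-S(h)\})$, which includes purely temporal jumps $y=0$ that hit $t$ exactly. Such jumps do not create a discontinuity of $A(L(\cdot)-)$ at $t$. For the \emph{if} direction this is harmless, since the hypothesis is stronger than what is needed. For \emph{only if} I would first try to show that, under the standing SDE representation, a jump of $S$ landing exactly at level $t$ with positive probability forces a simultaneous jump of $A$ (that is, $K(\cdot;\{0\}\times\cdot)$ plays no role). Failing that, I would read $K$ as restricted to $y\neq 0$ in the coupling sense, and point out that this convention is needed.

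A secondary technical point is the rigorous justification of the compensation formula with the state-dependent $K$ and the core $C_c^\infty(\R^{d+1})$. For this I would invoke the Lévy-system results for semimartingales with the characteristics read off from \eqref{generatorlimit}, as in \cite{jacod2013limit}.
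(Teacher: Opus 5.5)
The paper does not prove this statement; it only refers to \cite{biocic2025}. I am therefore checking your proposal against the statement itself.

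Your pathwise reduction is correct, and so is the compensation step. Because $S$ is strictly increasing, at most one $u$ satisfies $S(u)=t$. The $w=0$ jumps only enter on the Lebesgue-null set $\{h:S(h)=t\}$. Together these give the exact identity
\[
\PP^{(x,s)}\bigl(A(L(t)-)^+\neq A(L(t)-)\bigr)=\EE^{(x,s)}\int_0^\infty K\bigl(A(h),S(h);(\R^d\setminus\{0\})\times\{t-S(h)\}\bigr)\,dh .
\]
This proves the \emph{if} direction as stated. It also proves the \emph{only if} direction with $\R^d\setminus\{0\}$ in place of $\R^d$.

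Your first plan for the obstacle you flagged cannot work, because with $\R^d$ the \emph{only if} direction is false in general. Purely temporal jumps of $S$ that land exactly on $t$ do not affect $A(L(\cdot)-)$. Here is a counterexample.

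\begin{itemize}
\item Fix $t_0>1$ and take $b_i\equiv0$, $a_{ij}\equiv0$, $\gamma\equiv1$.
\item Take $K(x,v;dy,dw)=\rho(v)\,\delta_0(dy)\,\delta_{t_0-v}(dw)$, where $\rho$ is continuous, positive on $(t_0-1,t_0)$ and zero elsewhere.
\item The jump part $f\mapsto\rho(v)\bigl(f(x,t_0)-f(x,v)\bigr)$ is a bounded operator on $C_0(\R^{d+1})$. So the generator is a bounded perturbation of the drift $\partial_v$. It is therefore a Feller generator of the form \eqref{generatorlimit}, with core $C_c^\infty(\R^{d+1})$.
\item $S$ is strictly increasing and unbounded. $A\equiv A(0)$ never moves, so $A(L(t)-)^+=A(L(t)-)$ for every $t$ and every starting point.
\item Take $s\in(t_0-1,t_0)$ and $h<t_0-s$, and work under $\PP^{(x,s)}$. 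On the event that no jump has occurred by time $h$, which has positive probability, $K\bigl(A(h),S(h);\R^d\times\{t_0-S(h)\}\bigr)=\rho(s+h)>0$.
\end{itemize}
So the right-hand condition fails at level $t_0$ on a set of $h$ of positive measure. Indeed $S$ jumps exactly onto $t_0$ with positive probability, but $A$ never jumps with it.

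Hence restricting $K$ to $y\neq0$ is not an optional convention: the equivalence is only true with that restriction. With the restriction, your argument is a complete proof.
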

	\begin{proof}
		See \cite{biocic2025}.
	\end{proof}
	\begin{remark}
		In general, it is possible that it does not hold $(A(L(t)-)^+=A(L(t)-)$ $\PP^{(x,s)}$-a.s. for all $x\in \R^d$ and $s\le t$, see \cite{biocic2025}. If, however, the jump kernel $K$ has no atoms, then clearly the necessary and sufficient condition of Proposition \ref{p:1333} is satisfied.
	\end{remark}
	
	We emphasize that in the construction of harmonic problems above, we were deliberately not rigorous in saying what kind of generators $\,^t\mathfrak{A}^+$ and $\,^t\mathfrak{A}$ are. The goal here is to get a feeling for them in some vague sense, or to heuristically describe them, and after we get a good candidate for the evolution operator, we can proceed by rigorously defining it (e.g. pointwise, weakly, distributionally, etc.) and try to obtain the harmonic property. Checking that \eqref{1408} indeed solves \eqref{1407} is a problem in itself, and we note that, to the best of our knowledge, this problem can be attacked only case by case, as there is no universal method for solving harmonic problems for general Markov processes. The main issue is usually the regularity of \eqref{1408}. Here we mention that harmonic problems were studied for unimodal L\'evy processes in \cite{bogdan_extension}, in the pointwise, distributional and Dirichlet forms settings. However, the process $(A,S)$ is neither symmetric, nor unimodal, nor L\'evy. Moreover, it is known that in some quite reasonable cases, see \cite[p. 150]{BS}, the harmonic function (i.e. the candidate for a solution) may lack sufficient regularity to apply the generator on it pointwise. For the Dirichlet forms point of view on harmonic problems, see  \cite{Chen09} and \cite{MZZ10}.

	When the process $(A, S)$ is Markov additive, we can give a more explicit version of the previous theorem, and derive the evolution equations in a more approachable and simplified form. We recall that the Markov additive case is when the future evolution of $(A(u),S(u))$ depends only on the current position of $A(u)$. Furthermore, the first coordinate of $(A(u),S(u))$, ${u\ge 0}$, is a Markov process by itself, while the second coordinate is homogeneous in space. For details, see, e.g., \cite{cinlarma}. Analytically, if we have the generator representation \eqref{generatorlimit}, then the coefficients $b_i$, $\gamma$, $a_{ij}$, and the jumping kernel $K$ do not depend on $t$. This allows us to simplify the previous considerations in which we can abandon the parameter $t$ in the transitions $\,^tp^{\pm}$, while at the same time we still cover a wide range of interesting examples.  The simplification of Theorem \ref{theorem_harmoniv} is given in the next proposition.

	\begin{proposition}\label{p:harmonic}
		Let $(A(u), S(u))$, $u \geq 0$, be a Markov additive process as above, with the transition probabilities $p_{u}(x,v;dy,dw)$. Then, there exist  Markov processes $Z^+=(A^+(u),S^+(u))$, $u \geq 0$, and $Z^-=(A^-(u),S^-(u))$, $u \geq 0$, on $E\times [0,+\infty)$ on the canonical probability space with the measures $\mathds{P}_{\pm}^{(x,t)}$, $(x,t)\in E\times [0,+\infty)$, and the transition probabilities 
		\begin{align}
			{p}_{u}^+(x,t;dy,dw)&=\mathds{P}^{(x,0)}(A(u)\in dy,(t-S(u))\vee 0\in dw),\label{transitionunder-NO4} \\
			\begin{split}\label{transitionunder-NO5}
				{p}_{u}^-(x,t;dy,dw)&=\mathds{1}_{[w>0]} \mathds{P}^{(x,0)} \left( A(u) \in dy, t-S(u) \in dw \right)\\
				&\hspace{4em}+ \mathds{P}^{(x,0)} \left( A({L(t)-}) \in dy, L(t) \leq u \right) \delta_0(dw),
			\end{split}
		\end{align}
		for which it holds
		\begin{align}
			& \mathds{P}^{(x,0)} \left( A({L(t)}) \in dy \right) =  \mathds{P}_+^{(x,t)} \left(A^+({\,\tau_0^+}) \in dy \right), \label{evol-over}\\
			& \mathds{P}^{(x,0)} \left( A({L(t)-}) \in dy \right) =  \mathds{P}_-^{(x,t)} \left(A^-({\,\tau_0^-}) \in dy \right),\label{evol-under}
		\end{align}
		where $\tau^{\pm}_0 \coloneqq \inf \{ u \geq 0 : Z^{\pm}(u) \in E \times \{ 0 \} \}$.
	\end{proposition}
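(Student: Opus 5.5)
The plan is to derive Proposition \ref{p:harmonic} from Theorem \ref{theorem_harmoniv}. The key fact is that, in the Markov additive case, the law of $(A,S)$ is invariant under shifts of the second coordinate. Precisely, for every $r\in\RR$,
\[
p_u(x,v;dy,dw)=\PP^{(x,0)}(A(u)\in dy,\, v+S(u)\in dw).
\]
This holds because the coefficients and the kernel $K$ in \eqref{generatorlimit} do not depend on $t$, so the law of $(A, S-S(0))$ under $\PP^{(x,v)}$ does not depend on $v$.

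Step one is to substitute this into the kernels $^tp^\pm_u(x,v;dy,dw)$ of Theorem \ref{theorem_harmoniv}. The first term becomes $\mathds{1}_{[0<w\le v]}\PP^{(x,0)}(A(u)\in dy,\, v-S(u)\in dw)$, and the dependence on $t$ disappears. For the atom at $0$ we also use that $L(t)$ under $\PP^{(x,t-v)}$ has the same joint law with $A$ as $L(v)$ under $\PP^{(x,0)}$. Therefore
\[
{}^t\upvarpi^\pm_u(x,t-v,dy)=\PP^{(x,0)}\bigl(A(L(v))\in dy\ (\text{resp. }A(L(v)-)),\ L(v)\le u\bigr).
\]
Here we use $\{S(u)\ge v\}=\{L(v)\le u\}$ a.s., which holds since $S$ is strictly increasing and $L=\widetilde L$ a.s., as in the proof of Theorem \ref{theorem_harmoniv}. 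This gives \eqref{transitionunder-NO5} with $t$ renamed $v$.

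Step two concerns $Z^+$, whose transition should be \eqref{transitionunder-NO4}. Its atom at $0$ is $\PP^{(x,0)}(A(L(t))\in dy,\ S(u)\ge t)$. In the Markov additive case the first coordinate $A$ is Markov by itself, and the claim is that the joint law of $A(u)$ and $\{S(u)\ge t\}$ equals that of $A(L(t))$ on $\{L(t)\le u\}$. I do not expect this to hold pathwise: the formula \eqref{transitionunder-NO4} freezes the second coordinate at $0$ while $A^+$ keeps moving. So I would not deduce \eqref{transitionunder-NO4} from Theorem \ref{theorem_harmoniv}. Instead I would define $Z^+(u):=(A(u),(t-S(u))\vee0)$ directly under $\PP^{(x,0)}$. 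The Markov property follows from the Markov additive property: the future of $(A,S-S(u))$ given $\mathcal F_u$ depends only on $A(u)$, and $(t-S(u+h))\vee0$ is a function of $(t-S(u))\vee 0$ and $S(u+h)-S(u)$. This is consistent because once the second coordinate is $0$ it stays $0$. Chapman--Kolmogorov for \eqref{transitionunder-NO4} then holds automatically, and canonical processes can be constructed.

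Step three is to identify the exit positions. For $Z^+$, $\tau_0^+=\inf\{u:S(u)\ge t\}=L(t)$ a.s., so $A^+(\tau_0^+)=A(L(t))$, which gives \eqref{evol-over}. For $Z^-$, the pathwise description \eqref{eq:heuristics} gives $A^-(\tau_0^-)=A(L(t)-)$, which is \eqref{evol-under}. Chapman--Kolmogorov for $p^-$ is inherited from Theorem \ref{theorem_harmoniv} via the identification in step one.

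The main obstacle is the shift-invariance identity in step one, in particular the transfer of the joint law of $(A(L(t)-),L(t))$ under $\PP^{(x,t-v)}$ to that of $(A(L(v)-),L(v))$ under $\PP^{(x,0)}$. I would handle it by proving the path-level statement that the law of $(A,S-S(0))$ on $D$ does not depend on $S(0)$. I would check this first on finite-dimensional distributions through the semigroup, using that translation in the second variable commutes with the generator, and then apply it to the measurable functionals $L$ and $A(L-)$.
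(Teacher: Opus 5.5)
Your proposal is correct and follows essentially the paper's route. You use the spatial homogeneity of $S$ to strip the $t$-dependence from the kernels of Theorem \ref{theorem_harmoniv}, and then identify $\tau_0^\pm$ with $L(t)$ as in that proof. Where you differ, you realize $Z^+$ pathwise as $(A(u),(t-S(u))\vee 0)$ under $\PP^{(x,0)}$, so Chapman--Kolmogorov for \eqref{transitionunder-NO4} comes for free, whereas the paper merely asserts it. This is the same representation the paper itself uses in Lemma \ref{lem:operator}, and it correctly reflects that $p^+$ lets the first coordinate keep moving rather than freezing it at $A(L(t))$.
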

	\begin{proof}
		Recall that since $(A,S)$ is Markov additive, the second coordinate $S$ is homogeneous in space so we have $p_u(x,t-v;dy,t-dw)=p_u(x,-v;dy,-dw)=p_u(x,0;dy,v-dw)$. Therefore, it is not hard to see that the kernels
		\begin{align}
			\begin{split}\label{transitionover-NO2}
				{p}_{u}^+(x,t;dy,dw) &\coloneqq \mathds{1}_{[0<w\leq t]} \, p_{u}(x,0;dy,t-dw)\\
				&\hspace{4em}+ \delta_0(dw)\, p_{u} (x,0; dy, [t, +\infty)).
			\end{split}\\
			\begin{split}\label{transitionover-NO3}
				{p}_{u}^-(x,t;dy,dw) &\coloneqq \mathds{1}_{[0<w\leq t]} \, p_{u}(x,0;dy,t-dw)\\
				&\hspace{4em}+ \delta_0(dw)\,\upvarpi_u^-(x,t,dy),
			\end{split}
		\end{align}
		where $\upvarpi_u^-(x,t,dy)\coloneqq \mathds{P}^{(x,0)}\left( A(L(t)-)\in dy, 
		S(u)\in[t,+\infty))\right)$,
		satisfy the Chapman-Kolmogorov property, thus forming (sub-)Markov transition kernels. By the construction, we get \eqref{transitionunder-NO4} and \eqref{transitionunder-NO5}, and the  rest of the proof is the same as the proof of Theorem \ref{theorem_harmoniv}.
	\end{proof}
	
	Recall that, since $(A,S)$ is Markov additive, the first coordinate $A$ is a Markov process by itself, so in the evolution of $Z^+$ in Proposition \ref{p:harmonic} we allowed the first coordinate to keep moving after stopping the second one, unlike in Theorem \ref{theorem_harmoniv} where we have stopped both coordinates.
	
	The derivation of the evolution equations for $X^+$ and $X$ in the Markov additive setting now goes like this. The processes $Z^+$ and $Z^-$ from Proposition \ref{p:harmonic} induce two operators $\mathfrak{A}^+$ and $\mathfrak{A}^-$ as their generators (again, taken in some appropriate sense), so the harmonic problems for the processes $Z^\pm$ upon exiting $E\times (0,\infty)$
	\begin{align}\label{evolutions-NO2}
		\mathfrak{A}^\pm q^\pm(x,t) &= 0, \quad x\in E,\,t>0,\\
		q^\pm(x,0) &= u(x),\quad x\in E.
	\end{align}
	should have solutions $q^\pm(x,t)=\EE^{(x,t)}_\pm\big(u(A^\pm(\tau^\pm_0))\big)$. By \eqref{evol-over} and \eqref{evol-under} these are exactly the evolutions of $X^+(t)$, $t\ge0$, and $X(t)$, $t\ge0$.

	\subsubsection*{Applying the `universal' method}
	Now we apply the `universal' method explained above to write the governing evolution equation for $X$ and $X^+$ in some special cases when the well-posedness has been made clear.
	
	In particular, in the undershooting and overshooting case explained in Example \ref{example_limit_coupled}, we can derive the operators explicitly and rigorously and establish well-posedness of the harmonic problem. To this end, let $M(u)$, $u\ge0$, be a Feller process in $E$ associated with the semigroup $P_t$, $t \geq 0$, and the generator $(G,\DD(G))$. Let also the process  $S(u)$, $u\ge 0$, be a strictly increasing subordinator, independent of $M$, characterized by its Laplace exponent $\phi$ as in \eqref{bernstein-eq0} (with parameters $a$, $b$ and $\nu$). We consider here the Feller process $(A(u), S(u))$, $u \geq 0$, where $A(u) \coloneqq M(S(u)-S(0))$, so that $X^+(t) = M(S(L(t)))$ and $X(t) = M(S(L(t)-))$. Note that $X^+$ is the process $M$ time-changed by the overshootings $D(t)$, $t\ge0$, of the subordinator $S$, while $X$ is $M$ time-changed by the undershootings $H(t-)$, $t\ge0$, of $S$.

	The expectations $\mathds{E}^{(x,v)}$, $(x,v)\in E\times \R$,  considered here correspond to the canonical probability measures for which $\mathds{P}^{(x,v)}\big(M(0)=x,S(0)=v\big)=1$. Let us denote by $S^0$ the process $S^0(u), u \geq 0$, where $S^0(u)=S(u)-S(0)$. Note that under $\mathds{P}^{(x,0)}$ we have $S^0(u)=S(u)$, a.s., for all $u\geq0$.
	
	Let us now derive rigorously the operator $\mathfrak{A}^+$ in the case $X^+(t)=A(L(t))=M(S(L(t))=M(D(t))$, $t\ge0$.

	\begin{lemma}\label{lem:operator}
		The process $(Z_u^+)_{u\ge0}$ is a Feller process on $E\times[0,\infty)$ with the generator $\mathfrak{A}^+$ such that
		\begin{align*}%\label{gen-final}
			\mathfrak{A}^+ h(x,t)&=-a\,h(x,t)+b(G-\partial_t^{(0,\infty)})h(x,t)\\
			&\hspace{4em}+\int_0^\infty \left(P_sh\big(\cdot,(t-s)\vee0\big)(x)-h(x,t)\right)\nu(ds),
		\end{align*}
		for all $h\in \mathrm{span} \{(x,t)\mapsto f(x)g(t): f\in \DD(G),g\in C_{00}^1([0,\infty))\}$.\footnote{Here $C_{00}^1([0,\infty))\coloneqq \{f\in C_0([0,\infty)):f\in C^1(0,\infty),\, \lim\limits_{t\searrow0}f'(t)=\lim\limits_{t\to\infty}f'(t)=0\}=\{f\in C_0([0,\infty)):f\in C^1(0,\infty),\, \lim\limits_{t\searrow0}(f(t)-f(0))/t=\lim\limits_{t\to\infty}f'(t)=0\}$, and $\partial_t^{(0,\infty)}f(t)=f'(t)\1_{t>0}$, $f\in C_{00}^1([0,\infty))$.}
	\end{lemma}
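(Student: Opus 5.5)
Our plan is to work directly with the semigroup $P^+_u h(x,t)=\int h(y,w)\,p^+_u(x,t;dy,dw)$, where $p^+_u$ is given by \eqref{transitionunder-NO4}. Using the independence of $M$ and $S$ we rewrite it as
\begin{align*}
P^+_u h(x,t)=\mathds{E}^{(0,0)}\Big[\1_{[u<\zeta]}\,P_{S(u)}h\big(\cdot,(t-S(u))\vee 0\big)(x)\Big],
\end{align*}
where $\zeta$ is the exponential killing time with rate $a$. We first check the Feller property. Positivity and sub-Markovianity are immediate, and the semigroup property is just Chapman--Kolmogorov, which was already established in Proposition \ref{p:harmonic}. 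To see that $P^+_u$ maps $C_0(E\times[0,\infty))$ into itself, we use that $(x,t)\mapsto h(x,(t-s)\vee0)$ lies in $C_0$ for each fixed $s$, that $P_s$ is Feller, and dominated convergence in $s$ with respect to the law of $S(u)$. Strong continuity at $u=0$ then follows from $S(u)\to0$ a.s., the strong continuity of $P_s$, and uniform continuity of $h$, again combined with dominated convergence.

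To identify the generator we take a product $h(x,t)=f(x)g(t)$ with $f\in\DD(G)$ and $g\in C^1_{00}$; linearity extends the result to the span. We would like to use the subordination structure: the space-time process $u\mapsto (M(S(u)),(t-S(u))\vee0)$ is Bochner subordination of the two-parameter Feller semigroup $T_s h(x,t)=P_sh(\cdot,(t-s)\vee0)(x)$ by $S$. Indeed, $T_s=P_s\otimes\Theta_s$, where $\Theta_s g(t)=g((t-s)\vee0)$ is the shift semigroup stopped at zero, and the two factors commute. On $C_0([0,\infty))$, the semigroup $\Theta_s$ has generator $-\partial_t^{(0,\infty)}$ on $C^1_{00}$; the condition $g'(0+)=0$ is exactly what makes the boundary point $0$ absorbing while keeping $-g'$ continuous. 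Hence $fg$ lies in the domain of the generator of $T$, and that generator acts as $(G-\partial_t^{(0,\infty)})(fg)=(Gf)g-f\,\partial_t^{(0,\infty)}g$. Phillips' subordination theorem (\cite[Theorem 13.6]{bernstein}) then gives, on the domain of the generator of $T$, the formula
\begin{align*}
\mathfrak{A}^+h=-ah+b(G-\partial_t^{(0,\infty)})h+\int_0^\infty (T_sh-h)\,\nu(ds),
\end{align*}
which is exactly the claimed expression.

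The main obstacle is justifying the Phillips step concretely, i.e.\ showing that the tensor semigroup $T_s$ is a strongly continuous contraction semigroup on $C_0(E\times[0,\infty))$ and that $fg$ belongs to the domain of its generator with the stated action. For $\Theta_s$, strong continuity and the domain characterisation of $C^1_{00}$ should be checked by hand. For the product, we would write
\begin{align*}
T_s(fg)-fg=(P_sf-f)\,\Theta_sg+f\,(\Theta_sg-g)
\end{align*}
and divide by $s$; both pieces then converge uniformly.

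Should this fail to give the generator directly, we would fall back on computing $\lim_{u\to0}(P^+_uh-h)/u$ via the Lévy--Itô decomposition of $S$. That route has its own difficulty, namely the uniform integrability near $s=0$ of the integrand against $\nu$. We would handle it with the bound $\|T_sh-h\|\le s\,\|(G-\partial_t^{(0,\infty)})h\|$ for small $s$ and $2\|h\|$ for large $s$, which is integrable against $(1\wedge s)\,\nu(ds)$.
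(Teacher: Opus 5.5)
Your proposal is correct and follows essentially the same route as the paper. In both, $Z^+$ is recognized as the Bochner subordination by $S$ of the Feller process $(M(u),\Gamma^0(u))$, which is your tensor semigroup $T_s=P_s\otimes\Theta_s$, and $\mathfrak{A}^+$ is then read off from Phillips' theorem on the product functions $fg$. The only difference is that you check by hand two steps the paper treats as easy: the Feller property and the generator computation on $fg$ via the splitting $(P_sf-f)\,\Theta_sg+f\,(\Theta_sg-g)$. For the Feller property of the subordinate process, the paper instead cites \cite[Proposition 15.1]{bernstein}.
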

	\begin{proof}
		Note that the process $Z^+(u)$, ${u\ge0}$,  can be written as $Z^+(u)=\big(M(S^0(u)),(t-S^0(u))\vee 0\big)$ under $\PP_+^{(x,t)}$. We recognize that this form is actually the subordination with $S^0$ of the Feller process $(M(u),\Gamma^0(u))$, $u\ge0$, where $\Gamma^0$ is the negative translation (pure drift) stopped at zero. In other words, first take the process $(M(u),\Gamma^0(u))$, ${u\ge0}$, in $E\times[0,\infty)$ given by 
		\begin{align}\label{1747}
			\PP^{(x,t)}(M(u)\in dy,\Gamma^0(u)\in dw)=p_u(x,dy)\delta_{(t-u)\vee 0}(dw).
		\end{align}
		It is easy to show that $(M(u),\Gamma^0(u))$, $u\ge0$, is a Feller process on $E\times[0,\infty)$ and that for its generator, denoted by $(G-\partial_t^{(0,\infty)})$, it holds that
		\begin{align}
			(G-\partial_t^{(0,\infty)})h(x,t)=Gh(x,t)-\partial_t^{(0,\infty)}h(x,t),
		\end{align}
		for $h\in \mathrm{span} \{(x,t)\mapsto f(x)g(t): f\in \DD(G),g\in C_{00}^1([0,\infty))\}\subset\DD(G-\partial_t^{(0,\infty)})$.
		By subordinating the process $(M(u),\Gamma^0(u))$, $u\ge0$, by $S^0$, we get the process with transitions
		\begin{align}
			\PP^{(x,t)}(M({S^{0}(u)})\in dy,\Gamma^0({S^0(u)})\in dw)&=\PP^{(x,0)}(M({S^{0}(u)})\in dy,(t-S(u))\vee 0\in dw)\\
			&=\PP_+^{(x,t)}(Z^+(u)\in(dy,dw)).
		\end{align}
		Thus, by \cite[Proposition 15.1]{bernstein} we get that $(Z(u)^+)$, ${u\ge0}$, is a Feller process and by Phillips' theorem \cite[Theorem 15.6]{bernstein}, we get that for the generator $\mathfrak{A}^+$ of $Z^+$ it holds that
		\begin{align*}
			\mathfrak{A}^+ h(x,t)&=-a\,h(x,t)+b(G-\partial_t^{(0,\infty)})h(x,t)\\&\hspace{4em}+\int_0^\infty \left(\ex^{(x,0)}\left[h(M_s,(t-s)\vee0)\right]-h(x,t)\right)\nu(ds)\\
			&=-a\,h(x,t)+b(G-\partial_t^{(0,\infty)})h(x,t)\\&\hspace{4em}+\int_0^\infty \left(P_sh\big(\cdot,(t-s)\vee0\big)(x)-h(x,t)\right)\nu(ds),
		\end{align*}
		for all $h\in \DD(G-\partial_t^{(0,\infty)})\subset \DD(\mathfrak{A}^+)$, and in particular for all $h\in \mathrm{span} \{(x,t)\mapsto f(x)g(t): f\in \DD(G),g\in C_{00}^1([0,\infty))\}$.
	\end{proof}
	
	Although we obtained the operator $\mathfrak{A}^+$ quite explicitly, it still remains to prove that $q^+(x,t)=\EE^{(x,0)}(u(X^+(t)))$ solves the harmonic problem. To be able to conduct the calculations, we need assumptions on the subordinator $S$, expressed in terms of its Laplace exponent. We will assume:
	
	\begin{assumption}{S}{}\label{assphi}
		The function $\phi$ is a special Bernstein function such that $a=b=0$, $\nu(0,\infty)=\infty$, and its potential density $u^\phi$ satisfies $\int_0^1|\log(t)|u^\phi(t)dt<\infty$.
	\end{assumption}
	
	That $\phi$ is a special Bernstein function means that $\lambda\mapsto \lambda/\phi(\lambda)$ is a Bernstein function, too. Such functions have the potential measure decomposition $U(dt)=c \delta_0(dt)+u^\phi(t)dt$, where $u^\phi$ is non-increasing and integrable in $(0,1)$. Also, under $a=b=0$, $\nu(0,\infty)=\infty$, it holds that $c=0$. For details, see \cite[Chapter 13]{bernstein}.

	Under \ref{assphi}, we finally obtain
	\begin{align}\label{gen-over}
		\mathfrak{A}^+ f(x,t)=\int_0^\infty \left(P_sf\big(\cdot,(t-s)\vee0\big)(x)-f(x,t)\right)\nu(ds).
	\end{align}
	
	The evolution of the undershooted Feller process, i.e. $X(t)=A(S(L(t)-))^+=M(H(t-))^+$ is driven by the operator $\mathfrak{A}$, i.e. by the generator of the process $Z^-$ with the transitions as in \eqref{transitionunder-NO5}. {Here the property $A(L(t)-)^+=M(H(t-))^+=M(H(t-))=A(L(t)-)$ a.s. can be confirmed by a direct calculation. Namely, $\PP^{(x,0)}(H(t-)<t)=1$ for all $t>0$, see \cite[Proposition 2 \& Theorem 4, in Chapter III]{bertoin1996}, hence the probability that $H(t-)$ jumps at some fixed time $t>0$ is zero.} We can obtain an explicit expression for  $\mathfrak{A}$ by the following reasoning. The first term in \eqref{transitionunder-NO5}, in this case, corresponds to the subordinate Feller process $M(S^0(u))$ in the first coordinate, and ${t-S^0(u)}$ in the second coordinate, but only if ${t-S^0(u)}$ did not go below level 0 (or in other words if ${S^0(u)}$ did not go over level $t$). This evolution is then just the subordinated evolution $\overline P_s f(x,t)\coloneqq P_sf(x,t-s) \mathds{1}_{[s < t]}$. On the other hand, the second term in \eqref{transitionunder-NO5} corresponds to the event when the process ${t-S^0(u)}$ has crossed the level 0, and in that case we have stopped the second coordinate in 0, while the first coordinate is again the subordinate Feller process $M(S^0(u))$ but stopped in its position before the jump of ${t-S^0(u)}$ below level 0. Since $M(S^0)$ and $S$ have simultaneous jumps, this position is $M(S^0(L(t)-))$. This induces the following representation for $\mathfrak{A}^-$ (under the assumption \ref{assphi}), which is a modification of \eqref{gen-over} that takes into account this stopping:
	\begin{align}
		\mathfrak{A}f(x,t) \, = \,  \int_0^{+\infty} \left( P_sf(x,t-s) \mathds{1}_{[s < t]} + \underbrace{f(x, 0) \mathds{1}_{[s \geq t]}}_{\text{stopping event}} -f(x,t)\right) \nu(ds),
		\label{genunder}
	\end{align}
	and by rearranging \eqref{genunder} we get
	\begin{align}
		\mathfrak{A}f(x,t) \, = \,  \int_0^{+\infty} \left( P_sf(x,t-s) \mathds{1}_{[s < t]}  -f(x,t)\right) \nu(ds) + f(x,0) \bar{\nu}(t).
		\label{phiddt-g}
	\end{align}
	With this at hand, we then expect that the evolution equation governing the process $X^+$ is 
	\begin{align}
		\begin{cases}\label{eq_over}
			\begin{array}{rcll}
				\mathfrak{A}^+q^+(x,t) &=& 0,  &(x,t)\in E\times (0,+\infty),\\
				q^+(x,0) &=& u(x), &x\in E,
			\end{array}
		\end{cases} 
	\end{align}
	and that the evolution equation governing the process $X$ is
	\begin{align}
		\begin{cases}\label{eq_under} 
			\begin{array}{rcll}
				\mathfrak{A}q(x,t) &=& 0,  &(x,t)\in E\times (0,+\infty),\\
				q(x,0) &=& u(x), &x\in E,
			\end{array}
		\end{cases} 
	\end{align}
	
	Here is the formal statement establishing well-posedness in the pointwise sense, and it is the combination of \cite[Theorem 5.2]{ascione2024} and \cite[Theorem 4.2]{biocic2025}. 
	\begin{theorem}
		Assume that the function $\phi$  satisfies \ref{assphi}, and take $u\in \DD(G)$. Then a pointwise solution to \eqref{eq_over}
		is given by $q^+(x,t)=\mathds{E}^{(x,0)}[u(M({D_t}))]$, $t\ge0$ and a pointwise solution to \eqref{eq_under} is $q(x,t) = \mathds{E}^{(x,0)}u(M(H(t-)))$. 
		Furthermore, these solutions are the unique in the class of jointly continuous functions in $E\times [0,+\infty)$ with property $\lim_{t\to +\infty} q(x,t)=0$, for all $x\in E$.
	\end{theorem}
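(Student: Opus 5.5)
Fix $u\in\DD(G)$ and write $q^+(x,t)=\mathds{E}^{(x,0)}u(M(D(t)))$ and $q(x,t)=\mathds{E}^{(x,0)}u(M(H(t-)))$. The plan is to compute both functions explicitly from the first passage law of $S$, substitute into \eqref{gen-over} and \eqref{phiddt-g}, and reduce each equation to the identity $\nu\ast u^\phi\equiv$ the tail relation of a special Bernstein function. Uniqueness will then come from a maximum-principle argument for the operators $\mathfrak{A}^\pm$.

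\textbf{Representation.} Since $M$ is independent of $S$, condition on the first passage event. Under \ref{assphi} the drift is zero, so there is no creeping. Hence
\begin{align*}
\mathds{P}^{(x,0)}\big(H(t-)\in dy,\, D(t)\in dz\big)=u^\phi(y)\,\nu(dz-y)\,\mathds{1}_{[0\le y\le t<z]}\,dy ,
\end{align*}
which gives
\begin{align*}
q^+(x,t)=\int_0^t u^\phi(y)\int_{(t-y,\infty)}P_{y+r}u(x)\,\nu(dr)\,dy,\qquad q(x,t)=\int_0^t P_yu(x)\,u^\phi(y)\,\bar\nu(t-y)\,dy .
\end{align*}
Continuity in $(x,t)$ follows from strong continuity of $P_s$ and dominated convergence. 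The decay $q^\pm(x,t)\to0$ as $t\to\infty$ uses $a=0$, so that $D(t)\to\infty$, together with $P_su\to0$ pointwise for $u\in C_0$; I will need to check the latter, possibly through the paper's standing assumptions on $M$.

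\textbf{Verification for $q$.} Insert $q$ into \eqref{phiddt-g}. The term $\int_{[0,t)}P_s q(\cdot,t-s)\nu(ds)$ becomes, by the semigroup property and Fubini, a double integral of $P_{s+y}u\,u^\phi(y)\bar\nu(t-s-y)$. The substitution $w=s+y$ turns it into $\int_0^t P_w u\,\big[(u^\phi\ast\nu)\text{-type convolution}\big](w)\,dw$. We need to compare this with $\nu(0,\infty)q(x,t)-q(x,0)\bar\nu(t)$. Since $\nu$ is infinite, both pieces are infinite separately, so I would first replace $\nu$ by a truncated measure $\nu_\varepsilon=\nu|_{(\varepsilon,\infty)}$ and work with the compensated difference. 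The key identity is the renewal relation for special Bernstein functions,
\begin{align*}
\int_0^w u^\phi(w-y)\,\bar\nu(y)\,dy=1, \qquad w>0,
\end{align*}
which is the Laplace inversion of $\tfrac{\phi(\lambda)}{\lambda}\cdot\tfrac1{\phi(\lambda)}=\tfrac1\lambda$. Using it, the $\varepsilon$-truncated expression should collapse to terms that vanish as $\varepsilon\to0$. Here the increments $P_su-u=\int_0^sP_rGu\,dr=O(s)$ control $\int_0^1 s\,\nu(ds)<\infty$, which is why $u\in\DD(G)$ is needed. The case $q^+$ is analogous with \eqref{gen-over}, where the extra $(t-s)\vee0$ boundary term is handled by $q^+(\cdot,0)=u$ and the same renewal identity. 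A cleaner route for both is to take Laplace transforms in $t$: $\wt q(\lambda)=\tfrac{1}{\lambda}\big(\ldots\big)$ in terms of resolvent-like integrals $\int_0^\infty e^{-\lambda y}P_yu\,u^\phi(y)\,dy$. One then checks that the Laplace transform of $\mathfrak{A}q$ is zero, and recovers the pointwise statement from continuity.

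\textbf{Main obstacle.} The main difficulty is justifying the pointwise application of the singular integral in $\mathfrak{A}^\pm$ to $q^\pm$ near $s=0$. This requires a modulus bound of the form $|q(x,t-s)-q(x,t)|\lesssim s$ plus a log-type correction, uniformly in $x$. The condition $\int_0^1|\log t|\,u^\phi(t)\,dt<\infty$ is precisely what I expect to control the time-increment of $\int_0^t u^\phi(y)\bar\nu(t-y)\,dy$-type kernels, using the monotonicity of $u^\phi$ and $\bar\nu$.

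\textbf{Uniqueness.} Take the difference $v$ of two solutions: it is continuous, vanishes at $t=0$ and at $t=\infty$, and satisfies $\mathfrak{A}^\pm v=0$. If $v(x_0,t_0)>0$ is a positive maximum, then every term $P_sv(\cdot,t_0-s)(x_0)-v(x_0,t_0)\le0$. In \eqref{phiddt-g} the stopping term gives $v(x_0,0)-v(x_0,t_0)<0$ on the set $\{s\ge t_0\}$, which has $\nu$-mass $\bar\nu(t_0)>0$. This yields $\mathfrak{A}v(x_0,t_0)<0$, a contradiction. The same argument applies to $-v$ and to $\mathfrak{A}^+$, so $v\equiv0$. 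Attainment of the maximum uses the decay at infinity in $t$, and vanishing at infinity in $x$ if $E$ is non-compact; if the latter is not available, I would argue with approximate maxima instead.
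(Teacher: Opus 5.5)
Your route is the one the paper takes. You use the first-passage law of $(H(t-),D(t))$ from \cite{bertoin1996}. You then take a Laplace transform in $t$, which reduces $\mathfrak{A}^\pm q^\pm=0$ to the fact that $\int_0^\infty e^{-\lambda y}u^\phi(y)P_y\,dy$ inverts the Phillips operator $f\mapsto-\int_0^\infty (e^{-\lambda s}P_sf-f)\,\nu(ds)$ of the semigroup $e^{-\lambda s}P_s$ subordinated by $S$; this is where $u\in\DD(G)$ enters. A separate regularity step makes the singular $\nu$-integral pointwise meaningful and continuous in $t$, and a positive maximum principle gives uniqueness.

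The step you were unsure about is a real problem. Decay $q^\pm(x,t)\to0$ does not follow from the hypotheses, and nothing in the paper's setting gives $P_su(x)\to0$. Take $E$ a single point, or any compact $E$ with $M$ conservative, and $u\equiv1$. Then $q^+\equiv q\equiv1$, so the candidate solutions are not even in the stated uniqueness class. You need an additional transience-type hypothesis, such as $P_su(x)\to0$ as $s\to\infty$. With it, your argument via $D(t)\ge t$, $H(t-)\to\infty$ and dominated convergence is fine.

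The uniqueness step also has a gap. Pointwise decay in $t$ does not force a positive supremum to be attained, even for compact $E$. On $E=[0,1]$, set $v(x,t)=(1-x)\max\{0,1-|t-1/x|\}$ for $x>0$ and $v(0,t)=0$. This function is jointly continuous, vanishes at $t=0$, and tends to $0$ as $t\to\infty$ for each $x$, yet never attains $\sup v=1$.

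Your fallback to approximate maxima does not work for these operators as it stands. At a near-maximizer $(x_n,t_n)$ you only know $P_sv(\cdot,t_n-s)(x_n)-v(x_n,t_n)\le\varepsilon_n$. Integrating this against $\nu$, which has infinite mass near $s=0$, gives no bound at all. Meanwhile the good term $-v(x_n,t_n)\,\nu([t_n,\infty))$ degenerates if $t_n\to\infty$. You need either a class in which the maximum is attained (uniform decay, e.g. functions in $C_0(E\times[0,\infty))$), or a penalization by a function with compact sublevel sets on which $\mathfrak{A}^\pm$ can be controlled.

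Finally, your strict inequality uses $\nu([t_0,\infty))\ge\bar\nu(t_0)>0$ for every $t_0>0$. That is not automatic for a L\'evy measure, so you should justify it. Under \ref{assphi} it holds: $\int_0^\infty e^{-\lambda t}\bar\nu(t)\,dt=\phi(\lambda)/\lambda$ makes $\bar\nu$ the potential density of the non-trivial conjugate subordinator with exponent $\lambda/\phi(\lambda)$. Such a density cannot vanish on a half-line.
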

	\begin{proof}[Idea of the proof]
		The proof is conducted by using the Laplace transform technique and is in spirit similar to the proof of Theorem \ref{theorem_convolution}. The calculations, however, are cumbersome and include regularity claims proved for $q^+$ and $q$ (e.g. it is proved that $\partial_t q(t,x)$ exists a.e.), as well as the proofs of the positive maximum principle for these operators. Also, under the imposed assumptions, it is useful that the distribution of the undershooting $H(t-)$ and overshooting $D(t)$ is known and given in \cite{bertoin1996}.
		
		Nevertheless, full proofs with all details can be found in \cite{ascione2024} and \cite{biocic2025}.
	\end{proof}

	\begin{remark}[Courrège-type form of $\mathfrak{A}^+$]
		If the Courrège-type form of the generator of the Markov additive process $(A,S)$ is known (denoted by $\mathcal{A}$) and e.g. given by \eqref{generatorlimit}, then it is possible to heuristically deduce a Courrège-type form of the operator $\mathfrak{A}^+$. Indeed, since $\ex^{(x,t)}_+ [f(Z^+(u))]=\ex^{(x,0)}\left[ f\big(A(u),(t-S(u))\vee 0\big)\right]$, we get
		\begin{align}
			\begin{split}\label{gen:1026}
				\mathfrak{A}^+f(x,t)&=\lim_{u\searrow0}\frac{\ex^{(x,t)}_+f({Z^+(u)})-f(x,t)}{u}\\
				&=\lim_{u\searrow0}\frac{\ex^{(x,0)}[ f(A(u),(t-S(u))\vee 0)]-f(x,t)}{u}\eqqcolon\mathcal{A}F_t(x,0),
			\end{split}
		\end{align}
		where $F_t(x,s)=f(x,(t-s)\vee 0)$. Here, we emphasize that $F_t$ is certainly not in the $C_0(\R^{d+1})$-domain of $\mathcal{A}$ since it does not vanish at the infinity. However, we can look at \eqref{gen:1026} heuristically, or e.g. by considering the pointwise extension. In other words, if $\mathcal{A}$ has the form \eqref{generatorlimit}, by \eqref{gen:1026} the generator $\mathfrak{A}^+$ has the form
		\begin{align*}
			&\mathfrak{A}^+f(x,t)=\sum_{i=1}^d b_i(x) \partial_{x_i} f(x,t) - \gamma(x) \partial_t f(x,t) + \frac{1}{2} \sum_{1 \leq i, j \leq d} a_{ij}(x) \partial_{x_ix_j}^2f(x,t)  \\
			&+ \int \left( f(x+y, (t-w)\vee 0) - f(x,t)-\sum_{i=1}^d y_i \mathds{1}_{[(y,w) \in [-1,1]^{d+1}]} \partial_{x_i} f(x,t)\right) K(x;dy,dw).
		\end{align*}
	\end{remark}
	
	\subsection{Variable-order fractional kinetic equations and anomalous aggregation}
	One might want to consider the case where the waiting time of a CTRW depends on the walker's current position. This case is useful in applications because it models an anomalous aggregation of particles, as explained in the following lines. Take an uncoupled CTRW with waiting times having power law behavior with (current position dependent) exponent $\alpha(x)$, $x \in \mathbb{R}^d$, with $\alpha(x) \in (0,1)$. Then, under suitable assumptions, this process converges (in probability) to the region where $\alpha(x)$ is minimum as shown in \cite{fedotov2012subdiffusive}, see also the statements of the theorems below for a better understanding of this convergence. A heuristic explanation is that the tail is heavier when $\alpha(x)$ is smaller, so the process tends to be trapped in that region, more than in the other. This phenomenon also has experimental evidence and is connected to variable-order fractional diffusion equations, see \cite{fedotov2019asymptotic, fedotov2021variable}. A similar behavior can be observed for scaling limits of these space-dependent CTRWs, as shown in \cite{savtoa}, which we will discuss later.
	
	Scaling limits of CTRWs governed by fractional variable-order equations are considered, e.g., in \cite{Kolokoltsov2023variable, STRAKA2018451}. In such cases, the limit process $(A,S)$ (from Theorem \ref{maintheoremctrwlim}) is Markov additive, with generator
	\begin{align}
		\begin{split}
			\mathcal{A}f(x,t) \, = \, &Gf(x,t)\\
			&+b(x)  \partial_t f(x,t) + \int_0^{+\infty} \left( f(x,t+w) -f(x,t) \right) \nu(x,dw).
		\end{split} \label{59}
	\end{align}
	Here, the operator $(G, \mathcal{D}(G))$ acts only on the $x$-variable. Moreover, when $G$ has a Courr\`ege-type form, and under appropriate technical assumptions on the coefficients, one has that \eqref{59} generates a Markov additive process, see \cite[Section 4]{Meerschaert2014} or \cite[Section 2]{savtoa}. Thus, in order to obtain a governing equation for $X^+$ and $X$ using the method of Section \ref{sec_coupled_harmonic}, we can resort to the simplified transition probabilities given in Proposition \ref{p:harmonic} to determine the Markov processes $Z^\pm$ and the harmonic problems \eqref{evolutions-NO2}. In particular, if we assume that $(A,S)$ is generated by \eqref{59}, with $G$ having a Courr\`ege-type form as in \cite[Section 3]{savtoa}, we have that the processes $A$ and $S$ do not jump simultaneously a.s. (by \cite[Lemma 3.2]{savtoa}), and thus $X^+(t)=A(L(t))=A(L(t)-)^+=X(t)$ a.s. by Lemma \ref{lemma_ctrw=octrw}. Hence, we can look only at the transition probabilities \eqref{transitionover-NO2}:
	\begin{align}
		p^+_u(s,t;dy,dw) \, = \, \mathds{P}^{(x,0)} \left( A(u) \in dy, (t-S(u)) \vee 0 \in dw \right).
	\end{align}
	Therefore, the operator $\mathfrak{A}^+$ can be determined starting from \eqref{59} just by observing that the second coordinate is reversed and stopped when it crosses zero, and we get
	\begin{align}
		\begin{split}
			\mathfrak{A}^+f(x,t) \, = \, &Gf(x,t) -b(x) \partial_tf(x,t) \\
			&  -\int_0^{+\infty} (f(x,t-w) \mathds{1}_{[w<t]} +f(x,0) \mathds{1}_{[w \geq t]} -f(x,t) ) \nu(x, dw).
		\end{split}
		\label{1908}
	\end{align}
	By setting $b(x)=0$ and $\nu(x,dw) =dw \alpha(x) w^{-\alpha(x)-1}/\Gamma(1-\alpha(x))$ in \eqref{1908}, one obtains the variable-order equation studied in \cite{Kolokoltsov2023variable}. Instead, if we rearrange the second term of \eqref{1908} as follows
	\begin{align}
		&-\int_0^{+\infty} (f(x,t-w) \mathds{1}_{[w<t]} +f(x,0) \mathds{1}_{[w \geq t]} -f(x,t) ) \nu(x, dw) \notag \\
		= \, & -\frac{\partial}{\partial t} \int_0^t f(x,w) \nu(x,(t-w, +\infty)) \, dw \, + \,  f(x,0) \, \nu(x,(t, +\infty)),
	\end{align}
	we obtain the variable-order fractional-type equation considered, for example, in \cite{savtoa}.
	
	The variable-order fractional diffusion equation is obtained by setting $G=\Delta$:
	\begin{align}
		\partial_t^{\alpha(x)} q(x,t) \, = \, \Delta q(x,t) \qquad q(x,0) = u(x).
	\end{align}
	The analysis of this equation is quite delicate (see, e.g., \cite{KIAN20181146, kian2018time}). The underlying stochastic process is a time-changed Brownian motion $B(L(t))$, $t\ge0$, where $L(t)$ is the inverse of the additive component $S(t)$, $t\ge0$, of the Markov additive process $(B(t), S(t))$, $t\ge0$, such that
	\begin{align}
		\mathds{E}^{(x,0)} e^{-\lambda S(t)} \, = \, \mathds{E}^{(x,0)}e^{-\int_0^t \lambda^{\alpha(B(s))}ds}.
	\end{align}
	We note that this holds under technical assumptions on the function $\alpha(x)$ (these technicalities were developed in \cite[Section 3]{savtoa}).
	
	It is very interesting to see what happens to the anomalous aggregation phenomena mentioned above, previously considered for space-dependent CTRWs, in the above described continuous process. The theory is developed in \cite[Section 4]{savtoa} and we state the main results omitting the proofs. In the following, we consider the one-dimensional case, i.e., $B$ is a one-dimensional standard Brownian motion, and we denote by $\mathpzc{l}(\cdot)$ the Lebesgue measure on $\RR$.
	\begin{theorem}[{\cite[Theorem 4.12]{savtoa}}]\label{thm:occupation}
		Let  $\alpha: \mathbb{R}\mapsto (0,1)$, with $\alpha^*=\min_{x\in \mathbb{R}}\alpha(x)>0, \max_{x\in\mathbb{R}}\alpha(x)<1$ and \[1>\lim_{x \to +\infty}\alpha(x)=\alpha_I>\alpha^*,\,\,1>\lim_{x\to -\infty}\alpha(x)=\alpha_J>\alpha^*.\] Also, let there exist $\beta_0>0$ small enough such that for all $\beta_0\geq \beta$, the set $A_\beta=\{x\in\mathbb{R}:\alpha(x)<\alpha^*+\beta<1\}$ is bounded and satisfies $0<\mathpzc{l}({A_\beta})<\infty$ and $\mathpzc{l}(\partial A_\beta)=0$. Then,
		\begin{enumerate}
			\item if $2\alpha^*<\min\{\alpha_I,\alpha_J\}$, then for all $0\leq \beta\leq \beta_0$
			\begin{equation}\label{eq:occupationG}
				\lim_{t \to +\infty}\frac{\int_{0}^{t}\mathds{1}_{[B(L(s))\in A_\beta]}ds}{t}=1,\,\mathds{P}^{(x,0)}\text{-a.s.};
			\end{equation}
			\item  if $2\alpha^*>\min\{ \alpha_I,\alpha_J\}$, then for any $K>0$,
			\begin{equation}\label{eq:occupationG1}
				\lim_{t \to +\infty}\frac{\int_{0}^{t}\mathds{1}_{\big[B(L(s))\in A^c_\beta\cap [-K,K]^c\big]}ds}{t}=1,\,\mathds{P}^{(x,0)}\text{-a.s.}.
			\end{equation}
		\end{enumerate}
	\end{theorem}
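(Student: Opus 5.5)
Since $A=B$ is independent of $S$ and $X(t)=B(L(t))$, the occupation time of a set $C$ by $X$ up to time $t$ can be rewritten through the change of variables $s=S(r)$ along the range of $S$. Because $L$ is constant on the gaps of the range, the time $X$ spends at $B(r)$ for $r$ in a jump epoch equals the jump size of $S$ at $r$. Since $S$ is a pure-jump increasing additive functional ($b=0$), we obtain
\begin{align*}
\int_0^t \mathds{1}_{[B(L(s))\in C]}\,ds \,=\, \sum_{r\le L(t)} \mathds{1}_{[B(r)\in C]}\,\Delta S(r) \;+\;\text{(boundary term at } L(t)).
\end{align*}
The claim therefore reduces to comparing, as $u\to\infty$, the two additive functionals $S_C(u)=\sum_{r\le u}\mathds{1}_{[B(r)\in C]}\Delta S(r)$ and $S_{C^c}(u)$. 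Here $t$ is replaced by $S(u)$ via $L$; the boundary term is the overshoot $t-S(L(t)-)$, and I will control it by monotonicity, sandwiching $t$ between $S(L(t)-)$ and $S(L(t))$ and checking that the ratios of the two functionals are stable under this sandwich.

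Conditionally on $B$, $S$ is a process with independent increments with $\mathds{E}[e^{-\lambda S_C(u)}\mid B]=\exp\bigl(-\int_0^u \mathds{1}_{[B(r)\in C]}\lambda^{\alpha(B(r))}dr\bigr)$. So $S_C(u)$ is a sum of conditionally stable-like contributions whose index is at least $\alpha^*+\beta$ outside $A_\beta$ and close to $\alpha^*$ inside it. The scale comparison is then as follows. Inside the bounded set $A_\beta$, the local time (occupation time) of $B$ up to time $u$ grows like $u^{1/2}$, so $S_{A_\beta}(u)$ grows like $(u^{1/2})^{1/\alpha^*}=u^{1/(2\alpha^*)}$, up to slowly varying and $\beta$-dependent corrections. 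Far away, $B$ spends time of order $u$ near $\pm\infty$, where $\alpha\approx\alpha_I,\alpha_J$, giving growth $u^{1/\min(\alpha_I,\alpha_J)}$. On any compact set outside $A_\beta$ the occupation time is again of order $u^{1/2}$ with indices at least $\alpha^*+\beta$, giving a negligible growth $u^{1/(2(\alpha^*+\beta))}$. Hence case (1), $2\alpha^*<\min\{\alpha_I,\alpha_J\}$, means $1/(2\alpha^*)>1/\min\{\alpha_I,\alpha_J\}$ and $A_\beta$ dominates, while case (2) means the far region $[-K,K]^c$ dominates.

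To make this rigorous and almost sure, I would proceed in three steps. First, use the ratio ergodic theorem and the law of the iterated logarithm for Brownian occupation times of the sets $A_\beta$ and $[-K,K]$, which are bounded with positive measure and null boundary, to get a.s. bounds $u^{1/2\pm\epsilon}$. Second, condition on $B$ and apply a.s. growth results for subordinators with Laplace exponent squeezed between $c_1\lambda^{\gamma}$ and $c_2\lambda^{\gamma'}$. Via a time change by the occupation-time clock, this gives $\liminf$ and $\limsup$ exponents of Khintchine/Fristedt type: $\limsup_u \log S_C(u)/\log u \le 1/\gamma+\epsilon$ and a matching $\liminf$. Third, choose $\epsilon$ small relative to the gap between $1/(2\alpha^*)$ and $1/\min\{\alpha_I,\alpha_J\}$; the dominating functional then beats the others polynomially, so the ratios tend to $0$ and the occupation fraction tends to $1$.

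The main obstacle is the second step. The lower bound for the dominating functional and the upper bound for the others must hold almost surely despite the heavy tails, and the index only approaches $\alpha^*$ on $A_\beta$ (and $\alpha_I,\alpha_J$ at infinity). I would handle this by letting $\beta\downarrow0$ and $K\uparrow\infty$ along sequences, and by using the Borel–Cantelli lemma along geometric times $u=2^n$ together with monotonicity of $S_C$.
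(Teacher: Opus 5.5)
The paper does not prove this statement. It quotes it from \cite[Theorem 4.12]{savtoa} and explicitly omits the proof, so I can only assess your argument on its own terms.

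Your reduction is sound, and your sandwich works. Since $B$ is continuous and the range of the driftless $S$ is Lebesgue-null, $X$ sits at $B(r)$ on the whole gap $[S(r-),S(r))$. Hence
\begin{align*}
\frac{1}{t}\int_0^t \mathds{1}_{[X(s)\notin C]}\,ds\,\le\,\frac{S_{C^c}(L(t))}{S_C(L(t)-)},
\end{align*}
and it suffices to show $\limsup_u \log S_{C^c}(u)/\log u<\liminf_u \log S_C(u)/\log u$ a.s.

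Two corrections come before the real gap.

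\textbf{Independence.} $B$ and $S$ are \emph{not} independent here: $(B,S)$ is Markov additive. What is true is that, conditionally on $B$, the jumps of $S$ form a Poisson random measure with intensity $dr\,\nu(B(r),dw)$, where $\nu(y,dw)=\frac{\alpha(y)}{\Gamma(1-\alpha(y))}w^{-1-\alpha(y)}dw$. This, not independence, is what gives your conditional Laplace transform and the conditional independence of $S_C$ and $S_{C^c}$.

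\textbf{Growth laws.} Conditionally on $B$, these are time-inhomogeneous additive processes; the index still varies with $B(r)$ after time-changing by the occupation clock. So Khintchine/Fristedt laws for subordinators do not apply verbatim. You do not need them, because only log-scale exponents matter. With $O_C(u)=\int_0^u\mathds{1}_C(B(r))\,dr$, $\gamma\le\alpha\le\gamma'$ on $C$, and $x\ge1$,
\begin{align*}
\mathds{P}\bigl(S_C(u)>x\mid B\bigr)\le\frac{O_C(u)\,x^{-\gamma}}{1-e^{-1}},\qquad \mathds{P}\bigl(S_C(u)<x\mid B\bigr)\le e\,\exp\bigl(-O_C(u)\,x^{-\gamma'}\bigr).
\end{align*}
Together with a.s. bounds on $O_C$, conditional Borel--Cantelli along $u=2^n$, and monotonicity, these give the exponents. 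For the lower bound in (1), use $A_\varepsilon\subset A_\beta$ with $\varepsilon<\beta$. So your ``main obstacle'' is routine. Also note that the LIL only gives the upper bound $u^{1/2+\varepsilon}$ on the local time $\ell^0_u$. The lower bound follows from $\mathds{P}(\ell^0_u\le u^{1/2-\varepsilon})=\mathds{P}(\ell^0_1\le u^{-\varepsilon})=O(u^{-\varepsilon})$ by Brownian scaling.

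\textbf{The genuine gap is in case (2).} You need the far region to grow with exponent $1/\min\{\alpha_I,\alpha_J\}$. Lumping $[-K,K]^c$ together only bounds the index there by $\max\{\alpha_I,\alpha_J\}+\varepsilon$. That gives exponent $1/\max\{\alpha_I,\alpha_J\}$, which does not beat $1/(2\alpha^*)$ when $\min\{\alpha_I,\alpha_J\}<2\alpha^*<\max\{\alpha_I,\alpha_J\}$.

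You must work on the half-line carrying the minimal limit, say $(K',\infty)$ if $\alpha_I<\alpha_J$. Your first step (ratio ergodic theorem and LIL for bounded sets) says nothing about the occupation time of that half-line. In fact ``time of order $u$'' is false there: by the arcsine law, the fraction of time $B$ spends on a half-line has $\liminf$ equal to $0$ and $\limsup$ equal to $1$ a.s. What is true, and sufficient, is
\begin{align*}
\lim_{u\to\infty}\frac{\log\int_0^u\mathds{1}_{(K',\infty)}(B(r))\,dr}{\log u}=1\quad\text{a.s.}
\end{align*}
This follows from four ingredients:
\begin{enumerate}
\item $\mathds{P}\bigl(\int_0^u\mathds{1}_{(0,\infty)}(B(r))\,dr\le u^{1-\delta}\bigr)=\frac{2}{\pi}\arcsin(u^{-\delta/2})$ for $B$ started at $0$ (otherwise use the strong Markov property at the hitting time of $0$);
\item Borel--Cantelli along $u=2^n$;
\item monotonicity;
\item subtracting the $O(u^{1/2+\delta})$ time spent in $(0,K']$.
\end{enumerate}
With this lemma, the lower bound $u^{1/\min\{\alpha_I,\alpha_J\}-\varepsilon}$ for the target set beats the upper bound $u^{1/(2\alpha^*)+\varepsilon}$ for its bounded complement $A_\beta\cup[-K,K]$, and case (2) closes.
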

	The previous theorem shows that, whenever the minimum of $\alpha (x)$ is small enough, the process is trapped in it, in the sense that the occupation measure behaves, asymptotically, linearly as $t$. If, instead, the minimum is not small enough, this trapping effect does not take place. This statement can be made stronger by assuming that the function $\alpha(x)$ jumps on the minimum, as follows.
	\begin{theorem}[{\cite[Theorem 4.13]{savtoa}}]
		\label{thm:hyp}
		Let  $\alpha:\mathbb{R}\mapsto (0,1)$ with $\alpha^*=\min_{x\in\mathbb{R}}\alpha(x)>0$, $\max_{x\in\mathbb{R}}\alpha(x)<1$ and \[1>\lim_{x\to +\infty}\alpha(x)=\alpha_I,\,\,1>\lim_{x\to -\infty}\alpha(x)=\alpha_J.\]
		
		Let $A_0=\{x\in\mathbb{R}:\alpha(x)=\alpha^*\}=\bigcup_i I_i,$ be bounded where $I_i$ are disjoint intervals. Assume also $\mathpzc{l}({A_0})\in({0,\infty})$, $\mathpzc{l}({\partial A_0})=0$, and that for all small enough $\beta>0$ we have  $A_0=A_\beta$, where $A_\beta=\{x\in\mathbb{R}:\alpha(x)<\alpha^*+\beta\}$.
		
		Then, if $2\alpha^*<\min\{\alpha_I,\alpha_J\}$, it holds true that 
		\begin{equation}\label{eq:hyp}
			\lim_{t \to +\infty}\mathds{P}^{(x,0)}({B(L(t))\in A_0})=1.
		\end{equation}
	\end{theorem}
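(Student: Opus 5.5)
The plan is to turn the event $\{B(L(t))\in A_0\}$ into a statement about which jumps of the additive component $S$ cover the level $t$, and then to compare the contributions of $A_0$ and $A_0^c$ through the Laplace transform in $t$.

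\emph{Structure of $S$.} Conditionally on the Brownian path, $S$ has independent increments with Laplace exponent $\int_0^u \lambda^{\alpha(B(s))}\,ds$. It is pure jump, with jump kernel $\nu(B(s),dw)$, and it is strictly increasing. By \cite[Lemma 3.2]{savtoa}, $B$ and $S$ do not jump together. Hence a fixed level $t$ is a.s.\ straddled by a jump $[S(u-),S(u))$ of $S$, and $B(L(t))=B(u)$, where $u=L(t)$ is that jump epoch. So $B(L(t))\notin A_0$ exactly when the straddling jump occurs while $B\in A_0^c$.

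\emph{Laplace transform in $t$.} Using the compensation formula for the jumps of $S$ given $B$, I would compute, for $\lambda>0$,
\begin{align*}
\int_0^{+\infty} e^{-\lambda t}\,\mathds{P}^{(x,0)}\big(B(L(t))\notin A_0\big)\,dt
= \frac{1}{\lambda}\,\mathds{E}^{(x,0)}\int_0^{+\infty} e^{-\lambda S(u)}\,\lambda^{\alpha(B(u))}\,\mathds{1}_{A_0^c}(B(u))\,du .
\end{align*}
The analogous identity without the indicator equals $1/\lambda$, so $\lambda$ times the right-hand side is a probability-like ratio $R(\lambda)$. The goal of this step is $R(\lambda)\to0$ as $\lambda\to0$.

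\emph{Showing $R(\lambda)\to0$.} Split $S=S_{0}+S_{c}$, where $S_0$ collects the jumps made while $B\in A_0$ and $S_c$ the jumps made while $B\in A_0^c$. Conditionally on $B$, $S_0$ is an $\alpha^*$-stable subordinator run at the occupation clock $\Lambda_0(u)=\int_0^u \mathds{1}_{A_0}(B(s))\,ds$. For a bounded set and recurrent one-dimensional Brownian motion, $\Lambda_0(u)\asymp u^{1/2}$ by local time and Brownian scaling, so $S_0(u)\approx u^{1/(2\alpha^*)}$. The weight $\lambda^{\alpha(B(u))}$ on $A_0^c$ splits in two regions:
\begin{itemize}
\item near the origin, $\alpha\ge \alpha^*+\beta$, which holds because $\alpha$ jumps at the minimum and $A_\beta=A_0$;
\item far away, $\alpha\approx\alpha_I$ or $\alpha_J$.
\end{itemize}
In the integral, $e^{-\lambda S(u)}\le e^{-\lambda S_0(u)}$ makes the effective $u$-range of order $\lambda^{-2\alpha^*}$. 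On that range, the bounded part of $A_0^c$ has occupation of order $\lambda^{-\alpha^*}$ and weight $\lambda^{\alpha^*+\beta}$, giving a contribution $O(\lambda^{\beta})$. The far part has occupation $\lesssim\lambda^{-2\alpha^*}$ and weight $\approx\lambda^{\min(\alpha_I,\alpha_J)-\varepsilon}$, giving $O(\lambda^{\min(\alpha_I,\alpha_J)-2\alpha^*-\varepsilon})$. This exponent is positive exactly under the hypothesis $2\alpha^*<\min\{\alpha_I,\alpha_J\}$. These heuristics are the same scaling comparisons behind Theorem \ref{thm:occupation}, and I would make them rigorous with the estimates of \cite[Section 4]{savtoa}: Brownian local-time tail bounds plus conditional independence of $S_0$ and $S_c$ given $B$.

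\emph{Main obstacle: from Laplace to pointwise limits.} $R(\lambda)\to0$ only yields, by an Abelian/Tauberian argument, the Cesàro statement $\frac1t\int_0^t \mathds{P}(B(L(s))\notin A_0)\,ds\to0$, which is essentially \eqref{eq:occupationG} with $A_\beta=A_0$. To upgrade this to $\lim_{t\to+\infty}$ of the probability itself, I would first try a monotonicity or slow-variation argument in $t$: use the Markov embedding of Theorem \ref{theorem_semim} for $(B(L(t)),\Gamma(t))$, plus the fact that once $t$ falls inside a long jump interval started in $A_0$ the state is frozen. This should show that $t\mapsto \mathds{P}(B(L(t))\notin A_0)$ cannot oscillate on scales comparable to $t$. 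Failing that, I would obtain pointwise convergence directly from the joint law of undershoot and jump size: for the straddling jump, the probability that it originates from $A_0^c$ is bounded by $\mathds{E}[S_c(L(t))]/t$-type quantities, and these can be controlled uniformly in $t$ by the same scaling estimates.
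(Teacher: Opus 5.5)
\textbf{There is a genuine gap, and it sits at the decisive step.} Your reduction to the jump straddling $t$ and your Laplace identity are correct. Even made rigorous, though, they only yield $\frac1t\int_0^t\mathds{P}^{(x,0)}(B(L(s))\notin A_0)\,ds\to0$. That is already a consequence of Theorem \ref{thm:occupation}: here $A_\beta=A_0$ for small $\beta$, so \eqref{eq:occupationG} plus bounded convergence gives it. What Theorem \ref{thm:hyp} adds is exactly the fixed-$t$ statement, and for it you offer two tentative routes. Neither works as described.

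\emph{First route (Tauberian upgrade).} The map $t\mapsto\mathds{P}^{(x,0)}(B(L(t))\notin A_0)$ has no a priori monotonicity. You would therefore need $\{B(L(t))\notin A_0\}$ to persist, with non-vanishing conditional probability, over windows of length comparable to $t$. But the scaling comparison that drives the theorem ($S_c$ negligible with respect to $S_0$) shows that jumps from $A_0^c$ straddling a large level $t$ have size $o(t)$. So no such persistence holds. The freezing supplied by the embedding of Theorem \ref{theorem_semim} concerns long jumps made from $A_0$. It bounds $\mathds{P}^{(x,0)}(B(L(s))\in A_0)$ from below at later times $s$, which is the wrong direction: combined with a Ces\`aro limit, it does not stop $\mathds{P}^{(x,0)}(B(L(t))\in A_0)$ from dipping at sparse times.

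\emph{Second route (bound via $S_c(L(t))$).} This is not a bound. The straddling jump can come from $A_0^c$ and be small when $S(L(t)-)$ is close to $t$, so the event does not force $S_c(L(t))$ to be of order $t$. Moreover $\mathds{E}^{(x,0)}[S_c(L(t))]=+\infty$. With positive probability the straddling jump comes from some $y\in A_0^c$, and its overshoot then has conditional tail $((a+r)/a)^{-\alpha(y)}$ with $\alpha(y)<1$. This is the same mechanism that gives $\mathds{E}^{(x,0)}S(L(t))=+\infty$ in the stable case.

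\textbf{What is missing is a non-averaged estimate.} For instance, the compensation formula gives
\[
\mathds{P}^{(x,0)}\big(B(L(t))\notin A_0\big)=\mathds{E}^{(x,0)}\int_0^{+\infty}\mathds{1}_{A_0^c}(B(u))\,\mathds{1}_{[S(u)\le t]}\,\frac{(t-S(u))^{-\alpha(B(u))}}{\Gamma(1-\alpha(B(u)))}\,du .
\]
Split according to the size of $t-S(u)$:
\begin{itemize}
\item Where $t-S(u)\ge\varepsilon t$, the kernel is at most its value at $\varepsilon t$. This part can be bounded by occupation estimates up to $L(t)$, and this is where $2\alpha^*<\min\{\alpha_I,\alpha_J\}$ enters.
\item Where $S(u)\in(t-\varepsilon t,t]$, the kernel is singular. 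This part is bounded by $\mathds{P}^{(x,0)}(t-S(L(t)-)<\varepsilon t)$. You must then show that the age at the fixed level $t$ does not concentrate at $0$ on scale $t$, uniformly in $t$ as $\varepsilon\to0$. That requires local information on the law of $S$ near level $t$, e.g.\ density bounds inherited, given $B$, from the $\alpha^*$-stable component $S_0$.
\end{itemize}
The Laplace transform in $t$ averages exactly this information away.

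\textbf{A separate error in the $R(\lambda)$ step.} Your far-region estimate is wrong as written. With only the damping $e^{-\lambda S_0(u)}$, the expected occupation of $\{y>K\}$ is infinite: $\Lambda_0$ does not grow during excursions of $B$ away from the bounded set $A_0$, and these excursions have infinite mean length. You must keep the damping $e^{-\lambda^{\alpha_I+\varepsilon}\Lambda_{\mathrm{far}}(u)}$ produced by the far region itself. Excursion theory then caps each excursion's contribution and gives a bound of order $\lambda^{\alpha_I/2-\alpha^*-O(\varepsilon)}$, not $\lambda^{\alpha_I-2\alpha^*-\varepsilon}$. The threshold $2\alpha^*<\alpha_I$ is unchanged, so $R(\lambda)\to0$ does hold, but it only reproduces Theorem \ref{thm:occupation}.
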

	For clarity, let us consider a special case, which may be of the greatest interest.
	\begin{corollary}[{\cite[Corollary 4.14]{savtoa}}]\label{cor:conseq}
		Let   $\alpha:\mathbb{R}\mapsto (0,1)$ be piece-wise constant taking values $0<\alpha_1<\alpha_2<\dots<\alpha_n<1$. Assume that $A=\{x\in\mathbb{R}:\alpha(x)=\alpha_1\}$ is a finite union of disjoint intervals such that $\mathpzc{l}({A})\in(0, +\infty)$ and $\min\{\lim_{x\to +\infty}\alpha(x);\lim_{x\to -\infty}\alpha(x)\}=\alpha_j$, for some $j\in\{2,\dots,n\}$. If $2\alpha_1<\alpha_j$ then 
		\begin{equation}\label{eq:hyp1}
			\lim_{t\to +\infty}\mathds{P}^{(x,0)}({B(L(t))\in A})=1.
		\end{equation}
		Otherwise, if $2\alpha_1>\alpha_j$ and $A_j=\{x\in \mathbb{R}:\alpha(x)=\alpha_j\}$, then
		\begin{equation}\label{eq:hyp2}
			\lim_{t\to +\infty}\mathds{P}^{(x,0)}({B(L(t))\in A_j})=1.
		\end{equation}
	\end{corollary}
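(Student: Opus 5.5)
Corollary \ref{cor:conseq} will be obtained as a direct specialisation of Theorems \ref{thm:hyp} and \ref{thm:occupation}, by checking their hypotheses for a piecewise constant $\alpha$ and then reading off the two cases.

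\emph{Setting up the hypotheses.} Since $\alpha$ takes only the values $\alpha_1<\dots<\alpha_n$, we have $\alpha^*=\alpha_1>0$ and $\max\alpha=\alpha_n<1$. For $0<\beta<\alpha_2-\alpha_1$ the sublevel set is
\[A_\beta=\{\alpha<\alpha_1+\beta\}=\{\alpha=\alpha_1\}=A=A_0.\]
By assumption $A$ is a finite union of disjoint intervals with $\mathpzc{l}(A)\in(0,+\infty)$. Hence $\partial A$ is finite, so $\mathpzc{l}(\partial A)=0$, and $A$ is bounded because it has finite measure. Piecewise constancy together with the existence of the limits at $\pm\infty$ means $\alpha$ is eventually constant at $\pm\infty$. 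So $\alpha_I,\alpha_J\in\{\alpha_2,\dots,\alpha_n\}$, each is $<1$, and $\min\{\alpha_I,\alpha_J\}=\alpha_j$. In particular $\alpha_I,\alpha_J>\alpha^*$: neither can equal $\alpha_1$, since that would make $A$ unbounded.

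\emph{Case $2\alpha_1<\alpha_j$.} All hypotheses of Theorem \ref{thm:hyp} hold with $A_0=A$, and \eqref{eq:hyp} is exactly \eqref{eq:hyp1}.

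\emph{Case $2\alpha_1>\alpha_j$.} This is the main obstacle, because neither earlier theorem states a limit for $\mathds{P}^{(x,0)}(B(L(t))\in A_j)$; Theorem \ref{thm:occupation}(2) only says that the time-averaged occupation of $A^c\cap[-K,K]^c$ tends to $1$. My plan is a reduction in two steps.

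\begin{enumerate}
\item Use $K$ large enough that $\alpha$ is constant outside $[-K,K]$. Outside $[-K,K]$ the process then sits only in the regions $\{x>K\}$, where $\alpha=\alpha_I$, or $\{x<-K\}$, where $\alpha=\alpha_J$.
\item Eliminate the tail with the larger exponent when $\alpha_I\neq\alpha_J$. Repeat the comparison underlying Theorem \ref{thm:occupation}, now between the slow tail with exponent $\alpha_j$ and the faster tail with exponent $\max\{\alpha_I,\alpha_J\}$. The time spent in a region of order $\alpha$ grows like $t^{\alpha}$-type scales after the time change, so the tail with the smaller exponent dominates.
\end{enumerate}

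This gives occupation convergence to $A_j\cap[-K,K]^c$ for the dominant tail. To pass from occupation-average convergence to convergence of the one-dimensional marginal, I would argue as in the proof of Theorem \ref{thm:hyp}. The Laplace-in-time transform of $\mathds{P}^{(x,0)}(B(L(t))\in\cdot)$ is expressed through the resolvent of $B$ killed at rate $\lambda^{\alpha(\cdot)}$. As $\lambda\to0$, the $\lambda^{\alpha_j}$ weight on $A_j$ dominates, and a Tauberian argument (monotone density, made possible by the regularity of the piecewise constant structure) turns this into pointwise convergence in $t$. The delicate points are the Tauberian step and the case $\alpha_I=\alpha_J$; in the latter, both tails belong to $A_j$, and the claim already follows from Theorem \ref{thm:occupation}(2) combined with the Tauberian step.
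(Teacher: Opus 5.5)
Your treatment of the case $2\alpha_1<\alpha_j$ is correct and follows the intended route. The paper gives no separate argument: it quotes \cite[Corollary 4.14]{savtoa} as a special case of the preceding theorems. Checking that a piecewise constant $\alpha$ satisfies the hypotheses of Theorem \ref{thm:hyp} is all that is needed, with $A_0=A_\beta=A$ for $0<\beta<\alpha_2-\alpha_1$, and $\alpha_I,\alpha_J\ge\alpha_2$ because $A$ is bounded.

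The case $2\alpha_1>\alpha_j$, however, is not proved. What you give is a plan whose two essential steps are only asserted.

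The first missing step is the passage to the one-dimensional marginal. Put $g(t)=\mathds{P}^{(x,0)}(B(L(t))\in A_j)$. Taking expectations in Theorem \ref{thm:occupation}(2) yields at most the Ces\`aro limit $\frac1t\int_0^t g(s)\,ds\to1$. The Feynman--Kac resolvent computation yields at most the Abel limit $\lambda\int_0^{+\infty}e^{-\lambda t}g(t)\,dt\to1$ as $\lambda\downarrow0$. Neither gives $g(t)\to1$ without a Tauberian side condition. The monotone density theorem needs $g$ to be eventually monotone, and nothing in the piecewise constant structure makes the probability of being in a fixed set monotone in $t$. So this step, which you flag yourself as delicate, is missing rather than delicate. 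A usable substitute would be a slow-oscillation bound. For $s<t$, $L$ is constant on $[s,t]$ on the event $\{\Gamma(s)>t-s\}$, hence $|g(t)-g(s)|\le\mathds{P}^{(x,0)}(\Gamma(s)\le t-s)$. Together with the Ces\`aro limit, it would then suffice to show $\lim_{\epsilon\downarrow0}\limsup_{s\to+\infty}\mathds{P}^{(x,0)}(\Gamma(s)\le\epsilon s)=0$. That is a genuine estimate on the remaining lifetime of the variable-order additive component, and you would have to prove it.

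The second missing step concerns the case $\alpha_I\neq\alpha_J$. Theorem \ref{thm:occupation}(2) says nothing about which half-line is occupied, and ``repeating the comparison'' is not an argument. The mechanism differs from the bounded-set case: each half-line receives a random macroscopic fraction of Brownian time (arcsine law), rather than time on the $\sqrt{u}$ local-time scale.

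Your Laplace-side heuristic is also not right as stated. As $\lambda\downarrow0$ the largest killing rate is $\lambda^{\alpha_1}$ on $A$, not $\lambda^{\alpha_j}$ on $A_j$. The set $A$ becomes negligible only for the following reason. Before the killing time, which is of order $\lambda^{-\alpha_j}$, Brownian motion spends time of order $\lambda^{-\alpha_j/2}$ in the bounded set $A$. The killing mass there is therefore of order $\lambda^{\alpha_1-\alpha_j/2}$, which vanishes exactly when $2\alpha_1>\alpha_j$. The other tail contributes at most of order $\lambda^{\max\{\alpha_I,\alpha_J\}-\alpha_j}$, off an event of small probability controlled by the arcsine law. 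Your sketch never shows where the threshold $2\alpha_1>\alpha_j$ enters. A proof of the second case has to make these estimates rigorous and then supply a valid Tauberian step.
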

	Theorem \ref{thm:occupation} can be formulated also when the region where $\alpha(\cdot)$ is minimum is unbounded, subject to growth conditions on $\mathpzc{l}(A \cap [-x,x])$ as $x \to +\infty$, see \cite[Section 4.2]{savtoa}.

	\section{Killed time-changed Markov process}
	\label{section_killed}
	In this section, we consider an evolution equation for the process $X^+(t) = A(L(t))$, $t\ge0$, killed upon exiting an open set $D$ (a subset of the state space). 
	
	Here, we treat only the uncoupled case in which $A$ is a suitable Feller process, while $L$ is the inverse of a strictly increasing subordinator $S$, independent of $A$. Hence, $X^+(t)=X(t)$ a.s. for all $t\ge0$. {The evolution equation in the uncoupled case has been studied in various settings. The research started with the fractional derivatives in time, and with the Laplacian as the spatial operator. Further developments include more general non-local derivatives and more general spatial operators, as well as different types of problems that include adding potentials (the so-called Schr\"odinger equations) to treat the soft killing effect, adding forcing terms, treating semilinear problems, and nonhomogeneous boundary data, among others. This line of research has led to a broad and rich theory; we refer the reader to the notable works  \cite{BaeumerLuksMeersc-SpaceTime18,DuTon_SPA,CKKW_FM18,Ton19jmaa,CDX21,KW25,CMN12,CGCV24}, and the references therein. In this section, we introduce a new contribution to this theory. Namely, to treat the evolution equation pointwise, theory based on strong solutions with initial conditions in the  domain of the spatial generator is usually employed. However, as we show below, such theory, although rather general, does not cover some very reasonable initial conditions. On the other hand, weak-solutions theory of these evolution problems is also well-established, but from the standpoint of applications, a pointwise theory is often more desirable. Therefore, we show a novel approach on how to solve the evolution equation pointwise for initial conditions that do not have to be inside of the domain of the spatial operator.}
	
	First, we show a tight bond between the killed processes $A$ and $X$ (both killed upon exiting the open set $D$). Let $\tau_D\coloneq \inf\{t>0:X(t) \notin D\}$, be the exit time of the process $X$ from the open set $D$, and let $T_D \coloneqq \inf  \{ t > 0 : A(t) \notin D \}$ be the exit time of the process $A$ from $D$. Since $X$ is obtained by subordination of $A$ with the inverse subordinator $L$, there is a strong connection between the exit times $T_D$ and $\tau_D$. In particular, since $L$ has continuous trajectories (i.e. $S$ is strictly increasing), it can be seen that the process $X$ killed upon exiting $D$, i.e.
	\begin{align}
		X^D(t) = \begin{cases} X(t), \qquad & t <\tau_D, \\ \partial, & t \geq \tau_D ,\end{cases}
	\end{align}
	where $\partial$ is an added point to the state space usually called the cemetery (absorbing point), is almost surely for every $t \geq 0$ equal to the process $A^D(L(t))$, $t\ge0$, where $A^D$ is the process $A$ killed upon exiting $D$, i.e.
	\begin{align}
		A^D(t) = \begin{cases}
			A(t), \qquad &t < T_D, \\
			\partial, & t \geq T_D.
		\end{cases}
		\label{killed_process}
	\end{align}
	Indeed, since $S$ is strictly increasing, we have that $L(t) < h$ if and only if $S(h-)>t$, and we have $\tau_D = S(T_D-)$. From this, it easily follows that $X^D=A^D(L)$. Moreover, recall that $S$ is independent of $A$, and recall that at any fixed time $t$, the probability that {the subordinator $S$} jumps is zero, so we have $S(T_D-)=S(T_D)$ almost surely. Hence, $\tau_D=S(T_D)$ almost surely.

	One might now be tempted to apply directly Theorem \ref{theorem_convolution}, as follows. Since $A$ is a Markov process, $A^D$ remains Markovian and we can consider its semigroup $P^D_t$, $t \geq 0$,  generated by $(G^D, \mathcal{D}(G^D))$ on a suitable Banach space $\mathfrak{B}$. Under suitable assumptions, e.g. if the process $A$ is \textit{doubly Feller} and $D$ is \textit{regular} for $A$ (see \cite{Chung-DoublyFeller-Seminar85}), $P^D_t$, $t \geq 0$, is a semigroup on $C_0(D)$.
	
	Then, under the assumptions of Theorem \ref{theorem_convolution}, the mapping $t \mapsto \mathcal{P}_t^Du$, where 
	\begin{align}
		\mathcal{P}_t^Du \, \coloneqq \, \int_0^{+\infty} P_s^D u \, l_t(ds)=\EE^{(x,0)} u(A^D(L(t)))=\EE^{(x,0)} u(X^D(t)),
	\end{align}
	solves the equation
	\begin{align}\label{eq:Dirichlet-time-frac}
		\phi(\partial_t) (q(t)-q(0)) = G^Dq(t), \qquad q(0) = u \in \mathcal{D}(G^D),
	\end{align}
	in $\mathfrak{B}$.
	A major problem in applications is that in general the generator of the killed process $(G^D,\mathcal{D}(G^D))$ is a quite mystical object, in the sense that it is not clear which functions are in $\DD(G^D)$. If the generator of $A$ is a nice local operator, for example if $A$ is an It\^o diffusion process with smooth enough coefficients and if the boundary of $D$ is smooth enough, the corresponding semigroup $P_t^D$, $t\ge0$, is Feller semigroup, i.e., it is a strongly continuous sub-Markovian semigroup on $C_0(D) = \{ u \in C({D}): \forall \varepsilon>0, \, \exists K\subset D \text{ compact s.t. } |u(x)|<\varepsilon \text{ for all $x\in D\setminus K$}\}$ equipped with the supremum norm, and it moreover holds $C_c^2(D) \subset \mathcal{D}(G^D)$ (see e.g. \cite[Theorem 21.28]{SchillingBMSC} and \cite[Theorem 2.3]{BaeumerLuksMeersc-SpaceTime18}). However, the property $C_c^2(D) \subset \mathcal{D}(G^D)$ does not hold if the process $A$ is jumping, as we demonstrate in Proposition \ref{p:generator domain}. In other words, determining which functions $u$ indeed belong to $\DD(G^D)$ and thus can be taken as the initial condition in \eqref{eq:Dirichlet-time-frac} is not apparent. Moreover, we would indeed like to take $u\equiv \1$, so that the expected solution to \eqref{eq:Dirichlet-time-frac} is $q(x,t)=\mathcal{P}_t\1(x)=\PP^{(x,0)}(\tau_D>t)$. But $\1$ is certainly  not in $C_0(D)$-domain of the generator $G^D$ because it does not vanish at the boundary.\footnote{Moreover, the consideration of the semigroup $P_t^D$ on $C_b(D)$ or $L^2(D)$ (if that is possible) will not fix the problem that $\1_D\notin G^D$ in general. E.g. in the fractional Laplacian case $G=-(\Delta)^{\alpha/2}$ for $\alpha\in (0,2)$, and if $\partial D$ is smooth, it can be seen that $c^{-1}\dist(x,\partial D)^{-\alpha}\le -(\Delta)^{\alpha/2}\1_D(x)\le \dist(x,\partial D)^{-\alpha}$, $x\in D$.}
	
	We overcome this problem by looking at the problem \eqref{eq:Dirichlet-time-frac} in the pointwise sense rather than in the strong sense, and instead of a general Feller process $A$, we deal with an element of a subclass of L\'evy processes, namely $A$ will be a subordinate Brownian motion. We find that this simplified but still rather general setting allows us to deal with $\BB_b(D)\cap C^2_{loc}(D)$ functions as initial conditions. Moreover, note that such functions are certainly not contained in $\DD(G^D)$ (since vanishing on the boundary is not demanded), which suggests that looking at the problem \eqref{eq:Dirichlet-time-frac} in the strong sense may indeed be too restrictive, i.e. it shows the limitations of the abstract Cauchy problem. 
	
	Since we will not directly rely on the abstract Cauchy problem technique, and the corresponding implied regularity of solutions  (e.g. as in the proof of Theorem \ref{theorem_convolution}), we will need to accomplish the same kind of necessary regularity some other way around. To this end, we will first solve the local in time parabolic problem (corresponding to \eqref{eq:Dirichlet-time-frac}) in Subsection \ref{ss:parabolic-local}, and then building on those results, we will provide the evolution equation for $X^D$ by moving to non-local in time parabolic problems in Subsection \ref{ss:parabolic-non-local}.

	Now we construct the underlying process $(A(t),S(t))$, $t\ge0$, and record some preliminary results. Let $B=(B_t,\, t \geq 0)$ be a Brownian motion in $\mathbb{R}^d$, $d\ge 1$, with the characteristic exponent $\xi\mapsto |\xi|^2$. Let $S=(S(t),\,t \geq 0)$, and $\sigma=(\sigma(t),\,t \geq 0)$, be two independent subordinators, independent of $B$. Assume that $\sigma$ has the Laplace exponent 
	\begin{align}\label{eq:psi Laplace defn}
		\psi(\lambda)=b_\psi\lambda+\int_0^\infty(1-e^{-\lambda t})\mu(dt),
	\end{align}
	with $b_\psi>0$ or $\mu(0,+\infty)=+\infty$, so that  $\sigma$ is not a compound Poisson process but may have a drift. The Laplace exponent of  $S$, denoted by $\phi$, is assumed to be as in \eqref{bernstein-eq0} with $b>0$ or $\nu(0,+\infty)=+\infty$. Note that under these assumptions, both $\sigma$ and $S$ are strictly increasing.
	
	Set $A(t) =  B (\sigma(t))$, $t \geq 0$, which defines a process called a subordinate Brownian motion. The process $A$ is a L\'evy process with the characteristic exponent $\xi\mapsto\psi(|\xi|^2)$, see \cite{sato} for more details. For an open set $D\subset \mathbb{R}^d$, denote by $A^D$ the process $A$ killed upon exiting the set $D$, defined as in \eqref{killed_process}.
	
	In this section, it is irrelevant to consider $S(0) \neq 0$ and $\sigma(0) \neq 0$ and so to simplify the notation, we consider the Markov process $(A(t), S(t))$, $t\ge0$, only with respect to the family of probability measures $\mathds{P}^{x} \coloneqq \mathds{P}^{(x,0)}$, and we only consider the case when $\sigma(0)=0$. Under each such measure,
	\begin{align}
		& \mathds{E}^x e^{-\lambda \sigma (t)} = e^{-t \psi (\lambda)},\quad t,\,\lambda\ge0,\\
		&  \mathds{E}^x e^{-\lambda S (t)} = e^{-t \phi (\lambda)},\quad t,\,\lambda\ge0. 
	\end{align}
	Therefore, when applied to events in the sigma-algebra generated by $S(t), t \geq 0$, or $\sigma(t), t \geq 0$, we will avoid the superscript in the probability measure, i.e.,  $\mathds{P} \coloneqq \mathds{P}^x$, for $x$ arbitrary, because $x$ is irrelevant in this context.
	
	We will look at general open sets $D$, but sometimes we will need additional assumptions on $D$ such as regularity of $\partial D$ in the following sense: We say that $z\in \partial D$ is regular for $A$ if $\mathds{P}^z(T_D=0)=1$, i.e. if the process started at $z\in \partial D$ immediately exits the set $D$. Otherwise, we say $z$ is irregular. If all points on $\partial D$ are regular for $X$, we say $D$ is regular for $X$. At this point, we also note that it is well known that $T_D$ satisfies: $\mathds{P}^x(T_D=t)=0$, for all $x\in \mathbb{R}^d$ and $t>0$, see e.g. the proof of \cite[Proposition 1.20]{chung_zhao}.
	
	Hence, in the notation of Section \ref{sec3_non-local}, we have $(A,S)=(B(\sigma),S)$, and we are interested in the evolution equation for the process $$X(t) = A(L(t)) = B(\sigma(L(t))), \quad t \geq 0,$$
	where $L$ is the inverse of $S$, killed upon exiting the open set $D$, denoted by $X^D$.
	
	\subsection{Subordinate Brownian motion: transition kernel, semigroup and generator}
	The transition density of the Brownian motion $B$ is given by
	\begin{align}\label{eq:trans density BM}
		p_B(t,x,y)=p_B(t,x-y)\coloneqq \frac{1}{(4\pi t)^{d/2}}e^{-\frac{|x-y|^2}{4t}}, \quad t>0,\, x,y\in\mathbb{R}^d.
	\end{align}
	Since the process $A$ is obtained by subordinating the Brownian motion B, it is symmetric, and thus has a symmetric transition density, which is given by
	\begin{align}\label{eq:trans density SBM}
		p(t,x,y)=p(t,x-y)=\int_0^\infty p_B(s,x,y)\mathds{P}(\sigma(t)\in ds)=\mathds{E}[p_B(\sigma(t),x,y)].
	\end{align}
	Note that the density $p$ is strictly positive, jointly continuous off the diagonal, and can explode at the diagonal since $p(t,0)=(4\pi)^{-d/2}\mathds{E}[\sigma(t)^{-d/2}]\in(0,\infty]$, and we will return to this peculiarity a few paragraphs below.
	
	The transition density of the killed process $A^D$ is given by Hunt's formula
	\begin{align}\label{eq:trans density KSBM}
		p^D(t,x,y)=p(t,x,y)-\mathds{E}^x[p(t-T_D,A({T_D}),y);{t>T_D}], \qquad t>0,\, x,y\in\mathbb{R}^d,
	\end{align}
	which can be verified by using the strong Markov property. It is well known that $p^D$ is also symmetric on $\mathbb{R}^d\times \mathbb{R}^d$ and jointly continuous off the diagonal, e.g. we can repeat the calculations from \cite[Theorem 2.4]{chung_zhao}, see also \cite[Lemma 2]{Chung-DoublyFeller-Seminar85}. The following Lemma will also be useful later.
	\begin{lemma}\label{l:ap:Pt in C0}
		The transition density $p^D$ satisfies
		\begin{align}
			\lim_{D\ni x\to z}p^D(t,x,y)=0,
		\end{align}
		for all $y\in D$ and for all regular points $z\in \partial D$ for $X$.
		
		If $D$ is regular, then for all $u\in \BB_b(D)$ we have $ P^D_tu\in C_0(D)$, $t>0$.
	\end{lemma}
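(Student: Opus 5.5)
We will work directly from Hunt's formula \eqref{eq:trans density KSBM}, writing
\begin{align*}
p^D(t,x,y)=p(t,x,y)-\mathds{E}^x\big[p(t-T_D,A(T_D),y);T_D<t\big],
\end{align*}
and show that the subtracted term converges to $p(t,z,y)$ as $D\ni x\to z$, with $z\in\partial D$ regular and $y\in D$ fixed. Since $p(t,\cdot,y)$ is continuous at $z\neq y$, we have $p(t,x,y)\to p(t,z,y)$. This reduces the first claim to the convergence of the expectation term.

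Fix $\delta=\dist(y,\partial D)/2>0$ and restrict to $x$ near $z$. On $\{T_D<t\}$ we have $A(T_D)\notin D$, so $|A(T_D)-y|\ge 2\delta$. The function $(s,w)\mapsto p(s,w,y)$ is bounded on $(0,t]\times\{|w-y|\ge 2\delta\}$. To see this, use \eqref{eq:trans density SBM}: for $|w-y|\ge2\delta$,
\begin{align*}
p_B(r,w-y)\le \sup_{r>0} (4\pi r)^{-d/2}e^{-\delta^2/r}<\infty,
\end{align*}
and this bound is uniform in $r$, hence also after averaging over $\sigma(s)$. The bound also extends continuously as $s\searrow0$ to the value $0$ off the diagonal. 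Call this bounded function $F(s,w)=p(s,w,y)$, set $F(0,w)=0$ for $w\neq y$, and note that $F$ is jointly continuous on $[0,t]\times\{|w-y|\ge2\delta\}$. The joint continuity in $s$ follows from dominated convergence, using the stochastic continuity of $\sigma$ and the fact that $p_B(r,w)\to0$ as $r\to0$ off the diagonal.

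Regularity of $z$ means $\mathds{P}^z(T_D=0)=1$. The main obstacle is to transfer this to the statement that, as $x\to z$ with $x\in D$,
\begin{align*}
T_D\to0 \quad\text{and}\quad A(T_D)\to z \quad\text{in }\mathds{P}^x\text{-probability}.
\end{align*}
I would argue via the standard fact that $x\mapsto \mathds{P}^x(T_D\le s)$ is lower semicontinuous for each $s>0$. Indeed, $\mathds{P}^x(T_{D}\le s)\ge \mathds{P}^x(\exists r\in(\varepsilon,s]: A(r)\notin D)=\mathds{E}^x\big[\mathds{P}^{A(\varepsilon)}(T_D\le s-\varepsilon)\big]$, and the latter is continuous in $x$ by the strong Feller property of $A$, since $A$ has a density. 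Taking the limit yields $\liminf_{x\to z}\mathds{P}^x(T_D\le s)\ge \mathds{P}^z(T_D\le s)=1$. For the position, right-continuity of paths together with Lévy translation invariance gives $\mathds{P}^x\big(\sup_{r\le s}|A(r)-x|>\eta\big)=\mathds{P}^0\big(\sup_{r\le s}|A(r)|>\eta\big)\to0$ as $s\to0$, uniformly in $x$. Combining the two, $(T_D,A(T_D))\to(0,z)$ in probability. Then bounded convergence together with the joint continuity of $F$ gives
\begin{align*}
\mathds{E}^x\big[F(t-T_D,A(T_D));T_D<t\big]\to F(t,z)=p(t,z,y).
\end{align*}
This proves the first claim.

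For the second claim, let $D$ be regular and $u\in\BB_b(D)$, so that $P^D_tu(x)=\int_D p^D(t,x,y)u(y)\,dy$. Continuity in $x\in D$ follows from $P^D_tu=P^D_{t/2}(P^D_{t/2}u)$ and the strong Feller property of $P^D$. The latter holds by Hunt's formula: $P_{t}u$ is continuous, and the correction term is continuous by the same semigroup trick together with $\mathds{P}^x(T_D\le\varepsilon)\to0$ uniformly on compacts. For vanishing at $\partial D$, apply the first claim with $y$ integrated out, using domination $p^D\le p(t,\cdot)$ and $\int p(t,x,y)\,dy=1$. The integral over $y$ near $\partial D$ is made small uniformly, via the alternative bound $|P^D_tu(x)|\le\|u\|\,\mathds{P}^x(T_D>t)$ and $\mathds{P}^x(T_D>t)\to0$ as $x\to z$ by the regularity argument above. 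This bound in fact directly gives decay at every boundary point. If $D$ is unbounded, decay at infinity comes from $P^D_t|u|\le P_t(|u|\mathbf 1_D)$. That term is small at infinity after truncating $u$ to a large ball, since $P_t$ is Feller and maps $L^1$ functions to $C_0$ via the bound $p(t,\cdot)\in L^1$.
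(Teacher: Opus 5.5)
Your proof of the first claim follows the paper's route. The paper only points to the Brownian argument of Chung--Zhao, which is Hunt's formula plus showing that the correction term tends to $p(t,z,y)$, and you fill in the details correctly: lower semicontinuity of $x\mapsto\mathds{P}^x(T_D\le s)$ via the strong Feller property of $A$, $(T_D,A(T_D))\to(0,z)$ in probability, and boundedness and joint continuity of $(s,w)\mapsto p(s,w,y)$ on $|w-y|\ge 2\delta$, with value $0$ at $s=0$.

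For the second claim you take a different and more direct route. The paper gets boundary decay from the first claim plus dominated convergence; since the dominating function $\|u\|_\infty p(t,x,\cdot)$ moves with $x$, this actually needs a generalized version. It gets interior continuity by splitting $D$ into $B(x,\de(x)/2)$ and its complement and using continuity of $p^D$ off the diagonal. Your bound $|P^D_tu(x)|\le\|u\|_\infty\mathds{P}^x(T_D>t)$, together with the semicontinuity you already proved, gives boundary decay at once without the first claim. You can therefore drop the muddled first half of that paragraph. Your interior argument needs no information on $p^D$ near the diagonal, where $p$ may blow up (e.g.\ for geometric stable $A$); that is what it buys over the paper's splitting. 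Write it in its actual form:
\[
\sup_{x\in K}|P^D_tu(x)-P_\varepsilon(P^D_{t-\varepsilon}u)(x)|\le\|u\|_\infty\sup_{x\in K}\mathds{P}^x(T_D\le\varepsilon)\to0
\]
for compact $K\subset D$, where $P_\varepsilon(P^D_{t-\varepsilon}u)$ is continuous because $A$ is strong Feller. As you phrased it (``follows from \dots\ the strong Feller property of $P^D$''), it reads as circular.

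The genuine problem is decay at infinity. Truncating $u$ to a large ball only helps if the tail $P_t(|u|\mathds{1}_{D\setminus B(0,R)})$ is uniformly small. That holds when $\mathpzc{l}(D)<+\infty$, because $\sup_x\int_{x-E}p(t,h)\,dh\to0$ as $\mathpzc{l}(E)\to0$, but not in general. In fact the second claim is false without such an assumption. Take a half-space $D$, which is regular by rotational symmetry and Blumenthal's $0$--$1$ law, and $u\equiv1$: then $P^D_t\mathds{1}(x)=\mathds{P}^x(T_D>t)\to1$ as $\dist(x,\partial D)\to\infty$.

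So you must add $\mathpzc{l}(D)<+\infty$ to the hypotheses. This is the setting in which the paper uses the lemma, and the paper's own sketch is silent on unbounded $D$. With that assumption in place, also add the routine compactness step needed when $\partial D$ is unbounded. Cover the compact set $\partial D\cap\overline{B(0,R)}$ by finitely many balls on which $|P^D_tu|<\varepsilon$; this yields the compact $K\subset D$ required in the definition of $C_0(D)$.
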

	\begin{proof}
		The first part of the proof follows by the same calculation as in the case of Brownian motion, see \cite[Theorem 2.4]{chung_zhao}.
		
		For the second part, note that $p^D\le p$, so the vanishing at the boundary can be obtained by using the first part of the lemma and the dominated convergence theorem. The continuity in the interior point $x\in D$ is obtained by separating $D=B(x,\de(x)/2)\cup (D\setminus B(x,\de(x)/2)$\footnote{Here $\de(x)\coloneqq \dist(x,D^c)$.}, using the joint continuity of $p^D$ off the diagonal and the dominated convergence theorem.
	\end{proof}
	
	Moreover, the next lemma states that $x\mapsto p(t,x,y)$ and $x\mapsto p^D(t,x,y)$ are differentiable away from $y$ which we will use many times throughout the section. Recall that the diagonal behavior of $p$ (as well as $p^D$) depends on the subordinator $\sigma$: $p(t,0)=(4\pi)^{-d/2}\mathds{E}[\sigma(t)^{-d/2}]\in(0,\infty]$. In particular, if $b_\psi>0$, then obviously $p(t,0)<\infty$, or if $\psi$ satisfies the so-called weak lower scaling condition at infinity, then  $p(t,0)<\infty$, too, as can be seen in Lemma \ref{l:density in origin}. In such cases, $p$ and $p_D$ are regular at fixed time $t>0$ even in the case when $x=y$. However, $p$ explodes on the diagonal in the case when $\psi(\lambda)=\log(1+\lambda^{\alpha/2})$, i.e. when $A$ is the so-called geometric stable process, see \cite[Theorem 2.6]{SSV06-pot_th_gsp}. 
	\begin{lemma}\label{l:differ of dens SBM SBKM}
		Let $r_0>0$. For every multi-index $\beta=(\beta_1,\dots,\beta_d)\ge0$, there exist constants $c_1,c_2,c_3>0$ such that for all $t>0$ and all $x,y\in \mathbb{R}^d$ such that $|x-y|>r_0$ it holds that
		\begin{align}
			|\partial^\beta_x p(t,x,y)|&\le c_1(\beta, r_0),\label{eq:differ of dens SBM}\\
			|\partial^\beta_x p^D(t,x,y)|&\le c_3(\beta, r_0,\de(x)),\label{eq:differ of dens SBKM}
		\end{align}
		and for all $t>0$ and $x,y\in D$ it holds that
		\begin{align}
			|\partial^\beta_x \mathds{E}^x\left[p(t-T_D,A({T_D}),y);t>T_D\right]|&\le c_2(\beta, \de(x)). \label{eq:differ of dens SBKM-2}
		\end{align}
		Moreover, both $x\mapsto p(t,x,y)$, $x\in \mathbb{R}^d$, and $x\mapsto p^D(t,x,y)$, $x\in D$, are infinitely differentiable in $x\ne y$, and for every multi-index $\beta$ it holds that $\lim_{|x|\to\infty}\partial^\beta_x p(t,x,y)=0$.
	\end{lemma}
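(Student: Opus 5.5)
The plan is to differentiate the subordination formula $p(t,x,y)=\mathds{E}[p_B(\sigma(t),x-y)]$ under the expectation. Hunt's formula \eqref{eq:trans density KSBM} then transfers the resulting bounds to $p^D$.

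\emph{Step 1: Gaussian derivatives.} For the Gaussian kernel \eqref{eq:trans density BM}, $\partial^\beta_x p_B(s,x-y)$ is a finite sum of terms of the form $s^{-d/2-k}\,P(x-y)\,e^{-|x-y|^2/(4s)}$, where $P$ is a polynomial of degree at most $|\beta|$ and $k\le|\beta|$. Write $\rho=|x-y|>r_0$. Then
\begin{align*}
|\partial^\beta_x p_B(s,x-y)|\le C(\beta)\sum_{k\le |\beta|}\rho^{|\beta|}s^{-d/2-k}e^{-\rho^2/(4s)}.
\end{align*}
Substituting $r=\rho^2/s$, each term becomes $\rho^{|\beta|-d-2k}\,r^{d/2+k}e^{-r/4}$. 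The factor $r^{d/2+k}e^{-r/4}$ is bounded in $r$. When $|\beta|-d-2k\le0$, the power of $\rho$ is at most $r_0^{|\beta|-d-2k}$. When the exponent is positive, I instead split off $e^{-r/8}$ and use $\rho^m r^{j}e^{-r/8}\le C\rho^m (\rho^2/s)^{j}e^{-\rho^2/(8s)}$. The cleanest route is to bound $\rho^{|\beta|}e^{-\rho^2/(8s)}\le C s^{|\beta|/2}$ first. This yields a uniform bound $c_1(\beta,r_0)$ valid for all $s>0$.

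\emph{Step 2: the bound for $p$ and regularity.} Since the bound from Step 1 is uniform in $s>0$ and locally uniform in $x$ on the set $\{|x-y|>r_0\}$, dominated convergence allows differentiating under $\mathds{E}$ in \eqref{eq:trans density SBM}. This gives \eqref{eq:differ of dens SBM} and $C^\infty$ smoothness of $x\mapsto p(t,x,y)$ for $x\ne y$, taking $r_0<|x-y|$.

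For the decay as $|x|\to\infty$, each term $\partial_x^\beta p_B(\sigma(t),x-y)$ tends to $0$ pointwise and is dominated by $c_1$. Dominated convergence then gives $\lim_{|x|\to\infty}\partial^\beta_x p(t,x,y)=0$.

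\emph{Step 3: the correction term \eqref{eq:differ of dens SBKM-2}.} This is the main obstacle. The function $h(x)=\mathds{E}^x[p(t-T_D,A(T_D),y);t>T_D]$ depends on $x$ through the law of the exit time and position, so differentiating inside is not directly possible. I would treat $h$ as a harmonic-type function in $D$ and establish its smoothness in $B(x,\de(x)/2)$ via a convolution representation.

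Fix $x_0$ and the ball $U=B(x_0,\de(x_0)/2)$. By the strong Markov property at $T_U$, $h(x)=\mathds{E}^x[h(A(T_U))]$ on $\{T_U<t\}$, up to the time shift; more precisely, $h$ is a space-time caloric function vanishing nowhere singular. Given $y\in D$, the points $A(T_D)\in D^c$ lie at distance at least $\de(y)$ from $y$, but the bound should depend only on $\de(x)$.

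My first attempt would be symmetry. Since $p^D$ is symmetric and $p(t,x,y)=p(t,y,x)$, I would write
\begin{align*}
h(x)=\int_{D^c}\int_0^t p(t-s,z,y)\,\mathds{P}^x(T_D\in ds,A(T_D)\in dz).
\end{align*}
By the Ikeda--Watanabe formula, this equals $\int_D\int_0^t p^D(s,x,w)\,\int_{D^c}J(w-z)p(t-s,z,y)\,dz\,dw\,ds$, plus a continuous-exit part when $b_\psi>0$. To get $x$-derivatives with the singularity kept away from $x$, I split the $w$-integral into $|w-x|>\de(x)/2$ and $|w-x|\le\de(x)/2$. On the first region I use bounds on $\partial_x p^D$, which requires bootstrapping. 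As an alternative, I would use the identity $h(x)=p(t,x,y)-p^D(t,x,y)$ together with the heat-kernel representation on the ball $U$:
\begin{align*}
h(x)=\mathds{E}^x\big[h_{t-T_U}(A(T_U));T_U<t\big]+\text{(interior term)}.
\end{align*}
Here the derivatives fall on the exit distribution of a ball for the subordinate Brownian motion, which is a convolution-type kernel in $x-x_0$. The resulting bounds depend only on $\de(x)$, by translation invariance of $A$.

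\emph{Step 4: combine.} Given Step 3, \eqref{eq:differ of dens SBKM} follows from Hunt's formula, Step 2, and Step 3, with $c_3=c_1+c_2$. Smoothness of $p^D$ off the diagonal follows in the same way.
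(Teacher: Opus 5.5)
\textbf{Step 3 has a genuine gap.} Steps 1, 2 and 4 follow the paper's route: differentiate $\mathds{E}[p_B(\sigma(t),x-y)]$ under the expectation, then combine via Hunt's formula. But you never establish \eqref{eq:differ of dens SBKM-2}, and neither route you sketch closes.

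The Ikeda--Watanabe representation puts the $x$-derivatives on $p^D(s,x,w)$.
\begin{itemize}
\item On $|w-x|>\de(x)/2$ it presupposes bounds of the type \eqref{eq:differ of dens SBKM}. You intend to deduce those from Step 3, so the argument is circular.
\item On $|w-x|\le\de(x)/2$ it needs derivative control of $p^D$ near the diagonal, where $p^D$ need not even be finite (e.g.\ for geometric stable $A$).
\end{itemize}
The ball decomposition moves the $x$-dependence into the space-time exit law $\mathds{P}^x(T_U\in ds,A(T_U)\in dz)$ of a fixed ball $U$. This law is not a convolution in $x$. Its smoothness in $x$ for a general subordinate Brownian motion is a nontrivial regularity statement that you do not prove. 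Moreover, the quantity propagated from $A(T_U)$ is naturally bounded only in terms of $\de(y)$. That is exactly the obstruction you noticed and left unresolved, and translation invariance does not turn it into a $\de(x)$-bound.

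\textbf{The missing idea is the symmetry you mention but then do not use.} The correction term equals $p(t,x,y)-p^D(t,x,y)$, a difference of two symmetric kernels. Hence
\begin{align*}
\mathds{E}^x\left[p(t-T_D,A(T_D),y);t>T_D\right]=\mathds{E}^y\left[p(t-T_D,A(T_D),x);t>T_D\right].
\end{align*}
Now $x$ enters only as the spatial argument of $p$, not as the starting point of the process. Since $A(T_D)\in D^c$, every $x'\in B(x,\de(x)/2)$ satisfies $|x'-A(T_D)|>\de(x)/2$. So your Step 1--2 bound with $r_0=\de(x)/2$ dominates $\partial^\beta_{x'}p(t-T_D,A(T_D),x')$ uniformly in $t-T_D>0$, in $y$ and in $\omega$. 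Dominated convergence then lets you differentiate under $\mathds{E}^y$. This gives \eqref{eq:differ of dens SBKM-2} with $c_2(\beta,\de(x))=c_1(\beta,\de(x)/2)$, and smoothness of the correction term on all of $D$. This is exactly the paper's argument; no harmonic-function or exit-law regularity is needed.

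\textbf{A smaller issue in Step 1.} Bounding the polynomial by $C\rho^{|\beta|}$ independently of $k$ loses the structure. The resulting terms $\rho^{|\beta|-d-2k}r^{d/2+k}e^{-r/4}$ are unbounded when $|\beta|>d+2k$ (fix $r$, let $\rho\to\infty$). Likewise your ``cleanest route'' leaves $s^{(|\beta|-d)/2-k}$, which is unbounded as $s\to\infty$.

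Use scaling instead. With $g(w)=(4\pi)^{-d/2}e^{-|w|^2/4}$ and $w=z/\sqrt{s}$, one has $\partial^\beta_z p_B(s,z)=s^{-(d+|\beta|)/2}(\partial^\beta g)(w)$. Hence, for $|z|>r_0$ and uniformly in $s>0$,
\begin{align*}
|\partial^\beta_z p_B(s,z)|=|z|^{-d-|\beta|}\,|w|^{d+|\beta|}\,|\partial^\beta g(w)|\le C_\beta\, r_0^{-d-|\beta|}.
\end{align*}
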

	\begin{proof}
		Let us first deal with $p(t,x,y)$. By using the representation \eqref{eq:trans density BM} we first show
		\begin{align}\label{eq:differ SBM eq0}
			\partial^\beta_xp(t,x,y)=\int_0^\infty \partial^\beta_x p_B(s,x,y)\mathds{P}(\sigma(t)\in ds).
		\end{align}
		Indeed, by using \eqref{eq:trans density BM}, it is easy to see that $\partial^\beta_x p_B(s,x,y)$ can be absolutely bounded by a (finite) linear combination of the following terms 
		\begin{align*}
			\frac{|x-y|^{c_3(\beta)}}{s^{d/2+c_2(\beta)}}e^{-\frac{|x-y|^2}{4s}}, \quad s>0,\,x,y\in \mathbb{R}^d,
		\end{align*}
		for integer valued constants $c_2(\beta)$ and $c_3(\beta)$. Hence, if we take $|x-y|>r_0$, by the rapid decay of exponential function, we have $|\partial^\beta_x p_B(s,x,y)|\le C(\beta,r_0)$, $s>0$, and $\partial^\beta_x p_B(s,x,y)$ vanishes at infinity. In other words, we can use the dominated convergence theorem to get \eqref{eq:differ SBM eq0}, and consequently \eqref{eq:differ of dens SBM}, with $\partial^\beta_x p(t,x,y)$ vanishing at infinity.
		
		Next we show \eqref{eq:differ of dens SBKM-2}. Notice that from the symmetry of $p$ and $p^D$ we have
		\begin{align}\label{eq:symmetry of transition remainder}
			\mathds{E}^x\left[p(t-T_D,A({T_D}),y);t>T_D\right]=\mathds{E}^y\left[p(t-T_D,A({T_D}),x);t>T_D\right].
		\end{align}
		Hence, from the beginning of the proof, and the bound obtained for $p$ we have \begin{align*}
			|\partial^\beta_x\mathds{E}^y\left[p(t-T_D,A({T_D}),x);t>T_D\right])|&=|\mathds{E}^y\left[\partial^\beta_x p(t-T_D,A({T_D}),x);t>T_D\right]|\\&\le C(\beta,\de(x)),
		\end{align*}
		where we used that $|x-A({T_D})|\ge \de(x)$ and
		\begin{align*}
			\sup_{s>0,h\ge\delta}\frac{h^a}{s^{b}}e^{-\frac{h^2}{s}}\le C(\delta,a,b),\quad a,b\ge 0.
		\end{align*}
		The claim \eqref{eq:differ of dens SBKM} follows from already proved claims \eqref{eq:differ of dens SBM} and \eqref{eq:differ of dens SBKM-2}.
	\end{proof}
	
	The probability density $p(t,\cdot)$ falls into a general category of the so-called unimodal densities, i.e. $p(t,\cdot)$ radially decreases. Hence, by \cite[Corollary 7]{bogdan_density_and_tails_unimodal}, we have
	\begin{align}\label{eq:upper bound density p}
		p(t,x)\le C(d,\psi) \frac{t \psi(|x|^{-2})}{|x|^d},\quad t>0,\, x\in \mathbb{R}^d.
	\end{align}
	For large $t$, if $|x|>r_0$, the bound from Lemma \ref{l:differ of dens SBM SBKM} is better, so we get $p(t,x)\le C(d,\psi,r_0)\, (1\wedge t)$, $t>0$.
	As we have said, in general $x\mapsto p(t,x)$ can explode at the origin, but under an additional assumption this is impossible due to the next lemma (which serves as an addition to Lemma \ref{l:differ of dens SBM SBKM}).
	\begin{lemma}\label{l:density in origin}
		Assume that the Laplace exponent  $\psi$, in the case when $b_\psi=0$ in \eqref{eq:psi Laplace defn},  satisfies the weak lower scaling at infinity, i.e., there exist $a_\psi>0$ and $\delta_\psi\in (0,1)$ such that
		\begin{align}\label{eq:LWSC}
			\psi(\lambda t)\ge a_\psi \lambda^{\delta_\psi} \psi(t),\quad t\ge 1, \, \lambda\ge 1.
		\end{align}
		Then, for every multi-index $\beta\ge 0$, $t_0>0$, and $k\in \mathbb{N}\cup\{0\}$ we have
		\begin{align*}
			|\partial^{(k)}_tp(t,x)|&\le C(t_0,k),\quad t\ge t_0,\, x\in \mathbb{R}^d.\\
			|\partial^\beta_x p(t,x)|&\le C(t_0, \beta),\quad t\ge t_0,\, x\in \mathbb{R}^d.
		\end{align*}
	\end{lemma}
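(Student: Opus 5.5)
Write $p(t,x)$ through its Fourier representation instead of the heat-kernel mixture used in Lemma \ref{l:differ of dens SBM SBKM}. The process $A$ has characteristic exponent $\xi\mapsto\psi(|\xi|^2)$, so the plan is to establish
\begin{align*}
p(t,x)=(2\pi)^{-d}\int_{\mathbb{R}^d} e^{-i\xi\cdot x}e^{-t\psi(|\xi|^2)}\,d\xi,
\end{align*}
with all derivatives in $x$ and $t$ obtained by differentiating under the integral. Differentiating formally gives
\begin{align*}
\partial_t^{(k)}p(t,x)&=(2\pi)^{-d}\int (-\psi(|\xi|^2))^k e^{-i\xi\cdot x}e^{-t\psi(|\xi|^2)}\,d\xi,\\
\partial_x^\beta p(t,x)&=(2\pi)^{-d}\int (-i\xi)^\beta e^{-i\xi\cdot x}e^{-t\psi(|\xi|^2)}\,d\xi.
\end{align*}
Hence both claimed bounds follow once we show that, for every $k$ and $\beta$, the integrals
\begin{align*}
\int\psi(|\xi|^2)^k e^{-t_0\psi(|\xi|^2)}\,d\xi\quad\text{and}\quad\int|\xi|^{|\beta|}e^{-t_0\psi(|\xi|^2)}\,d\xi
\end{align*}
are finite. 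These dominate the integrands uniformly for $t\ge t_0$, since $e^{-t\psi}\le e^{-t_0\psi}$ and $\psi\ge0$. By dominated convergence, finiteness also justifies the Fourier inversion, which shows $p(t,\cdot)$ is bounded and continuous, and the exchange of derivative and integral. Note that $t$-derivatives are taken locally in $t$ but on $(t_0/2,\infty)$, so the domination with $t_0/2$ in place of $t_0$ suffices.

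The key step, and the only real obstacle, is a lower bound on the growth of $\psi$ at infinity.
\begin{itemize}
\item If $b_\psi>0$, then $\psi(\lambda)\ge b_\psi\lambda$, so $e^{-t_0\psi(|\xi|^2)}\le e^{-t_0 b_\psi|\xi|^2}$. This is Gaussian and integrable against any polynomial.
\item If $b_\psi=0$, apply \eqref{eq:LWSC} with $t=1$ and $\lambda=|\xi|^2\ge1$. This gives $\psi(|\xi|^2)\ge a_\psi\psi(1)|\xi|^{2\delta_\psi}$ for $|\xi|\ge1$, with $\psi(1)>0$ since $\sigma$ is nondegenerate. Thus $e^{-t_0\psi(|\xi|^2)}\le e^{-c\,t_0|\xi|^{2\delta_\psi}}$ on $\{|\xi|\ge1\}$, which decays faster than any polynomial.
\end{itemize}

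For the $t$-derivatives we also need polynomial growth of $\psi$. This holds in both cases, since any Bernstein function satisfies $\psi(\lambda)\le \psi(1)(1+\lambda)$ (from concavity and the representation \eqref{eq:psi Laplace defn}). Hence $\psi(|\xi|^2)^k\le C(1+|\xi|^2)^k$.

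On the compact set $\{|\xi|\le1\}$ the integrands are bounded, so finiteness there is trivial. Combining the two regions gives the integrability, and hence
\begin{align*}
|\partial_t^{(k)}p(t,x)|\le (2\pi)^{-d}\int\psi(|\xi|^2)^k e^{-t_0\psi(|\xi|^2)}\,d\xi=C(t_0,k),
\end{align*}
and similarly for $\partial^\beta_x$, uniformly in $x$ and in $t\ge t_0$.
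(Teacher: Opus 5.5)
Your proposal is correct and follows essentially the same route as the paper: Fourier inversion of $e^{-t\psi(|\xi|^2)}$, differentiation under the integral, and domination using $\psi(\lambda)\le\psi(1)(1\vee\lambda)$ together with either $\psi(|\xi|^2)\ge b_\psi|\xi|^2$ or the lower-scaling bound $\psi(|\xi|^2)\ge a_\psi\psi(1)|\xi|^{2\delta_\psi}$ for $|\xi|\ge 1$. Your remark about dominating with $t_0/2$ to justify the $t$-derivatives on a neighbourhood of each $t\ge t_0$ is a bit of extra care that the paper leaves implicit.
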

	\begin{proof}
		
		By the inverse Fourier transform, the density $p$ can be written as
		$$ p(t,x)=\frac{1}{(2\pi)^{d}}\int_{\mathbb{R}^d} e^{-i x\cdot \xi} e^{-t \psi (|\xi|^2)}d\xi,\quad t>0,\, x\in \mathbb{R}^d.$$
		Note that every Bernstein function satisfies
		$$1\wedge \lambda \le \frac{\psi(\lambda \, t)}{\psi(t)}\le 1\vee \lambda,\quad \lambda >0.$$
		In particular, $\psi(\lambda)\le\psi(1) \lambda$, $\lambda\ge 1$.
		
		Assume now that $b=0$. Then by the lower scaling, and the dominated convergence theorem, we get for $t\ge t_0$ and all $x\in \mathbb{R}^d$
		\begin{align}
			|\partial_tp(t,x)|&\le \int_{\mathbb{R}^d}\psi(|\xi|^2)e^{-t \psi (|\xi|^2)}d\xi\le \psi(1)+\int_{B(0,1)^c}|\xi|^2e^{-t\, a\,\psi(1) |\xi|^{2\delta}} \le C(t_0),\label{eq:density diff t eq1}
		\end{align}
		and for $t\ge t_0$, by passing the derivative inside the integral we have
		\begin{align}
			|\partial_{x_i}p(t,x)|&\le \int_{\mathbb{R}^d}|\xi|e^{-t \psi (|\xi|^2)}d\xi\le \psi(1)+\int_{B(0,1)^c}|\xi|e^{-t\, a\,\psi(1) |\xi|^{2\delta}} \le C(t_0).\label{eq:density diff x eq2}
		\end{align}
		In a similar way we proceed in the case $k\ge 2$ or for a general multi-index $\beta$.
		
		In the case when $b> 0$, we have $\psi(|\xi|^2)\ge b |\xi|^2$, so by using $e^{-t\psi(|\xi|^2)}\le e^{-tb |\xi|^2}$, we can bound the derivatives of $p$ similarly as in  \eqref{eq:density diff t eq1} and \eqref{eq:density diff x eq2}.
	\end{proof}
	By $P_t$, $t \geq 0$, we denote the semigroup of $A$, i.e.
	\begin{align}\label{eq:semigroup SBM}
		P_tu(x)=\mathds{E}^x[u(A(t))]=\int_{\mathbb{R}^d}p(t,x,y)u(y)dy.
	\end{align}
	We are little imprecise by not writing what is the underlying Banach space for $P_t$, $t\ge0$, but this is intentional. Namely, the process $A$ is a L\'evy process, so \eqref{eq:semigroup SBM} generates a strongly continuous semigroup on $C_0(\R^d)$, i.e. a Feller semigroup, see \cite[Section 2.1]{schillinglevy}. However, \eqref{eq:semigroup SBM} can be also uniquely extended to $L^2(\R^d)$ semigroup, see \cite[Section 1.5]{schillinglevy}.  We will actively use both semigroups and in each usage we will always indicate what is the underlying Banach space. Moreover, $P_t$ satisfies the so-called strong Feller property, i.e. $P_t (\BB_b(\R^d))\subset C_b(\R^d)$, since $A$ has an absolutely continuous transition densities which is a classical result due to Hawkes \cite{Hawkes-potential-levy79}.
	
	By $P^D_t$, $t\ge0$, we denote the corresponding semigroup of the killed process, i.e.
	\begin{align}\label{eq:semigroup KSBM}
		P^D_tu(x)=\mathds{E}^xu(A^D(t))=\mathds{E}^x[u(A(t));t<T_D]=\int_Dp^D(t,x,y)u(y)dy.
	\end{align}
	Here we extended all functions $u$ on $\partial$ by setting $u(\partial)=0$. It can be easily seen that the semigroup $P^D$ induces a $L^2(D)$-semigroup. Also, $P^D_t$ satisfies the strong Feller property. Indeed, for $u\in \mathcal{B}_b(D)$ we have
	\begin{align*}
		P^Du(x)=P_t\big(u\mathds{1}_D\big)(x)-\int_D\mathds{E}^y[p(t-T_D,A({T_D}),x)\mathds{1}_{[t>T_D]}]u(y)dy,
	\end{align*}
	where the first term is in $C_b(D)$ since $T$ is strongly Feller, and the second term is in $C_b(D)$ since $p$ is continuous off the diagonal. On the other hand, $P^D_t$, $t\ge0$, is $C_0(D)$ semigroup only if $D$ is regular for $A$, which easily follows from Lemma \ref{l:ap:Pt in C0}, see also \cite{Chung-DoublyFeller-Seminar85}.

	Recall that  $A$ is a L\'evy process so it has the L\'evy measure, denoted by $J(dx)$. Under our assumptions, see \cite[Theorem 30.1]{sato}, the measure $J$ is absolutely continuous: $J(dx)=j(|x|)dx$, where 
	\begin{align*}
		j(r)\coloneqq\int_0^\infty \frac{1}{(4\pi t)^{d/2}}e^{-\frac{r^2}{4t}}\mu(dt),\quad r>0.
	\end{align*}
	Note that $j$ is a decreasing function such that $\lim_{r\to \infty} j(r)=0$.
	
	The $C_0(\mathbb{R}^d)$-infinitesimal generator of $A$, denoted by $-\Lpsi$, acts on $C_0^2(\mathbb{R}^d)$ as
	\begin{align}
		-\Lpsi u(x)&=b_\psi\Delta u(x)+\int_{\mathbb{R}^d}\big(u(y)-u(x)-\nabla u(x)\cdot (y-x)\mathds{1}_{[|x-y|\le 1]}\big)j(|y|)dy\nonumber\\
		&=b_\psi\Delta u(x)+\textrm{P.V.}\int\big(u(y)-u(x)\big)j(|y|)dy, \quad u\in C_0^2(\mathbb{R}^d),\label{eq:Lpsi defn}
	\end{align}
	where for details we refer to \cite{sato}.
	We extend the definition of the operator $\Lpsi$ in the pointwise sense whenever the expression in \eqref{eq:Lpsi defn} makes sense, which is  true, e.g., for all $x\in\mathbb{R}^d$ if $u\in C^{2}(\mathbb{R}^d)\cap L^1(\mathbb{R}^d,1\wedge j(|x|)dx)$.
	
	Denote by $-\LpsiD$ the infinitesimal generator of $P^D_t$, $t\ge0$, both in the $L^2(D)$ sense and in $C_0(D)$ (where we recall that the latter is considered only if $D$ is regular for $A$). The domain of this operator can be tricky to find. We emphasize that in the case of the $C_0(D)$-semigroup it is possible that   $C_c^\infty(D)\not\subset \DD(-\LpsiD)$ which we prove in the next proposition. On the other hand, $C_c^\infty(D)$ is in the $L^2(D)$ domain of $-\LpsiD$. We refer to \cite{BaeumerLuksMeersc-SpaceTime18} for an important and detailed approach on the generators of killed $C_0(D)$-semigroups.
	
	\begin{proposition}\label{p:generator domain}
		Let $\mathpzc{l}(D)<+\infty$\footnote{We keep denoting by $\mathpzc{l}(\cdot)$ the Lebesgue measure in $\R^d$.}. The space $C_c^2(D)$ is contained in the $L^2(D)$-domain of $-\LpsiD$ and for $u\in C_c^2(D)$ it holds that
		\begin{align*}
			\LpsiD u=\Lpsi u,\quad \textit{in $D$}.
		\end{align*}
		On the other hand, if $\mu\not\equiv0$, and if $u\in C_c^2(D)$, $u\not\equiv 0$, with $u\ge0$ or $u\le 0$, then $u$ is not in the $C_0(D)$-domain of $-\LpsiD$ (where the regularity of $D$ is implicitly assumed).
	\end{proposition}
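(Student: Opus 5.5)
For the first claim, I would show directly that $u\in C_c^2(D)$ belongs to the $L^2(\R^d)$-domain of $-\Lpsi$, and then pass to the killed semigroup using Hunt's formula. Since $u\in C_c^2(\R^d)$, the function $\Lpsi u$ given by \eqref{eq:Lpsi defn} is bounded. It is also in $L^2$. Away from $\supp u$ it equals $-\int u(y)j(|x-y|)\,dy$, which is bounded by $\|u\|_{L^1}\, j(\dist(x,\supp u))$. This is square integrable at infinity, because $\int_{|z|>1} j(|z|)\,dz<\infty$ and $j$ is decreasing and bounded away from the origin. By Fourier analysis, $u$ lies in the $L^2$-domain of $-\Lpsi$, since $\psi(|\xi|^2)\le c(1+|\xi|^2)$ and $u\in H^2$.

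Next I would use
\[
P^D_tu(x)-u(x)=\big(P_tu(x)-u(x)\big)-\mathds{E}^x\big[u(A(t));T_D\le t<\infty\big],\qquad x\in D .
\]
I would then show that the last term, divided by $t$, converges in $L^2(D)$ to $0$. Since $u$ vanishes outside $K=\supp u$ and $\dist(K,D^c)>0$, by the strong Markov property at $T_D$ this term equals
\[
\mathds{E}^x\big[P_{t-T_D}u(A(T_D));T_D\le t\big].
\]
Here $P_s u(z)$ for $z\notin D$ is bounded by $\|u\|_\infty\, \PP^z(|A(s)-z|\ge \dist(K,D^c))\le C s$ uniformly in $z$. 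Hence the term is bounded by $Ct\,\PP^x(T_D\le t)$, which is $o(t)$ pointwise and dominated by $Ct$. Because $\mathpzc{l}(D)<\infty$, dominated convergence in $L^2(D)$ applies. This gives $-\LpsiD u=-\Lpsi u$ restricted to $D$.

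For the second claim, suppose for contradiction that $u$ is in the $C_0(D)$-domain. The $C_0$-generator and the $L^2$-generator agree where both are defined, because uniform convergence on $D$ with $\mathpzc{l}(D)<\infty$ implies $L^2$ convergence. So $-\LpsiD u=-\Lpsi u$ on $D$, and this function must lie in $C_0(D)$. I would then show that it does not vanish at a boundary point. Take $u\ge0$, $u\not\equiv0$, and $z\in\partial D$. For $x$ near $z$ we have $x\notin\supp u$, so
\[
-\Lpsi u(x)=\int u(y)j(|x-y|)\,dy \longrightarrow \int u(y)j(|z-y|)\,dy .
\]
The limit is strictly positive because $j>0$ when $\mu\not\equiv0$, since $j(r)=\int(4\pi t)^{-d/2}e^{-r^2/4t}\mu(dt)$. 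This contradicts membership in $C_0(D)$. The case $u\le0$ follows by considering $-u$. One also needs $\partial D\neq\emptyset$; this holds because $\mathpzc{l}(D)<\infty$ forces $D\ne\R^d$.

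I expect the main obstacle to be the uniform $O(s)$ bound $\sup_{z\notin D} P_s|u|(z)\le Cs$. I would prove it via the standard estimate $\PP^0(|A(s)|\ge r)\le C s\,\big(b_\psi r^{-2}+\int(1\wedge |y|^2/r^2)\,j(|y|)\,dy\big)$, which is a maximal-inequality bound for Lévy processes.
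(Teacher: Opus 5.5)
Your argument is correct and follows the paper's proof. You use the same split of $P^D_tu-u$ into $P_tu-u$ minus the strong-Markov remainder $\mathds{E}^x[P_{t-T_D}u(A(T_D));T_D\le t]$, the same bound of that remainder by $Ct\,\PP^x(T_D\le t)$ with dominated convergence in $L^2(D)$ via $\mathpzc{l}(D)<+\infty$, and the same positivity-of-$j$ argument near a boundary point. The paper identifies the candidate by pointwise convergence at every $x\in D$ and uses uniform rather than $L^2(\R^d)$ convergence of $(P_tu-u)/t$, which changes nothing essential.

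The step you flag as the main obstacle needs no maximal inequality. Since $u(z)=0$ for $z\notin D$ and $u\in C_c^2(\R^d)$ lies in the $C_0(\R^d)$-domain of $-\Lpsi$, you get $|P_su(z)|=|P_su(z)-u(z)|\le s\,\|\Lpsi u\|_{L^\infty(\R^d)}$, which is exactly how the paper bounds the remainder.
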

	\begin{proof}
		Take $u\in C_c^2(D)$. First we show that for all $x\in D$, it holds
		\begin{align}\label{eq:killed semigr conv}
			\lim_{t\to0}\frac{ P^D_tu(x)-u(x)}{t}=-\Lpsi u(x),
		\end{align}
		which implies that $-\Lpsi u$ is the only candidate for $-\LpsiD u$ both in $L^2(D)$ and in $C_0(D)$.
		
		To prove \eqref{eq:killed semigr conv}, write
		\begin{align}\label{eq:semi decomp}
			\frac{ P^D_tu(x)-u(x)}{t}=\frac{P_tu(x)-u(x)}{t}-\frac{\wt  P^D_tu(x)}{t},
		\end{align}
		where
		\begin{align}\label{eq:remainder of semigr}
			\wt  P^D_tu(x)\coloneqq \int_D \mathds{E}^x\left[p(t-T_D,A({T_D}),y);t>T_D\right]u(y)dy=\mathds{E}^x\left[P_{t-T_D}u(A({T_D}));t>T_D\right].
		\end{align}
		For the first term in \eqref{eq:semi decomp} we have $	\frac{P_tu(x)-u(x)}{t}\to -\Lpsi u(x)$ uniformly in $\mathbb{R}^d$, and 
		\begin{align}
			\left\|\frac{P_tu-u}{t}\right\|_{L^\infty(D)}\le 	\left\|\frac{P_tu-u}{t}\right\|_{L^\infty(\mathbb{R}^d)}\le C \|-\Lpsi u\|_{L^\infty(\mathbb{R}^d)}<\infty,\quad t<1,
		\end{align}
		where for the second inequality we used \cite[Eq. (15.3)]{bernstein}, and for the last $|\Lpsi u(x)|\le c_1\|u\|_{C^2(\mathbb{R}^d)}(1\wedge j(|x|))\le c_2$, $x\in \mathbb{R}^d$, see e.g. \cite[Eq. (6)]{Bio22}.
		
		For the second term in \eqref{eq:semi decomp}, since $u\equiv 0$ on $D^c$ and by using the same arguments as above, we have that $\frac{P_{t-T_D}u(A({T_D}))\mathds{1}_{[t>T_D]}}{t}$, for $t\in(0,1)$,
		is uniformly bounded $\mathds{P}^x$-a.s.,  with the same bound for all $x\in D$, so $\frac{ \widetilde{P}^D_tu}{t}$ is uniformly bounded in $D$, too. Notice that $\lim_{t\to0}\frac{P_{t-T_D}u(A({T_D}))\mathds{1}_{[t>T_D]}}{t}\to 0$  $\mathds{P}^x$-a.s. since the numerator becomes 0 for small $t$, so by the dominated convergence we have $\widetilde{P}^D_tu(x)/t\to0$ for all $x\in D$. Thus, we have proved \eqref{eq:killed semigr conv}.
		
		To prove that $u$ is in the $L^2(D)$ domain of $-\Lpsi_D$, we use \eqref{eq:semi decomp} to get
		\begin{align*}
			\left\|\frac{ P^D_tu-u}{t}+\Lpsi u\right\|_{L^2(D)}\le 	C\left\|\frac{P_tu-u}{t}+\Lpsi u\right\|^2_{L^\infty(D)}+\left\|\frac{\wt  P^D_t u}{t}\right\|_{L^2(D)}.
		\end{align*}
		The first term goes to 0 by the uniform convergence of $\frac{P_tu-u}{t}$ to $-\Lpsi u$, while the second goes to 0 by using the dominated convergence of $\wt  P^D_tu(x)/t$ to 0 as we explained in the previous paragraph, where in both convergences we used the assumption $\mathpzc{l}(D)<+\infty$.
		
		To prove that  $u$ is not in the $C_0(D)$ domain of $-\LpsiD$, for $u\not\equiv0$, $u\ge0$ or $u\le0$, first recall from the beginning of the proof that $-\Lpsi u$ is the only candidate for $-\LpsiD u$. However, $-\Lpsi u\not\in C_0(D)$ since for $z\in \partial D$ we have $u\equiv0$ in $D\cap B(z,\varepsilon)$ for some $\varepsilon>0$, so for $x\in D\cap B(z,\varepsilon)$, the relation  \eqref{eq:Lpsi defn} becomes
		\begin{align*}
			\Lpsi u(x)=\int_{D}u(y)j(|x-y|)dy.
		\end{align*}
		Since $\mu\not\equiv0$, we have $j>0$, so the previous relation shows that $-\Lpsi f\not\in C_0(D)$.	
	\end{proof}
	
	\subsection{Non-local in space parabolic problem}\label{ss:parabolic-local}
	With minimal assumptions on the open set $D$, we want to solve the problem
	\begin{align}
		\begin{cases}
			\partial_t u(x,t)=-\Lpsi u(x,t), & x\in D,\,t>0,\\
			u(x,t)=0,& x\in D^c,\, t>0,\\
			u(x,0)=1,& x\in D,
		\end{cases}\label{eq:parabolic problem}
	\end{align}
	in the pointwise sense, where we consider $\Lpsi$ in the extended pointwise sense as in \eqref{eq:Lpsi defn}. The obvious candidate for a solution to \eqref{eq:parabolic problem} is $q(x,t)= P^D_t\mathds{1}(x)=\mathds{P}^x(T_D>t)$, i.e. the tail of the exit time distribution, but we need to make this rigorous. The difficulty here lies in the fact that $\mathds{1}\not\in \DD(-\LpsiD)$ in $C_0(D)$ sense so the natural strategy involving the abstract Cauchy problem cannot be directly applied. 
	
	In this quite simple initial condition, we can see the importance of studying pointwise parabolic problems for initial data outside of the domain of the spatial operator. The proof that $\mathds{P}^x(T_D>t)$ solves \eqref{eq:parabolic problem} will also show how to deal with more general initial conditions. Theorem \ref{t:solution parabolic-f} gives a solution to \eqref{eq:parabolic problem} for the initial condition in $\BB_b(D)\cap C_{loc}^2(D)$.
	
	To successfully deal with the parabolic problem, it would be useful to have some nice properties of the operator $\LpsiD$ such as compactness. This was obtained by Chen and Song in \cite{ChenSong-spectralSBM-07} for all $D$ of finite Lebesgue measure. Interestingly, in \cite[Section 3]{ChenSong-spectralSBM-07} there are examples showing that $ P_t^D$ does not have to be a Hilbert-Schmidt operator (for a connection between these classes, see e.g. \cite[Chapter 5]{apple-semigr}).
	\begin{theorem}[{\cite[Theorem 2.1]{ChenSong-spectralSBM-07}}]\label{t:spectrum}
		For any open set $D\subset \mathbb{R}^d$ with $\mathpzc{l}(D)<+\infty$, and every $t > 0$, $ P^D_t$ is a compact operator in $L^2(D)$ and therefore it has a discrete spectrum.
	\end{theorem}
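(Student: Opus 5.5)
The plan is to show that $P^D_t$ is an integral operator on $L^2(D)$ whose kernel $p^D(t,\cdot,\cdot)$ can be approximated, in operator norm, by Hilbert--Schmidt (hence compact) operators. Since compact operators form a closed subset of the bounded operators, this gives compactness of $P^D_t$. Discreteness of the spectrum then follows from the Riesz--Schauder theory, because $P^D_t$ is also self-adjoint (by the symmetry of $p^D$).

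The difficulty is that $p^D(t,x,x)\le p(t,0)=(4\pi)^{-d/2}\mathds{E}[\sigma(t)^{-d/2}]$ may be infinite (for example, for geometric stable processes). In that case $\int_D\int_D p^D(t,x,y)^2\,dx\,dy$ need not be finite, so Hilbert--Schmidt cannot be argued directly.

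I would therefore split the kernel according to the value of the subordinator. Write
\[
p(t,x,y)=\mathds{E}[p_B(\sigma(t),x,y);\sigma(t)\ge\varepsilon]+\mathds{E}[p_B(\sigma(t),x,y);\sigma(t)<\varepsilon]\eqqcolon p_\varepsilon^{1}+p_\varepsilon^{2}.
\]
The first piece is bounded by $(4\pi\varepsilon)^{-d/2}$. The second piece has $L^1$-norm in $y$ equal to $\mathbb{P}(\sigma(t)<\varepsilon)$, which tends to $0$ as $\varepsilon\to0$ for fixed $t>0$, since $\sigma$ is strictly increasing and $\sigma(t)>0$ a.s.

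To transfer this split to the killed kernel, I would avoid Hunt's formula and instead dominate: $0\le p^D\le p$. The better route is to use the semigroup property,
\[
P^D_t=P^D_{t/2}P^D_{t/2},
\]
together with the contraction $P^D_{t/2}\colon L^2(D)\to L^2(D)$, and then reduce to showing that $P^D_{t/2}$ (or some power) is compact.

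For compactness of $P^D_s$ itself, I would use the pointwise domination $0\le P^D_s|f|\le P_s|f|$ on $D$. The operator $P_s$ restricted to $D$ decomposes as $K^1_\varepsilon+K^2_\varepsilon$, built from the kernels $p^1_\varepsilon$ and $p^2_\varepsilon$:
\begin{itemize}
\item $K^1_\varepsilon$ has a bounded kernel on $D\times D$. Since $\mathpzc{l}(D)<\infty$, it is Hilbert--Schmidt.
\item $K^2_\varepsilon$ has $L^2\to L^2$ norm at most $\mathbb{P}(\sigma(s)<\varepsilon)$, by Young's (Schur's) inequality, since its kernel is a convolution kernel with that $L^1$ mass.
\end{itemize}
Hence $P_s|_D=\mathbf{1}_D P_s\mathbf{1}_D$ is compact on $L^2(D)$.

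Compactness of the dominated positive operator $P^D_s$ then follows from the Dodds--Fremlin domination theorem on the $L^2$ Banach lattice. An alternative that avoids this theorem is the following. Write
\[
P^D_t=P^D_{t/2}P^D_{t/2},
\]
and bound the kernel of the killed operator directly by $p^D(s,x,y)\le p^1_\varepsilon+p^2_\varepsilon$ with $p^D=p^D_1+p^D_2$ split proportionally (with $p^D_1\le p^1_\varepsilon$ and $p^D_2\le p^2_\varepsilon$). This still yields a Hilbert--Schmidt piece plus a piece that is small in the Schur bound.

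I expect this last transfer of the splitting to $p^D$ to be the main obstacle. My first try is simply to define $p^D_1\coloneqq p^D\,p^1_\varepsilon/p$ and $p^D_2\coloneqq p^D\,p^2_\varepsilon/p$. These kernels are measurable and satisfy $p^D_i\le p^i_\varepsilon$. Schur's test requires only symmetric row-sum bounds, and those hold because $p^2_\varepsilon$ is symmetric.

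Letting $\varepsilon\to0$ gives $P^D_t$ as an operator-norm limit of Hilbert--Schmidt operators, hence compact. Self-adjointness and spectral theory for compact self-adjoint operators then give the discrete spectrum.
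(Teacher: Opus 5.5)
Your proof is correct. There is no in-paper argument to compare it with: the paper imports the statement from Chen--Song \cite[Theorem 2.1]{ChenSong-spectralSBM-07} and uses it only to obtain the eigenbasis $(e_n)$. Your proposal is therefore a self-contained replacement for the citation.

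Your final kernel argument proves compactness of $P^D_s$ directly for every $s>0$. The part of $p^D(s,\cdot,\cdot)$ lying under $p^1_\varepsilon\le(4\pi\varepsilon)^{-d/2}$ is Hilbert--Schmidt on $L^2(D)$ because $\mathpzc{l}(D)<+\infty$. The part lying under the convolution kernel $p^2_\varepsilon$ has Schur norm at most $\mathbb{P}(\sigma(s)<\varepsilon)$. Consequently the factorization $P^D_t=P^D_{t/2}P^D_{t/2}$ is unnecessary. Dodds--Fremlin, which does apply since $0\le P^D_s\le\mathbf{1}_DP_s\mathbf{1}_D$ on $L^2(D)$, is an alternative rather than a needed step.

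The proportional split $p^Dp^i_\varepsilon/p$ is legitimate, because $p>0$ and the diagonal where $p$ may be infinite is Lebesgue-null. Two simpler choices avoid the division:
\begin{itemize}
\item Take $\min(p^D,p^1_\varepsilon)$ and $(p^D-p^1_\varepsilon)_+\le p^2_\varepsilon$.
\item Truncate at a height $M$, using $(p^D-M)_+\le(p(s,x-y)-M)_+$ and $\int(p(s,z)-M)_+\,dz\to0$ as $M\to\infty$.
\end{itemize}
The truncation version shows that you only use three facts: $0\le p^D\le p(s,x-y)$, $p(s,\cdot)\in L^1(\mathbb{R}^d)$ is a full transition density, and $\mathpzc{l}(D)<+\infty$.

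Compared with the citation, your route makes three things visible:
\begin{itemize}
\item No regularity of $D$ is needed.
\item It explains why compactness persists when $p(s,0)=+\infty$ and $P^D_s$ is not Hilbert--Schmidt, which is the situation of the examples the paper mentions.
\item It shows exactly where the standing assumption on $\sigma$ enters: $\mathbb{P}(\sigma(s)<\varepsilon)\to\mathbb{P}(\sigma(s)=0)=\lim_{\lambda\to\infty}e^{-s\psi(\lambda)}=0$.
\end{itemize}

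That assumption cannot be dropped. For compound Poisson $\sigma$ one has $P^D_sf\ge e^{-s\mu(0,+\infty)}f$ for $f\ge0$. This is incompatible with compactness: test it on an orthonormal sequence of nonnegative functions with disjoint supports in $D$.
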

	Theorem \ref{t:spectrum} implies that there exists an orthonormal basis $(e_n)_n$ of $L^2(D)$ consisting of eigenfunctions of $-\Lpsi$, i.e. there exists $0\le \lambda_1\le\lambda_2\le \dots$ such that, $\lim_n\lambda_n=\infty$ and 
	\begin{align*}
		\LpsiD e_n=\lambda_n e_n,
	\end{align*}
	in $L^2(D)$ sense. In particular, for all $f\in \DD(-\LpsiD)$ we have
	\begin{align}\label{eq:LpsiD series}
		\LpsiD f=\sum_{n=1}^\infty \lambda_n \wh f_n e_n, \quad \text{in $L^2(D)$,}
	\end{align}
	where from now on we always write $\wh f_n$ for the $n$-th coefficient of the $L^2(D)$-representation $f=\sum_n \wh f_n e_n$. Also, for all $f\in L^2(D)$ we have
	\begin{align}\label{eq:SKBM group series}
		P^D_t f=\sum_{n=1}^\infty e^{-\lambda_nt} \wh f_n e_n.
	\end{align}
	
	We are ready to show that the tail of the exit distribution $\PP^x(T_D>t)$ solves local in time parabolic problem in the pointwise sense.
	\begin{theorem}\label{t:solution parabolic}
		Let $D$ be regular for $A$ and $\mathpzc{l}(D)<+\infty$. The  problem
		\begin{align}
			\begin{cases}
				\partial_t u(x,t)=-\Lpsi u(x,t), & x\in D,\,t>0,\\
				u(x,t)=0,& x\in D^c,\, t>0,\\
				u(x,0)=1,& x\in D,
			\end{cases}\label{eq:parabolic problem no2}
		\end{align}
		has a pointwise solution $u(x,t)\coloneqq \mathds{P}^x(T_D>t)$.
	\end{theorem}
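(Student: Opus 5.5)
The plan is to use the semigroup property to move the initial datum $\1_D$ into the domain of a nice operator after any positive time. Write $u(x,t)=P^D_t\1_D(x)$. For fixed $t_0>0$, $P^D_{t_0/2}\1_D\in L^2(D)$ because $\mathpzc{l}(D)<+\infty$. By \eqref{eq:SKBM group series}, for $t\ge t_0$,
\[
u(\cdot,t)=\sum_n e^{-\lambda_n t}\wh{(\1_D)}_n e_n .
\]
This series may be differentiated in $t$ in $L^2(D)$, giving $\partial_t u(\cdot,t)=-\LpsiD u(\cdot,t)$ in $L^2(D)$ sense, since $u(\cdot,t)=P^D_{t/2}P^D_{t/2}\1_D$ lies in the $L^2$-domain of the generator. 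This alone only gives an a.e. identity with respect to the $L^2$ generator. The real work is to upgrade it to a pointwise identity with $\Lpsi$ understood as the extended pointwise operator \eqref{eq:Lpsi defn}, and to verify the boundary and initial conditions.

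\textbf{Step 1: regularity in $x$.} Use Hunt's formula \eqref{eq:trans density KSBM} and Lemma~\ref{l:differ of dens SBM SBKM} to show that $x\mapsto u(x,t)=\int_D p^D(t,x,y)\,dy$ is $C^2$ (indeed $C^\infty$) in $D$ for each $t>0$. Split $D$ into $B(x,\de(x)/2)$ and its complement. On the complement, differentiate under the integral using \eqref{eq:differ of dens SBKM}. Near the diagonal, write $p^D=p-\EE^x[\cdots]$; the correction term is handled by \eqref{eq:differ of dens SBKM-2}. The free part $\int_{B(x,\de(x)/2)}p(t,x,y)\,dy$ must be treated by translation invariance, i.e.\ differentiating $\int \1_{B}(x-z)\,p(t,z)\,dz$. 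If $p(t,\cdot)$ explodes at the origin, this near-diagonal term is a possible obstacle. I would try moving derivatives onto the smooth cutoff: replace $\1_B$ by a smooth bump $\chi$ equal to $1$ near $x$, so the free part becomes $P_t\chi$, which is smooth since $\chi\in C_c^\infty$.

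\textbf{Step 2: pointwise equation.} I would first try to establish $\partial_t u(x,t)=-\Lpsi u(x,t)$ via the Markov property,
\[
\frac{u(x,t+h)-u(x,t)}{h}=\frac{P^D_h u(\cdot,t)(x)-u(x,t)}{h}.
\]
Decompose $u(\cdot,t)=\chi u(\cdot,t)+(1-\chi)u(\cdot,t)$, with $\chi$ a smooth cutoff equal to $1$ near $x$ and supported in $D$.
\begin{itemize}
\item The part $\chi u(\cdot,t)$ is in $C^2_c(D)$. Proposition~\ref{p:generator domain}, more precisely \eqref{eq:killed semigr conv}, gives the limit $-\Lpsi(\chi u)(x)$.
\item The part $(1-\chi)u(\cdot,t)$ vanishes near $x$. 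For it, $h^{-1}P^D_h[(1-\chi)u](x)\to\int (1-\chi(y))\,u(y,t)\,j(|x-y|)\,dy$. This follows from $h^{-1}p(h,x,y)\to j(|x-y|)$ boundedly away from the diagonal (using the bound \eqref{eq:upper bound density p}), together with $h^{-1}\wt P^D_h\to0$ as in the proof of Proposition~\ref{p:generator domain}.
\end{itemize}
Summing gives the right derivative equal to $-\Lpsi u(x,t)$, where $u(\cdot,t)$ is extended by zero on $D^c$, so the nonlocal tail is well defined. The left derivative follows from continuity of $t\mapsto\Lpsi u(x,t)$, which I would get from the $L^2$ eigen-expansion smoothing combined with Step 1 bounds; a function with continuous right derivative is $C^1$. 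Alternatively, identify the pointwise derivative with the $L^2$ one a.e.\ and use continuity.

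\textbf{Step 3: boundary and initial conditions.} On $D^c$, $u=0$ by definition of the killed semigroup. Continuity up to $\partial D$ follows from Lemma~\ref{l:ap:Pt in C0}, since $u(\cdot,t)=P^D_t\1\in C_0(D)$ for regular $D$. For the initial condition, $u(x,t)=\PP^x(T_D>t)\to1$ as $t\to0$ for $x\in D$, because $T_D>0$ $\PP^x$-a.s.\ by right continuity of paths.

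\textbf{Main obstacle.} I expect the hardest point to be the uniform control in $h$ in Step 2, which is needed to pass the limit under the integral for the nonlocal part, together with the near-diagonal $C^2$ regularity in Step 1 when $p$ is singular at the origin.
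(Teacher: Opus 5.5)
Your proposal is correct in outline, but it reaches the time derivative by a different route than the paper. The paper never differentiates pointwise through the semigroup property. It first proves the weak identity \eqref{eq:weak solution 1} against $\varphi\in C_c^\infty(D)$ using the $L^2(D)$ eigenfunction expansion; this is where Theorem \ref{t:spectrum}, and hence $\mathpzc{l}(D)<+\infty$, is used. It then uses the $C^\infty$ regularity of $P^D_s\1$, obtained from exactly your Step 1 decomposition \eqref{eq:decomp semig R1}, to move $\Lpsi$ from $\varphi$ onto $u$ by symmetry and integration by parts. This yields $u(x,t)-1+\int_0^t\Lpsi u(x,s)\,ds=0$ for a.e.\ $x$. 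The paper upgrades this to every $x\in D$ by continuity of $x\mapsto\Lpsi u(x,s)$, and finally differentiates in $t$ using continuity of $s\mapsto\Lpsi u(x,s)$.

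You instead compute the right derivative at each point as a pointwise generator limit of $P^D_h$ applied to $u(\cdot,t)$. The $C^2_c(D)$ piece is handled by \eqref{eq:killed semigr conv}, and the piece vanishing near $x$ by $h^{-1}p(h,\cdot)\to j$ off the diagonal. You then pass from the right derivative to the full derivative via continuity. This avoids the spectral theorem, the integration by parts and the a.e.-to-everywhere upgrade, and your opening $L^2$ paragraph becomes unnecessary. The price is that you must justify a pointwise generator limit for a function that lies in no generator domain. You also still need the same continuity of $s\mapsto\Lpsi u(x,s)$ that the paper proves at the end, which follows from the uniform-in-$s$ $C^k$ bounds of Step 1.

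One justification in Step 2 has to be replaced. For $g=(1-\chi)u(\cdot,t)$ you cannot conclude $h^{-1}\wt P^D_h g(x)\to0$ ``as in Proposition \ref{p:generator domain}''. That argument relies on $|P_sg(z)|=|P_sg(z)-g(z)|\le Cs$ for $z\in D^c$, which needs $g\in C^2_c(D)$. Your $u(\cdot,t)$ is only continuous across $\partial D$; for the fractional Laplacian it behaves like $\de^{\alpha/2}$ there, so its derivatives blow up.

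A correct argument goes as follows. Since $D$ is regular, $g$ extended by zero lies in $C_0(\R^d)$ and vanishes on $D^c$. Hence $|P_{h-T_D}g(A(T_D))|\le\sup_{s\le h}\|P_sg-g\|_\infty\to0$. Meanwhile $h^{-1}\PP^x(T_D\le h)\le h^{-1}\PP^x\big(\sup_{s\le h}|A(s)-x|\ge\de(x)\big)$ stays bounded, for example by Dynkin's formula applied to a smooth bump centred at $x$ and supported in $B(x,\de(x))$. The product therefore tends to $0$. Alternatively, split the $y$-integral according to whether $\de(y)$ is small (where $|g|$ is small) or not (where the off-diagonal bounds apply). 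With this fix your argument goes through.
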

	\begin{proof}
		Let  $u(x,t)= P^D_t\mathds{1}(x)=\mathds{P}^x(T_D>t)$. It is obvious that $u$ satisfies the boundary conditions of \eqref{eq:parabolic problem no2} since $D$ is regular for $A$ and by the right-continuity of the paths of $A$. 
		
		First we show that $u$ solves \eqref{eq:parabolic problem no2} weakly in the following way:
		\begin{align}\label{eq:weak solution 1}
			\int_D\big(u(x,t)-u(x,0))\varphi(x)dx+\int_0^t\int_D u(x,s)\Lpsi \varphi(x)dx\,ds=0, \quad \forall \varphi\in C_c^\infty(D).
		\end{align}
		Take $\varphi\in C_c^\infty(D)$ and recall that $\varphi \in \DD(-\LpsiD)$ in $L^2(D)$ sense by Proposition \ref{p:generator domain}. By \eqref{eq:SKBM group series} and \eqref{eq:LpsiD series} we have in $L^2(D)$ sense
		\begin{align*}
			P^D_t\mathds{1}&=\sum_{n=1}^\infty e^{-\lambda_n\,t}\wh{\mathds{1}}_n e_n,\\
			\varphi=\sum_{n=1}^\infty\wh\varphi_n e_n,&\quad \Lpsi\varphi=\sum_{n=1}^\infty\lambda_n\wh\varphi_n e_n,
		\end{align*}
		so by using these series representations it is easy to see that \eqref{eq:weak solution 1} holds.

		Now in \eqref{eq:weak solution 1} we want to transfer the operator $\Lpsi$ from acting on $\varphi$ to acting on $u$, i.e.
		\begin{align}\label{eq:change of operator}
			\int_D u(x,s)\Lpsi \varphi(x)dx=\int_D \Lpsi u(x,s) \varphi(x)dx, \quad s>0,
		\end{align}
		where $\Lpsi u(x,t)$ is taken as a pointwise operator. For this it is enough to show that $u(\cdot,s)= P^D_s\mathds{1}$ is $C^2$ in a neighborhood of $\supp \varphi$, for every $s>0$, since then we may repeat the classical calculation from \cite[Section 3]{bogdan1999potential} and use the integration by parts to obtain \eqref{eq:change of operator}. Let us then show that $u(\cdot,s)= P^D_s\mathds{1}$ is in $C_{loc}^2(D)$, $s>0$.
		
		Take $\eta\in C_c^\infty(D)$ such that $0 \le \eta\le 1$ and $\eta\equiv 1$ on an open set $U\subset D$ such that  $\supp\varphi \subset U$ and $\dist(\supp \varphi,D^c)\le 2\dist(U,D^c)$. Write
		\begin{align}\label{eq:decomp semig R1}
			P^D_t\mathds{1}=P_t\eta-\wt  P^D_t\eta+ P^D_t(1-\eta),
		\end{align}
		where $\wt  P^D_t\eta$ is defined as in \eqref{eq:remainder of semigr}. For the first term note that for every multi-index $\beta \ge 0$ we have
		\begin{align*}
			\partial^\beta_x P_t\eta(\cdot)&=\partial^\beta_x\int_{\mathbb{R}^d}p(t,\cdot,y)\eta(y)dy=\partial^\beta_x\int_{\mathbb{R}^d}p(t,h)\eta(\cdot-h)dh=\int_{\mathbb{R}^d}p(t,h)\partial^\beta_x\eta(\cdot-h)dh\\
			&=\int_{\mathbb{R}^d}p(t,\cdot,y)\partial^\beta_x\eta(y)dh=P_t\partial^\beta_x\eta(\cdot),
		\end{align*}
		where we used that $p$ is radial. Thus, $\|P_t\eta\|_{C^k(\R ^d)}\le \|P_t\|\|\eta\|_{C^k(\R ^d)}=\|\eta\|_{C^k(\mathbb{R} ^d)}$, for all $k\ge 0$.
		
		For the second term in \eqref{eq:decomp semig R1}, for any multi-index $\beta\ge0$ we have
		\begin{align*}
			\partial^\beta_x\wt  P^D_t\eta(x) &=	\partial^\beta_x\int_D \mathds{E}^y\left[p(t-T_D,A({T_D}),x);t>T_D\right]\eta(y)dy\\
			&=\int_D \partial^\beta_x\mathds{E}^y\left[p(t-T_D,A({T_D}),x);t>T_D\right]\eta(y)dy,
		\end{align*}
		where we used \eqref{eq:symmetry of transition remainder} and \eqref{eq:differ of dens SBKM-2}. Thus, by using \eqref{eq:differ of dens SBKM-2} again,  we have $\|\partial^\beta_x\wt  P^D_t\eta\|_{C^k(U)}\le C\|\eta\|_{L^\infty(D)}$, for $C=C(\dist(\supp \eta,D^c),k)>0$.
		
		For the third term, note that $1-\eta\equiv 0$ in $U$, so first choose an open $V\supset \supp \varphi$  such that $\overline V\subset U$. Now for $x\in V$ we have
		\begin{align*}
			\partial^\beta_x  P^D_t(1-\eta)(x)=\int_D \partial^\beta_x p^D(t,x,y)(1-\eta)(y)dy,
		\end{align*}
		since the only non-zero part of the integral above is in the case when $|x-y|\ge \dist(U,V^c)>0$ where $p^D$ is differentiable by Lemma \ref{l:differ of dens SBM SBKM}. Hence, $\|  P^D_t(1-\eta)\|_{C^k(V)}\le C\|1-\eta\|_{L^1(D)}$ for $C=C(\dist(U,V^c),k)>0$.
		
		Thus, we have just proved that $u(\cdot,t)= P_t^D\mathds{1}\in C^\infty(D)$, and $\|  P^D_t\mathds{1}\|_{C^k(U)}\le C$ uniformly for all $t>0$ (where $C$ depends on $U$ and $k$),  hence \eqref{eq:change of operator} holds. By implementing \eqref{eq:change of operator} into \eqref{eq:weak solution 1}, we get that for almost all $x\in D$ it holds that
		\begin{align}\label{eq:weak solution 2}
			u(x,t)-u(x,0)+\int_0^t \Lpsi u(x,s)ds=0.
		\end{align}
		Moreover, $x\mapsto\Lpsi u(x,s)$ is continuous and bounded. Indeed, since $D$ is regular, we have $u(\cdot,s)= P^D_s\mathds{1}\in C_0(D)$, see Lemma \ref{l:ap:Pt in C0}. Hence, by writing
		\begin{align}
			\begin{split}\label{eq:operator on PtD}
				\Lpsi u(x,s)&=b_\psi\Delta u(x,s)+\int_{B(0,\delta)^c}\big(u(x+h,s)-u(x,s)\big)j(|h|)dh\\
				&\hspace{3em}+\int_{B(0,\delta)}\big(u(x+h,s)-u(x,s)-\nabla u(x,s)\mathds{1}_{|h|\le \delta}\big)j(|h|)dh,
			\end{split}
		\end{align}
		we can see that the first and the third term in the above expression are continuous in $x$ for all $s>0$ since $u(x,s)= P^D_s\mathds{1}(x)$ is smooth in $D$. The second term is continuous in $D$: $u(\cdot,s)\in C_0(D)$, and hence $u(\cdot,s)\in C_b(\mathbb{R}^d)$ (since $u(\cdot,s)=0$ in $D^c$), so we can use the dominated convergence theorem. 
		
		In other words, \eqref{eq:weak solution 2} holds for all $x\in D$. To finish the proof, we want to take the time derivative in \eqref{eq:weak solution 2}. To do so, note that it is enough to show that $s\mapsto \Lpsi u(x,s)$ is continuous. To this end, look at \eqref{eq:operator on PtD} again. To prove that the first term is continuous in time, recall the decomposition \eqref{eq:decomp semig R1} of $ P_t^D\mathds{1}$. From the proof that $ P_t^D\mathds{1}\in C^\infty$ it is easy to see\footnote{This uses the continuity in time of $p$ (and $p^D$) easily observed, since the probability that $\sigma$ has jump at fixed time $t>0$ is 0.} that $t\mapsto \Delta  P_t^D\mathds{1}$ is continuous. The second term in \eqref{eq:operator on PtD} is continuous in time since $ P_t^D\mathds{1}$ is continuous in time and uniformly  bounded so we may use the dominated convergence theorem. The third term in \eqref{eq:operator on PtD} is arbitrarily small, uniformly for $t>0$, for small $\delta$ since $\| P_t^D\mathds{1}\|_{C^2(U)}\le C$ uniformly in $t>0$ as we proved earlier in the proof. Thus, $s\mapsto \Lpsi u(x,s)$ is continuous in $(0,+\infty)$, hence $t\mapsto \int_0^t \Lpsi u(x,s)ds$ is differentiable in $(0,+\infty)$, hence \eqref{eq:weak solution 2} implies
		\begin{align*}
			\partial_t u(x,t)+\Lpsi u(x,t)=0,\quad t>0.
		\end{align*}
	\end{proof}
	
	By using the same technique as in the previous theorem, we obtain a more general result.
	\begin{theorem}\label{t:solution parabolic-f}
		Let $D$ be regular for $A$ and $\mathpzc{l}(D)<+\infty$, and $f\in \BB_b(D)\cap C_{loc}^2(D)$. The  problem
		\begin{align}
			\begin{cases}
				\partial_t u(x,t)=-\Lpsi u(x,t), & x\in D,\,t>0,\\
				u(x,t)=0,& x\in D^c,\, t>0,\\
				u(x,0)=f(x),& x\in D,
			\end{cases}\label{eq:parabolic problem f}
		\end{align}
		has a pointwise solution $u(x,t)\coloneqq \mathds{E}^x[f(A^D(t))]=\mathds{E}^x[f(A(t));t<T_D]$, and $u(x,t)\to u(x,0)$ as $t\to 0$, for all $x\in D$.
	\end{theorem}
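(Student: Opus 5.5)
I will follow the proof of Theorem \ref{t:solution parabolic} almost verbatim, replacing $\mathds{1}$ by $f$, and then verify the extra claim $u(x,t)\to f(x)$ separately. Set $u(\cdot,t)=P^D_tf$. The boundary condition $u=0$ in $D^c$ is immediate from the convention $f(\partial)=0$. Since $D$ is regular and $f\in\BB_b(D)$, Lemma \ref{l:ap:Pt in C0} gives $u(\cdot,t)\in C_0(D)$ for every $t>0$. Because $\mathpzc{l}(D)<\infty$, we have $f\in L^2(D)$, so the spectral representation \eqref{eq:SKBM group series} applies. Testing against $\varphi\in C_c^\infty(D)$ and using \eqref{eq:LpsiD series} together with Proposition \ref{p:generator domain}, I obtain the weak identity \eqref{eq:weak solution 1} with $f$ in place of $\mathds{1}$. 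Here I use $\sum_n(1-e^{-\lambda_nt})\wh f_n\wh\varphi_n=\int_0^t\sum_n\lambda_ne^{-\lambda_ns}\wh f_n\wh\varphi_n\,ds$, which is justified by Cauchy--Schwarz since $\sum_n\lambda_n|\wh\varphi_n|^2$-type sums are finite.

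Next I will show that $x\mapsto P^D_tf(x)$ is in $C^\infty(U)$ for every open $U$ compactly contained in $D$, with $C^k(U)$ bounds uniform in $t>0$. This is what allows the operator to be moved onto $u$ as in \eqref{eq:change of operator}. I use the decomposition \eqref{eq:decomp semig R1} with $\eta f$ in place of $\eta$:
\[
P^D_tf=P_t(\eta f)-\wt P^D_t(\eta f)+P^D_t\big((1-\eta)f\big).
\]
The second and third terms are handled exactly as before, via \eqref{eq:differ of dens SBKM-2} and Lemma \ref{l:differ of dens SBM SBKM}, and they only require $f$ bounded. For the first term, $\eta f\in C_c^2(\mathbb{R}^d)$ because $f\in C^2_{loc}(D)$. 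Derivatives commute with the convolution, so $\|P_t(\eta f)\|_{C^2}\le\|\eta f\|_{C^2}$. This is the one place where the $C^2_{loc}$ hypothesis enters, and it gives only $C^2$ regularity rather than $C^\infty$, which is enough.

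Continuity in time is the main obstacle. With this regularity, \eqref{eq:weak solution 2} holds for almost every $x$, and it extends to every $x$ by continuity of $\Lpsi u(\cdot,s)$, using the splitting \eqref{eq:operator on PtD}. Differentiating in $t$ requires $s\mapsto\Lpsi u(x,s)$ to be continuous, which in turn requires $t\mapsto\Delta P_t(\eta f)(x)=P_t\Delta(\eta f)(x)$ to be continuous. This holds by strong continuity of $P_t$ on $C_0(\mathbb{R}^d)$, since $\Delta(\eta f)\in C_c$. The remaining pieces are continuous in time thanks to the time continuity of $p$ and $p^D$ and dominated convergence. The small-ball term in \eqref{eq:operator on PtD} is uniformly small in $t$ because the $C^2(U)$ bound is uniform in $t$. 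Taking the derivative then gives $\partial_tu=-\Lpsi u$ pointwise in $D\times(0,\infty)$.

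Finally, for the initial condition: for $x\in D$, right-continuity of paths gives $\mathds{P}^x(T_D>t)\to1$. Hence
\[
u(x,t)-P_t(\eta f)(x)=-\mathds{E}^x[(\eta f)(A(t));t\ge T_D]+\mathds{E}^x\big[((1-\eta)f)(A(t));t<T_D\big].
\]
The first term is bounded by $\|f\|_\infty\mathds{P}^x(T_D\le t)\to0$. The second term tends to $0$ by dominated convergence, since $(1-\eta)f$ vanishes near $x$ and the paths are right-continuous. Moreover $P_t(\eta f)(x)\to\eta f(x)=f(x)$ by the Feller property. Together these give $u(x,t)\to f(x)$ as $t\to0$.
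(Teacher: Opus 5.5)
Your proposal is correct and is essentially the paper's proof: the paper simply reruns the argument of Theorem~\ref{t:solution parabolic} with the splitting $f=f\eta+f(1-\eta)$ in place of \eqref{eq:decomp semig R1}. Your remark that the $P_t(\eta f)$ piece is only $C^2$ (which suffices) and your separate check of $u(x,t)\to f(x)$, which the paper leaves implicit, are both sound. Two small slips: the boundary condition at points of $\partial D$ uses the regularity of $D$ (i.e.\ $\mathds{P}^x(T_D=0)=1$), not just the convention $f(\partial)=0$, and your opening claim of $C^\infty(U)$ regularity should read $C^2(U)$, as you yourself note later.
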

	\begin{proof}
		The only key difference with respect to the proof of Theorem \ref{t:solution parabolic} is to use the decomposition $f=f\eta + f(1-\eta)$ in place of \eqref{eq:decomp semig R1}.
	\end{proof}
	
	We note that in this quite general setting, the transition kernel $p_D$ lacks regularity on the diagonal which does not allow us (by using the same approach) to obtain a smoothing property and instant regularity for the parabolic problem in the case of only integrable data, i.e. some regularity of $f$ is still needed.
	
	\begin{remark}[Generalization in more regular settings]
		If $D$ and $\Lpsi$ posses more regularity, e.g. if $D$ is an open bounded Lipschitz set, and $\Lpsi=(-\Delta)^{\alpha/2}$, $\alpha\in (0,2)$, also known as the $\frac\alpha2$-fractional Laplacian, then it follows from  \cite[Lemma 2.4]{Rut24} that if $ P^D_{t_0}f(x_0)$ is finite for some $t_0>0$ and $x_0\in D$, then $P_tf\in C_b(D)\cap C^\infty(D)$ for all $t>0$.
		
		The proof relies on the decomposition of the heat kernel $p_D(t,x,y)$ in the form: $p^D(t,x,y)\asymp \mathds{P}^x(T_D>t)p(t,x,y)\mathds{P}^y(T_D>t)$, for $x,y\in D$, and $t\in(0,T)$, with a constant of comparability depending on $T$, together with the regularity of $p_D$ up to the boundary in the sense of the existence of the limit $\lim_{D\ni y\to \partial D}\frac{p_D(t,x,y)}{\mathds{P}^y(T_D>1)}$.
		
		The point of this considerations (in this regular setting) is to conclude: if there exists $x_0\in D$ and $t_0>0$ such that $ P^D_{t_0}f(x_0)$ is finite, it follows (by Theorem \ref{t:solution parabolic-f}) that $u(x,t)= P^D_{t+s}f$ solves $(\partial_t+\Lpsi)u(x,t)=0$ with an initial condition $u(x,0)= P^D_sf(x)\in C_b^2(D)$. But here $s$ is arbitrary, and $\partial_t$ is time-shift invariant, so we can see that $u(x,t)= P^D_tf(x)$ satisfies $(\partial_t+\Lpsi)u(x,t)=0$ pointwise in $t>0$. Moreover, it can be seen in \cite[Lemma 2.4]{Rut24} that a nice condition ensuring that $ P^D_{t_0}f(x_0)$ is finite is $$\int_D \mathds{P}^y(T_D>1)|f(y)|dy<\infty.$$
		
	\end{remark}

	\subsection{Non-local in time parabolic problem for non-local operator}\label{ss:parabolic-non-local}
	Now we move to solving an evolution problem for the killed CTRWs limits, in the uncoupled case, i.e. the evolution of $X^D$. We solve the following non-local in time parabolic problem in the pointwise sense:
	\begin{align}
		\begin{cases}
			\phi(\partial_t) \big(U(x,t)-U(x,0)\big)=-\Lpsi U(x,t), & x\in D,\,t>0,\\
			U(x,t)=0,& x\in D^c,\, t>0,\\
			U(x,0)=f(x),& x\in D,
		\end{cases}\label{eq:time-non-local parabolic problem}
	\end{align}
	for $f\in \BB_b(D)\cap C^2_{loc}(D)$, where we show that a solution is $U(x,t)=\EE^xf(X^D(t))$, thus obtaining the connection between $X^D$ and its evolution equation.
	
	Before we give the statement and the proof, we want to emphasize again that the considered class $\BB_b(D)\cap C^2_{loc}(D)$ is not a subset of $\DD(-\LpsiD)$ in $C_0(D)$, but we are able to provide a pointwise solution to the evolution problem for $X^D$. This is especially interesting in the special case $f\equiv 1$, because the solution is then the tail of the exit time distribution of $X^D$, implying the importance of studying pointwise problems for initial conditions outside $C_0$-domains of generators.
	
	The presented approach connects the problems \eqref{eq:parabolic problem} and \eqref{eq:time-non-local parabolic problem} by using the connection between the processes $X^D$ and $A^D$ which are at the heart of the mentioned problems, respectively, as discussed at the very beginning of the section.

	\begin{theorem}\label{t:solution time-non-local parabolic}
		Let $D$ be regular for $A$, $\mathpzc{l}(D)<+\infty$, and $f\in \BB_b(D)\cap C^2_{loc}(D)$. The problem
		\begin{align}
			\begin{cases}
				\phi(\partial_t) \big(U(x,t)-U(x,0)\big)=-\Lpsi U(x,t), & x\in D,\,t>0,\\
				U(x,t)=0,& x\in D^c,\, t>0,\\
				U(x,0)=f(x),& x\in D.
			\end{cases}\label{eq:time-non-local parabolic problem no2}
		\end{align}
		has a pointwise solution $U(x,t)\coloneqq \EE^xf(X^D(t))$. 
		
		In particular, if $f\equiv \1$, the solution is 
		$U(x,t)=\mathds{P}^x(\tau_D>t)$.
	\end{theorem}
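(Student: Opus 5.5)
The proof will rest on the subordination identity $X^D=A^D(L)$ established at the beginning of this section, combined with the independence of $L$ and $A$. It gives
\begin{align*}
U(x,t)=\EE^x f(X^D(t))=\int_0^{+\infty} u(x,s)\, l_t(ds),\qquad u(x,s)\coloneqq \EE^x[f(A(s));s<T_D].
\end{align*}
By Theorem \ref{t:solution parabolic-f}, $u$ is a pointwise solution of the local in time problem \eqref{eq:parabolic problem f}. The boundary condition $U=0$ on $D^c$ is immediate, since $u(\cdot,s)=0$ there. The initial condition $U(x,0)=f(x)$ also holds, because $L(0)=0$ a.s. when $S$ is strictly increasing. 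The real work is therefore to transfer the equation $\partial_s u=-\Lpsi u$ to the non-local in time equation, pointwise in $x$. I would imitate the proof of Theorem \ref{theorem_convolution}, but work with scalar functions $t\mapsto U(x,t)$ for each fixed $x\in D$, rather than with Banach-space-valued maps.

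\emph{Step 1 (exchange of $\Lpsi$ with the $l_t$-integral).} From the proof of Theorem \ref{t:solution parabolic}, via the decomposition $f=f\eta+f(1-\eta)$, I would extract that $\|u(\cdot,s)\|_{C^2(U')}\le C$ uniformly in $s>0$ on any $U'\Subset D$, and that $|u|\le\|f\|_\infty$. Dominated convergence in the splitting \eqref{eq:operator on PtD} then yields $\Lpsi U(x,t)=\int_0^{+\infty}\Lpsi u(x,s)\,l_t(ds)$, with $|\Lpsi u(x,s)|\le C(x)$ uniformly in $s$. The same splitting, together with continuity of $t\mapsto l_t$ (recall $L$ is continuous), gives continuity of $t\mapsto \Lpsi U(x,t)$.

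\emph{Step 2 (integrated equation via Laplace transform).} Fix $x$. Exactly as in \eqref{eq:uncoupled-sol}, I integrate by parts in $s$ using $\partial_s u(x,s)=-\Lpsi u(x,s)$. The integration by parts is legitimate because $s\mapsto \Lpsi u(x,s)$ is continuous and bounded, as shown at the end of the proof of Theorem \ref{t:solution parabolic}. This gives
\begin{align*}
U(x,t)=f(x)-\int_0^{+\infty}\Lpsi u(x,s)\,\PP(S(s)\le t)\,ds .
\end{align*}
Taking Laplace transforms in $t$ and using $\int_0^\infty e^{-\lambda t}\PP(S(s)\le t)dt=\lambda^{-1}e^{-s\phi(\lambda)}$, I get
\begin{align*}
\wt U(x,\lambda)=\frac{f(x)}{\lambda}-\frac1\lambda\int_0^\infty e^{-s\phi(\lambda)}\Lpsi u(x,s)\,ds .
\end{align*}
On the other hand, Step 1 gives
\begin{align*}
\int_0^\infty e^{-\lambda t}\Lpsi U(x,t)\,dt=\int_0^\infty \Lpsi u(x,s)\int_0^\infty e^{-\lambda t}l_t(ds)\,dt .
\end{align*}
Since $\int_0^\infty e^{-\lambda t}l_t(ds)\,dt=\frac{\phi(\lambda)}{\lambda}e^{-s\phi(\lambda)}ds$, which is the standard potential identity for inverse subordinators, this transform equals $\frac{\phi(\lambda)}{\lambda}\big(f(x)-\lambda \wt U(x,\lambda)\big)$. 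This is precisely $-\lambda$ times the transform \eqref{326} of $b(U-f)+\int_0^t(U(x,s)-f(x))\bar\nu(t-s)ds$, all computed pointwise in $x$. Uniqueness of the Laplace transform then gives the pointwise integrated identity
\begin{align*}
b(U(x,t)-f(x))+\int_0^t (U(x,s)-f(x))\bar\nu(t-s)\,ds=-\int_0^t\Lpsi U(x,s)\,ds
\end{align*}
for a.e.\ $t$, hence for all $t$. To pass from a.e.\ to all $t$, I would use the continuity argument of Theorem \ref{theorem_convolution} for the left side, based on boundedness of $U$ and local integrability of $\bar\nu$.

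\emph{Step 3 (differentiate).} The right side of the integrated identity is $C^1$ by the continuity from Step 1, so the left side is differentiable and equals $-\Lpsi U(x,t)$; this is \eqref{eq:time-non-local parabolic problem no2}. The case $f\equiv\1$ gives $U=\PP^x(L(t)<T_D)=\PP^x(\tau_D>t)$, using $\tau_D=S(T_D)$ a.s. The main obstacle I anticipate is Step 1 together with the uniform-in-$s$ bounds needed for Fubini and the integration by parts near $s=0$. For non-smooth $f$, $\Lpsi u(x,s)$ might blow up as $s\to0$. I would handle this using $f\in C^2_{loc}$: in the decomposition, the $f\eta$ part keeps $C^2$ norms bounded by $\|f\eta\|_{C^2}$, and the $f(1-\eta)$ part is controlled by Lemma \ref{l:differ of dens SBM SBKM}.
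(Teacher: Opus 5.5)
Your proposal is correct and follows essentially the same route as the paper: the representation $U(x,t)=\EE[u(x,L(t))]$, the integration-by-parts formula in $s$, matching the Laplace transforms of both sides of the integrated identity using the uniform-in-time $C^2$ bounds, passing from a.e.\ $t$ to all $t$ by continuity, and differentiating via continuity of $s\mapsto \Lpsi U(x,s)$. The only cosmetic difference is that you commute $\Lpsi$ with the $l_t(ds)$-integral and use $\int_0^\infty e^{-\lambda t}\,l_t(ds)\,dt=\frac{\phi(\lambda)}{\lambda}e^{-s\phi(\lambda)}\,ds$, whereas the paper commutes $\Lpsi$ with the Laplace transform in $t$ and uses $\LL U(x,\lambda)=\frac{\phi(\lambda)}{\lambda}\LL u(x,\phi(\lambda))$.
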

	
	\begin{proof}
		Let $U(x,t)\coloneqq \EE^xf(X^D(t))$. Note that since $D$ is regular for $A$, it is also regular for $X$. Indeed, for $x\in \partial D$ we have
		$$\mathds{P}^x(\tau_D=0)=\mathds{P}^x(S(T_D)=0)=\mathds{P}^x(S(T_D)=0,T_D=0)=\mathds{P}^x(T_D=0)=1,$$
		where we used that $\tau_D=S(T_D)$ almost surely. Hence, $U$ satisfies the boundary conditions of \eqref{eq:time-non-local parabolic problem no2}.
		
		We use the technique of the Laplace transform to show that $U$ satisfies \eqref{eq:time-non-local parabolic problem no2}. First we show that $U(x,t)$ satisfies \eqref{eq:time-non-local parabolic problem no2} in the integral form, i.e. we will show that for all $t>0$
		\begin{align}\label{eq:time non-local integral form}
			b\big(U(x,t)-U(x,0)\big)+\int_0^t \big(U(x,s)-U(x,0)\big)\overline \nu(t-s)ds=\int_0^t -\Lpsi U(x,s)ds.
		\end{align}
		To this end, let $x\in D$ and recall that by the independence of $A$ and $S$, we have $A^D(L)=X^D$, hence
		\begin{align}
			\begin{split}
				U(x,t)&=\EE^xf(X^D(t))=\EE^xf(A^D(L(t)))=\int_0^\infty \EE^xf(A^D(s))\p(L(t)\in ds)\\&=\int_0^\infty u(x,s)\,\p(L(t)\in ds)=\mathds{E}[u(x,L(t))],
			\end{split}\label{eq:U relative to u}
		\end{align}
		where $u(x,t)=\EE^xf(A^D(t))$ is a solution  to the problem \eqref{eq:parabolic problem f} from Theorem \ref{t:solution parabolic-f}. Furthermore, $\partial_tu(x,t)=-\Lpsi u(x,t)$ in $D\times (0,+\infty)$, in the pointwise sense, so in particular $\partial_t u(x,t)$ exists for all $t>0$. Now, in the same way as in \eqref{eq:uncoupled-sol}, we can write 
		\begin{align}\label{eq:U intergral repr.}
			U(x,t)&=f(x)+\int_0^{+\infty} \partial_s u(x,s)\PP(L(t)\ge s)ds.
		\end{align}
		From \eqref{eq:U intergral repr.}, we easily calculate the Laplace transform in time of $U$ (as in \eqref{resolv})
		\begin{align}
			\begin{split}
				\LL U(x,\lambda)&=\frac{f(x)}{\lambda}+\int_0^\infty  \partial_s u(x,s)\frac{e^{-\phi(\lambda)s}}{\lambda}ds=\frac{\phi(\lambda)}{\lambda}\LL u(x,\phi(\lambda)).
			\end{split}\label{eq:U laplace to u}
		\end{align}
		Here, in the  last equality we used the formula for the Laplace transform of a derivative \cite[Proposition 1.6.6]{arendt2001cauchy}.
		
		Apply now the Laplace transform to the left hand side of \eqref{eq:time non-local integral form} in the variable $t$. By using the convolution property of the Laplace transform \cite[Proposition 1.6.4]{arendt2001cauchy}, we get
		\begin{align}
			b\left(\LL U(x,\lambda)-\frac{f(x)}{\lambda}\right)	&+\LL\left(\int_0^t \big(U(x,s)-U(x,0)\big)\overline \nu(t-s)ds\right)(\lambda)\notag\\
			&=b\left(\LL U(x,\lambda)-\frac{f(x)}{\lambda}\right)+\left(\LL U(x,\lambda)-\frac{f(x)}{\lambda}\right)\left(\frac{\phi(\lambda)}{\lambda}-b\right)\notag\\
			\overset{\eqref{eq:U laplace to u}}&{=}\left(\frac{\phi(\lambda)}{\lambda}\LL u(x,\phi(\lambda))-\frac{f(x)}{\lambda}\right)\frac{\phi(\lambda)}{\lambda}.\label{eq:U laplace LHS}
		\end{align}
		
		Now we take the Laplace transform of the right hand side of \eqref{eq:time non-local integral form} in the variable $t$. Before doing so, note that the Laplace transform and $-\Lpsi$ commute in the cases of the functions $U(x,t)$ and $u(x,t)$ since both functions are in  $C^2(B(x,\de(x)/2))$ and the bounds are uniform in $t>0$, so we may use Fubini's theorem to show this. For $u$, its regularity  was argued in Theorem \ref{t:solution parabolic-f} (i.e. Theorem \ref{t:solution parabolic}), while the claim for $U$ follows from \eqref{eq:U relative to u} and by the claim for $u$. It follows that the Laplace transform of the right hand side of \eqref{eq:time non-local integral form} is
		\begin{align*}
			\LL&\left(\int_0^t -\Lpsi U(x,s)ds\right)(\lambda)=\frac{1}{\lambda}\LL\big(-\Lpsi  U(x,\cdot)\big)(\lambda)=-\frac{1}{\lambda}\Lpsi  \LL U(x,\lambda)\\\overset{\eqref{eq:U laplace to u}}&{=}-\frac{\phi(\lambda)}{\lambda^2}\Lpsi \LL u(x,\phi(\lambda))=\frac{\phi(\lambda)}{\lambda^2}\LL\left(-\Lpsi u(x,\cdot)\right)\big(\phi(\lambda)\big)\\&=\frac{\phi(\lambda)}{\lambda^2}\left(\phi(\lambda)\LL u(x,\phi(\lambda))-f(x)\right),
		\end{align*} 
		which is the same as \eqref{eq:U laplace LHS}, where in the last equality we again used the formula for the Laplace transform of a derivative \cite[Proposition 1.6.6]{arendt2001cauchy}. By inverting the Laplace transform, we obtain that for almost all $t>0$, the relation \eqref{eq:time non-local integral form} holds. Moreover, by using the uniform boundedness in time of $U(x,t)$ in $C^2(B(x,\de(x)/2))$, and continuity of the paths of $L(t)$, $t\ge0$, it is easy to see that both sides of \eqref{eq:time non-local integral form} are continuous in $t$, hence \eqref{eq:time non-local integral form} holds for all $t>0$.

		To obtain $\phi(\partial_t)U(x,t)=-\Lpsi U(x,t)$, for $x\in D$ and $t>0$, it is enough to show that just one side of \eqref{eq:time non-local integral form} is differentiable. We will prove that this is the case for the right hand side by showing that the integrand $-\Lpsi U(x,s)$ is continuous in $s$. 
		
		This can be done similarly as in Theorem \ref{t:solution parabolic-f} (i.e. Theorem \ref{t:solution parabolic}) when we showed that $s\mapsto \Lpsi u(x,s)$ is continuous. First, recall that by \eqref{eq:U relative to u} we have $U(x,t)=\mathds{E}[ P^D_{L(t)}f(x)]$, so by decomposing $ P^D_{L(t)}f$ as in \eqref{eq:decomp semig R1}, (with $f=f\eta+f(1-\eta)$), and by recalling that $L(t)$ has continuous trajectories, we obtain the same regularity for $U$ as for $u$. In other words, by the same arguments as for $u$, by using the decomposition \eqref{eq:operator on PtD} on $U(x,s)$, it follows that $s\mapsto-\Lpsi U(x,s)$ is continuous,
		
		Thus, both sides of \eqref{eq:time non-local integral form} are differentiable, so by differentiating them we obtain 
		$$ \phi(\partial_t)\big(U(x,t)-U(x,0)\big)=-\Lpsi U(x,t),\quad x\in D,\,t>0.$$
	\end{proof}

	\begin{remark}[Nonuniqueness of pointwise solutions]
		One may be tempted to think that the problems \eqref{eq:parabolic problem f} and \eqref{t:solution time-non-local parabolic} for $f\equiv 0$ admit only the trivial solution $U\equiv 0$. However, a typical feature of non-local settings is that equations of the form  \eqref{eq:parabolic problem f} and \eqref{t:solution time-non-local parabolic} with $f\equiv 0$ have infinitely many (non-negative) pointwise solutions. 
		
		For instance, if $\Lpsi = (-\Delta)^{\alpha/2}$ for $\alpha\in (0,2)$, and if $D$ is a Lipschitz domain, then for each $Q\in \partial D$ there exists a smooth non-negative function, the so-called Martin kernel, $x\mapsto \eta_D(x,Q)$ which pointwise solves \eqref{eq:parabolic problem f} with $f\equiv 0$. For details, see \cite{ABR25}.
		
		To obtain uniqueness, one needs to supplement \eqref{eq:parabolic problem f} with an additional boundary condition of the \eqref{eq:parabolic problem} of the type $\lim_{x\to \partial D}U(x,t)/u^*(x)=\zeta$, where $u^*$ is certain reference function directly related to the problem, see \cite{CGCV22}. for analogous boundary behavior in non-local-in-time equations, see \cite{CGCV24}.
	\end{remark}
	
	\appendix
	
	\section{Subordinators and Bernstein functions}
	\label{appendix_bernstein}
	
	All the information given in this appendix essentially comes from \cite{bernstein}, see in particular Chapter 5 therein.
	\begin{definition}
		A function $\phi : (0,+\infty) \to [0,+\infty)$ is called Bernstein function if $\phi\in C^{\infty}(0,+\infty)$ and
		$$
		(-1)^{n-1} \phi^{(n)}(\lambda) \ge 0, \qquad n \in \mathbb{N} ,\, \lambda > 0.
		$$
	\end{definition}
	
	The Bernstein functions are characterized by the following theorem. 
	
	\begin{theorem}
		A function $\phi : (0,{+\infty}) \to [0,+\infty)$ is a Bernstein function if and only if it holds that
		\begin{align}
			\phi(\lambda) = a + b \lambda + \int_{(0,{+\infty})} (1 - e^{-\lambda t}) \,\nu(dt),
			\label{A1002}
		\end{align}
		where $a,b \ge 0$ and $\nu$ is a Borel measure on $(0,{+\infty})$ that satisfies $\int_{(0,{+\infty})} (1 \wedge t)\,\nu(dt) < {+\infty}$.
		
		Moreover, the triplet $(a,b,\nu)$ uniquely determines $ \phi$.
	\end{theorem}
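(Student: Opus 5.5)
We need to prove the Bernstein representation theorem: $\phi$ is Bernstein if and only if it has the Lévy–Khintchine form \eqref{A1002}, and the triplet $(a,b,\nu)$ is unique.

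\emph{Sufficiency} is the easy direction. If $\phi$ has the form \eqref{A1002}, we differentiate under the integral sign. For $n\ge1$,
\[
\phi^{(n)}(\lambda)=b\,\1_{\{n=1\}}+(-1)^{n-1}\int_{(0,+\infty)} t^n e^{-\lambda t}\,\nu(dt).
\]
This is justified by dominated convergence, using the bounds $t^ne^{-\lambda t}\le C_{n,\lambda_0}(1\wedge t)$ for $\lambda\ge\lambda_0>0$ and $1-e^{-\lambda t}\le (1\vee\lambda)(1\wedge t)$. The formula gives smoothness, and the sign condition $(-1)^{n-1}\phi^{(n)}\ge0$ is then immediate.

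\emph{Necessity} goes through Bernstein's theorem on completely monotone functions. If $\phi$ is Bernstein, then $\phi'$ is completely monotone on $(0,+\infty)$. By the Hausdorff--Bernstein--Widder theorem there is a unique Borel measure $\mu$ on $[0,+\infty)$ with
\[
\phi'(\lambda)=\int_{[0,+\infty)} e^{-\lambda t}\,\mu(dt),\qquad \lambda>0.
\]
We set $b:=\mu(\{0\})$ and $\nu(dt):=t^{-1}\mu(dt)$ on $(0,+\infty)$. Since $\phi\ge0$ is nondecreasing, the limit $a:=\phi(0+)$ exists and is nonnegative. Integrating $\phi'$ from $0$ to $\lambda$ and using Tonelli gives
\[
\phi(\lambda)-a=b\lambda+\int_{(0,+\infty)}\int_0^\lambda e^{-st}\,ds\;t\,\nu(dt)=b\lambda+\int_{(0,+\infty)}(1-e^{-\lambda t})\,\nu(dt).
\]
The integrability condition follows from finiteness at $\lambda=1$, because $1-e^{-t}\ge c\,(1\wedge t)$.

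\emph{Uniqueness.} The triplet is recovered directly from $\phi$. First, $a=\phi(0+)$. Next, $b=\lim_{\lambda\to\infty}\phi(\lambda)/\lambda$, which holds by dominated convergence since $(1-e^{-\lambda t})/\lambda\to0$ and is bounded by $1\wedge t$ for $\lambda\ge1$. Finally, $\phi'(\lambda)-b=\int e^{-\lambda t}\,t\,\nu(dt)$ is a Laplace transform, and uniqueness of Laplace transforms of measures determines $t\,\nu(dt)$, hence $\nu$.

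The main obstacle is the necessity direction. It rests entirely on Bernstein's theorem for completely monotone functions, which I would take from \cite{bernstein}, Chapter 1, rather than reprove; it can be established via Helly selection or Krein--Milman. The remaining care lies in handling the boundary behaviour at $0$: the limit $\phi(0+)$ exists by monotonicity, and letting the lower integration limit tend to $0$ is justified by monotone convergence.
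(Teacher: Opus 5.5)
Your proof is correct. The paper gives no proof of its own and simply defers to \cite{bernstein}, and your argument is the standard one found there: apply Bernstein's theorem to the completely monotone function $\phi'$, integrate from $0+$ using monotone convergence, and recover the triplet as $\phi(0+)$, $\lim_{\lambda\to\infty}\phi(\lambda)/\lambda$ and the measure whose Laplace transform is $\phi'-b$.

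One remark on the uniqueness clause: read literally (the triplet determines $\phi$), it is immediate from \eqref{A1002}. What you actually prove is the substantive converse, that $\phi$ determines $(a,b,\nu)$.
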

	
	In the triplet $(a,b,\nu)$, $a$ is usually called the killing term, $b$ is called the drift, and $\nu$ the L\'evy measure of $\phi$. The reason for this nomenclature lies in the fact that Bernstein functions are in 1--1 correspondence with subordinators.

	\begin{definition}
		A càdlàg process $S = (S(t))_{t \ge 0}$ defined on a probability space $(\Omega,\mathcal{F},\mathbb{P})$ is called a subordinator if it is non-negative, has independent and stationary increments,  and $S_0 = 0$.
	\end{definition}
	It is easily observed that paths of a subordinator $S$ are non-decreasing functions. In other words, subordinators are non-decreasing L\'evy processes.
	
	Let $E_a$ be an exponentially distributed random variable with rate parameter $a \ge 0$, i.e. $
	\mathbb{P}(E_a \le  t) = 1- e^{-a t}$, $t\ge0$. In the case $a = 0$, we set $E_a = +\infty$. Assume that $E_a$ and the subordinator $(S(t))_{t\ge0}$ are independent, and define a process $({S}^\dagger(t))_{t \ge 0}$ by
	$$
	{S}^\dagger(t) :=
	\begin{cases}
		S(t), & t < E_a, \\
		+\infty, & t \ge E_a.
	\end{cases}
	$$
	
	The process $S^\dagger$ is the subordinator $S$ killed at an independent exponential time $E_a$ and such a process is called a  {killed subordinator}.
	
	The following theorem gives the 1--1 correspondence between killed subordinators and Bernstein functions.
	
	\begin{theorem}
		If $S^\dagger$ is a killed subordinator, then there exists a Bernstein function $\phi$ such that
		\begin{align}\label{eq:1741}
			\EE e^{-\lambda S^\dagger(t)}=e^{-t \phi(\lambda)}.
		\end{align}
		Conversely, if $\phi$ is a Bernstein function, then there exists a killed subordinator $S^\dagger$ such that \eqref{eq:1741} holds.
	\end{theorem}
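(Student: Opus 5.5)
Both directions should be proved, with the killed case handled by allowing the value $+\infty$. For a killed subordinator $S^\dagger$ I set $e^{-\lambda\cdot(+\infty)}=0$. Independence of $E_a$ and $S$ then gives
\[
\EE e^{-\lambda S^\dagger(t)}=\EE e^{-\lambda S(t)}\,\PP(E_a>t)=e^{-at}\,\EE e^{-\lambda S(t)}.
\]
It therefore suffices to show that, for an unkilled subordinator $S$, the identity $\EE e^{-\lambda S(t)}=e^{-t\phi_0(\lambda)}$ holds with $\phi_0$ Bernstein and having triplet $(0,b,\nu)$. The killed case then follows with $\phi=a+\phi_0$, which is again Bernstein by the representation \eqref{A1002}.

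\textbf{Forward direction.} Fix $\lambda>0$ and set $g(t)=\EE e^{-\lambda S(t)}\in(0,1]$. Stationary independent increments give $g(t+s)=g(t)g(s)$. Right-continuity of paths together with dominated convergence makes $g$ right-continuous, and $g(0)=1$. The classical Cauchy functional equation argument then yields $g(t)=e^{-t\phi_0(\lambda)}$ for some $\phi_0(\lambda)\ge0$. To identify $\phi_0$, the route I would take is the L\'evy--Khintchine formula for the infinitely divisible law of $S(1)$ on $[0,+\infty)$. For such laws the Gaussian part vanishes, the L\'evy measure is supported on $(0,+\infty)$ with $\int(1\wedge t)\,\nu(dt)<+\infty$, and the compensated drift is a nonnegative $b$. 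This gives $\phi_0(\lambda)=b\lambda+\int(1-e^{-\lambda t})\,\nu(dt)$, so $\phi_0$ is Bernstein by the characterization theorem.

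If one wants to avoid citing L\'evy--Khintchine, there is a more self-contained alternative. Write
\[
\phi_0(\lambda)=\lim_{n}n\big(1-\EE e^{-\lambda S(1/n)}\big)=\lim_n\int(1-e^{-\lambda x})\,n\PP(S(1/n)\in dx).
\]
Each function $\lambda\mapsto n\big(1-\EE e^{-\lambda S(1/n)}\big)$ is Bernstein, since $1-e^{-\lambda x}$ is Bernstein in $\lambda$ and mixtures preserve the sign conditions. A pointwise limit of Bernstein functions is Bernstein: the derivative sign conditions pass to the limit because these functions are completely monotone up to sign, and Bernstein's theorem gives locally uniform convergence of the derivatives. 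This limit step is the main obstacle, and I would handle it by passing through the representation \eqref{A1002} via vague compactness of the measures $(1\wedge x)\,n\PP(S(1/n)\in dx)$.

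\textbf{Converse direction.} Given $\phi$ with triplet $(a,b,\nu)$, I would construct the process directly. Take a Poisson random measure $N$ on $(0,+\infty)\times(0,+\infty)$ with intensity $dt\,\nu(dx)$ and set
\[
S(t)=bt+\int_{(0,t]\times(0,+\infty)}x\,N(ds,dx).
\]
This is finite because $\int(1\wedge x)\,\nu(dx)<+\infty$: there are finitely many jumps larger than $1$ on bounded intervals, and the small jumps have finite mean. The increments are stationary and independent, the paths are c\`adl\`ag, and the exponential formula for Poisson random measures gives $\EE e^{-\lambda S(t)}=e^{-t(b\lambda+\int(1-e^{-\lambda x})\,\nu(dx))}$. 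Finally I kill the process at an independent $E_a$, which multiplies the transform by $e^{-at}$ and produces $e^{-t\phi(\lambda)}$ as required.
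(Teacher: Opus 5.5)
Your proof is correct. The paper does not prove this statement; it quotes it from \cite{bernstein}, where the correspondence is obtained analytically through convolution semigroups of sub-probability measures on $[0,+\infty)$. In that route the forward direction is essentially your ``self-contained alternative'': the Cauchy equation gives $e^{-t\phi}$, and $\phi(\lambda)=\lim_{t\to 0}t^{-1}\int(1-e^{-\lambda x})\,\mu_t(dx)$ is Bernstein because Bernstein functions are closed under pointwise limits. The converse uses that $e^{-t\phi}$ is completely monotone, being a completely monotone function composed with a Bernstein function. Bernstein's theorem then yields measures $\mu_t$ of mass $e^{-ta}$ with $\mu_{t+s}=\mu_t*\mu_s$, and the process comes from the Kolmogorov construction plus a c\`adl\`ag modification.

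Your converse instead builds the paths directly from a Poisson random measure, and you treat killing separately through the factor $e^{-at}$ rather than as a defect of mass. The pathwise construction gives c\`adl\`ag paths for free and exhibits $b$ and $\nu$ as drift and jump intensity, but it needs Poisson random measure machinery. The analytic route needs only the structure theory of Bernstein and completely monotone functions and handles $a>0$ uniformly. However, it produces only the marginal laws, so obtaining a process is a separate step.

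In your vague-compactness step, make two points explicit. First, since $1-e^{-x}\ge(1-e^{-1})(1\wedge x)$, the measures $(1\wedge x)\,n\PP(S(1/n)\in dx)$ have total mass at most $(1-e^{-1})^{-1}\phi_0(1)$. Second, you must take limits on the compactification $[0,+\infty]$, because mass escaping to $0$ is exactly what produces the drift $b$. Mass escaping to $+\infty$ would give a killing term, which vanishes here since $\phi_0(0+)=0$. Once you do this, the heuristic about derivatives converging locally uniformly is no longer needed.
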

	
	We also mention that each non-killed subordinator $S$ with corresponding triplet $(0,b,\nu)$ can be written as $S(t)=bt+\wt S(t)$, where $\wt S$ is a subordinator with  triplet $(0,0,\nu)$ (also called a pure-jump subordinator with the L\'evy measure $\nu$). In particular, if the subordinator $S$ has  triplet $(0,0,\nu)$ and $\nu(0,+\infty)\eqqcolon\lambda<+\infty$, then $S$ is called a compound Poisson process since after each of independent exponential time with rate $\lambda$, $S$ jumps by following the distribution $\nu(ds)/\nu(0,+\infty)$.
	
	Note also that $S$ is strictly increasing if and only if $b>0$ or $\nu(0,+\infty)=+\infty$. If $S$ is a proper killed subordinator, i.e. $a>0$ and let $E_a$ be its killing exponential, then its inverse subordinator $L(t)=\inf\{s>0:S(s)>t\}$ remains trapped in the position $E_a$ for all $t\ge S(E_a)$, since we treat the killing of a subordinator as sending it to the value $+\infty$. Some authors, on the other hand, sometimes also consider that the inverse $L$ is also killed after the time $S(E_a)$.

	The best known subordinator is the so-called stable subordinator. Its Bernstein function is $\phi(\lambda)=\lambda^{\alpha}$, for some $\alpha\in (0,1)$, with the corresponding triplet $(0,0,\nu)$ with $\nu(dt)=\frac{\alpha}{\Gamma(1-\alpha)}t^{-1-\alpha}dt$.
	
	One particularly interesting subclass of Bernstein functions are called special Bernstein functions. The related subordinators are called special subordinators. Here is the definition and its characterization, for details see \cite[Chapter 13]{bernstein}.
	
	\begin{definition}
		A Bernstein function $\phi$ is called a special Bernstein function if $\phi^*(\lambda)\coloneqq \frac{\lambda}{\phi(\lambda)}$, $\lambda>0$, is also a Bernstein function.
		
		A subordinator $S$ is special subordinator if its corresponding Bernstein function is special.
	\end{definition}
	Obviously, stable subordinators are special. To state the  characterization theorem, note that by $U$ we denote the potential measure of a subordinator $S$, given by the relation $\EE\int_0^{+\infty} f(S(t))dt=\int_0^{+\infty} f(t) U(dt)$, $f\ge0$.
	\begin{theorem}
		Let $S$ be a subordinator with potential measure $U$. Then $S$ is special if, and only if,
		$$
		U(dt) = c \delta_0(dt) + u(t)\,dt 
		$$
		for some $c \ge 0$ and some non-increasing function $u : (0,{+\infty}) \to (0,+\infty)$ satisfying $
		\int_0^1 u(t)\,dt < +\infty.$
	\end{theorem}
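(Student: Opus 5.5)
The plan is to work entirely on the Laplace side. The key identity is that the potential measure $U$ of $S$ has Laplace transform $1/\phi$. Indeed, by Tonelli and \eqref{eq:1741},
\begin{align*}
	\int_{[0,+\infty)} e^{-\lambda t}\,U(dt)=\EE\int_0^{+\infty} e^{-\lambda S(t)}\,dt=\int_0^{+\infty} e^{-t\phi(\lambda)}\,dt=\frac{1}{\phi(\lambda)},\qquad \lambda>0 .
\end{align*}
This requires $\phi(\lambda)>0$ for $\lambda>0$, which holds unless $S\equiv 0$, and that degenerate case I set aside. The same computation works for killed subordinators, using the convention $e^{-\lambda\cdot(+\infty)}=0$. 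Both implications will then follow from the representation \eqref{bernstein-eq}, i.e. $\phi(\lambda)=b\lambda+\lambda\int_0^{+\infty}e^{-\lambda t}\bar\nu(t)\,dt$, applied to $\phi^*$ instead of $\phi$, together with uniqueness of Laplace transforms of measures on $[0,+\infty)$.

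\emph{Necessity.} Assume $\phi$ is special. Write $\phi^*(\lambda)=\lambda/\phi(\lambda)$ with triplet $(a^*,b^*,\nu^*)$, so that by \eqref{bernstein-eq}
\begin{align*}
	\frac{1}{\phi(\lambda)}=\frac{\phi^*(\lambda)}{\lambda}=b^*+\int_0^{+\infty}e^{-\lambda t}\big(a^*+\nu^*(t,+\infty)\big)\,dt .
\end{align*}
The right-hand side is the Laplace transform of the measure $b^*\delta_0(dt)+u(t)\,dt$, where $u(t):=a^*+\nu^*(t,+\infty)$. Since this Laplace transform equals that of $U$, uniqueness gives $U(dt)=c\,\delta_0(dt)+u(t)\,dt$ with $c=b^*$. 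The function $u$ is non-increasing because $t\mapsto\nu^*(t,+\infty)$ is. Moreover, $\int_0^1 u(t)\,dt<+\infty$ follows from
\begin{align*}
	\int_0^1\nu^*(t,+\infty)\,dt=\int_{(0,+\infty)}(s\wedge 1)\,\nu^*(ds)<+\infty .
\end{align*}
Strict positivity of $u$ can fail in degenerate cases (for instance $a^*=0$ with $\nu^*$ of bounded support), so I would read the codomain as $[0,+\infty)$, or otherwise check this point against the source \cite{bernstein}.

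\emph{Sufficiency.} Assume $U(dt)=c\,\delta_0(dt)+u(t)\,dt$ with $u$ non-increasing and integrable on $(0,1)$. I would first replace $u$ by its right-continuous version; this changes $u$ only on a countable set and so leaves $U$ unchanged. Set $a^*:=\lim_{t\to+\infty}u(t)\ge 0$. The function $t\mapsto u(t)-a^*$ is non-increasing, right-continuous and tends to $0$ at infinity, so it is the tail $\nu^*(t,+\infty)$ of a unique Borel measure $\nu^*$ on $(0,+\infty)$. The integrability of $u$ near $0$ is exactly the condition $\int(1\wedge s)\,\nu^*(ds)<+\infty$, by the same Fubini identity as above. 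Then
\begin{align*}
	\frac{\lambda}{\phi(\lambda)}=\lambda\int e^{-\lambda t}\,U(dt)=c\lambda+a^*+\lambda\int_0^{+\infty}e^{-\lambda t}\,\nu^*(t,+\infty)\,dt ,
\end{align*}
which by \eqref{bernstein-eq} (read backwards) is the Bernstein function with triplet $(a^*,c,\nu^*)$. Hence $\phi$ is special.

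The argument is essentially bookkeeping. The only points I expect to need care are the measure-theoretic ones in the sufficiency direction: passing to the right-continuous version of $u$, building $\nu^*$ from its tail, and invoking uniqueness of the Laplace transform for (possibly infinite) measures on $[0,+\infty)$ whose transforms are finite for every $\lambda>0$.
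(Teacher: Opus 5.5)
The paper gives no proof of this statement; it simply defers to \cite{bernstein}. Your argument through the conjugate $\phi^*=\lambda/\phi$ is the standard proof of this fact: it uses $\int_{[0,+\infty)}e^{-\lambda t}U(dt)=1/\phi(\lambda)=\phi^*(\lambda)/\lambda$, the tail representation \eqref{bernstein-eq}, and uniqueness of Laplace transforms. Both directions are correct as written, including the construction of $\nu^*$ from the right-continuous version of $u$ and the identity $\int_0^1\nu^*(t,+\infty)\,dt=\int(1\wedge s)\,\nu^*(ds)$.

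What is missing is the strict positivity of $u$ in the necessity direction, which the statement asserts. Your suggested way it could fail ($a^*=0$ with $\nu^*$ of bounded support) is also wrong. Since $u$ is non-increasing, $u(t_0)=0$ would give $U((t_0,+\infty))=0$. Now write $S$ as the subordinator $\wt S$ with triplet $(0,b,\nu)$, killed at an independent exponential time of rate $a$. Then
\[
U((t,+\infty))=\int_0^{+\infty}e^{-as}\,\PP\big(\wt S(s)>t\big)\,ds>0\qquad\text{for every } t\ge0,
\]
as soon as $b>0$ or $\nu\neq0$. Indeed, in that case $\EE e^{-\wt S(s)}=e^{-s(\phi(1)-a)}\to0$, so $\wt S(s)\to+\infty$ a.s. and $\PP(\wt S(s)>t)\to1$ as $s\to+\infty$.

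Hence $u>0$ on $(0,+\infty)$. In particular, no special $\phi$ has a conjugate with $a^*=0$ and $\nu^*\neq0$ of bounded support. For example, $1-e^{-\lambda}$ is not special, since its potential measure is $\sum_{n\ge0}\delta_n$.

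The only exception is the constant $\phi\equiv a>0$, i.e. the zero process killed at rate $a$, where $U=a^{-1}\delta_0$ and $u\equiv0$. Under the paper's definition of a subordinator (no killing), the only degenerate case is $S\equiv0$, which you already excluded. So rather than weakening the codomain to $[0,+\infty)$, add this short argument.
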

	
	Lastly, we bring a generalization of \cite[Lemma A.3]{daniel24} and \cite[Lemma A.1]{Jorge2023a} to include also the case when the subordinator has a non-zero killing term.
	\begin{lemma}\label{daniel}
		Let $\phi$ be a Bernstein function with triplet $(a,b,\nu)$, $S$ its corresponding (killed) subordinator, and $L$ the inverse of $S$. Then for all $\lambda >0$ there exists $C_1,C_2>0$ such that
		\begin{align}
			\EE[e^{\lambda L(t)}]\le C_1e^{C_2 t},\quad t>0. 
		\end{align}
	\end{lemma}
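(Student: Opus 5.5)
We plan to reduce the exponential moment of $L(t)$ to the Laplace transform of $S$ through the tail identity $\PP(L(t)>s)=\PP(S(s)\le t)$. This identity holds for every $s$ apart from a null set: since $L(t)=\inf\{s>0:S(s)>t\}$ and $S$ is non-decreasing, $\{L(t)>s\}\subset\{S(s)\le t\}$, and $\{S(s)\le t\}\subset\{L(t)\ge s\}$. In the killed case we use the convention $S^\dagger=+\infty$ after $E_a$, so $\{S(s)\le t\}$ automatically forces $s<E_a$, and the identity is unaffected. For every $\theta>0$ Chebyshev's inequality then gives
\begin{align*}
\PP(S(s)\le t)=\PP\big(e^{-\theta S(s)}\ge e^{-\theta t}\big)\le e^{\theta t}\,\EE e^{-\theta S(s)}=e^{\theta t}e^{-s\phi(\theta)}.
\end{align*}
Here $\EE e^{-\theta S^\dagger(s)}=e^{-s\phi(\theta)}$ already includes the killing term $a$, because $e^{-\theta\cdot\infty}=0$.

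Next we write
\begin{align*}
\EE[e^{\lambda L(t)}]=1+\lambda\int_0^\infty e^{\lambda s}\,\PP(L(t)>s)\,ds.
\end{align*}
This is Fubini applied to $e^{\lambda L}-1=\int_0^L\lambda e^{\lambda s}\,ds$, and it stays valid if $L(t)=+\infty$ on some event, since the bound below shows that event is null. We then insert the tail bound, which gives $\EE[e^{\lambda L(t)}]\le 1+\lambda e^{\theta t}\int_0^\infty e^{-s(\phi(\theta)-\lambda)}\,ds$.

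The only point that needs care is choosing $\theta$ with $\phi(\theta)>\lambda$. This is possible whenever $\phi$ is unbounded, which holds if $b>0$ or $\nu(0,\infty)=\infty$. With that choice we get $\EE[e^{\lambda L(t)}]\le 1+\frac{\lambda}{\phi(\theta)-\lambda}e^{\theta t}$, so we may take $C_2=\theta$ and $C_1=1+\lambda/(\phi(\theta)-\lambda)$.

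If instead $\phi$ is bounded, the subordinator is killed compound Poisson, and $\sup\phi=a+\nu(0,\infty)$. For large $\lambda$ the Chebyshev route then fails, so we handle this case directly. Here $L(t)\le E_a\wedge \big(\text{time of the }(N+1)\text{-th jump}\big)$ on $\{\text{at most }N\text{ jumps before }L(t)\}$. More simply, we bound $L(t)$ by a sum of i.i.d. exponential holding times $\sum_{i\le K}\varepsilon_i$, with $K-1$ the number of jumps of size at most $t$ made before the first jump exceeding $t$ or the killing time. The variable $K$ is geometric, and $\EE e^{\lambda \varepsilon}$ can be infinite for large $\lambda$.

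We therefore expect the statement to require $\phi$ unbounded; this matches its use in Theorem \ref{theorem_convolution}, where only $\lambda_0$ with $\phi(\lambda_1)>\lambda_0$ is needed. In that application the unboundedness is guaranteed by the standing assumption $b>0$ or $\nu(0,\infty)=\infty$, and this is the main point where the hypotheses must be read carefully.
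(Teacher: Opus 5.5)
Your argument is correct, and it takes a different route from the paper. The paper handles $a=0$ by citing \cite[Lemma A.3]{daniel24}. It then reduces $a>0$ to that case via $L(t)=\widetilde L(t)\wedge E_a\le \widetilde L(t)$. You work directly from $\PP(L(t)>s)\le\PP(S(s)\le t)\le e^{\theta t}e^{-s\phi(\theta)}$ and the layer-cake formula. Only this one-sided inclusion is needed, so the "a.e.\ $s$" discussion can be dropped. Your route is self-contained and needs no case split, since the killing term is already inside $\phi$. It also gives explicit constants: $C_2$ can be any $\theta$ with $\phi(\theta)>\lambda$. That is exactly how $\lambda_1$ is chosen in the proof of Theorem \ref{theorem_convolution}.

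Your reading of the hypotheses is also right: as literally stated, for an arbitrary triplet, the lemma is false. Suppose $b=0$ and $\nu(0,\infty)<\infty$, and let $\varepsilon_1$ be the minimum of the first jump time and $E_a$. Then $S\equiv 0$ on $[0,\varepsilon_1)$, and $\varepsilon_1$ is exponential with rate $a+\nu(0,\infty)=\sup\phi$. Hence $L(t)\ge\varepsilon_1$ for every $t\ge 0$, and $\EE[e^{\lambda L(t)}]=\infty$ whenever $\lambda\ge a+\nu(0,\infty)$. Your Markov-inequality bound covers exactly the complementary range $\lambda<\sup\phi$, so that range is sharp. The bound holds for all $\lambda>0$ if and only if $b>0$ or $\nu(0,\infty)=\infty$, which is the standing assumption wherever the lemma is used. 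The paper's reduction discards $E_a$, so in the compound Poisson case it would only reach $\lambda<\nu(0,\infty)$; yours keeps the killing rate.

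One thing to fix: your compound Poisson paragraph says you will "handle this case directly." You then bound $L(t)$ from \emph{above} by a geometric sum of exponential holding times. An upper bound cannot show that an exponential moment is infinite. Replace that paragraph with the lower bound $L(t)\ge\varepsilon_1$ above, stated as a counterexample. Then add "$b>0$ or $\nu(0,\infty)=\infty$", or more generally "$\lambda<\sup\phi$", to the hypotheses of the lemma.
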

	\begin{proof}
		If $a=0$, the claim is known, see \cite[Lemma A.3]{daniel24}.
		
		Let now $a>0$, and write the subordinator $S$ as $S(t)=\wt S(t)\1_{E_a<t}+\infty\1_{E_a\ge t}$, where $\wt S$ is the subordinator with triplet $(0,b,\nu)$ and $E_a$ is an independent exponential random variable with rate $a>0$. Denote by $\wt L$ the inverse of $\wt S$. Then it holds that $L(t)=\wt L(t)\wedge E_a$ and $\wt L$ and $E_a$ are independent. Hence,
		\begin{align}
			\EE[e^{\lambda L(t)}]&=\EE[e^{\lambda (\wt L(t)\wedge E_a)}]\le\EE[e^{\lambda (\wt L(t)}],
		\end{align}
		and now the claim follows from \cite[Lemma A.3]{daniel24}.
	\end{proof}

	\section{Feller semigroups and Bochner subordination}
	\label{appendix_semigroups}
	We direct the reader to classical monographs, such as \cite{hille1996functional}, for a full (very general) overview of the theory of semigroups and to \cite{schillinglevy} for a more modern and accessible treatment with a probabilistic approach. 
	
	A strongly continuous semigroup of operators on a Banach space $(\mathfrak{B}, \left\| \cdot \right\|)$ is a family of bounded linear operators $P_t$, $t \geq 0$, such that $P_t:\mathfrak{B} \mapsto \mathfrak{B}$, which is strongly continuous at $t=0$, i.e., $\left\| P_tu-u \right\| \to 0$ as $t \to 0$ for all $u\in\mathfrak{B}$, and has the semigroup property $P_sP_t=P_tP_s=P_{t+s}$, for all $s,t \geq 0$. If, furthermore, $\left\| P_tu \right\| \leq \left\|u \right\|$, the family is called a \emph{strongly continuous contraction semigroup}. The (infinitesimal) generator of the semigroup $P_t$, $t\ge0$, is the operator $(G, \mathcal{D}(G))$
	\begin{align}
		&  Gu \coloneqq \text{strong-}\lim_{t \to 0} \frac{P_tu-u}{t},\quad u\in \DD(G), \\
		& \mathcal{D}(G) \coloneqq \left\lbrace u \in \mathfrak{B}: \lim_{t \to 0} \frac{P_tu-u}{t} \text{ exists as the strong limit} \right\rbrace.
	\end{align}
	The generator of a strongly continuous semigroup is a closed
	and densely defined linear operator that determines the semigroup uniquely. 
	
	For every strongly continuous semigroup there exist constants $M >0$ and $\lambda_0\geq0$ such that
	\begin{align}
		\left\|  P_t u \right\| \, \leq \, M e^{\lambda_0 t} \left\| u \right\|,
	\end{align}
	for all $u\in \mathfrak{B}$.
	Therefore, the $\lambda$-potential operator on $\mathfrak{B}$
	\begin{align}
		U_\lambda u \, = \, \int_0^{+\infty} e^{-\lambda t} P_tu \, dt
	\end{align}
	is well defined for $\mathfrak{Re}(\lambda) > \lambda_0$. Furthermore, every $\lambda$ such that $\mathfrak{Re} (\lambda) > \lambda_0$ belongs to the resolvent set of $G$ denoted by $\rho(G)$. In other words, for $\lambda \in \rho (G)$ it holds that $U_\lambda = (\lambda - G)^{-1}$, i.e., the operator $U_\lambda$ is the resolvent of $G$.
	
	Bochner's subordination provides a method to construct a strongly continuous contraction semigroup given another one (see \cite[Chapter 15]{bernstein} for a full overview of the theory). In particular, let $P_t$, $t \geq 0$, be a strongly continuous contraction semigroup, let $S(t)$, $t \geq 0$, be a (killed) subordinator with the Laplace exponent $\phi$ as in \eqref{A1002} and let $\mu_t(\cdot)$ be subordinator's law. Then, the family of operators $P_t^\phi$, $t \geq 0$, given by
	\begin{align}
		P_t^\phi u \, = \, \int_0^{+\infty} P_su \, \mu_t(ds)
		\label{b0952}
	\end{align}
	forms (another) strongly continuous contraction semigroup on the same Banach space, where the integral \eqref{b0952} is understood in the Bochner sense. Furthermore, if $(G, \mathcal{D}(G))$ is the generator of $P_t$, $t \geq 0$, then $P_t^\phi$, $t \geq 0$, is generated by $(G^\phi, \mathcal{D}(G^\phi))$ such that
	\begin{align}
		G^\phi u \mid_{\mathcal{D}(G)} \, = \, au + bG u + \int_0^{+\infty} \left( P_s u - u \right) \nu (ds),
	\end{align}
	where $\mathcal{D}(G)$ is an operator core for $(G^\phi, \mathcal{D}(G^\phi))$. It turns our that, in suitable sense, $G^\phi u = - \phi (-G)$ (this is due to \cite{Schilling1998SubordinationBochner}).
	
	Feller semigroups are strongly continuous contraction semigroup on the Banach space $(C_0(E), \left\| \cdot \right\|_\infty)$ of continuous functions vanishing on $E$ at infinity, where $E$ is a locally compact and separable space, with the additional property of being positivity preserving, i.e., $u \geq 0 \implies P_u \geq 0$. We recall that vanishing at infinity, in this general setting, means that
	\begin{align}
		\forall \epsilon >0, \, \exists K \subset E \text{ compact s.t. } |u(x)| \leq \epsilon, \, \forall x \in E\setminus K.
	\end{align}
	In this case, one can use the Riesz representation theorem (see, e.g., \cite[Theorem 1.5]{schillinglevy}) to say that
	\begin{align}
		P_tu (x) \, = \, \int_E u(y) \, p_t(x, dy)
	\end{align}
	where $p_t (\cdot, \cdot)$ is a kernel of (sub-)probability measures satisfying the Chapman-Kolmogorov equations. With this at hand, it is clear that following the Kolmogorov procedure to construct the canonical process, it is always possible to associate the semigroup $P_t$, $t \geq 0$, on $C_0(E)$ with a Markov process $M(t)$, $t \geq0$, on $E$ as
	\begin{align}
		\mathds{E}^x u(M(t)) \, = \, P_tu(x)
	\end{align}
	where $\mathds{E}^x$ is the (unique) probability measure such that $\mathds{P}^x (M(0)=x)=1$. Such a process $M(t)$, $t \geq0$, is called a Feller process, and it always admits a càdlàg modification. Moreover, it is in fact a strong Markov process that is also quasi-left continuous, for details we refer the reader to \cite[Chapter III, Sections 2 and 3]{revuz2013continuous}.
	
	\section*{Acknowledgments}
	The authors acknowledge financial support under the National Recovery and Resilience Plan (NRRP), Mission 4, Component 2, Investment 1.1, Call for tender No. 104 published on 2.2.2022 by the Italian Ministry of University and Research (MUR), funded by the European Union – NextGenerationEU– Project Title “Non–Markovian Dynamics and Non-local Equations” – 202277N5H9 - CUP: D53D23005670006 - Grant Assignment Decree No. 973 adopted on June 30, 2023, by the Italian Ministry of University and Research (MUR).
	
	The author B. Toaldo would like to thank the Isaac Newton Institute for Mathematical Sciences, Cambridge, for support and hospitality during the programme Stochastic systems for anomalous diffusion, where work on this paper was undertaken. This work was supported by EPSRC grant EP/Z000580/1.
	
	The author I. Biočić acknowledges financial support by the European Union – NextGenerationEU through the National Recovery and Resilience Plan 2021-2026 Institutional grant of University of Zagreb Faculty of Science (IK IA 1.1.3. Impact4Math), as well as the support by Croatian Science Foundation through the project IP-2025-02-8793.
	
	The authors are very grateful to Giacomo Ascione, Enrico Scalas, and Francesco Virgili for their valuable comments, which have considerably improved a previous version of the manuscript.
	
	\bibliographystyle{abbrv}
	
	\bibliography{References}

	\newpage
	
	\noindent{\bf Ivan Bio\v{c}i\'c}
	
	\noindent Department of Mathematics, Faculty of Science, University of Zagreb, Zagreb, Croatia,
	
	\noindent Department of Mathematics “Giuseppe Peano”, University of Turin, Turin, Italy,
	
	\noindent Email:  \texttt{ivan.biocic@math.hr},
	\texttt{ivan.biocic@unito.it}
	
	\bigskip

	\noindent{\bf Bruno Toaldo}
	
	\noindent Department of Mathematics “Giuseppe Peano”, University of Turin, Turin, Italy,
	
	\noindent Email: \texttt{bruno.toaldo@unito.it}

\end{document}